\tikzset{every loop/.style={}}
\tikzset{
    labl/.style={anchor=south, rotate=90, inner sep=.5mm}
}
\setlist[enumerate,1]{label=(\arabic*)}
\newlist{steplist}{enumerate}{1}
\setlist[steplist]{label={Step \arabic*:}, ref={Step \arabic*}}
\newtheorem{thm}{Theorem}[section]
\newtheorem{lem}[thm]{Lemma}
\newtheorem{prop}[thm]{Proposition}
\newtheorem{cor}[thm]{Corollary}
\numberwithin{equation}{section}
\theoremstyle{definition}
\newtheorem{defn}[thm]{Definition} 
\newtheorem{remk}[thm]{Remark}
\newcommand{\cC}{\mathcal{C}}
\newcommand{\cH}{\mathcal{H}}
\newcommand{\cM}{\mathcal{M}}
\newcommand{\cN}{\mathcal{N}}
\newcommand{\cP}{\mathcal{P}}
\newcommand{\fa}{\mathfrak{a}}
\newcommand{\fb}{\mathfrak{b}}
\newcommand{\fc}{\mathfrak{c}}
\newcommand{\h}{\mathfrak{h}}
\newcommand{\hh}{\hat{\mathfrak{h}}}
\newcommand{\hH}{\hat{\mathcal{H}}}
\newcommand{\diam}{\operatorname{diam}}
\newcommand{\Hull}{\operatorname{Hull}}
\newsavebox{\@brx}
\newcommand{\llangle}[1][]{\savebox{\@brx}{\(\m@th{#1\langle}\)}%
	\mathopen{\copy\@brx\kern-0.5\wd\@brx\usebox{\@brx}}}
\newcommand{\rrangle}[1][]{\savebox{\@brx}{\(\m@th{#1\rangle}\)}%
	\mathclose{\copy\@brx\kern-0.5\wd\@brx\usebox{\@brx}}}
\begin{document}
\begin{center}
{\LARGE\bf
Semistability of cubulated groups}\\
\bigskip
\bigskip
{\large Sam Shepherd}
\end{center}
\bigskip

\begin{abstract}
	We prove that all cubulated groups are semistable at infinity.
	In doing so we prove two further results about cubulations of groups.
	The first of these states that any one-ended cubulated group has a cubulation for which all halfspaces are one-ended.
	The second states that any cubulated group has a cubulation for which all quarterspaces are deep -- analogous to the fact that passing to the essential core of a given cubulation ensures that all halfspaces are deep.
\end{abstract}
\bigskip
\tableofcontents

\bigskip
\section{Introduction}

A connected, locally finite CW complex $X$ is \emph{semistable at infinity} if any two proper rays $r,s:[0,\infty)\to X$ converging to the same end are properly homotopic.
The terminology comes from the following connection with inverse systems of groups. An inverse systems of groups $\{H_n\}$ is \emph{semistable} if, for each $n$, the images of the bonding homomorphisms $H_m\to H_n$ are the same for all but finitely many $m>n$.
Given a connected, locally finite CW complex $X$, we consider the inverse system of groups $\{\pi_1(X-C_n,r)\}$, where $\{C_n\}$ is an exhausting sequence of compact subsets of $X$, $r$ is a proper ray in $X$, and the bonding maps are induced by inclusions of subsets.
If $X$ is one-ended and simply connected then $\{\pi_1(X-C_n,r)\}$ is semistable if and only if $X$ is semistable at infinity \cite{Mihalik83}.
In this case the inverse limit of $\{\pi_1(X-C_n,r)\}$ provides a well defined notion of \emph{fundamental group at infinity} for $X$. Both semistability at infinity and the fundamental group at infinity are quasi-isometry invariants for simply connected, locally finite CW complexes \cite{Brick93,Geoghegan08}.

If a finitely presented group $G$ acts properly and cocompactly on a simply connected CW complex $X$, then we say that $G$ is \emph{semistable at infinity} if $X$ is semistable at infinity.
The quasi-isometry invariance from the preceding paragraph implies that semistability of $G$ is independent of the choice of complex $X$, and is a quasi-isometry invariant for groups.
Various classes of groups are known to be semistable at infinity, including hyperbolic groups \cite{Krasinkiewicz77,GeogheganKrasinkiewicz91,GeogheganSwenson19,Bowditch99,Levitt98,Swarup96,BestvinaMess91}, Artin and Coxeter groups \cite{Mihalik96}, one-relator groups \cite{MihalikTschanz92a}, and certain graphs of groups \cite{MihalikTschantz92b}.
It is unknown if all CAT(0) groups are semistable at infinity -- and this is one of the more heavily studied problems in the field (see \cite{GeogheganSwenson19}). In fact it is even unknown if all finitely presented groups are semistable at infinity.
In this paper we prove the following.

\begin{thm}\label{thm:semistable}
	Cubulated groups are semistable at infinity.
\end{thm}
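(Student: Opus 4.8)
The plan is to argue by induction on the dimension of a cubulation, reducing at the outset to the one-ended case and then, for a one-ended group, pushing loops near infinity out to infinity by crossing hyperplanes.

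First, the reduction to one-ended groups. A group acting properly and cocompactly on a CAT(0) cube complex is finitely presented -- indeed of type $F_\infty$ -- since such a complex is contractible, so semistability at infinity is a meaningful invariant of the group. Finite groups and two-ended groups are trivially semistable at infinity (they act properly and cocompactly on a point and on a line). If $G$ has infinitely many ends then, by Stallings' theorem, $G$ is the fundamental group of a finite graph of groups with finite edge groups; by the known behaviour of semistability under splittings over finite subgroups (Mihalik, Mihalik--Tschantz) $G$ is semistable at infinity as soon as its vertex groups are, and those vertex groups are themselves cubulated, with cubulations of dimension at most that of the original. Since this recursion terminates by Dunwoody's accessibility theorem, it suffices to prove, by induction on $d$, that every cubulated group admitting a cubulation of dimension at most $d$ is semistable at infinity, and moreover we may assume $G$ is one-ended with $\dim X = d$ (the case $\dim X < d$ being the inductive hypothesis, and the reduction above handling multi-ended $G$ at level $d$).

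So let $G$ be one-ended, acting properly and cocompactly on a CAT(0) cube complex $X$ with $\dim X = d$. Applying in turn the result that a one-ended cubulated group admits a cubulation with all halfspaces one-ended, and the result that the cubulation can be further modified so that all quarterspaces are deep -- ordering the two constructions, or checking compatibility, so that the final cubulation enjoys both properties -- we may assume every halfspace of $X$ is one-ended and every quarterspace of $X$ is deep. If $d \le 1$ then $X$ is a tree, and its quotient by $G$ is a finite graph, so the tree, hence $G$, would have zero, two, or infinitely many ends, contradicting one-endedness; so this case is vacuous. For $d \ge 2$ the inductive hypothesis applies to every cubulated group with a cubulation of dimension $< d$; in particular each hyperplane $\hat h$ of $X$ is a connected CAT(0) cube complex of dimension $< d$ on which its stabiliser acts properly and cocompactly, so every hyperplane is semistable at infinity, which is what will control homotopies running along hyperplanes. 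Now fix a basepoint $x_0 \in X$. By the standard criterion for semistability of one-ended, simply connected, locally finite complexes (Mihalik), it suffices to show that for every compact $C \subseteq X$ there is a larger compact $D$ such that any loop in $X \setminus D$ can be \emph{pushed to infinity within} $X \setminus C$, meaning freely homotoped in $X \setminus C$ to loops that leave every compact set. Given such a loop $\gamma$ -- which we may take to lie in a halfspace $h$ with $x_0 \notin \overline h$ and with $\hat h$ far from $x_0$ -- we drive $\gamma$ through a strictly decreasing chain of halfspaces $h \supsetneq h_1 \supsetneq h_2 \supsetneq \cdots$ exhausting an end of $X$, by repeated \emph{elementary moves}: an elementary move chooses a hyperplane $\hat u$ with $\gamma$ meeting its carrier $N(\hat u) \cong \hat u \times [0,1]$ and homotopes $\gamma$ across the carrier into a strictly deeper halfspace, disturbing only the parts of $\gamma$ that thread $N(\hat u)$, which are slid using semistability of $\hat u$ from the inductive hypothesis. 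One-endedness of the halfspaces $h_i$ guarantees that such a move can always be carried out without the homotopy drifting back towards $x_0$, and deepness of the halfspaces and of the quarterspaces guarantees that there is always a next, deeper halfspace to advance into -- in particular, when $\gamma$ has been pushed into the corner of two crossing hyperplanes it is not trapped there against $C$. Bookkeeping arranges that the bounding hyperplanes $\hat h_i$ march off to infinity, so the countably many elementary moves concatenate to a proper homotopy pushing $\gamma$ to infinity within $X \setminus C$.

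The main obstacle is the elementary move, and more precisely the quantitative control it requires: one must slide a loop across a hyperplane carrier into a deeper halfspace while keeping the entire homotopy off the prescribed set $C$, even though $\gamma$ may thread the carrier in a tangled, self-osculating way and a one-ended halfspace still has an unbounded frontier along its bounding hyperplane. Making this precise forces the simultaneous, careful use of semistability of hyperplanes (to control the slide along the carrier) and one-endedness of halfspaces (to keep the homotopy away from $x_0$), and the deep-quarterspaces hypothesis is exactly what excludes the bad scenario in which $\gamma$ is progressively squeezed into a thinning corner with no room to advance. The rest is bookkeeping -- choosing $D$ from $C$, ensuring the chain of halfspaces can always be continued and that its bounding hyperplanes escape every compact set, and patching the countably many homotopies into one proper homotopy -- which should be routine once the elementary move and its estimates are in hand.
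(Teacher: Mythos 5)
Your reduction to the one-ended case and your appeal to Theorem \ref{thm:halfquarter} match the paper exactly. After that, you diverge: you propose an induction on $\dim X$, invoking semistability of hyperplanes (as lower-dimensional cubulated groups) to execute ``elementary moves'' across hyperplane carriers. This is a genuinely different strategy from the paper's, and as written it has two real gaps.

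First, the induction on dimension is not obviously well-founded once you invoke Theorem \ref{thm:halfquarter}. That theorem replaces the given cubulation by a new one with one-ended halfspaces and deep quarterspaces, but nothing in its statement bounds the dimension of the new cube complex by that of the old one. (Tracing the proof: Theorem \ref{thm:quarterspaces} does not increase dimension, by Lemma \ref{lem:pocsetquotient}, but Theorem \ref{thm:halfspaces} produces a cubing whose width is only bounded by a constant $k(R+E+\dim X)$ that can exceed $\dim X$.) So after applying Theorem \ref{thm:halfquarter} you may be sitting at dimension $d' > d$, and the inductive hypothesis ``groups with cubulations of dimension $< d$ are semistable'' no longer applies to the hyperplanes of the new complex.

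Second, and more seriously, the ``elementary move'' --- the single step you yourself flag as the main obstacle --- is not actually constructed, and semistability of the hyperplane $\hat u$ is not the right tool for it. Semistability is a statement about properly homotoping proper rays (or pushing loops arbitrarily far out) \emph{within} $\hat u$; it gives no direct handle on sliding a piece of a loop from $\hat u \times \{0\}$ to $\hat u \times \{1\}$ in the carrier while staying off a prescribed compact $C$ that may meet the carrier in a complicated way. You gesture at one-endedness of halfspaces and depth of quarterspaces keeping the homotopy away from $x_0$ and from getting trapped, but there is no precise mechanism offered, and the concatenation of countably many such moves into a single proper homotopy is asserted, not proved.

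The paper's Section \ref{sec:semistability} avoids both issues with a much shorter argument that needs no induction on dimension and no semistability of hyperplanes. Using Theorem \ref{thm:semistableequiv}, set $D$ to be the intersection of all closed halfspaces containing $C$ (compact by Lemma \ref{lem:finhull}). A loop $\alpha$ in $X - D$ is then, by compactness, a concatenation of arcs $\alpha_0,\dots,\alpha_n$ with $\alpha_i$ lying in a halfspace $\h_i$ disjoint from $D$. Consecutive halfspaces are transverse (their quarterspace $\h_i\cap\h_{i+1}$ contains the junction point and is disjoint from $D$), and because each $\h_i$ is one-ended and each quarterspace is deep (unbounded is enough), one can choose, for any further compact $E$, replacement points and arcs in the unbounded components of $\h_i - E$ and $\h_i\cap\h_{i+1}-E$. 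Since halfspaces are CAT(0)-convex, the geodesic homotopy from $\alpha_i$ to its replacement stays inside $\h_i\subset X - D\subset X - C$, and these homotopies patch because of uniqueness of geodesics. This uses convexity of halfspaces in place of your semistability-of-hyperplanes mechanism, and it is the place where the work of Theorems \ref{thm:halfspaces} and \ref{thm:quarterspaces} actually pays off.
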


There is also a connection between semistability and group cohomology: if $G$ is semistable at infinity then $H^2(G,\mathbb{Z}G)$ is free abelian \cite{Houghton77,GeogheganMihalik85}, so we obtain the following corollary.
It is an open question of Hopf whether $H^2(G,\mathbb{Z}G)$ is free abelian for all finitely presented groups.

\begin{cor}
	If $G$ is a cubulated group then $H^2(G,\mathbb{Z}G)$ is free abelian.
\end{cor}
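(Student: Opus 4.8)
The plan is to deduce the corollary immediately from Theorem~\ref{thm:semistable} together with the cited relationship between semistability at infinity and the second cohomology with group ring coefficients; essentially all of the work has already been done in proving Theorem~\ref{thm:semistable}, and what remains is to check that its conclusion feeds correctly into the cohomological statement.

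First I would record that a cubulated group $G$ — that is, a group acting properly and cocompactly on a CAT(0) cube complex $X$ — is finitely presented and fits the framework of the introduction: $X$ is a simply connected CW complex, and properness together with cocompactness force $X$ to be locally finite with finitely many $G$-orbits of cells, so $G$ acts properly and cocompactly on the simply connected complex $X$ and is in particular finitely presented. Thus the phrase ``$G$ is semistable at infinity'' is meaningful for $G$ in the sense defined above, and Theorem~\ref{thm:semistable} applies verbatim to conclude that $G$ is semistable at infinity. Nothing further about the combinatorial structure of $X$ is needed at this stage.

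Finally I would invoke the theorem of Houghton and of Geoghegan--Mihalik \cite{Houghton77,GeogheganMihalik85}: if a finitely presented group $G$ is semistable at infinity then $H^2(G,\mathbb{Z}G)$ is free abelian. Combining this with the previous paragraph yields the corollary. The only point requiring any attention — and it is routine — is to confirm that the hypotheses demanded by that cohomological result (finite presentability, together with the appropriate formulation of semistability, including the case of groups with more than one end, where semistability at infinity is understood as in the introduction) are precisely those already established; there is no genuine obstacle here beyond the proof of Theorem~\ref{thm:semistable} itself.
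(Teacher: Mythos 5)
Your proposal is correct and matches the paper's argument exactly: the corollary follows by combining Theorem~\ref{thm:semistable} with the cited result of Houghton and Geoghegan--Mihalik that semistable finitely presented groups have free abelian $H^2(G,\mathbb{Z}G)$. The paper treats this as immediate; your extra remarks verifying finite presentability and applicability of the definition are routine but sound.
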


We say that a group $G$ is \emph{cubulated} if it acts properly and cocompactly on a CAT(0) cube complex $X$ - and we refer to such an action as a \emph{cubulation} of $G$.
Examples of cubulated groups include small cancellation groups, finite volume hyperbolic 3-manifold groups and many Coxeter groups -- see \cite{WiseRiches} for a more extensive list.
The geometry and combinatorics of CAT(0) cube complexes is a rich and dynamic theory, and it grants cubulated groups with many properties stronger than those of CAT(0) groups -- such as bi-automaticity \cite{Swiatkowski06} and the Tits Alternative \cite{SageevWise05}.

We remark that semistability at infinity for many cubulated groups, including virtually special groups (see \cite{HaglundWise08}), can be deduced fairly directly from existing literature. Indeed, if $X$ is a finite non-positively curved cube complex whose hyperplanes are two-sided and do not self-intersect, then successively cutting $X$ along hyperplanes corresponds to successively splitting the fundamental group of $X$, terminating in trivial groups. We deduce from \cite{MihalikTschantz92b} that the fundamental group of $X$ is semistable at infinity, and it follows that all virtually special groups are semistable at infinity because semistability is a quasi-isometry invariant.
However, for general cubulated groups there is no (virtual) hierarchy that we can use, and indeed our proof of Theorem \ref{thm:semistable} uses a different argument. 

The idea for our proof is as follows.
Using results from the literature we can easily reduce to the case of a one-ended group; and if the cubulation is given by a CAT(0) cube complex $X$ then we can reduce to showing that, for any compact $C\subset X$, any loop sufficiently far from $C$ (and based on a proper base ray) can be pushed arbitrarily far from $C$ (relative to the base ray) by a homotopy that avoids $C$.
The key step is to achieve this ``pushing out'' using two geometric (and cubical) properties of $X$. However, these properties do not hold for arbitrary cubulations, so we must first modify the cubulation (Theorem \ref{thm:halfquarter}).
In fact most of the work in this paper goes into proving Theorem \ref{thm:halfquarter}.
The first geometric property we need is one-ended halfspaces, which is obtained with the following theorem.

\begin{thm}\label{thm:halfspaces}
	Let $G$ be a group acting cocompactly on a one-ended locally finite CAT(0) cube complex $X$. Suppose there exists a subgroup $\Gamma<G$ whose induced action on $X$ is proper and cocompact. Then there is a locally finite CAT(0) cube complex $Y$ with the following properties:
	\begin{enumerate}
		\item All halfspaces in $Y$ are one-ended.
		\item\label{item:Gcocompact} $G$ acts cocompactly on $Y$.
		\item There exists a $G$-equivariant quasi-isometry $\theta:X\to Y$.
		\item\label{item:halfspacestabs} The $G$-stabilizers of hyperplanes in $Y$ are subgroups of the $G$-stabilizers of hyperplanes in $X$.
	\end{enumerate}	
\end{thm}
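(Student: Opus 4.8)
The plan is to build $Y$ as the CAT(0) cube complex dual, via Sageev's construction, to a carefully modified $G$-invariant collection $\mathcal{W}$ of walls (halfspaces) of $X$; I will move freely between a CAT(0) cube complex and its pocset of halfspaces. As a preliminary reduction, replace $X$ by its essential core: since $X$ is one-ended and $G$ acts cocompactly this is a $G$-equivariant operation under which $G$ still acts cocompactly, $\Gamma$ still acts properly and cocompactly, $X$ and its core are $G$-equivariantly quasi-isometric, hyperplane stabilizers do not grow, and every halfspace becomes deep. So assume all halfspaces of $X$ are deep, and note that by cocompactness there are finitely many $G$-orbits of hyperplanes; having a non-one-ended halfspace is a $G$-invariant condition, so only finitely many orbits are ``bad''.

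\emph{The surgery.} For each bad hyperplane orbit I want to delete that orbit from the hyperplane collection and put in its place a finite family of $G$-orbits of replacement walls $w$ with the following properties: (i) each side of each $w$ is a one-ended subset of $X$; (ii) $w$ is compatible (nested or crossing) with all retained hyperplanes and with all other replacement walls, so $\mathcal{W}$ is again a pocset; (iii) each edge of $X$ is crossed by only finitely many walls of $\mathcal{W}$, so the dual complex is locally finite; and (iv) passing from the hyperplanes of $X$ to $\mathcal{W}$ changes coarse distances by at most a bounded multiplicative factor, so the dual complex is quasi-isometric to $X$. A natural source of replacement walls is ``pieces'' of hyperplanes, e.g.\ sets of the form $\mathfrak{k}\cap\mathfrak{h}$ (the intersection of a halfspace $\mathfrak{k}$ of $X$ with a bad halfspace $\mathfrak{h}$, which is convex hence connected, with connected complement) together with subsets of $\mathfrak{h}$ cut out near its individual ends; since these are pieces of hyperplanes of $X$, their $G$-stabilizers land inside $G$-stabilizers of hyperplanes of $X$, which will yield property (4).

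\emph{The crux.} The heart of the matter is producing, for each bad halfspace $\mathfrak{h}$, enough replacement walls -- with one-ended sides, finitely many $G$-orbits, and the compatibility and local-finiteness conditions -- to simultaneously cut $\mathfrak{h}$ into one-ended pieces and recover the coarse separation that $\hat{\mathfrak{h}}$ provided. Since $X$ is one-ended while $\mathfrak{h}$ is not, the extra ends of $\mathfrak{h}$ are ``filled in'' through $\hat{\mathfrak{h}}$ and the opposite side $\mathfrak{h}^{\ast}$; analysing how a compact set separating the ends of $\mathfrak{h}$ fails to separate $X$ should force the existence of hyperplanes meeting a bounded neighbourhood of $\hat{\mathfrak{h}}$ along which one can cut, at every scale, to build the replacement walls. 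This is where the subgroup $\Gamma$ enters: as $\Gamma$ acts properly and cocompactly, $X$ is quasi-isometric to the one-ended group $\Gamma$, so ``ends of $\mathfrak{h}$'' and the relevant ``scales'' can be controlled by coarse group theory (coarse stabilizers / almost invariant sets), the cutting hyperplanes can be taken from a bounded neighbourhood of $\hat{\mathfrak{h}}$, and the whole family can be made $\operatorname{Stab}_G(\hat{\mathfrak{h}})$-equivariant with only finitely many orbits; if $\mathfrak{h}$ has infinitely many ends one iterates, with termination coming from an accessibility/complexity bound. I expect this step to be where essentially all of the difficulty lies -- in particular the simultaneous control of one-endedness of the new sides, finiteness of $G$-orbits, local finiteness of $\mathcal{W}$, and the quasi-isometry estimate.

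\emph{Conclusion.} Granting such a collection $\mathcal{W}$, let $Y$ be its dual CAT(0) cube complex. Finitely many $G$-orbits of walls together with local finiteness of $\mathcal{W}$ make $Y$ locally finite with $G$ acting cocompactly, which is (2). The map $\theta\colon X\to Y$ sending a vertex to its principal ultrafilter is $G$-equivariant, and condition (iv) together with the retention of enough of the original separation makes $\theta$ a quasi-isometry, which is (3). Each halfspace of $Y$ corresponds to a wall of $\mathcal{W}$ and is carried by $\theta$ to within bounded Hausdorff distance of the corresponding side of that wall in $X$; since those sides are one-ended by (i) and the number of ends is a quasi-isometry invariant, every halfspace of $Y$ is one-ended, which is (1). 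Finally each hyperplane of $Y$ is dual to a wall of $\mathcal{W}$ that is either a retained hyperplane of $X$ or a piece of one, whose $G$-stabilizer lies inside the $G$-stabilizer of that hyperplane of $X$, which is (4).
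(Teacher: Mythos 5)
You have identified the right overall shape of the argument -- pass to the essential core, replace bad halfspaces by a $G$-invariant family of new walls forming a pocset, take the dual cube complex, and iterate with an accessibility argument for termination -- and this does match the structure of the paper. But as you say yourself, essentially all of the content lies in the step you label the ``crux'', and that step is left entirely unresolved. What you have written is a plan with the hard part flagged, not a proof.

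Two concrete points where the sketch would need real repair. First, your proposed replacement walls (quarterspaces $\mathfrak{k}\cap\mathfrak{h}$ and vague ``subsets of $\mathfrak{h}$ near its ends'') are not what the paper uses, and neither pocset-compatibility, one-endedness of sides, local finiteness, nor the quasi-isometry estimate is close to automatic for them. The paper (Theorem~\ref{thm:smallerstabs}) instead fixes a finite convex set $\fc_0\subset\h_0$ separating $\h_0$ into several unbounded pieces, considers its $G_{\h_0}$-translates, and takes the regions $[x]\subset\h_0$ not cut off by any translate; the new pocset consists of the retained halfspaces together with pairs $(g[x],g\h_0)$ and their complements, where the second coordinate explicitly records a halfspace of $X$. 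That bookkeeping is precisely what makes property~\ref{item:halfspacestabs} hold; your claim that the stabilizer of a ``piece of a hyperplane'' automatically sits inside the stabilizer of that hyperplane has a gap, since the piece need not determine the hyperplane (for instance a symmetry could swap $\mathfrak{k}$ and $\mathfrak{h}$ in $\mathfrak{k}\cap\mathfrak{h}$). Second, a single surgery step does not produce one-ended halfspaces -- the new regions $[x]$ can still have many ends -- so you cannot conclude, as in your last paragraph, that the halfspaces of $Y$ are one-ended after one application. The paper instead shows that each step non-trivially splits at least one $\Gamma$-hyperplane-stabilizer over finite subgroups (using Hagen--Touikan panel collapse to turn an auxiliary cubing of $\h_0$ into a tree), and invokes accessibility of those stabilizers -- which are finitely presented because they act properly and cocompactly on hyperplanes -- to force termination; one-endedness of halfspaces is obtained only at the terminal stage. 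Your ``accessibility/complexity bound'' gestures at this but does not identify which groups are being split nor why the process terminates.
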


The second geometric property we need is about quarterspaces.
A \emph{quarterspace} in a CAT(0) cube complex is an intersection $\h_1\cap\h_2$ of transverse halfspaces (equivalently, halfspaces for which the corresponding hyperplanes intersect). The quarterspace $\h_1\cap\h_2$ is \emph{shallow} if it is contained in a bounded neighborhood of the opposite quarterspace $\h_1^*\cap\h_2^*$, otherwise $\h_1\cap\h_2$ is \emph{deep}.
These notions are analogous to the notions of halfspaces being shallow or deep (see Section \ref{subsec:CCC}). 
One can remove shallow halfspaces using the essential core of Caprace-Sageev (Proposition \ref{prop:essential}), and similarly the following theorem provides a way to remove shallow quarterspaces.

\begin{thm}\label{thm:quarterspaces}
	Let $G$ be a group acting cocompactly on a CAT(0) cube complex $X$. Then there is a CAT(0) cube complex $Y$ with the following properties:
	\begin{enumerate}
		\item All quarterspaces in $Y$ are deep.
		\item $G$ acts cocompactly on $Y$.
		\item There exists a $G$-equivariant quasi-isometry $\phi:Y\to X$.
		\item\label{item:hspace} $\phi$ maps each halfspace in $Y$ to within finite Hausdorff distance of a halfspace in $X$.
		\item $Y$ is locally finite if $X$ is locally finite.
	\end{enumerate}	
\end{thm}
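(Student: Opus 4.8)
The plan is to first reduce, via the essential core (Proposition~\ref{prop:essential}), to the case where \emph{every halfspace of $X$ is deep}, and then to modify the wall structure of $X$ so as to ``uncross'' every transverse pair of halfspaces that bounds a shallow quarterspace; $Y$ will be the cube complex dual to the resulting space with walls. For this I would first establish two facts about shallow quarterspaces, both using that all halfspaces are deep (and that, by cocompactness, the relevant constants can be taken uniform over the finitely many $G$-orbits of configurations involved). \emph{Fact 1:} for transverse halfspaces $\h_1,\h_2$, among the four quarterspaces $\h_1\cap\h_2$, $\h_1\cap\h_2^*$, $\h_1^*\cap\h_2$, $\h_1^*\cap\h_2^*$ no two that share a bounding hyperplane can both be shallow; indeed, if $\h_1\cap\h_2$ and $\h_1^*\cap\h_2$ were both shallow then $\h_2=(\h_1\cap\h_2)\cup(\h_1^*\cap\h_2)$ would be a union of two sets lying in a bounded neighbourhood of $\h_2^*$, making $\h_2$ shallow. \emph{Fact 2:} if $\h_1\cap\h_2$ is shallow then it lies in a uniform bounded neighbourhood of each adjacent quarterspace $\h_1^*\cap\h_2$ and $\h_1\cap\h_2^*$, hence in particular of each of its two bounding hyperplanes; for closeness to $\h_1^*\cap\h_2$, any $v\in\h_1\cap\h_2$ satisfies $d(v,\h_1^*)\le d(v,\h_1^*\cap\h_2^*)\le R_0$ with $R_0$ uniform, and the combinatorial projection of $v$ onto $\h_1^*$ lands in $\h_1^*\cap\h_2$ because the hyperplane of $\h_2$, being transverse to that of $\h_1$, does not separate $v$ from $\h_1^*$.

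The modification is this: for each halfspace $\h$ put
\[
 R(\h)\ :=\ \bigcup\bigl\{\,\h'\cap\h \ :\ \h'\ \text{and}\ \h\ \text{transverse, and}\ \h'\cap\h\ \text{shallow}\,\bigr\}\ \subseteq\ \h,
\]
and replace the wall $\{\h,\h^*\}$ by $\{\,\h\setminus R(\h)\,,\ \h^*\cup R(\h)\,\}$; this assignment is $G$-equivariant. By Fact~1 at most one of $\h'\cap\h$, $(\h')^*\cap\h$ enters $R(\h)$ for each transverse $\h'$, and by Fact~2 one gets $R(\h)\subseteq N_{R_0}(\text{hyperplane of }\h)$ with $R_0$ uniform; hence, as $\h$ is deep, $\h\setminus R(\h)$ is still deep (it contains the set of points of $\h$ outside the $R_0$-neighbourhood of the hyperplane of $\h$), while $\h^*\cup R(\h)$ lies in a bounded neighbourhood of $\h^*$, since a shallow $\h'\cap\h$ lies in a bounded neighbourhood of $(\h')^*\cap\h^*\subseteq\h^*$. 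Crucially, if $\h_1\cap\h_2$ was shallow then $R(\h_1)\supseteq\h_1\cap\h_2$, so $\h_1\setminus R(\h_1)\subseteq\h_1\cap\h_2^*$ is disjoint from $\h_2\supseteq\h_2\setminus R(\h_2)$, whence $\h_1$ and $\h_2$ are no longer transverse after the modification (other pairs, however, can be affected --- see below). The dual cube complex of the modified wallspace is a CAT(0) cube complex with a $G$-action; local finiteness is preserved because each new wall differs from the old only inside a bounded neighbourhood of the old hyperplane; and since the two wall structures are boundedly close one obtains a $G$-equivariant quasi-isometry between the cube complexes under which halfspaces map close to halfspaces --- the Hausdorff-closeness of $\h\setminus R(\h)$ to $\h$, needed for property~\ref{item:hspace}, being the delicate point addressed next.

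The main obstacle, and where I expect almost all of the work to lie, is that one application of this move need not leave \emph{all} quarterspaces deep, for two reasons. First, distinct shallow quarterspaces can overlap, so the uniform estimates --- in particular ``$\h\setminus R(\h)$ lies in a bounded neighbourhood of $\h$'' --- cannot be read off term by term. Second, and worse, modifying halfspaces can \emph{create} new transverse pairs: if $\fc$ is transverse to two nested halfspaces $\fa\subseteq\fb$ with $\fc\cap\fb$ shallow, then $\fa\setminus R(\fa)$ and $\fb\setminus R(\fb)$ may become transverse, and one must then check whether the resulting quarterspaces are deep. The resolution is to iterate the modification, producing a sequence $X\rightsquigarrow Y_1\rightsquigarrow Y_2\rightsquigarrow\cdots$, and to prove it stabilises after finitely many steps. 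I would look for a complexity measure of the $G$-action that strictly decreases at each step --- a natural candidate being a weighted count of $G$-orbits of transverse pairs of hyperplanes --- exploiting that whenever the move turns a nested pair into a transverse one it simultaneously turns strictly more nearby transverse pairs into nested ones (in the situation above, both $\{\fa,\fc\}$ and $\{\fb,\fc\}$ become nested). One also has to verify that deepness of all halfspaces and cocompactness persist from step to step, so that Facts~1 and~2 remain available. Granting that the iteration terminates, the final complex is $Y$: all quarterspaces are deep, $G$ acts cocompactly (as for the essential core), local finiteness is preserved, and the $G$-equivariant quasi-isometry $\phi\colon Y\to X$ together with property~\ref{item:hspace} is obtained by composing the bounded Hausdorff estimates and coarse maps over the finitely many steps and with Proposition~\ref{prop:essential}.
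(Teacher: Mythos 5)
Your high-level strategy matches the paper's (reduce to the essential case, ``uncross'' shallow quarterspaces, iterate, and deduce termination from a decreasing complexity), but the implementation differs in two material ways, and the second of these leaves a real gap.

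The paper restricts attention to \emph{depth-$0$} quarterspaces and encodes the uncrossing purely on the level of the pocset of halfspaces: it defines a quasi-order $\h_1\le\h_2$ iff $\h_1\subseteq\h_2$ or $\h_1\cap\h_2^*$ has depth $0$, quotients by $\sim$ to get a genuine pocset, and takes $Y$ to be its cubing. The payoff is an explicit injective $G$-map $\phi\colon Y^0\to X^0$, $\phi(\mu)=\{\h:[\h]\in\mu\}$, which is shown to be non-surjective as soon as a depth-$0$ quarterspace exists (Lemmas~\ref{lem:exist0} and~\ref{lem:notsurj}). Thus the number of $G$-orbits of vertices strictly drops at each step, giving a clean, fully verified termination argument; Lemma~\ref{lem:exist0} (every shallow quarterspace contains a depth-$0$ one) then guarantees that the terminal complex has no shallow quarterspaces at all. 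You instead modify halfspaces as \emph{subsets}, replacing $\h$ by $\h\setminus R(\h)$ where $R(\h)$ collects all shallow quarterspaces $\h'\cap\h$ (not only depth-$0$ ones), and cube the resulting wallspace. Your Facts~1 and~2 about shallow quarterspaces are correct and in the right spirit, and it is true that a shallow pair becomes non-transverse after the modification; but since you are not taking a quotient of the original pocset, there is no canonical injection of the new vertex set into $X^0$, so the paper's vertex-count measure is not available to you.

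This is where the gap lies. You acknowledge that the move can create new transverse pairs and that you must iterate, and you propose, as a candidate, a weighted count of $G$-orbits of transverse pairs of hyperplanes, with the heuristic that each newly created transverse pair destroys strictly more transverse pairs. That heuristic is not established, and it is not obvious how to choose weights so that it becomes a proof; moreover, you would also need to show (which you flag but do not do) that essentiality, cocompactness, and the uniform constant $R_0$ of Fact~2 persist at each stage so that Facts~1 and~2 remain usable, and that for pairs that remain transverse the new quarterspaces are controllably close to the old ones so that property~\ref{item:hspace} survives composition over arbitrarily many steps. As it stands, the termination of your iteration is asserted rather than proved, and without it the rest of the argument does not close. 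The paper's restriction to depth-$0$ quarterspaces is precisely the device that turns ``uncrossing'' into a pocset quotient with a built-in injective comparison map, and hence into a provably decreasing integer invariant; adopting that reformulation (or finding a genuinely different strictly decreasing invariant with a complete proof) is what is missing from your proposal.
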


Applying Theorem \ref{thm:halfspaces} and then Theorem \ref{thm:quarterspaces} to the case where $G$ acts properly as well as cocompactly yields the following theorem, which we use in the proof of Theorem \ref{thm:semistable} as outlined above.
Note that the halfspaces remain one-ended when applying Theorem \ref{thm:quarterspaces} because of property \ref{item:hspace}.

\begin{thm}\label{thm:halfquarter}
	Every one-ended cubulated group admits a cubulation in which all halfspaces are one-ended and all quarterspaces are deep.
\end{thm}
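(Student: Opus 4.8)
The plan is to obtain Theorem~\ref{thm:halfquarter} by applying Theorem~\ref{thm:halfspaces} and then Theorem~\ref{thm:quarterspaces}, as sketched in the paragraph preceding its statement. Since $G$ is a one-ended cubulated group, fix a proper cocompact action of $G$ on a CAT(0) cube complex $X$. A proper cocompact action forces $X$ to be locally compact, hence locally finite, and by the Milnor--Svarc lemma the number of ends of $X$ equals the number of ends of $G$, so $X$ is one-ended. Thus the hypotheses of Theorem~\ref{thm:halfspaces} hold with $\Gamma=G$.

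Applying Theorem~\ref{thm:halfspaces} gives a locally finite CAT(0) cube complex $Y$ on which $G$ acts cocompactly, all of whose halfspaces are one-ended, together with a $G$-equivariant quasi-isometry $\theta\colon X\to Y$. I would first upgrade the $G$-action on $Y$ to a proper action: fixing $x_0\in X$, the orbit map $G\to X$, $g\mapsto gx_0$, is a quasi-isometry by Milnor--Svarc, so composing with $\theta$ shows the orbit map $G\to Y$, $g\mapsto g\theta(x_0)$, is also a quasi-isometry, and in particular a quasi-isometric embedding. Since balls in $G$ (with a word metric) are finite, only finitely many $g\in G$ move $\theta(x_0)$ a bounded distance; as $Y$ is a proper metric space and $G$ acts cocompactly, this is equivalent to $G\acts Y$ being properly discontinuous. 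Hence $G$ acts properly and cocompactly on $Y$, and Theorem~\ref{thm:quarterspaces} applies to it.

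The second application produces a CAT(0) cube complex $Z$ (the ``$Y$'' of Theorem~\ref{thm:quarterspaces}) with $G\acts Z$ cocompact, all quarterspaces of $Z$ deep, $Z$ locally finite because $Y$ is, and a $G$-equivariant quasi-isometry $\phi\colon Z\to Y$ sending every halfspace of $Z$ to within finite Hausdorff distance of a halfspace of $Y$. Properness of $G\acts Z$ follows from $\phi$ by the same orbit-map argument as above. It remains to see that the halfspaces of $Z$ are one-ended: if $\mathfrak{k}$ is a halfspace of $Z$, then $\phi$ restricts to a quasi-isometric embedding of $\mathfrak{k}$ whose image lies at finite Hausdorff distance from some halfspace $\h$ of $Y$, so $\mathfrak{k}$ is quasi-isometric to $\h$ (using that halfspaces are convex, hence undistorted); since $\h$ is one-ended and having one end is a quasi-isometry invariant, so is $\mathfrak{k}$. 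Therefore $Z$ furnishes the desired cubulation of $G$.

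All of the substantial content sits inside Theorems~\ref{thm:halfspaces} and~\ref{thm:quarterspaces}; in this deduction the only things needing verification are the two routine bookkeeping points, namely that properness of the $G$-action is inherited through a $G$-equivariant quasi-isometry, and that one-endedness of halfspaces survives the second step --- which is precisely the role of property~\ref{item:hspace} of Theorem~\ref{thm:quarterspaces}.
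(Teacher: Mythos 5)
Your deduction is correct and follows exactly the route the paper indicates in the paragraph preceding Theorem~\ref{thm:halfquarter}: apply Theorem~\ref{thm:halfspaces} (with $\Gamma=G$), then Theorem~\ref{thm:quarterspaces}, using property~\ref{item:hspace} to preserve one-endedness of halfspaces. You have merely spelled out the two bookkeeping checks (properness of the induced actions via the orbit-map quasi-isometry, and the convexity/undistortedness argument showing halfspaces of $Z$ are quasi-isometric to halfspaces of $Y$) that the paper leaves implicit.
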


\begin{remk}\label{remk:deeper}
	One can alternatively use panel collapse \cite{HagenTouikan19} to show that any cocompact action on a CAT(0) cube complex can be modified to make quarterspaces deep. Indeed, panel collapse yields a hyperplane-essential action, and one can easily deduce that the cube complex has no shallow quarterspaces in this case.
	However, there is no analogue to property \ref{item:hspace} from Theorem \ref{thm:quarterspaces} when performing panel collapse, so this does not give an alternative proof of Theorem \ref{thm:halfquarter}.
	
	In fact, a cocompact, essential and hyperplane-essential action on a locally finite CAT(0) cube complex satisfies a stronger version of having deep quarterspaces in the sense that no quarterspace $\h_1\cap\h_2$ is contained in a bounded neighborhood of its complement $(\h_1\cap\h_2)^*=\h_1^*\cup\h_2^*$. Indeed, suppose the cube complex is a product $X=X_1\times\cdots X_n$ of irreducible cube complexes; if the halfspaces $\h_1,\h_2$ come from different factors $X_i$ then we can use the fact that $\h_1,\h_2$ are deep in their respective factors to deduce that $\h_1\cap\h_2$ is not contained in a bounded neighborhood of its complement, and if $\h_1,\h_2$ come from the same factor $X_i$ then we can use \cite[Proposition 1]{Hagen22}.
\end{remk}

The idea for the proof of Theorem \ref{thm:halfspaces} is to take a halfspace $\h_0$ with more than one end and chop it up using a $G_{\h_0}$-orbit of finite subcomplexes. We then obtain a new CAT(0) cube complex $Y$ by replacing the halfspace $\h_0$ with new halfspaces that correspond to the pieces leftover from the chopping, and we do the same for every $G$-translate of $\h_0$. Formally speaking, $Y$ is constructed from the cubing of a certain pocset (see Section \ref{subsec:pocsets} for background on pocsets). The new cube complex $Y$ might still have halfspaces with more than one end, but if we iterate the construction then we argue that it must terminate after a finite number of steps by considering the accessibility of the $\Gamma$-stabilizers of hyperplanes. The halfspaces in the terminal cube complex have at most one end, and we can easily remove bounded halfspaces using the essential cores of Caprace-Sageev.
The assumption in Theorem \ref{thm:halfspaces} about the existence of the subgroup $\Gamma<G$ is needed in the proof when we consider the accessibility of the $\Gamma$-stabilizers of hyperplanes, but we conjecture that Theorem \ref{thm:halfspaces} holds without this assumption.

The proof of Theorem \ref{thm:quarterspaces} is similar to that of Theorem \ref{thm:halfspaces} in that it involves cubing a pocset to obtain a new cube complex with lower complexity, and then iterating until the desired cube complex $Y$ is obtained. In this case the idea behind the pocset is to take certain shallow quarterspaces $\h_1\cap\h_2$ and pull apart the halfspaces $\h_1$ and $\h_2$. And this time the measure of complexity is just the number of $G$-orbits of vertices rather than anything to do with accessibility.

The structure of the paper is as follows.
In Section \ref{sec:prelim} we provide some background on CAT(0) cube complexes, pocsets, cubings, group splittings and accessibility of groups.
We prove Theorems \ref{thm:halfspaces}, \ref{thm:quarterspaces} and \ref{thm:semistable} in Sections \ref{sec:oneend}, \ref{sec:quarterspaces} and \ref{sec:semistability} respectively.
Finally, in Section \ref{sec:example} we give an example of a one-ended group with a cubulation given by a CAT(0) cube complex that is essential and contains an infinite-ended halfspace.
This demonstrates that Theorem \ref{thm:halfspaces} is not vacuous, and that it requires more than simply passing to the essential core of a CAT(0) cube complex.

\textbf{Acknowledgments:}\,
I thank Michael Mihalik for suggesting the problem of semistability of cubulated groups to me in the first place, and for his helpful comments on the paper.
I also thank Elia Fioravanti, Nicholas Touikan and Michah Saageev for their comments -- in particular Elia's comments lead to Remark \ref{remk:deeper}.
And I thank the referee for their helpful comments and corrections.

\bigskip
\section{Preliminaries}\label{sec:prelim}

\subsection{CAT(0) cube complexes}\label{subsec:CCC}

In this section we recall some basic concepts and facts regarding CAT(0) cube complexes. See \cite{WiseRiches,Manning20} for further background and proofs, including the definitions of CAT(0) cube complex and halfspace.

Let $X$ be a CAT(0) cube complex. We will write $\cH=\cH(X)$ for the collection of halfspaces. In this paper halfspaces will be combinatorial, so we will consider $\h\in\cH$ as a collection of vertices in $X$ rather than a convex subspace of $X$. We will write $\h^*$ for the complementary halfspace, so $\h\sqcup\h^*$ is a partition of the vertex set $X^0$.
We will write $\hat{\h}$ for the hyperplane corresponding to $\h$ and $\hH(X)$ for the collection of hyperplanes in $X$. 

We will mostly work with the combinatorial metric on $X$, i.e. the metric induced by the 1-skeleton $X^1$. We will denote this metric by $d$. Occasionally we will refer to the CAT(0) metric on $X$ (i.e. the metric induced by making every cube unit Euclidean), but we will always make this explicit.
(Note that properties related to these two metrics often coincide, for instance a subcomplex $Y\subseteq X$ is convex in the CAT(0) metric if and only if $Y^1$ is convex in $X^1$. These two metrics are also bi-Lipschitz equivalent if $X$ is finite dimensional.)
When describing properties of sets of vertices we will tacitly be referring to their induced subgraphs in $X^1$; for example we will say that $\fa\subseteq X^0$ is \emph{convex} if its induced subgraph is convex in $X^1$, or $\fb\subseteq\fa$ \emph{separates} $\fa$ if the induced subgraph of $\fb$ separates the induced subgraph of $\fa$.
The \emph{convex hull} $\Hull(\fa)\subseteq X^0$ of $\fa\subseteq X^0$ is the smallest convex set containing $\fa$ -- equivalently $\Hull(\fa)$ is the intersection of all halfspaces containing $\fa$ (with the empty intersection being $X^0$ by convention).
We will also use the following definition.

\begin{defn}\label{defn:Rthickening}
	Following \cite[\S4a]{HaglundWise12}, for $S\subseteq X$ define the \emph{cubical neighborhood} $N(S)\subseteq X^0$ to be the collection of vertices of cubes that intersect $S$. For an integer $R\geq0$, define the \emph{cubical $R$-thickening} $S^{+R}$ inductively by setting $S^{+0}$ to be the 0-skeleton of the smallest subcomplex of $X$ that contains $S$, and $S^{+(R+1)}:=N(S^{+R})$.
	We will often be interested in cubical neighborhoods and thickenings of hyperplanes $\hh$; note that $N(\hh)=\hh^{+0}$ is convex and consists of the endpoints of edges that join $\h$ to $\h^*$.
\end{defn}

\begin{remk}\label{remk:thickconvex}
	If $\fa\subseteq X^0$ is convex then $\fa^{+R}$ is convex for all $R\geq0$ -- in particular $R$-thickenings of hyperplanes are convex. 
\end{remk}

\begin{remk}\label{remk:thickneigh}
	Cubical thickenings are not the same as metric neighborhoods, but they are related because 
	$$\cN_R(\fa)\subseteq\fa^{+R}\subseteq\cN_{R\dim X}(\fa)$$
	for all $\fa\subseteq X^0$, where $\cN_r(\fa)\subset X^0$ denotes the $r$-neighborhood of $\fa$.
\end{remk}

\begin{defn}\label{defn:essential}
	A halfspace $\h$ is \emph{shallow} if it is contained in a bounded neighborhood of $\h^*$, otherwise $\h$ is \emph{deep}. A CAT(0) cube complex is \emph{essential} if all of its halfspaces are deep.
\end{defn}

The following proposition is due to Caprace--Sageev, it is a special case of \cite[Proposition 3.5]{CapraceSageev11}.

\begin{prop}\label{prop:essential}
	If $G$ is a group that acts cocompactly on an unbounded CAT(0) cube complex $X$, then there is a $G$-invariant convex subspace $Y\subseteq X$, which is either a subcomplex or a finite intersection of hyperplanes, and $Y$ is essential with respect to its induced cube complex structure. We call $Y$ the essential core of $X$.
\end{prop}

We now recall the notion of quarterspace, and the concept of a quarterspace being shallow or deep. These notions are new to this paper, but are analogous to Definition \ref{defn:essential}.

\begin{defn}	
	Halfspaces $\h_1,\h_2$ are \emph{transverse} if $\h_1\cap\h_2,\h_1\cap\h^*_2,\h^*_1\cap\h_2,\h^*_1\cap\h^*_2$ are all non-empty (equivalently if we get a non-empty intersection of the bounding hyperplanes $\hh_1\cap\hh_2\neq\emptyset$).
	In this case $\h_1\cap\h_2,\h_1\cap\h^*_2,\h^*_1\cap\h_2,\h^*_1\cap\h^*_2$ are referred to as \emph{quarterspaces}.
	
	A quarterspace $\h_1\cap\h_2$ is \emph{shallow} if it is contained in a bounded neighborhood of $\h^*_1\cap\h^*_2$, otherwise $\h_1\cap\h_2$ is \emph{deep}. The \emph{depth} of a quarterspace $\h_1\cap\h_2$ is the least integer $r$ such that $\h_1\cap\h_2$ is contained in the $(r+2)$-neighborhood of $\h^*_1\cap\h^*_2$, with $r=\infty$ if $\h_1\cap\h_2$ is deep. Note that $r\geq0$ because any path from $\h_1\cap\h_2$ to $\h^*_1\cap\h^*_2$ has length at least 2.
\end{defn}

\begin{remk}
	By considering projection maps, one can show that a quarterspace $\h_1\cap\h_2$ is shallow if and only if it is contained in a bounded neighborhood of $\hat{\h}_1\cap\hat{\h}_2$. Furthermore, the depth of $\h_1\cap\h_2$ is the least integer $r$ such that $\h_1\cap\h_2$ is contained in the $r$-neighborhood of $N(\hh_1)\cap N(\hh_2)$.
\end{remk}

The following three lemmas are well known, and will be used throughout the paper.

\begin{lem}\label{lem:d}
	If $\fa_1,\fa_2\subseteq X^0$ are convex then $d(\fa_1,\fa_2)$ is equal to the number of halfspaces $\h\in\cH(X)$ with $\fa_1\subseteq\h$ and $\fa_2\subseteq\h^*$.
	By considering $\fa_1=\{x_1\}$ and $\fa_2=\{x_2\}$ for $x_1,x_2\in X^0$, we deduce that any geodesic in $X^1$ has edges dual to distinct hyperplanes.
\end{lem}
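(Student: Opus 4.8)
The plan is to prove the statement first for singletons and then bootstrap to convex sets, with the displayed consequence about geodesics falling out along the way. For vertices $x_1,x_2\in X^0$, write $\cH(x_1\mid x_2)$ for the set of halfspaces $\h$ with $x_1\in\h$ and $x_2\in\h^*$. The key auxiliary fact I would establish first is that a geodesic $\gamma$ in $X^1$ crosses each hyperplane at most once: if it crossed some $\hh$ twice, then between a consecutive pair of such crossings we would get a geodesic subpath whose two endpoints lie in one of the halfspaces $\h,\h^*$ while an interior vertex lies in the other, contradicting convexity of halfspaces. Granting this, and using that each edge of $X^1$ is dual to a unique hyperplane: any edge path from $x_1$ to $x_2$ must cross the hyperplane bounding each $\h\in\cH(x_1\mid x_2)$, so $d(x_1,x_2)\geq|\cH(x_1\mid x_2)|$; conversely, along a geodesic $\gamma$ a hyperplane with $x_1,x_2$ on the same side is crossed an even number of times, hence (being crossed at most once) not at all, while a hyperplane with $x_1,x_2$ on opposite sides is crossed an odd number of times, hence exactly once, so the length of $\gamma$ equals $|\cH(x_1\mid x_2)|$. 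This proves the singleton case, and the final sentence of the lemma is immediate since a geodesic then crosses exactly $d(x_1,x_2)$ distinct hyperplanes, one per edge.

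Next I would handle convex $\fa_1,\fa_2\subseteq X^0$, writing $\cH(\fa_1\mid\fa_2)$ for the set of halfspaces $\h$ with $\fa_1\subseteq\h$ and $\fa_2\subseteq\h^*$. Since $\cH(\fa_1\mid\fa_2)\subseteq\cH(x_1\mid x_2)$ for every $x_1\in\fa_1$, $x_2\in\fa_2$, the singleton case gives $d(\fa_1,\fa_2)\geq|\cH(\fa_1\mid\fa_2)|$. For the reverse inequality I would take a \emph{bridge}: vertices $x_1\in\fa_1$, $x_2\in\fa_2$ with $d(x_1,x_2)=d(\fa_1,\fa_2)$, so that $x_1$ is a closest vertex of $\fa_1$ to $x_2$ and $x_2$ is a closest vertex of $\fa_2$ to $x_1$. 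By the gate (projection) property of convex subcomplexes, $d(x_2,y_1)=d(x_2,x_1)+d(x_1,y_1)$ for all $y_1\in\fa_1$, so the concatenation of a geodesic $x_2\to x_1$ with a geodesic $x_1\to y_1$ chosen inside $\fa_1$ is again a geodesic. If some $\h\in\cH(x_1\mid x_2)$ failed to contain $\fa_1$, then picking $y_1\in\fa_1\cap\h^*$ this concatenated geodesic would cross $\hh$ once on the way from $x_2\in\h^*$ to $x_1\in\h$ and again on the way from $x_1$ to $y_1$, contradicting the auxiliary fact above. Hence $\fa_1\subseteq\h$, and symmetrically $\fa_2\subseteq\h^*$; thus $\cH(x_1\mid x_2)=\cH(\fa_1\mid\fa_2)$, and $d(\fa_1,\fa_2)=d(x_1,x_2)=|\cH(\fa_1\mid\fa_2)|$ by the singleton case.

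The load-bearing inputs are the two standard facts I would cite rather than reprove in detail (see \cite{WiseRiches}): convexity of halfspaces (and of the cubical neighborhoods $N(\hh)$), and that convex subcomplexes of a CAT(0) cube complex are gated. Neither is hard, and given them the argument is short — consistent with the lemma being folklore. The only real place for care is the bookkeeping in the singleton case: making sure the parity-of-crossings argument and the ``crossed at most once'' argument are set up with consistent orientations of the halfspaces.
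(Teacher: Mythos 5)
Your proof is correct. The paper does not actually give an argument for this lemma---it sits in a block of statements declared ``well known,'' with the references \cite{WiseRiches,Manning20} standing in for proofs---so there is no in-paper argument to compare against, but your write-up is the standard one and all the steps go through. The two inputs you isolate are the right ones: full convexity of halfspaces (in the strong sense used in the paper, namely that every $X^1$-geodesic between two vertices of a halfspace stays in it) gives the ``crossed at most once'' fact; the parity count then yields the singleton case and the last sentence of the lemma simultaneously; and gatedness of convex subsets is precisely what lets you reduce the general case to a bridge pair with $\cH(x_1\mid x_2)=\cH(\fa_1\mid\fa_2)$. Two small remarks, neither affecting correctness: you are implicitly assuming $\fa_1,\fa_2\neq\emptyset$ when choosing the bridge pair (if either is empty the count is $0$ or $\infty$ depending on conventions and there is nothing to prove), and the ``crossed at most once'' step also uses, without comment, that a subpath of a geodesic is itself a geodesic.
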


\begin{lem}\label{lem:intersect}
	Any finite collection of pairwise intersecting convex sets in $X^0$ has non-empty intersection.
\end{lem}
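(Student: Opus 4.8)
The plan is to establish the Helly property by induction on the number $n$ of convex sets, using the median structure of $X$. Recall (see \cite{WiseRiches}) that $X^1$ with the combinatorial metric $d$ is a median graph: for any three vertices $a,b,c$ there is a unique vertex $m=m(a,b,c)$ lying simultaneously on a combinatorial geodesic from $a$ to $b$, from $b$ to $c$, and from $a$ to $c$. The one point requiring care, which I would record first, is that a convex set $\fa\subseteq X^0$ is closed under medians: if $a,b\in\fa$ then every geodesic from $a$ to $b$ lies in $\fa$ by convexity, and $m(a,b,c)$ lies on such a geodesic, so $m(a,b,c)\in\fa$ for any vertex $c$.

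The cases $n=1$ and $n=2$ are immediate (for $n=2$, ``pairwise intersecting'' just says the two sets meet). For $n=3$, given pairwise intersecting convex sets $\fa_1,\fa_2,\fa_3$, I would pick $x_{12}\in\fa_1\cap\fa_2$, $x_{13}\in\fa_1\cap\fa_3$ and $x_{23}\in\fa_2\cap\fa_3$, and set $m:=m(x_{12},x_{13},x_{23})$. Since $x_{12},x_{13}\in\fa_1$ and $m$ lies on a geodesic from $x_{12}$ to $x_{13}$, median-closedness of $\fa_1$ gives $m\in\fa_1$; symmetrically $m\in\fa_2$ (using $x_{12},x_{23}$) and $m\in\fa_3$ (using $x_{13},x_{23}$), so $m\in\fa_1\cap\fa_2\cap\fa_3$.

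For the inductive step with $n\geq4$, assume the statement for every family of fewer than $n$ convex sets, and let $\fa_1,\dots,\fa_n$ be pairwise intersecting and convex. Put $\fb_i:=\fa_i\cap\fa_n$ for $1\leq i\leq n-1$; each $\fb_i$ is convex as an intersection of two convex sets, and non-empty since $\fa_i$ meets $\fa_n$. For $i\neq j$ we have $\fb_i\cap\fb_j=\fa_i\cap\fa_j\cap\fa_n\neq\emptyset$ by the $n=3$ case applied to $\fa_i,\fa_j,\fa_n$. Thus $\fb_1,\dots,\fb_{n-1}$ is a pairwise intersecting family of $n-1$ convex sets, so by the inductive hypothesis $\bigcap_{i=1}^{n-1}\fb_i\neq\emptyset$, and any vertex in this intersection lies in $\bigcap_{i=1}^{n}\fa_i$.

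I do not expect any real obstacle: the result is classical (Helly's theorem for median graphs), and the only substantive input is the median-closedness of convex subsets, which is exactly where the convexity hypothesis is used. An alternative route avoiding the median map would separate the disjoint convex sets $\fa_1\cap\fa_2$ and $\fa_3$ by a hyperplane and argue via Lemma \ref{lem:d}, again reducing to the three-set case, but the median argument above is shorter and cleaner.
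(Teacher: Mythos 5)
The paper states this lemma (together with Lemmas \ref{lem:d} and \ref{lem:finhull}) as ``well known'' and offers no proof, so there is nothing internal to compare against. Your argument is the standard one for Helly's property in median graphs and is correct: the key facts --- that $X^1$ is a median graph, that a geodesically convex subset is closed under the median operation $m(a,b,c)$ because $m$ lies on some geodesic from $a$ to $b$ whenever $a,b$ are two of the three inputs, and the reduction of the general case to the triple case via $\fb_i=\fa_i\cap\fa_n$ --- are all accurately stated and fit together as you describe. One very small point of wording: you say ``$m(a,b,c)$ lies on such a geodesic,'' implicitly invoking that a geodesic from $a$ to $m$ concatenated with one from $m$ to $b$ is a geodesic from $a$ to $b$ (since $d(a,m)+d(m,b)=d(a,b)$); this is true and worth making explicit, since $m$ need not lie on \emph{every} $a$--$b$ geodesic, only on at least one, which convexity of $\fa$ then captures.
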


\begin{lem}\label{lem:finhull}
	If $\fa\subseteq X^0$ is finite then $\Hull(\fa)$ is finite.
\end{lem}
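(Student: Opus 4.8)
The plan is a hyperplane count. After dispensing with the trivial case $\fa=\emptyset$ (where $\Hull(\fa)=\emptyset$), I would fix a basepoint $a_0\in\fa$ and let $\cS\subseteq\hH(X)$ be the set of hyperplanes that separate some pair of vertices of $\fa$. The first observation is that $\cS$ is finite: there are only finitely many pairs $a,a'\in\fa$, and for each of them Lemma~\ref{lem:d} (applied to the singletons $\{a\}$ and $\{a'\}$) says that the number of hyperplanes separating $a$ from $a'$ equals $d(a,a')<\infty$.

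The key step is the claim that for every $v\in\Hull(\fa)$, \emph{every} hyperplane $\hh$ separating $a_0$ from $v$ already lies in $\cS$. To see this, let $\h$ be the halfspace of $\hh$ with $v\in\h$, so that $a_0\in\h^*$. If $\fa$ were contained in $\h^*$, then $\h^*$ would be a halfspace containing $\fa$, forcing $\Hull(\fa)\subseteq\h^*$ and contradicting $v\in\h$. Hence some $a'\in\fa$ lies in $\h$, and then $\hh$ separates the two vertices $a_0,a'$ of $\fa$, i.e.\ $\hh\in\cS$.

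Granting the claim, I would finish by exhibiting an injection $\Phi\colon\Hull(\fa)\to 2^{\cS}$, namely $\Phi(v):=\{\hh\in\hH(X):\hh\text{ separates }a_0\text{ from }v\}$, which lands in $2^{\cS}$ by the claim. For injectivity: if $v\neq v'$, then by Lemma~\ref{lem:d} there is a halfspace $\h$ with $v\in\h$ and $v'\in\h^*$, and the hyperplane $\hh$ separates $a_0$ from exactly one of $v,v'$ according to which side of $\hh$ contains $a_0$; thus $\Phi(v)\neq\Phi(v')$. It follows that $|\Hull(\fa)|\leq 2^{|\cS|}<\infty$. I do not expect a genuine obstacle — the argument is short and the only real idea is the claim in the previous paragraph — but it is worth being careful that the proof uses neither local finiteness nor finite-dimensionality of $X$ (a finite-radius ball in $X$ need not be finite in general), which the hyperplane count above indeed avoids. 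One could alternatively argue by induction on $|\fa|$, reducing to the convex hull of a finite convex set together with one extra vertex, but the direct count seems cleaner.
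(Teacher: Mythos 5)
Your proof is correct. The paper states Lemma~\ref{lem:finhull} without proof, simply noting it is well known, so there is no argument in the paper to compare against; your hyperplane-counting argument is the standard one. Two small remarks: (i) the finiteness of $\cS$ and the claim that separating hyperplanes for $a_0$ and $v\in\Hull(\fa)$ all lie in $\cS$ are exactly the right ingredients, and the injection into $2^{\cS}$ cleanly avoids any appeal to local finiteness (a radius bound $d(a_0,v)\leq|\cS|$ alone would not suffice without local finiteness, as you observe); (ii) once you have the claim, you could also argue more directly that $\Hull(\fa)$ is contained in the intersection of the finitely many halfspaces $\{\h:\fa\subseteq\h,\ \hh\in\cS\}$ together with the observation that each point of $\Hull(\fa)$ is determined by which side of each $\hh\in\cS$ it lies on, but this is essentially a rephrasing of your injection $\Phi$.
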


\subsection{Pocsets and cubings}\label{subsec:pocsets}

We now recall the construction of a CAT(0) cube complex from a pocset or wallspace. This construction was originally due to Sageev \cite{Sageev95}, although our formulation will be closer to the version in \cite{Manning20} -- see also \cite{Nica04,ChatterjiNiblo05,Roller16,WiseRiches}.

\begin{defn}
	A \emph{pocset} is a poset $(\cP,\leq)$ together with an involution $A\mapsto A^*$ for all $A\in\cP$ satisfying:
	\begin{enumerate}
		\item $A$ and $A^*$ are incomparable.
		\item $A\leq B \Rightarrow B^*\leq A^*$.
	\end{enumerate}
We define $\hat{\cP}$ to be the set of pairs $\{A,A^*\}$.
Elements $A,B\in\cP$ are \emph{transverse} if neither $A$ nor $A^*$ is comparable with $B$. The \emph{width} of $\cP$ is the maximum number of pairwise transverse elements, if such a maximum exists, otherwise we say the width is $\infty$.
\end{defn}

\begin{defn}
An \emph{ultrafilter} on a pocset $\cP$ is a subset $\omega\subseteq \cP$ satisfying:
\begin{enumerate}
	\item (Completeness) For every $A\in \cP$, exactly one of $\{A,A^*\}$ is in $\omega$.
	\item (Consistency) If $A\in\omega$ and $A\leq B$, then $B\in\omega$.
\end{enumerate}
An ultrafilter $\omega$ is \emph{DCC} (descending chain condition) if it contains no strictly descending infinite chain $A_1>A_2>A_3>\cdots$.
\end{defn}

\begin{prop}\label{prop:cubing}
	Let $\cP$ be a pocset of finite width that admits at least one DCC ultrafilter. Then there is a CAT(0) cube complex $C=C(\cP)$, called the \textbf{cubing of $\cP$}, such that:
	\begin{enumerate}
		\item The vertices of $C$ are the DCC ultrafilters on $\cP$.
		\item Two vertices $\omega_1,\omega_2$ in $C$ are joined by an edge if and only if $\omega_1\triangle\omega_2=\{A,A^*\}$ for some $A\in \cP$.
		\item\label{item:halfspace} The halfspaces of $C$ take the form $\{\omega\mid A\in\omega\}$ for $A\in\cP$, so we have a pocset isomorphism $(\cP,\leq)\cong(\cH(C),\subseteq)$, which also induces an identification $\hat{\cP}\cong\hH(C)$.
		\item The dimension of $C$ is equal to the width of $\cP$.
	\end{enumerate} 
\end{prop}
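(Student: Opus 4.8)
The plan is to build $C=C(\cP)$ explicitly as a combinatorial cube complex and then verify the listed properties; the content beyond the literal definition in items (1)--(2) is that $C$ is a CAT(0) cube complex, that its halfspaces are the sets $\{\omega\mid A\in\omega\}$ in a pocset-compatible way (item (3)), and that $\dim C=\operatorname{width}(\cP)$ (item (4)). I set $C^0$ to be the set of DCC ultrafilters on $\cP$, which is non-empty by hypothesis, and join $\omega_1,\omega_2\in C^0$ by an edge when $\omega_1\triangle\omega_2=\{A,A^*\}$ for some $A\in\cP$. This gives a graph $C^1$, and I obtain $C$ by filling in an $n$-cube whenever its $1$-skeleton occurs in $C^1$; one checks these fillings are unambiguous, so $C$ is a genuine cube complex. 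For item (3) the sets $H_A:=\{\omega\in C^0\mid A\in\omega\}$ manifestly satisfy $H_A\sqcup H_{A^*}=C^0$ and, by consistency of ultrafilters, $A\le B\iff H_A\subseteq H_B$; the remaining task is to see that $H_A$ and $H_{A^*}$ are genuinely the two sides of a hyperplane of $C$, namely the set of edges dual to the pair $\{A,A^*\}$, which will follow once $C$ is known to be CAT(0).

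The technical heart is the following finiteness lemma, and it is the only place the finite-width hypothesis does real work: for any two DCC ultrafilters $\omega_1,\omega_2$ the set $D:=\omega_1\setminus\omega_2$ is finite. I would argue by contradiction. Since $D\subseteq\omega_1$ and $\omega_1$ is DCC, $D$ has no infinite strictly descending chain. If $B_1<B_2<\cdots$ were an infinite strictly ascending chain in $D$, then each $B_n\notin\omega_2$ forces $B_n^*\in\omega_2$, and $B_1^*>B_2^*>\cdots$ would be an infinite strictly descending chain inside $\omega_2$, contradicting that $\omega_2$ is DCC; so $D$ has no infinite ascending chain either. Moreover any antichain in $D$ of size larger than the width of $\cP$ must in fact be pairwise transverse: if $A,A'\in D$ are incomparable but not transverse, then $A^*$ is comparable with $A'$, and each of the two possible comparisons contradicts $A,A'\in\omega_1\setminus\omega_2$ via consistency and completeness of the $\omega_i$. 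Hence $D$ is a poset of finite width with no infinite ascending or descending chains; by Dilworth's theorem it is a finite union of chains, one of which would be infinite if $D$ were infinite, but every infinite linear order contains an infinite ascending or descending sequence --- a contradiction. Therefore $D$ is finite, and symmetrically so is $\omega_2\setminus\omega_1$.

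From the lemma, connectivity of $C^1$ follows together with the formula that $d(\omega_1,\omega_2)$ equals the number of pairs $\{A,A^*\}$ separating $\omega_1$ from $\omega_2$: given $\omega_1\neq\omega_2$, pick a minimal element $A$ of the finite non-empty set $D=\omega_1\setminus\omega_2$ and replace $A$ by $A^*$ in $\omega_1$; a short check shows the result is again a consistent DCC ultrafilter (consistency could only fail at $A$, and minimality of $A$ rules this out; DCC survives because any descending chain passing through $A^*$ continues inside $\omega_1$), it is adjacent to $\omega_1$, and it is separated from $\omega_2$ by exactly $|D|-1$ pairs, so iterating reaches $\omega_2$ in $|D|$ steps. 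For the CAT(0) property I would show that $C^1$ is a median graph and invoke the standard equivalence between median graphs and $1$-skeleta of CAT(0) cube complexes (with the cube completion being the CAT(0) cube complex); alternatively one can verify Gromov's link condition and simple connectivity directly, as in the references cited for the cubing construction. The median of three DCC ultrafilters $\omega_1,\omega_2,\omega_3$ is the ``majority'' set $m=\{A\mid A$ lies in at least two of the $\omega_i\}$: it is complete and consistent by a majority argument, and it is DCC since an infinite descending chain in $m$ would, by pigeonhole, contain an infinite subchain lying in a single $\omega_i$; using the distance formula one checks that $m$ lies on a geodesic between each pair $\omega_i,\omega_j$, which is the median property.

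It remains to pin down which subcomplexes are cubes, and this gives item (4) and completes item (3). An $n$-cube at a vertex $\omega$ corresponds to a family of $n$ pairwise transverse elements $A_1,\dots,A_n\in\omega$ that are simultaneously ``flippable'' (each $A_i$ minimal in $\omega$), the point being that all $2^n$ ultrafilters obtained by flipping subsets of $\{A_1,\dots,A_n\}$ are then consistent and DCC --- routine given transversality and the DCC hypothesis. Flagness of links and the fact that maximal cubes realize maximal transverse families then yield $\dim C=\operatorname{width}(\cP)$, and the description of hyperplanes of $C$ as the edge-sets dual to the pairs $\{A,A^*\}$ makes $A\mapsto H_A$ a pocset isomorphism $(\cP,\le)\cong(\cH(C),\subseteq)$ inducing $\hat\cP\cong\hH(C)$. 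The expected main obstacle is the finiteness lemma for $\omega_1\setminus\omega_2$ and the resulting connectivity of $C^1$; once these are established, the CAT(0) structure and the identification of halfspaces are essentially bookkeeping, modulo citing the median-graph characterization.
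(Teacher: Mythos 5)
The paper does not prove this proposition: it is stated as standard background on the Sageev cubing, with citations to \cite{Sageev95,Manning20,Nica04,ChatterjiNiblo05,Roller16,WiseRiches}. Your sketch correctly reconstructs the proof found in those references — in particular the finiteness of $\omega_1\setminus\omega_2$ (DCC in both directions, and the observation that incomparable elements of $\omega_1\setminus\omega_2$ must be transverse, so finite width bounds antichains), connectivity via flipping a minimal element, the median-graph characterization of CAT(0) one-skeleta, and cube-filling to identify dimension with width — so there is no substantive methodological difference to report.
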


\begin{defn}\label{defn:wallspace}
	A \emph{wallspace} $(X,\cP)$ is a set $X$ together with a family $\cP$ of non-empty subsets that is closed under complementation, such that for any $x,y\in X$ the set $\{A\in \cP\mid x\in A,\,y\notin A\}$ is finite. $\cP$ forms a pocset under inclusion, with the involution $A\mapsto A^*$ given by complementation. Moreover, for any $x\in X$ the set
	$$\omega_x:=\{A\in\cP\mid x\in A\}$$
	is a DCC ultrafilter. Therefore, if $\cP$ has finite width, we can form the cubing $C=C(\cP)$, and we have a map $X\to C^0$.
\end{defn}

Any CAT(0) cube complex $X$ forms a wallspace $(X^0,\cH(X))$. We then get the following duality theorem between finite dimensional CAT(0) cube complexes and finite width pocsets.

\begin{thm}\label{thm:duality}
	If $X$ is a finite dimensional CAT(0) cube complex then $\cH(X)$ has finite width, and the map $X^0\to C(\cH(X))$ extends to an isomorphism of cube complexes $X\cong C(\cH(X))$.
\end{thm}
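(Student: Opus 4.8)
\textit{Proof idea.} The statement has two parts: that $\cH:=\cH(X)$ has finite width, and that the map $\iota\colon X^0\to C(\cH)^0$, $x\mapsto\omega_x$, extends to an isomorphism $X\cong C(\cH)$ (note that $C(\cH)$ makes sense once $\cH$ has finite width, since $\omega_{v_0}$ is a DCC ultrafilter for any vertex $v_0$). For the finite width, the plan is to show that any pairwise transverse halfspaces $\h_1,\dots,\h_n$ satisfy $n\le\dim X$. By transversality and Lemma~\ref{lem:intersect}, each of the $2^n$ orthants $\bigcap_{i\in S}\h_i\cap\bigcap_{i\notin S}\h_i^*$ ($S\subseteq\{1,\dots,n\}$) is a non-empty convex set. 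Take vertices $v\in\bigcap_i\h_i$ and $w\in\bigcap_i\h_i^*$ realizing $d(\bigcap_i\h_i,\bigcap_i\h_i^*)$; a standard bridge argument shows a geodesic from $v$ to $w$ is dual to exactly those hyperplanes that separate $\bigcap_i\h_i$ from $\bigcap_i\h_i^*$. Those are precisely $\hh_1,\dots,\hh_n$: each $\hh_i$ qualifies, and if some other $\hh'$ did, then (orienting $\h'\supseteq\bigcap_i\h_i$) Lemma~\ref{lem:intersect} applied to $\{(\h')^*,\h_1,\dots,\h_n\}$ and to $\{\h',\h_1^*,\dots,\h_n^*\}$ would yield $\h_i\le\h'\le\h_j$; transversality of $\h_i,\h_j$ forces $i=j$ and hence $\h'=\h_i$, contradicting $\hh'\ne\hh_i$. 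So $d(v,w)=n$, the vertices on geodesics from $v$ to $w$ are one per orthant, and they form the $1$-skeleton of an $n$-cube, which spans a genuine $n$-cube of $X$ since $X$ is CAT(0); thus the width of $\cH$ equals $\dim X<\infty$ (the reverse inequality being immediate), consistent with Proposition~\ref{prop:cubing}(4).

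For the second part, $(X^0,\cH)$ is a wallspace, so each $\omega_x$ is a DCC ultrafilter (Definition~\ref{defn:wallspace}) and $\iota$ is well defined, and $\iota$ is injective because distinct vertices are separated by a halfspace (Lemma~\ref{lem:d}). Moreover $\iota$ preserves and reflects adjacency: for vertices $x,y$, Lemma~\ref{lem:d} gives $|\omega_x\triangle\omega_y|=2d(x,y)$, so $x$ and $y$ are adjacent in $X$ iff $\omega_x\triangle\omega_y$ is a single pair $\{\h,\h^*\}$, iff (Proposition~\ref{prop:cubing}(2)) $\omega_x$ and $\omega_y$ are adjacent in $C(\cH)$. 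So once $\iota$ is known to be surjective, it is an isomorphism $X^1\cong C(\cH)^1$, and hence --- a CAT(0) cube complex being determined by its $1$-skeleton --- an isomorphism $X\cong C(\cH)$. It therefore remains to prove $\iota$ is surjective.

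Fix a DCC ultrafilter $\omega$ and a vertex $v_0$, and set $D:=\omega\setminus\omega_{v_0}=\{\h\in\omega\mid v_0\notin\h\}$; since $\h\mapsto\h^*$ identifies $\omega_{v_0}\setminus\omega$ with $D$, it suffices to produce a vertex $w$ with $\omega\setminus\omega_w=\emptyset$, as then $\omega_w=\omega$. \emph{Step 1: $D$ is finite.} Any two elements of $\omega$ intersect (otherwise one lies in the complement of the other, violating consistency and completeness), so $D$, and likewise $D^*:=\{\h^*\mid\h\in D\}\subseteq\omega_{v_0}$, is pairwise intersecting; these two facts force any two incomparable elements of $D$ to be transverse. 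Colour each pair in $D$ by whether it is comparable or incomparable. An infinite incomparable subset would be an infinite transverse family, contradicting finite width; an infinite comparable subset is an infinite chain, hence contains an infinite descending chain in $\omega$, or --- via $\h\mapsto\h^*$ --- an infinite descending chain in $\omega_{v_0}$, contradicting that both are DCC. By Ramsey's theorem $D$ is finite. \emph{Step 2: peeling off a geodesic.} Since $D$ is finite and pairwise intersecting, $K:=\bigcap_{\h\in D}\h$ is non-empty and convex (Lemma~\ref{lem:intersect}), and $v_0\notin K$ unless $D=\emptyset$. Pick a vertex $w\in K$ closest to $v_0$ and a geodesic $v_0=u_0,u_1,\dots,u_m=w$. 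Each $\hh$ with $\h\in D$ separates $v_0$ from $w$, hence is dual to the geodesic; conversely, if the geodesic is dual to $\hh'$ with $w\in\h'\not\ni v_0$, then $\h'$ separates $v_0$ from $K$, so $K\subseteq\h'$, whence $\{(\h')^*\}\cup D$ has empty intersection and so (Lemma~\ref{lem:intersect}) $\h\le\h'$ for some $\h\in D$, giving $\h'\in\omega$ by consistency and so $\h'\in D$. Thus the geodesic is dual to exactly the hyperplanes of $D$, so $m=|D|$; writing $\h_i\in D$ for the halfspace dual to $u_{i-1}u_i$ and containing $u_i$, an easy induction gives $\omega\setminus\omega_{u_i}=D\setminus\{\h_1,\dots,\h_i\}$, so $\omega\setminus\omega_w=\emptyset$ and $\omega=\omega_w$.

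The main obstacle is Step 1: proving that the disagreement set $D$ is finite is the one place where the finite-width hypothesis and the two DCC conditions genuinely interact. The cube-spanning for finite width, the reduction of the isomorphism to surjectivity, and the geodesic-peeling of Step 2 are all routine given the Helly property (Lemma~\ref{lem:intersect}) and Lemma~\ref{lem:d}.
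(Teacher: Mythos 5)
The paper states Theorem~\ref{thm:duality} as a standard duality result (Sageev's construction) and gives no proof, so there is no in-paper argument to compare against; your proposal is essentially a correct reconstruction of the standard proof. Both halves of your argument check out: the finite-width claim via the Helly property and the gate/bridge between the two extreme orthants, and surjectivity of $\iota$ via Ramsey (using finite width and the two DCC hypotheses) followed by peeling along a geodesic to the gate of $v_0$ in $\bigcap_{\h\in D}\h$. The one step that deserves to be spelled out rather than asserted is the passage from ``$I(v,w)$ is graph-isomorphic to the $1$-skeleton of an $n$-cube'' to ``it spans an $n$-cube in $X$'' (and, relatedly, your later appeal to the $1$-skeleton determining the cube complex). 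This is true but not trivial: it uses that in a CAT(0) cube complex two edges at a vertex dual to transverse hyperplanes span a square (a gate argument shows the second edge lies in the carrier $N(\hh_1)\cong\hh_1\times[0,1]$), and then the flag condition on vertex links to promote pairwise squares to a full $n$-cube. Since this is exactly the fact that gives the upper bound width $\leq\dim X$, it is the load-bearing point of the first half and shouldn't be dismissed with ``since $X$ is CAT(0).'' Everything else — the pairwise-intersecting property of $D$ and $D^*$, the Ramsey dichotomy, the use of the gate property to show the geodesic from $v_0$ to $w$ is dual to exactly the hyperplanes of $D$, and the induction $\omega\setminus\omega_{u_i}=D\setminus\{\h_1,\dots,\h_i\}$ — is correct.
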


Many geometric features of cubings can be interpreted using the pocset, for example distances, adjacent vertices and cocompactness.

\begin{lem}\label{lem:domega}
	$d(\omega_1,\omega_2)=\tfrac{1}{2}|\omega_1\triangle\omega_2|=|\omega_1-\omega_2|$ for $\omega_1,\omega_2\in C(\cP)^0$.
\end{lem}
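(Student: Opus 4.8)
The plan is to establish the two equalities separately: $d(\omega_1,\omega_2)=|\omega_1-\omega_2|$ via Lemma~\ref{lem:d} together with the description of the halfspaces of a cubing, and $|\omega_1-\omega_2|=\tfrac12|\omega_1\triangle\omega_2|$ via the completeness axiom for ultrafilters. Recall that $C(\cP)$ is a CAT(0) cube complex by Proposition~\ref{prop:cubing}, so Lemma~\ref{lem:d} applies to it.

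For the first equality I would apply Lemma~\ref{lem:d} to the (convex) singletons $\fa_1=\{\omega_1\}$ and $\fa_2=\{\omega_2\}$ in $C(\cP)$: it gives that $d(\omega_1,\omega_2)$ equals the number of halfspaces $\h\in\cH(C(\cP))$ with $\omega_1\in\h$ and $\omega_2\in\h^*$. By item~\ref{item:halfspace} of Proposition~\ref{prop:cubing}, the halfspaces of $C(\cP)$ are exactly the sets $\h_A:=\{\omega\mid A\in\omega\}$ for $A\in\cP$, the assignment $A\mapsto\h_A$ is a bijection, and $\h_A^*=\h_{A^*}$. Now $\omega_1\in\h_A$ says $A\in\omega_1$, while $\omega_2\in\h_A^*=\h_{A^*}$ says $A^*\in\omega_2$, which by completeness of $\omega_2$ is equivalent to $A\notin\omega_2$. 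Hence the halfspaces counted by Lemma~\ref{lem:d} are precisely the $\h_A$ with $A\in\omega_1-\omega_2$, and since $A\mapsto\h_A$ is injective there are exactly $|\omega_1-\omega_2|$ of them.

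For the second equality I would observe that the involution $A\mapsto A^*$ of $\cP$ restricts to a fixed-point-free involution of $\omega_1\triangle\omega_2$ interchanging $\omega_1-\omega_2$ and $\omega_2-\omega_1$: if $A\in\omega_1-\omega_2$ then completeness of $\omega_2$ gives $A^*\in\omega_2$ and completeness of $\omega_1$ gives $A^*\notin\omega_1$, so $A^*\in\omega_2-\omega_1$ (and symmetrically), while $A\neq A^*$ since $A$ and $A^*$ are incomparable. Thus $|\omega_1-\omega_2|=|\omega_2-\omega_1|$ and $|\omega_1\triangle\omega_2|=2|\omega_1-\omega_2|$, all three quantities being finite because $C(\cP)$, being a CAT(0) cube complex, is connected. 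I do not expect a genuine obstacle here: the only points needing care are that $A\mapsto\h_A$ is an honest bijection (so distinct pairs give distinct separating halfspaces), which is part of item~\ref{item:halfspace} of Proposition~\ref{prop:cubing}, and the elementary bookkeeping with the involution, which uses only completeness. Alternatively, one could prove $d(\omega_1,\omega_2)=|\omega_1-\omega_2|$ by hand, bounding $d$ above by flipping the antipodal pairs in $\omega_1\triangle\omega_2$ one at a time (peeling off elements of $\omega_1-\omega_2$ that are minimal in the induced order, to stay inside the set of DCC ultrafilters) and bounding $d$ below by noting that if $\nu,\nu'$ are adjacent then the cardinality $|\nu-\omega_2|$ changes by $\pm1$; but the route through Lemma~\ref{lem:d} is shorter.
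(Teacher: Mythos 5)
The paper states Lemma~\ref{lem:domega} without proof, treating it as a well-known fact about cubings, so there is no paper argument to compare against line by line. Your proposal is a correct and efficient proof: the first equality follows from Lemma~\ref{lem:d} applied to singletons once you translate through the pocset isomorphism $\cP\cong\cH(C(\cP))$ of Proposition~\ref{prop:cubing}\ref{item:halfspace}, using completeness of $\omega_2$ to rewrite $A^*\in\omega_2$ as $A\notin\omega_2$; the second equality is just the observation that $A\mapsto A^*$ is a fixed-point-free involution swapping $\omega_1-\omega_2$ and $\omega_2-\omega_1$, which again uses only completeness of the two ultrafilters (and the incomparability of $A$ and $A^*$ to rule out fixed points). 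The finiteness bookkeeping is also handled correctly, since connectedness of the cubing makes $d(\omega_1,\omega_2)$ finite and the equalities then propagate finiteness to the set-theoretic quantities. Your alternative sketch (peeling off minimal elements of $\omega_1-\omega_2$ one at a time to build a path, and noting that $|\nu-\omega_2|$ changes by $\pm1$ along edges) is in fact closer to how this lemma is usually proved from scratch when one does not yet have Lemma~\ref{lem:d} available for the cubing; it is slightly longer but has the advantage of not presupposing the duality Theorem~\ref{thm:duality} behind Lemma~\ref{lem:d}. Either route is sound.
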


\begin{lem}\label{lem:edges}
	For $\omega\in C(\cP)^0$, the vertices adjacent to $\omega$ are precisely the ultrafilters of the form $(\omega\cup\{A^*\})-\{A\}$ for $A\in\omega$ that is $\leq$-minimal in $\omega$.
\end{lem}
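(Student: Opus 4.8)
The plan is to unwind the definitions. By the description of edges in $C(\cP)$ (Proposition \ref{prop:cubing}), a vertex $\omega'$ is adjacent to $\omega$ exactly when $\omega\triangle\omega'=\{A,A^*\}$ for some $A\in\cP$, and since $\omega$ is complete precisely one of $A,A^*$ lies in $\omega$; after interchanging the roles of $A$ and $A^*$ if necessary we may assume $A\in\omega$, and then $\omega'=(\omega\cup\{A^*\})-\{A\}=(\omega-\{A\})\cup\{A^*\}$. This $\omega'$ automatically satisfies the completeness condition (we have merely exchanged $A$ for $A^*$ and left every other element alone), so the lemma reduces to the following claim: $\omega'$ is a DCC ultrafilter if and only if $A$ is $\leq$-minimal in $\omega$. (When it is, $\omega'$ is visibly adjacent to $\omega$ since $\omega\triangle\omega'=\{A,A^*\}$, which closes the argument.)

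For the ``only if'' direction I would argue contrapositively: if $B\in\omega$ with $B<A$, then the pocset axiom gives $A^*<B^*$; but $A^*\in\omega'$ while $B^*\notin\omega'$ (because $B\in\omega$ and $B\neq A$ force $B\in\omega'$), contradicting consistency of $\omega'$. For the ``if'' direction, assume $A$ is $\leq$-minimal in $\omega$ and record two observations. (i) No element of $\omega'$ lies strictly below $A^*$: any such element would lie in $\omega$ (it is not $A^*$ and not $A$), and consistency of $\omega$ would then force $A^*\in\omega$, contradicting $A\in\omega$. (ii) Every $B\in\cP$ with $B>A^*$ lies in $\omega'$: indeed $B>A^*$ is equivalent to $B^*<A$ (apply the axiom $C\leq D\Rightarrow D^*\leq C^*$ in both directions), so minimality of $A$ gives $B^*\notin\omega$, whence $B\in\omega$, and $B\neq A$ since $B>A^*$ forces $B\neq A$.

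Given these, consistency of $\omega'$ follows by cases on $B\in\omega'$ with $B\leq C$ (assuming $B\neq C$, else trivial): if $B=A^*$ then $C>A^*$, so $C\in\omega'$ by (ii); if $B\neq A^*$ then $B\in\omega$, so $C\in\omega$ by consistency of $\omega$, and $C\neq A$ (otherwise $B<A$ with $B\in\omega$ contradicts minimality), hence $C\in\omega'$. Finally, DCC of $\omega'$ follows from (i): it shows $A^*$ is a $\leq$-minimal element of $\omega'$, so any strictly descending chain in $\omega'$ cannot contain $A^*$ and therefore lies entirely in $\omega-\{A\}\subseteq\omega$, contradicting that $\omega$ is DCC. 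I expect the only real friction to be bookkeeping: keeping track of the handful of edge cases involving $A$ and $A^*$ in the consistency verification, and noticing that observations (i) and (ii) are precisely the two inputs those cases require; there is no conceptual obstacle beyond that.
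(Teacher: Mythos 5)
Your proof is correct and complete. The paper states Lemma \ref{lem:edges} without proof as a standard fact about cubings of pocsets; your argument is the standard one, reducing via the edge characterization in Proposition \ref{prop:cubing} to verifying that the symmetric-difference-by-$\{A,A^*\}$ operation preserves the DCC ultrafilter property exactly when $A$ is $\leq$-minimal, and then checking completeness, consistency, and DCC directly via your observations (i) and (ii).
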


\begin{lem}\label{lem:cocompact}
	If a group $G$ acts on $\cP$, then the induced action on $C(\cP)$ is cocompact if and only if there are finitely many $G$-orbits of collections of pairwise transverse elements of $\cP$.
\end{lem}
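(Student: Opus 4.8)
The plan is to work inside the CAT(0) cube complex $X:=C(\cP)$, using Proposition~\ref{prop:cubing} to identify $\cP$ with $\cH(X)$ and $\hat\cP$ with $\hH(X)$; under this identification a collection of pairwise transverse elements of $\cP$ is exactly a family of pairwise crossing hyperplanes of $X$ with a chosen orientation of each (i.e.\ a chosen halfspace). The action of $G$ on $\cP$ induces a cubical action on $X$, and this action is cocompact if and only if $X$ has only finitely many $G$-orbits of cubes (of all dimensions). I would then deduce the lemma by chaining three equivalences:
(a) finitely many $G$-orbits of cubes $\iff$ finitely many $G$-orbits of \emph{maximal} cubes (those not properly contained in another cube);
(b) finitely many $G$-orbits of maximal cubes $\iff$ finitely many $G$-orbits of \emph{maximal} families of pairwise crossing hyperplanes (families not contained in a strictly larger such family);
(c) finitely many $G$-orbits of maximal such families $\iff$ finitely many $G$-orbits of arbitrary such families (equivalently, of arbitrary collections of pairwise transverse elements of $\cP$).

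Equivalences (a) and (c) are soft. Any cube containing a given cube has dimension at most $\dim X=\mathrm{width}(\cP)<\infty$, so every cube is a face of some maximal cube; since a cube has only finitely many faces, finitely many orbits of maximal cubes forces finitely many orbits of all cubes, and the converse is trivial. Symmetrically, every family of pairwise crossing hyperplanes extends to a maximal one (again because the width is finite), and a maximal family, being finite, has only finitely many subfamilies, which gives (c).

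The crux is (b), which I would obtain from a $G$-equivariant bijection between the maximal cubes of $X$ and the maximal families of pairwise crossing hyperplanes of $X$; since a family of $m$ hyperplanes corresponds to $2^m\le 2^{\dim X}$ collections of elements of $\cP$, this bijection yields (b). Given a maximal family $\hat A_1,\dots,\hat A_m$, the standard fact that pairwise crossing hyperplanes bound a cube (a consequence of Lemma~\ref{lem:intersect} and the cube condition) produces a cube $c$ crossed by all of them; any hyperplane crossing $c$ would cross every $\hat A_i$, so by maximality $c$ is exactly $m$-dimensional and is a maximal cube with midhyperplanes $\hat A_1,\dots,\hat A_m$. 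Moreover $\bigcap_i N(\hat A_i)$ is convex and is crossed by exactly $\hat A_1,\dots,\hat A_m$ (a further hyperplane crossing it would cross all the $\hat A_i$, contradicting maximality), hence it is a single $m$-cube, so $c$ is unique. For the reverse direction the key geometric input is that the midhyperplanes $\hat A_1,\dots,\hat A_m$ of a maximal cube $c$ already form a maximal family: if some $\hat E\notin\{\hat A_1,\dots,\hat A_m\}$ crossed all the $\hat A_i$, then working inside the convex subcomplex $\bigcap_i N(\hat A_i)$ one could produce an $(m{+}1)$-cube having $c$ as a face, contradicting maximality of $c$. Together these show the two constructions $\{\hat A_i\}\mapsto c$ and $c\mapsto\{\text{midhyperplanes of }c\}$ are mutually inverse.

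I expect the reverse direction of (b) -- that the midhyperplanes of a maximal cube form a maximal family, together with the uniqueness statement -- to be the main obstacle, since it is genuinely a fact about hyperplane carriers rather than a formal consequence of the quoted lemmas. The clean way to handle it is via the product decomposition $\bigcap_i N(\hat A_i)\cong [0,1]^m\times Z$, where $Z$ is a CAT(0) cube complex whose hyperplanes are precisely those of $X$ crossing all of $\hat A_1,\dots,\hat A_m$: then the cube corresponding to $[0,1]^m\times\{z\}$ is maximal exactly when $z$ is isolated in $Z$, i.e.\ when $Z$ is a point, and for a maximal family $Z$ has no hyperplanes at all; this makes both the maximality of the midhyperplane family and the uniqueness immediate. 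Invoking (or proving) this carrier decomposition is the one ingredient that reaches slightly beyond the preliminaries recalled in the excerpt.
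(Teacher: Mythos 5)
The paper does not prove Lemma~\ref{lem:cocompact}: it is stated in Section~\ref{subsec:pocsets} as one of several standard background facts about the cubing construction (alongside Lemmas~\ref{lem:domega} and~\ref{lem:edges}), with the surrounding references \cite{Sageev95,Manning20,Nica04,ChatterjiNiblo05,Roller16,WiseRiches} implicitly serving as the source. So there is no in-paper argument to compare your proposal against.

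That said, your proof is correct, and the structure is sound. Equivalences (a) and (c) are indeed soft once one notes that the width of $\cP$ equals $\dim C(\cP)$ (Proposition~\ref{prop:cubing}), so chains of cubes and of pairwise-transverse families terminate and maximal objects have boundedly many faces/subfamilies. The heart is (b), and your reduction to the decomposition $\bigcap_i N(\hat A_i)\cong [0,1]^m\times Z$, with $Z$ a CAT(0) cube complex whose hyperplanes are exactly the hyperplanes of $X$ crossing every $\hat A_i$, is the right way to make both the uniqueness of the cube bounded by a maximal family and the maximality of the midhyperplane family of a maximal cube transparent. You are also right to flag that this carrier decomposition is not among the preliminaries quoted in the paper; it is nonetheless standard, and given that the lemma itself is treated as citable background, invoking it is perfectly reasonable. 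One small stylistic point: your identification in (c) between unoriented families of pairwise crossing hyperplanes and collections of pairwise transverse elements of $\cP$ is a $2^m$-to-$1$ correspondence with $m\le\mathrm{width}(\cP)$, which you do address when discussing (b), but it belongs in (c) as well; as it affects orbit counts only by a bounded factor it does not change the conclusion. Also worth noting (though harmless) is the degenerate case $m=0$: the empty family is handled by (a)/(c) rather than by the bijection in (b), since a vertex that is not a face of a positive-dimensional cube occurs only when $C(\cP)$ is a single point.
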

 
We will also make use of the following (nonstandard) definition and lemmas.

\begin{defn}
	A \emph{partial ultrafilter} on a pocset $\cP$ is a subset $\omega\subseteq \cP$ (possibly empty) such that if $A\in\omega$ and $A\leq B$ then $B^*\notin\omega$. (Note that $\omega$ contains at most one element from each pair $\{A,A^*\}$.)
	We will sometimes refer to ultrafilters as \emph{complete ultrafilters} to stress that they satisfy the completeness property, which is what distinguishes ultrafilters from partial ultrafilters.
A partial ultrafilter $\omega$ is \emph{DCC} (descending chain condition) if it contains no strictly descending infinite chain $A_1>A_2>A_3>\cdots$. A partial ultrafilter $\omega$ is \emph{cofinite} if $\omega\cap\{A,A^*\}$ is empty for only finitely many $A\in\cP$.
\end{defn}

\begin{lem}\label{lem:extend}
	Any partial ultrafilter $\omega$ can be extended to a complete ultrafilter $\bar{\omega}$. Moreover, if $\omega$ is DCC and cofinite, then $\bar{\omega}$ is DCC.
\end{lem}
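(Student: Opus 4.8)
The plan is to produce $\bar\omega$ by a Zorn's lemma argument and then treat descending chains separately. First I would order by inclusion the collection of all partial ultrafilters containing $\omega$, and check that the union of a chain of partial ultrafilters is again a partial ultrafilter: if $A$ lies in the union and $A\leq B$ with $B^*$ also in the union, then $A$ and $B^*$ both already lie in a single member of the chain, contradicting that that member is a partial ultrafilter. Hence Zorn's lemma gives a maximal partial ultrafilter $\bar\omega\supseteq\omega$.

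The heart of the argument is to show this maximal $\bar\omega$ is complete, i.e. contains at least one element of every pair $\{A,A^*\}$. Suppose not, so $A\notin\bar\omega$ and $A^*\notin\bar\omega$ for some $A$. By maximality neither $\bar\omega\cup\{A\}$ nor $\bar\omega\cup\{A^*\}$ is a partial ultrafilter. Unwinding the defining condition of a partial ultrafilter, and using the pocset axiom $C\leq D\Rightarrow D^*\leq C^*$ to rewrite the ``bad'' instances that involve the newly adjoined element, one finds that the failure for $\bar\omega\cup\{A\}$ forces some $B$ with $A\leq B$ and $B^*\in\bar\omega$, and symmetrically the failure for $\bar\omega\cup\{A^*\}$ forces some $B'$ with $A^*\leq B'$ and $(B')^*\in\bar\omega$. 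Then $A\leq B$ gives $B^*\leq A^*$, hence $B^*\leq A^*\leq B'$; but $B^*\in\bar\omega$ together with $B^*\leq B'$ is incompatible with $(B')^*\in\bar\omega$ since $\bar\omega$ is a partial ultrafilter, a contradiction. So $\bar\omega$ is complete, and for a complete set the partial-ultrafilter condition ``$A\in\bar\omega$ and $A\leq B$ imply $B^*\notin\bar\omega$'' is exactly the consistency condition ``$A\in\bar\omega$ and $A\leq B$ imply $B\in\bar\omega$''; thus $\bar\omega$ is a complete ultrafilter in the sense of the earlier definition.

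For the ``moreover'' statement, note that if $\omega$ is cofinite then $\bar\omega\setminus\omega$ is finite, since it meets each of the finitely many pairs $\{A,A^*\}$ disjoint from $\omega$ in at most one element. If $\bar\omega$ contained a strictly descending infinite chain $A_1>A_2>A_3>\cdots$, then, the $A_i$ being distinct, all but finitely many lie in $\omega$, so some tail $A_N>A_{N+1}>\cdots$ is a strictly descending infinite chain in $\omega$, contradicting that $\omega$ is DCC. Hence $\bar\omega$ is DCC. (In the DCC-and-cofinite case one can bypass Zorn's lemma and instead adjoin the finitely many missing elements one at a time, the same case analysis ensuring at each stage that one of the two choices keeps the set a partial ultrafilter.)

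I expect the main obstacle to be the bookkeeping in the completeness step: carefully listing the ways the partial-ultrafilter condition can fail after adjoining a single element $A$ (or $A^*$), and verifying that every such failure has the stated form, so that the two failures combine — via the order-reversing involution — into the contradiction above. Everything else is routine.
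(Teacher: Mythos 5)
Your proof is correct and follows essentially the same line as the paper's: the key one-step argument (if neither $\bar\omega\cup\{A\}$ nor $\bar\omega\cup\{A^*\}$ is a partial ultrafilter, produce $B,B'$ with $A\leq B$, $A^*\leq B'$, $B^*,(B')^*\in\bar\omega$, and derive $B^*\leq B'$ for a contradiction) is exactly the paper's argument, and the Zorn's-lemma and cofiniteness/DCC steps match as well. You simply run Zorn first and then argue that maximality forces completeness, whereas the paper states the one-step extendability up front and leaves the ``maximal implies complete'' inference implicit; your version spells out the case analysis that the paper compresses.
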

\begin{proof}
Take $A\in\cP$ with $\omega\cap\{A,A^*\}=\emptyset$. At least one of $\omega\cup\{A\}$ or $\omega\cup\{A^*\}$ must be a partial ultrafilter: indeed otherwise there exist $A\leq B_1$ and $A^*\leq B_2$ with $B_1^*,B_2^*\in\omega$, and this implies $B_1^*\leq B_2$, contradicting the fact that $\omega$ is a partial ultrafilter. The union of a chain of partial ultrafilters is clearly a partial ultrafilter, so it follows from Zorn's lemma that $\omega$ can be extended to an ultrafilter $\bar{\omega}$. If $\omega$ is cofinite, then any strictly descending infinite chain in $\bar{\omega}$ contains an infinite subchain in $\omega$; hence $\bar{\omega}$ is DCC if $\omega$ is DCC and cofinite.
\end{proof}

\begin{lem}\label{lem:partial=subcomplex}
	Let $\cP$ be a pocset of finite width that admits at least one DCC ultrafilter.
	Let $\omega$ be a DCC partial ultrafilter on $\cP$ such that if $A\in\omega$ and $A\leq B$, then $B\in\omega$.
	Then for each $\omega_0\in C(\cP)^0$, $\omega$ can be extended to a DCC complete ultrafilter given by
	\begin{equation}\label{baromega}
\bar{\omega}:=\omega\sqcup\{A\in\omega_0\mid \omega\cap\{A,A^*\}=\emptyset\}.
	\end{equation}
	Moreover, the set of all possible DCC complete extensions $\bar{\omega}$ of $\omega$ is equal to the intersection of the halfspaces $\{\bar{\omega}\in C(\cP)^0\mid A\in\bar{\omega}\}$ for $A\in\omega$, so it forms a convex subcomplex of $C(\cP)$. (If $\omega=\emptyset$ then this intersection is $C(\cP)^0$ by convention.)
\end{lem}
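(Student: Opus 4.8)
The plan is to verify directly that the set $\bar\omega$ in \eqref{baromega} satisfies the two ultrafilter axioms and the DCC condition, after which the ``moreover'' clause follows almost formally from the pocset--halfspace dictionary of Proposition~\ref{prop:cubing}. Throughout, write $\bar\omega=\omega\sqcup S$ with $S:=\{A\in\omega_0\mid\omega\cap\{A,A^*\}=\emptyset\}$; this union is genuinely disjoint since any $A\in S$ lies outside $\omega$ by the defining condition. For \emph{completeness}, fix $A\in\cP$: if $\omega\cap\{A,A^*\}\neq\emptyset$ then, $\omega$ being a partial ultrafilter, it contains exactly one of $A,A^*$ while $S$ contains neither; if $\omega\cap\{A,A^*\}=\emptyset$ then completeness of $\omega_0$ puts exactly one of $A,A^*$ into $\omega_0$, hence into $S$. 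Either way $\bar\omega$ meets $\{A,A^*\}$ in exactly one element.

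For \emph{consistency}, suppose $A\in\bar\omega$ and $A\leq B$; I want $B\in\bar\omega$. If $A\in\omega$ then $B\in\omega\subseteq\bar\omega$ by the standing hypothesis that $\omega$ is upward closed. Otherwise $A\in S$, so $A\in\omega_0$ and $\{A,A^*\}\cap\omega=\emptyset$; consistency of $\omega_0$ gives $B\in\omega_0$. Now I claim $B^*\notin\omega$: since $A\leq B$ we have $B^*\leq A^*$, so $B^*\in\omega$ would force $A^*\in\omega$ by upward closure, contradicting $\{A,A^*\}\cap\omega=\emptyset$. Hence either $B\in\omega\subseteq\bar\omega$, or else $\{B,B^*\}\cap\omega=\emptyset$ and then $B\in\omega_0$ gives $B\in S\subseteq\bar\omega$. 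I expect this consistency check to be the only genuinely nontrivial point: it is exactly where the interaction between the upward closure of $\omega$ and the order-reversal $A\leq B\Rightarrow B^*\leq A^*$ enters.

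For the \emph{DCC} property, suppose for contradiction that $A_1>A_2>A_3>\cdots$ is a strictly descending infinite chain in $\bar\omega=\omega\sqcup S$. Then either infinitely many $A_i$ lie in $\omega$, contradicting that $\omega$ is DCC, or infinitely many lie in $S\subseteq\omega_0$, contradicting that $\omega_0$ is DCC. So no such chain exists, and $\bar\omega$ is a DCC complete ultrafilter extending $\omega$, i.e.\ a vertex of $C(\cP)$.

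Finally, for the ``moreover'' clause: a vertex $\eta$ of $C(\cP)$ — equivalently a DCC complete ultrafilter — contains $\omega$ if and only if $A\in\eta$ for every $A\in\omega$, i.e.\ if and only if $\eta$ lies in every set $\{\eta'\in C(\cP)^0\mid A\in\eta'\}$ with $A\in\omega$, each of which is a halfspace of $C(\cP)$ by Proposition~\ref{prop:cubing}. Hence the set of all DCC complete extensions of $\omega$ equals this intersection of halfspaces; it is nonempty by the first part (and equals $C(\cP)^0$ if $\omega=\emptyset$), and being an intersection of convex subsets of $C(\cP)^0$ it is convex, hence a convex subcomplex. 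I anticipate no obstacle in this last step beyond quoting the dictionary.
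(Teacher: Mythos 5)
Your proof is correct and follows essentially the same route as the paper's: completeness is immediate, the consistency check hinges on deriving $B^*\notin\omega$ from upward closure and the order-reversal $A\leq B\Rightarrow B^*\leq A^*$, and DCC follows by noting any descending chain in $\bar\omega$ has an infinite subchain in $\omega$ or in $\omega_0$. You also spell out the ``moreover'' clause explicitly (the paper leaves it implicit), but your argument there is the intended one: membership of $\eta$ in the set of DCC extensions of $\omega$ is precisely $\omega\subseteq\eta$, which is the condition of lying in each halfspace $\{\eta'\mid A\in\eta'\}$ for $A\in\omega$.
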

\begin{proof}
	The completeness axiom is clearly satisfied by $\bar{\omega}$. To check consistency, suppose $A\in\bar{\omega}$ and $A\leq B$; we wish to show that $B\in\bar{\omega}$. We have two cases. In the first case $A\in\omega$, so $B\in\omega\subseteq\bar{\omega}$ by hypothesis of $\omega$. In the second case $A\in\omega_0$ and $\omega\cap\{A,A^*\}=\emptyset$; observe that $B^*\notin\omega$, otherwise we would have $B^*\leq A^*$ and $A^*\in\omega$; so either $B\in\omega\subseteq\bar{\omega}$, or $\omega\cap\{B,B^*\}=\emptyset$ and $B\in\omega_0$ by consistency of $\omega_0$, and we again have $B\in\bar{\omega}$. So $\bar{\omega}$ is a complete ultrafilter.
	Furthermore, we deduce that $\bar{\omega}$ is DCC because any strictly descending infinite chain in $\bar{\omega}$ contains an infinite subchain in either $\omega$ or $\omega_0$.
\end{proof}

\subsection{Group splittings and accessibility}

By a \emph{splitting} of a group $G$ we mean an action on a tree $T$ without edge inversions. The splitting is \emph{finite} if the action is cocompact and \emph{non-trivial} if there is no fixed point. The splitting is \emph{over finite subgroups} if the edge groups are finite.
A finitely generated group is \emph{accessible} if it admits a splitting over finite subgroups in which each vertex group is either finite or one-ended. For such a splitting the vertex groups do not themselves admit non-trivial splittings over finite subgroups \cite{Stallings71}.
(One can also assume that the splitting is finite by passing to a minimal invariant subtree.)
Dunwoody proved the following result.

\begin{thm}\cite{Dunwoody85}\label{thm:accessibility}
	Every finitely presented group is accessible.
\end{thm}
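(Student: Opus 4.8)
The plan is to follow Dunwoody's original scheme: realise $G$ on a suitable geometric model, translate splittings over finite subgroups into combinatorial \emph{patterns of tracks} on that model, and then bound the complexity of such patterns by a constant depending only on the model. First I would use finite presentability to build a finite $2$-complex $K$ with $\pi_1(K)\cong G$ and take $X$ to be its universal cover, subdivided so as to be a simplicial complex; then $G$ acts freely, cocompactly and simplicially on the simply connected complex $X$, and I set $N$ to be the number of $2$-simplices of $X/G$. Recall that a \emph{track} in $X$ is a connected subspace disjoint from the $0$-skeleton, meeting each edge in a finite set, and meeting each $2$-simplex in a disjoint union of arcs each joining two distinct sides, with endpoints matching across edges; a track with finitely many arcs (a \emph{finite track}) separates $X$ because $X$ is simply connected. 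The key point, using freeness of the action, is that the stabiliser of a finite track $t$ acts freely on the finite set of edges that $t$ crosses and is therefore finite. Consequently a $G$-invariant family of pairwise disjoint finite tracks with finitely many $G$-orbits has a \emph{dual tree} $T$ (complementary regions as vertices, tracks as edges) on which $G$ acts cocompactly with finite edge stabilisers, i.e.\ a finite splitting of $G$ over finite subgroups.

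Next I would set up the reverse direction. Given any finite splitting of $G$ over finite subgroups, i.e.\ a cocompact $G$-tree $T'$ with finite edge stabilisers, Dunwoody's \emph{resolution} produces an equivariant map $X\to T'$ (defined on orbit representatives of vertices and extended over edges and $2$-cells, using that trees are contractible) which may be made transverse to the midpoints of the edges of $T'$; the preimages of these midpoints form a $G$-invariant pattern of disjoint finite tracks whose dual tree maps equivariantly onto $T'$. Calling two disjoint tracks \emph{parallel} when the region of $X$ between them contains no vertex of $X$, one arranges, when $T'$ is reduced (minimal, with no collapsible edge orbit), that the resolving pattern contains no parallel pair; conversely such a pattern yields a reduced finite splitting of $G$ over finite subgroups with at least as many edge orbits as the pattern has track orbits. (The precise bookkeeping for ``reduced'' on both sides, and the handling of one-sided or infinite tracks, is part of Dunwoody's setup.)

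The heart of the proof, and the step I expect to be the main obstacle, is Dunwoody's combinatorial bound: there is a constant $n_0=n_0(N)$ such that every $G$-invariant pattern of disjoint finite tracks in $X$ with no two tracks parallel has at most $n_0$ orbits of tracks. The idea is to study the dual tree $T_P$ of such a pattern $P$ together with the way the finitely many $G$-orbits of vertices and of $2$-simplices of $X$ are distributed among the complementary regions: a region that has valence two in $T_P$ and contains no vertex of $X$ would force its two bounding tracks to be parallel, which is excluded, so every ``interior'' region must carry a vertex orbit or a $2$-simplex feature, and a counting argument on $T_P$ (leaves versus branch points, an Euler-characteristic count, and careful passage between $X$ and $X/G$) bounds the number of track orbits in terms of $N$. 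Getting this count right — controlling the leaf and branching regions, and the various degenerate configurations of arcs inside $2$-simplices — is the genuinely delicate part, and is where essentially all of the work lies.

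Granting the bound, the conclusion is a maximality argument. Among all reduced cocompact $G$-trees with finite edge stabilisers — equivalently, reduced finite splittings of $G$ over finite subgroups — choose one, $T^{*}$, with the greatest number of edge orbits, which exists by the bound. If some vertex group $G_v$ of $T^{*}$ were infinite and not one-ended, it would have at least two ends, so by Stallings' theorem \cite{Stallings71} it would split non-trivially over a finite subgroup; refining $T^{*}$ at the orbit of $v$ by this splitting and passing to a reduced form produces a reduced cocompact $G$-tree with finite edge stabilisers and strictly more edge orbits than $T^{*}$ — a standard consequence of Bass--Serre theory, since the refinement is proper — contradicting the choice of $T^{*}$. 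Hence every vertex group of $T^{*}$ is finite or one-ended, so $T^{*}$ exhibits $G$ as accessible.
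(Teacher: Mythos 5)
The paper does not prove this theorem; it is cited directly from Dunwoody \cite{Dunwoody85}, so there is no internal proof to compare against. Your outline faithfully reproduces the main stages of Dunwoody's original track-pattern argument: realising $G$ on the universal cover of a finite $2$-complex, the correspondence between $G$-invariant patterns of disjoint finite tracks and finite splittings over finite subgroups (via dual trees one way and resolutions the other), the central combinatorial bound on the number of orbits of pairwise non-parallel tracks in terms of the number of $2$-simplices of $X/G$, and the closing maximality-plus-Stallings argument. You also correctly flag that the bound on non-parallel track orbits is where the real work lies; as written that step is a plan rather than a proof, but the overall structure is sound and matches Dunwoody's approach.
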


We can then deduce the following theorem using \cite[Theorem 5.12]{Dicks80}.

\begin{thm}\label{thm:terminate}
	Let $G$ be a finitely presented group and let $(G_i)$ be a sequence of groups such that $G_0=G$, and $G_{i+1}$ is a vertex group in some non-trivial finite splitting of $G_i$ over finite subgroups. Then the sequence $(G_i)$ terminates.
\end{thm}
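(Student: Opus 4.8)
The plan is to combine Dunwoody's accessibility theorem with the complexity theory for finitely generated groups developed by Dicks--Dunwoody. Since $G$ is finitely presented, Theorem \ref{thm:accessibility} gives that $G$ is accessible. I would first record the routine observation that every $G_i$ is finitely generated: each $G_{i+1}$ is a vertex group in a finite splitting of $G_i$ (so the relevant $G_i$-action on a tree is cocompact) whose edge groups are finite, and a standard Bass--Serre argument shows that vertex groups of a cocompact action on a tree with finitely generated edge stabilizers are finitely generated. So the whole sequence $(G_i)$ consists of finitely generated groups.

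The essential input is \cite[Theorem 5.12]{Dicks80}, which attaches to each finitely generated group $H$ a \emph{complexity} $\kappa(H)$ in a well-ordered set, finite exactly when $H$ is accessible, with the feature that whenever $H$ admits a non-trivial splitting over a finite subgroup, every vertex group $H'$ of that splitting has $\kappa(H')<\kappa(H)$. Applying this along the sequence, the hypothesis that $G_{i+1}$ is a vertex group of a non-trivial finite splitting of $G_i$ over finite subgroups yields $\kappa(G_{i+1})<\kappa(G_i)$ for every $i$. Since $G$ is accessible, $\kappa(G_0)=\kappa(G)$ is finite, so by induction every $\kappa(G_i)$ is finite (in particular every $G_i$ is again accessible, even though finite generation alone would not guarantee this), and $\kappa(G_0)>\kappa(G_1)>\kappa(G_2)>\cdots$ is a strictly decreasing sequence in a well-ordered set. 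Such a sequence must be finite, so $(G_i)$ terminates.

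The real content lives entirely in \cite[Theorem 5.12]{Dicks80} --- the construction of the complexity invariant and the verification that it strictly decreases under splitting over finite subgroups --- so I expect no serious obstacle on our side beyond the bookkeeping: checking that each $G_i$ is finitely generated (so the complexity is defined and finite at each stage), that ``non-trivial splitting over finite subgroups'' is precisely the hypothesis under which \cite[Theorem 5.12]{Dicks80} forces a strict drop, and that a strictly decreasing sequence in the relevant well-ordered set terminates. If one prefers to avoid quoting the complexity invariant directly, an alternative is to argue by contradiction: an infinite sequence $(G_i)$ would, by splicing together the successive Bass--Serre trees, produce graph-of-groups decompositions of $G$ over finite subgroups with an unbounded number of edges, contradicting accessibility of $G$; but making ``splicing'' precise (handling reducedness and the HNN versus amalgam cases) is exactly the bookkeeping that \cite[Theorem 5.12]{Dicks80} packages cleanly, which is why I would route the argument through it.
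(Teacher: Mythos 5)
Your proposal is correct and follows the same route as the paper, which itself offers no argument beyond pointing to Dunwoody's accessibility theorem and \cite[Theorem 5.12]{Dicks80}; you have simply made explicit the complexity bookkeeping (finite generation of each $G_i$, strict decrease of the Dicks complexity when passing to a vertex group of a non-trivial finite splitting over finite subgroups, and termination of a strictly decreasing sequence) that the paper leaves implicit.
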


\bigskip
\section{Reducing to one-ended halfspaces}\label{sec:oneend}

In this section we prove the following theorem.

\theoremstyle{plain}
\newtheorem*{thm:halfspaces}{Theorem \ref{thm:halfspaces}}
\begin{thm:halfspaces}
	Let $G$ be a group acting cocompactly on a one-ended locally finite CAT(0) cube complex $X$. Suppose there is a subgroup $\Gamma<G$ whose induced action on $X$ is proper and cocompact. Then there is a locally finite CAT(0) cube complex $Y$ with the following properties:
	\begin{enumerate}
		\item All halfspaces in $Y$ are one-ended.
		\item $G$ acts cocompactly on $Y$.
		\item There exists a $G$-equivariant quasi-isometry $\theta:X\to Y$.
		\item The $G$-stabilizers of hyperplanes in $Y$ are subgroups of the $G$-stabilizers of hyperplanes in $X$.
	\end{enumerate}		
\end{thm:halfspaces}

We will deduce Theorem \ref{thm:halfspaces} from a repeated application of the following theorem.
Recall that an action of a group $G$ on a CAT(0) cube complex $X$ is \emph{without inversions in hyperplanes} if there is no $g\in G$ and halfspace $\h\in\cH(X)$ with $g\h=\h^*$.

\begin{thm}\label{thm:smallerstabs}
	Let $G$ be a group acting cocompactly on a one-ended locally finite essential CAT(0) cube complex $X$ without inversions in hyperplanes.
	Suppose there is a subgroup $\Gamma<G$ whose induced action on $X$ is proper and cocompact.
	If $X$ contains a halfspace with more than one end then there is a locally finite essential CAT(0) cube complex $Y$ with the following properties:
	\begin{enumerate}
		\item\label{item:GGammaact} $G$ acts cocompactly on $Y$ without inversions in hyperplanes, and the induced action of $\Gamma$ on $Y$ is proper and cocompact.
		\item There exists a $G$-equivariant quasi-isometry $\theta:X\to Y$.
		\item\label{item:trees} For each hyperplane $\hh\in\hH(X)$ there is a tree $T_{\hh}$, and there is an action of $G$ on $\sqcup_{\hh} T_{\hh}$ that is compatible with the action on $\hH(X)$. Furthermore, there is an injective $G$-equivariant map $$\sigma:\hH(Y)\to\sqcup_{\hh\in\hH(X)} VT_{\hh},$$		
		 such that:
		\begin{enumerate}[label={(\alph*)}]
			\item\label{item:finstab} $\Gamma$ acts on $\sqcup_{\hh}T_{\hh}$ with finite edge stabilizers.
			\item\label{item:cocompact} $\Gamma$ acts on $\sqcup_{\hh}T_{\hh}$ cocompactly.
			\item\label{item:nofix} There exists $\h_0\in\cH(X)$ with more than one end such that the action of $\Gamma_{\hh_0}$ on $T_{\hh_0}$ has no fixed point. 
		\end{enumerate}
	\end{enumerate}	
\end{thm}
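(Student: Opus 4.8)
The plan is to follow the strategy sketched in the introduction: choose a halfspace $\h_0$ to chop up, produce a $G_{\hh_0}$-invariant family of finite separating subcomplexes of $\h_0$, and then replace $\h_0$ and all of its $G$-translates by the resulting pieces, packaging everything as a pocset whose cubing will be $Y$. The trees $T_{\hh}$ will be single points for $\hh\notin G\hh_0$ and the ``structure trees'' of the choppings for $\hh\in G\hh_0$.

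\emph{Choosing $\h_0$ and the chopping family (the main obstacle).} Since $X$ is one-ended, a multi-ended halfspace $\h$ necessarily has unbounded carrier $N(\hh)$ (otherwise a finite subcomplex would separate $X$ into at least three unbounded pieces), so $\Gamma_{\hh}$ is an infinite, finitely presented group acting properly and cocompactly on $N(\hh)$. The first step is a lemma asserting that among the multi-ended halfspaces of $X$ one can pick $\h_0$ for which $\Gamma_{\hh_0}$ itself has more than one end; by Stallings' theorem $\Gamma_{\hh_0}$ then splits non-trivially over a finite subgroup, and, working in $N(\hh_0)$ where $\Gamma_{\hh_0}$ acts cocompactly, such a splitting can be realised by a $\Gamma_{\hh_0}$-finite nested family of finite tracks in the sense of Dunwoody. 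One then enlarges this to a $G_{\hh_0}$-invariant, $\Gamma_{\hh_0}$-cofinite, nested family $\mathcal{K}$ of finite separating subcomplexes of the whole halfspace $\h_0$, with finitely many $\Gamma_{\hh_0}$-orbits of complementary pieces, whose associated structure tree $T_{\hh_0}$ (vertices: the pieces of $\h_0\setminus\mathcal{K}$ and the elements of $\mathcal{K}$; edges: incidences) carries a $\Gamma_{\hh_0}$-action with no global fixed point and finite edge stabilisers. Carrying this out — simultaneously achieving nestedness, $G_{\hh_0}$-equivariance, finitely many orbits of pieces, finiteness of edge stabilisers, and non-triviality of the action on $T_{\hh_0}$, despite $\Gamma_{\hh_0}$ acting cocompactly only on $N(\hh_0)$ and not on all of $\h_0$ — is the heart of the proof and the step I expect to be the main obstacle.

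\emph{Building $Y$.} Let $\cP_Y$ consist of all halfspaces of $X$ that are not $G$-translates of $\h_0$ or $\h_0^*$, together with, for each $g\in G$, one ``new halfspace'' $A_{\vec e}\subseteq X^0$ for every oriented edge $\vec e$ of the structure tree $gT_{\hh_0}$ of $g\h_0$: namely $A_{\vec e}$ is the union of the pieces of $g\h_0\setminus g\mathcal{K}$ on the head side of $\vec e$, while $A_{\vec e}^*$ is the union of the remaining pieces, of $g\mathcal{K}$, and of $g\h_0^*$. The order on $\cP_Y$ is the natural one: new halfspaces from a common translate are ordered by nesting of their half-trees in $gT_{\hh_0}$, and a new halfspace $A_{\vec e}$ relates to any other element of $\cP_Y$ exactly as $g\h_0$ does (e.g.\ $\h\supseteq g\h_0\Rightarrow\h\supseteq A_{\vec e}$, and transversality is inherited). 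One checks that $(\cP_Y,\le)$ is a pocset — the only non-obvious comparisons being among new halfspaces from a fixed translate, where they are governed by the tree $gT_{\hh_0}$ — of finite width (a pairwise-transverse family has at most $\dim X$ old members, and the new members meeting a fixed cube are bounded in number using local finiteness of $X$ and finiteness of the members of $\mathcal{K}$), admitting a DCC ultrafilter obtained by modifying one on $X$ via Lemmas~\ref{lem:extend} and~\ref{lem:partial=subcomplex} (replace the datum ``$g\h_0\in\omega$'' by the half-trees of $gT_{\hh_0}$ determined by the piece of $g\h_0\setminus g\mathcal{K}$ that $\omega$ points into). Proposition~\ref{prop:cubing} then yields $Y:=C(\cP_Y)$, and Proposition~\ref{prop:essential} lets us pass to its essential core.

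\emph{Verifying the properties.} Local finiteness of $Y$ follows from that of $X$ together with finiteness of the members of $\mathcal{K}$ and local finiteness of the trees $gT_{\hh_0}$. The map $\theta\colon X\to Y$ is induced on vertices by the same modification of ultrafilters; it is $G$-equivariant by construction and a quasi-isometry because $\cP_Y$ is a bounded refinement of $\cH(X)$ — within each $\Gamma$-orbit, distances in $gT_{\hh_0}$ are comparable to distances in $X$, using local finiteness, finiteness of the members of $\mathcal{K}$ and cocompactness. Properness of $\Gamma\acts Y$ follows from properness of $\Gamma\acts X$ and the $\Gamma$-equivariant quasi-isometry $\theta$; cocompactness of $G\acts Y$ follows from Lemma~\ref{lem:cocompact} together with finiteness of the number of $G$-orbits of hyperplanes of $X$, the fact that $\mathcal{K}$ is a single $\Gamma_{\hh_0}$-orbit, and finiteness of the number of $\Gamma_{\hh_0}$-orbits of pieces; one may further arrange the action to be without inversions in hyperplanes by barycentrically subdividing the trees before forming the new halfspaces. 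For item~(3), set $T_{\hh}$ to be a point if $\hh\notin G\hh_0$ and (the subdivision of) the structure tree above if $\hh\in G\hh_0$; the $G$-action on $\bigsqcup_{\hh}T_{\hh}$ is the obvious one, compatible with $G\acts\hH(X)$ because $\mathcal{K}$ is $G_{\hh_0}$-invariant, and $\sigma$ sends each hyperplane of $Y$ either to its point-tree or to the vertex of $gT_{\hh_0}$ corresponding to the edge it came from. Injectivity and $G$-equivariance of $\sigma$ are then clear; (a) edge stabilisers of $\bigsqcup_\hh T_\hh$ are finite, being contained in stabilisers of the finite subcomplexes in $\mathcal{K}$ under the proper $\Gamma$-action; (b) the action is cocompact by the orbit count above; and (c) $\Gamma_{\hh_0}$ acts on $T_{\hh_0}$ with no global fixed point by the choice of $\mathcal{K}$.
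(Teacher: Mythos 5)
Your proposal diverges from the paper's argument at the central step, and the divergence is exactly where the genuine gaps lie. You propose (i) to first find a multi-ended halfspace $\h_0$ whose stabiliser $\Gamma_{\hh_0}$ is itself multi-ended, (ii) to apply Stallings' theorem and realise the resulting splitting by Dunwoody tracks in $N(\hh_0)$, and (iii) to promote these tracks to a $G_{\hh_0}$-invariant \emph{nested} family $\mathcal{K}$ of finite separating subcomplexes of all of $\h_0$. Step (i) is stated but not proved, and it is far from obvious: multi-endedness of $\h_0$ gives a finite $\fc_0\subset\h_0$ separating $\h_0$ into several unbounded pieces, and although each such piece meets $N(\hh_0)$ in an unbounded set (cf.\ Lemma~\ref{lem:boundaryunbounded}), those pieces can a priori reconnect through $N(\hh_0)\cap\h_0^*$, so one cannot simply read off that $N(\hh_0)$ (hence $\Gamma_{\hh_0}$) has more than one end. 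In the paper this fact is a \emph{consequence} of the construction rather than an input to it: $\Gamma_{\hh_0}$ is shown to act without fixed point on an auxiliary cube complex built from $\fc_0$ and its $G_{\hh_0}$-translates, and only after the fact does one deduce that $\Gamma_{\hh_0}$ splits. Step (iii) is the more serious gap, and you yourself flag it as the main obstacle. Simultaneously achieving nestedness and $G_{\hh_0}$-invariance while keeping only finitely many $\Gamma_{\hh_0}$-orbits is exactly the difficulty the paper is designed to circumvent: the tracks produced by Dunwoody are $\Gamma_{\hh_0}$-finite and nested inside $N(\hh_0)$, but $G_{\hh_0}$-saturating them destroys nestedness, while equivariantly re-nesting (in the style of tracks or resolutions) is a substantial piece of work that is nowhere sketched, and is moreover complicated by the fact that $\Gamma_{\hh_0}$ acts cocompactly on $N(\hh_0)$ but not on $\h_0$.

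The paper's route avoids both issues. It starts from the geometry rather than the group theory: it chooses a finite convex $\fc_0\subset\h_0$ that separates $\h_0$ into several unbounded pieces, forms the wallspace $(\h_0,\cP_0)$ from \emph{all} $G_{\hh_0}$-translates of $\fc_0$ (with no attempt at nestedness), cubes it to get a CAT(0) cube complex $C(\cP_0)$ with compact hyperplanes, and then uses Hagen--Touikan panel collapse to turn $C(\cP_0)$ into a tree $T_0$. Non-triviality of the $\Gamma_{\hh_0}$-action on $T_0$ is proved by a direct skewering argument (Lemma~\ref{lem:nofix}), not via Stallings. This ``cube first, collapse to a tree second'' manoeuvre is the key idea missing from your proposal, and it is precisely what dissolves the nestedness obstruction you identify. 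Your outline of the pocset $\cP_Y$ built from half-trees of $gT_{\hh_0}$, the induced $G$-action on $\bigsqcup_{\hh}T_{\hh}$ via a transversal of $G_{\hh_0}$, and the verifications of cocompactness/local finiteness/properness are in the right spirit and resemble the paper's treatment (the paper encodes the pieces of $\h_0$ as vertices of $C(\cP_0)$ via the map $[x]\mapsto\lambda_x$ rather than as half-trees, and has to do real work to show the resulting pocset has finite width and that $\theta$ is a quasi-isometry), but without steps (i) and (iii) the construction never gets off the ground.
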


Let's first see how to deduce Theorem \ref{thm:halfspaces}. 

\begin{proof}[Proof of Theorem \ref{thm:halfspaces}]
	We may assume that $X$ is essential by Proposition \ref{prop:essential}, and we may assume that $G$ acts on $X$ without inversions in hyperplanes by passing to the first cubical subdivision.
	If all halfspaces in $X$ are one-ended then we can take $Y=X$, otherwise apply Theorem \ref{thm:smallerstabs}. For each hyperplane $\hh\in \hH(X)$, the action of $\Gamma_{\hh}$ on $T_{\hh}$ is a finite splitting of $\Gamma_{\hh}$ over finite subgroups by \ref{item:finstab} and \ref{item:cocompact} (subdivide $T_{\hh}$ if there are edge inversions); and the $\Gamma$-stabilizers of hyperplanes in $\sigma^{-1}(T_{\hh})$ are distinct vertex groups in this splitting. Moreover, it follows from \ref{item:nofix} that this splitting is non-trivial for some hyperplane stabilizer $\Gamma_{\hh}$.
	
	If $Y$ also contains a halfspace with more than one end, then we may apply Theorem \ref{thm:smallerstabs} again to $Y$, and the hyperplane stabilizers for the new cube complex will be vertex groups in splittings of the hyperplane stabilizers for $Y$. And we can keep applying Theorem \ref{thm:smallerstabs} repeatedly, unless we obtain a cube complex $Y$ in which every halfspace is one-ended (there will be no bounded halfspaces since Theorem \ref{thm:smallerstabs} always produces an essential cube complex).
	Each hyperplane stabilizer $\Gamma_{\hh}$ for $X$ acts properly and cocompactly on $\hh$, so in particular $\Gamma_{\hh}$ is finitely presented; it then follows from Theorem \ref{thm:terminate} that the process of repeatedly applying Theorem \ref{thm:smallerstabs} must terminate after a finite number of steps. 
	The cube complex obtained at the final step is the desired cube complex $Y$ in Theorem \ref{thm:halfspaces}.
	Note that properties \ref{item:Gcocompact}--\ref{item:halfspacestabs} in Theorem \ref{thm:halfspaces} are satisfied because they hold for every application of Theorem \ref{thm:smallerstabs}.
\end{proof}

We will spend the rest of this section proving Theorem \ref{thm:smallerstabs}.
We will write $\cH=\cH(X)$ for the set of halfspaces of $X$.

\subsection{Chopping up the halfspace $\h_0$}

Let $\h_0\in\cH$ be a halfspace with more than one end, and let $\fc_0\subset\h_0$ be a finite set that separates $\h_0$ into multiple unbounded components. Passing to the convex hull, we may assume that $\fc_0$ is convex (the convex hull is finite by Lemma \ref{lem:finhull}). Also note that $\fc_0$ intersects the cubical neighborhood $N(\hh_0)$, else $\fc_0$ would separate $X$ into multiple unbounded components, contradicting one-endedness of $X$.

\begin{lem}\label{lem:boundaryunbounded}
If $\fa$ is an unbounded component of $\h_0-\fc_0$ then $\fa\cap N(\hh_0)$ is unbounded.
\end{lem}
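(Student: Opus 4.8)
The plan is to argue by contradiction: suppose $\fa$ is an unbounded component of $\h_0 - \fc_0$ but $\fa \cap N(\hh_0)$ is bounded. The intuition is that the "boundary wall" $N(\hh_0)$ should intercept every path leaving $\h_0$, so if $\fa$ only meets $N(\hh_0)$ in a bounded set, then $\fa$ together with a bounded chunk of the complement would separate $X$ into an unbounded piece and the rest — and that rest is also unbounded because $X$ has more than one... wait, $X$ is one-ended, so a bounded set cannot separate $X$ into two unbounded components. That is exactly the contradiction I want to reach. So the core task is: produce a bounded set $\fb$ whose removal from $X$ leaves $\fa$ (minus the bounded part) as a union of bounded-and-unbounded components with at least two unbounded ones, contradicting one-endedness.

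First I would set $\fb := \fc_0 \cup (\fa \cap N(\hh_0))^{+1}$, or more carefully, take $\fb$ to be a suitable bounded thickening of $\fc_0 \cup (\fa \cap N(\hh_0))$; by Lemma \ref{lem:finhull} and the standing assumption that $\fc_0$ is finite, and by the contradiction hypothesis that $\fa \cap N(\hh_0)$ is bounded (hence finite, using local finiteness of $X$), this $\fb$ is finite. Next I would check that $\fa - \fb$ is still unbounded: $\fa$ is unbounded and we removed only a finite subset, so this is immediate. Then the key claim is that no edge of $X^1$ joins a vertex of $\fa - \fb$ to a vertex outside $\h_0 \cup \fb$, and no edge joins $\fa - \fb$ to another component of $\h_0 - \fc_0$ outside $\fb$. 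The second part is clear since distinct components of $\h_0 - \fc_0$ are not adjacent once we stay away from $\fc_0 \subseteq \fb$. For the first part: an edge from $\fa$ to $\h_0^*$ is by definition dual to the hyperplane $\hh_0$, so both its endpoints lie in $N(\hh_0)$; hence its endpoint in $\fa$ lies in $\fa \cap N(\hh_0) \subseteq \fb$. Therefore $\fa - \fb$ is a union of connected components of $X - \fb$, all of which are "cut off" from the rest of $X - \fb$.

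Now I need a second unbounded component of $X - \fb$ disjoint from $\fa - \fb$. Since $\h_0$ has more than one end and $\fc_0$ separates it into multiple unbounded components, pick another unbounded component $\fa'$ of $\h_0 - \fc_0$ with $\fa' \neq \fa$; then $\fa' - \fb$ is unbounded (again we removed only finitely many vertices), is disjoint from $\fa$, and — by the same adjacency argument — sits inside a union of components of $X - \fb$ disjoint from those making up $\fa - \fb$. Hence $X - \fb$ has at least two unbounded components, so the finite set $\fb$ separates $X$ into multiple unbounded components, contradicting the hypothesis that $X$ is one-ended. The only slightly delicate point — and the step I expect to require the most care — is confirming that the adjacency/separation bookkeeping is airtight: specifically that every edge leaving $\fa$ either terminates in $\fc_0$, or stays within $\h_0$ in a component already accounted for, or crosses $\hh_0$ (and thus has its near endpoint in $N(\hh_0) \cap \fa$), so that after deleting $\fb$ the set $\fa - \fb$ genuinely decomposes into full components of $X - \fb$. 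This is elementary once one recalls that edges between complementary halfspaces are precisely the edges dual to the bounding hyperplane, but it is the crux of the argument and deserves to be spelled out.
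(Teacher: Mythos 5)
Your proof is correct and follows the same template as the paper's: assume $\fa\cap N(\hh_0)$ is bounded, form a finite set from $\fc_0$ and $\fa\cap N(\hh_0)$, observe that every edge leaving $\fa$ either terminates in $\fc_0$ or crosses $\hh_0$ at a vertex of $\fa\cap N(\hh_0)$, and conclude that removing this finite set leaves at least two unbounded components, contradicting one-endedness. The genuine difference is your witness for the second unbounded piece: you take another unbounded component $\fa'$ of $\h_0-\fc_0$ (which exists because $\fc_0$ was chosen to cut $\h_0$ into several unbounded pieces), while the paper takes the opposite halfspace $\h_0^*$ and invokes essentialness of $X$ for its unboundedness. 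Your route thus dispenses with the essential hypothesis, a small gain in self-containment, at no cost. Two presentational notes: the thickening $(\,\cdot\,)^{+1}$ in your definition of $\fb$ is unnecessary -- $\fb=\fc_0\cup(\fa\cap N(\hh_0))$ works as is; and the phrase ``by the same adjacency argument'' slightly overclaims for $\fa'$, since $\fa'-\fb$ need \emph{not} be a union of full components of $X-\fb$ (you did not put $\fa'\cap N(\hh_0)$ into $\fb$). What you actually need, and have, is weaker: the components constituting $\fa-\fb$ all lie in $\fa$, and $\fa'-\fb$ is disjoint from $\fa$, so $\fa'-\fb$ meets only other components of $X-\fb$. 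Combined with the fact that $X-\fb$ has finitely many components (so an unbounded set must meet an unbounded one), this gives the desired two unbounded components.
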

\begin{proof}
	If not, then $\fc_0\cup(\fa\cap N(\hh_0))$ is finite and $\fa-N(\hh_0)$ is a finite union of unbounded components of $X^0-\fc_0\cup(\fa\cap N(\hh_0))$.
	The halfspace $\h^*_0$ is contained in another component of $X^0-\fc_0\cup(\fa\cap N(\hh_0))$, and is itself unbounded because $X$ is essential. This contradicts $X$ being one-ended.
\end{proof}

We now consider $G_{\h_0}$-translates of $\fc_0$, which also separate $\h_0$ into multiple unbounded components.
For convenience we will write $G_0$ in place of $G_{\h_0}=G_{\hh_0}$.
Define a wallspace $(\h_0,\cP_0)$, where $\cP_0$ consists of the components of $\h_0-g\fc_0$ for $g\in G_0$, and their complements in $\h_0$.

\begin{lem}\label{lem:finhyp}
	$\cP_0$ has finite width, and the cubing $C(\cP_0)$ has compact hyperplanes.
\end{lem}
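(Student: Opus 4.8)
The plan is to understand $\cP_0$ concretely and then show both that it has finite width and that each hyperplane of the cubing $C(\cP_0)$ touches only finitely many edges. Recall that $\cP_0$ consists, for each $g \in G_0$, of the components of $\h_0 - g\fc_0$ together with their complements in $\h_0$. First I would check that $(\h_0, \cP_0)$ really is a wallspace: $\cP_0$ is closed under complementation by construction, and for fixed vertices $x,y \in \h_0$ I need that only finitely many elements of $\cP_0$ separate them. A set $A \in \cP_0$ coming from $g\fc_0$ can only separate $x$ from $y$ if $g\fc_0$ itself separates $x$ from $y$ in $\h_0$; since $\fc_0$ is convex and finite, $g\fc_0$ is contained in a metric ball around $g \cdot (\text{basepoint})$, and a set separating $x$ from $y$ must meet any geodesic $[x,y]$ (working in the convex subcomplex $\h_0^{+0}$, or using that $\h_0$ is convex in $X$). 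Because $\Gamma$ — hence also $G$ and $G_0$ acting cocompactly on $X$ restricted appropriately — there are only finitely many $G_0$-translates $g\fc_0$ meeting any given bounded set, so only finitely many such $g$ contribute, and each contributes finitely many components; this gives the wallspace condition.

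Next, for finite width: suppose $A_1, \dots, A_n \in \cP_0$ are pairwise transverse. Each $A_i$ is (a component of, or a complement of a component of) $\h_0 - g_i \fc_0$. Transversality of $A_i$ and $A_j$ forces $g_i \fc_0$ and $g_j \fc_0$ to "cross" — more precisely, each of the four corner intersections is nonempty, which in particular forces $g_i\fc_0$ to meet a bounded neighborhood of $g_j\fc_0$ (otherwise one of $g_j\fc_0$'s sides would be entirely on one side of $g_i\fc_0$, killing transversality). I would make this precise by showing there is a uniform constant $D$ (depending on $\diam \fc_0$ and $\dim X$) such that transverse $A_i, A_j$ force $d(g_i \fc_0, g_j \fc_0) \le D$. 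Then the $g_i \fc_0$ all lie in a bounded region, and by properness of the $\Gamma$-action (and cocompactness, bounding the number of $G_0$-orbits of translates in a ball) there are boundedly many such translates; combined with the fact that the $A_i$ coming from a single translate $g\fc_0$ are pairwise non-transverse (components of $\h_0 - g\fc_0$ and their complements form a pocset of width $1$ among themselves, since any two distinct components are nested-or-disjoint via their complements), we get a uniform bound on $n$. This is the step I expect to be the main obstacle: pinning down the quantitative claim that transversality implies bounded distance between the translates of $\fc_0$, and then converting "boundedly many translates in a ball" into an actual finite width bound using properness of $\Gamma \curvearrowright X$ together with finiteness of $G_0 / \Gamma_{\h_0}$-type orbit data. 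Once finite width is established, and since $\omega_x$ for $x \in \h_0$ is a DCC ultrafilter (Definition \ref{defn:wallspace}), Proposition \ref{prop:cubing} applies and $C(\cP_0)$ exists.

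Finally, for compactness of hyperplanes of $C(\cP_0)$: by Proposition \ref{prop:cubing}\ref{item:halfspace} the hyperplanes of $C(\cP_0)$ correspond to pairs $\{A, A^*\}$ with $A \in \cP_0$, and the carrier of the hyperplane dual to $A$ — i.e. $N(\hat A)$ inside $C(\cP_0)$ — is a CAT(0) cube complex whose dimension and local structure are controlled by the elements of $\cP_0$ transverse to $A$ together with $A$ itself. Using the distance bound from the width argument, the elements $B \in \cP_0$ with $B$ transverse to $A$ all come from translates $g\fc_0$ within distance $D$ of the translate defining $A$; there are finitely many such $B$. A DCC ultrafilter lying in the carrier of $\hat A$ is determined by its restriction to $A$ and to these finitely many transverse $B$ (the remaining coordinates are forced by consistency once we fix which side of $A$ we are near), so the carrier has finitely many vertices; hence $\hat A$ is compact, and $G_0$-cocompactness of $C(\cP_0)$ then also follows via Lemma \ref{lem:cocompact}. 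I would phrase the "finitely many transverse elements" input cleanly as a lemma and reuse it for both halves of the statement, since the geometric content — transversality localizes the relevant translates of $\fc_0$ — is identical.
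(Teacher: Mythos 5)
Your high-level plan matches the paper's: reduce both assertions to showing each element of $\cP_0$ is transverse to only finitely many others, since a uniform such bound caps the width and also forces each hyperplane of $C(\cP_0)$ (which is the cubing of the sub-pocset of elements transverse to a given one) to be finite. But the step you yourself flag as ``the main obstacle'' --- that transversality of $A_i,A_j$ forces $g_i\fc_0$ and $g_j\fc_0$ to be within some uniform distance $D$ --- is exactly where the content of the lemma lives, and you do not prove it; you only give the intuition ``otherwise one of $g_j\fc_0$'s sides would be entirely on one side of $g_i\fc_0$''. As stated, your attempt has a genuine gap there.

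The paper closes this gap with a sharper and shorter claim, effectively $D=0$, using that $\fc_0$ was replaced by its convex hull and is therefore connected. If $\fc_0$ and $g\fc_0$ are disjoint, then $g\fc_0$, being connected, lies in a single component $\fa_1$ of $\h_0-\fc_0$. For any other component $\fa\neq\fa_1$ we have $\fa\subseteq\h_0-g\fc_0$, and $\fa$ is connected, so $\fa$ sits inside one component of $\h_0-g\fc_0$ and is hence nested with every component (or complement) of $\h_0-g\fc_0$. For $\fa=\fa_1$ one instead observes that $\fa_1^{\,*}=\fc_0\cup(\text{other components})$ is connected (each component is adjacent to $\fc_0$) and disjoint from $g\fc_0$, so $\fa_1^{\,*}$ lies inside a single component of $\h_0-g\fc_0$ and nesting again follows. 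Thus disjoint translates never yield transverse walls, so a pairwise-transverse family can only use translates $g\fc_0$ that actually meet $\fc_0$; there are finitely many of these by local finiteness of $X$ (your route through properness of $\Gamma$ and cocompactness is also fine here and is arguably cleaner than the paper's appeal to properness of $G_0$). Replacing your ``bounded $D$'' target with this disjointness-implies-nesting observation would complete your argument along essentially the same lines as the paper.
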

\begin{proof}
	As in Proposition \ref{prop:cubing}\ref{item:halfspace}, halfspaces in $C(\cP_0)$ correspond to elements of $\cP_0$, and intersecting hyperplanes in $C(\cP_0)$ come from transverse elements in $\cP_0$; so both assertions of the lemma follow if there is a bound on the number of elements of $\cP_0$ transverse to any given element of $\cP_0$.	
	Local finiteness of $\h_0$ implies that $\h_0-\fc_0$ has finitely many components, so there are finitely many $G_0$-orbits in $\cP_0$. Thus it suffices to consider $\fa\in\cP_0$ a component of $\h_0-\fc_0$, and show that it is transverse to finitely many elements of $\cP_0$. If $\fb$ is a component of $\h_0-g\fc_0$ with $\fc_0,g\fc_0$ disjoint, then $\fa$ is nested with either $\fb$ or $\fb^*$, so $\fa,\fb$ are not transverse. 
	But $\fc_0$ is finite and $X$ is locally finite, so there are only finitely many sets $g\fc_0$ ($g\in G_0$) with $\fc_0\cap g\fc_0\neq\emptyset$, hence only finitely many elements of $\cP_0$ are transverse to $\fa$, as required.
\end{proof}

\begin{lem}\label{lem:finorbits}
	$\Gamma_0:=\Gamma\cap G_0$ acts on $C(\cP_0)$ with finitely many orbits of hyperplanes and finitely many orbits of cubes.
\end{lem}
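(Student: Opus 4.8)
The plan is to deduce both statements from cocompactness of the $\Gamma_0$-action on $C(\cP_0)$, and the real work is to first show that $\Gamma_0$ has only finitely many orbits of elements of $\cP_0$ (equivalently, of the walls $g\fc_0$, $g\in G_0$). Once that is in hand, Lemma~\ref{lem:finhyp} bounds the width of $\cP_0$ and the number of elements transverse to any given one, Lemma~\ref{lem:cocompact} turns this into cocompactness, and finitely many orbits of vertices and of hyperplanes follow immediately.

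For the main step, finitely many $\Gamma_0$-orbits of elements of $\cP_0$, the geometric input is the observation (made just before Lemma~\ref{lem:boundaryunbounded}) that $\fc_0$ meets the cubical neighborhood $N(\hh_0)$. Since every $g\in G_0=G_{\hh_0}$ fixes $\hh_0$ and hence $N(\hh_0)$, this forces $g\fc_0\cap N(\hh_0)=g(\fc_0\cap N(\hh_0))\neq\emptyset$ for all $g\in G_0$; so every wall of $\cP_0$ lies near $\hh_0$. Now recall that, as $\Gamma$ acts properly and cocompactly on the locally finite complex $X$, the hyperplane stabilizer $\Gamma_0=\Gamma_{\hh_0}=\Gamma\cap G_0$ acts cocompactly on $\hh_0$, hence on $N(\hh_0)$; fix a finite $K\subseteq N(\hh_0)$ with $\Gamma_0 K=N(\hh_0)$ and set $D=\diam(\fc_0)$. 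Given $g\in G_0$, pick $x\in g\fc_0\cap N(\hh_0)$ and $\gamma\in\Gamma_0$ with $\gamma x\in K$; then $\gamma g\fc_0$ is a subset of $X^0$ of cardinality $|\fc_0|$ and diameter $D$ meeting $K$, so $\gamma g\fc_0\subseteq\cN_D(K)$, which is finite by local finiteness of $X$. Thus there are finitely many possibilities for $\gamma g\fc_0$, i.e.\ finitely many $\Gamma_0$-orbits of walls. Since $\h_0-g\fc_0$ has the same (finite, by local finiteness) number of components as $\h_0-\fc_0$, and every element of $\cP_0$ is a component of some $\h_0-g\fc_0$ or the complement of one, there are finitely many $\Gamma_0$-orbits of elements of $\cP_0$. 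I expect this step to be the main obstacle: the crux is recognizing that ``$\fc_0$ meets $N(\hh_0)$'' is exactly what keeps all the translates $g\fc_0$ within bounded distance of the $\Gamma_0$-cocompact set $N(\hh_0)$.

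For the conclusion, apply Lemma~\ref{lem:cocompact}: it suffices to show $\Gamma_0$ has finitely many orbits of collections of pairwise transverse elements of $\cP_0$. Any such collection has size at most the width of $\cP_0$, which is finite by Lemma~\ref{lem:finhyp}. Given a collection $\{A_1,\dots,A_m\}$ of pairwise transverse elements, use the previous step to move $A_1$ by $\Gamma_0$ to one of finitely many representatives; then $A_2,\dots,A_m$ all lie among the finitely many elements of $\cP_0$ transverse to $A_1$ (again by the bound in Lemma~\ref{lem:finhyp}), so there are finitely many possibilities for the whole collection up to the $\Gamma_0$-action. Hence $\Gamma_0$ acts cocompactly on $C(\cP_0)$; a cocompact action has finitely many orbits of cells, so finitely many orbits of vertices, and since every hyperplane is dual to an edge, finitely many orbits of hyperplanes as well.
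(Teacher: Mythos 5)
Your proof is correct, and in fact more careful than the paper's own on the key first step. The paper asserts that ``the action of $\Gamma_0$ on $\h_0$ is cocompact,'' which as written is not literally true---a hyperplane stabilizer does not generally act cocompactly on a halfspace. What you supply is the correct underlying reasoning: $\Gamma_0$ acts cocompactly on $N(\hh_0)$, and every translate $g\fc_0$ (for $g\in G_0$) meets $N(\hh_0)$ because $\fc_0$ does and $G_0$ preserves $N(\hh_0)$; this yields finitely many $\Gamma_0$-orbits of walls, hence of elements of $\cP_0$. Your identification of this as ``the crux'' is exactly right. For the rest you take a slightly different route: you establish cocompactness of the $\Gamma_0$-action on $C(\cP_0)$ via Lemma~\ref{lem:cocompact} (translating one element of a pairwise transverse family to a representative and controlling the others via Lemma~\ref{lem:finhyp}), then read off finiteness of orbits of vertices and hyperplanes. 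The paper instead gets finitely many orbits of hyperplanes directly from finitely many $\Gamma_0$-orbits in $\cP_0$ via Proposition~\ref{prop:cubing}, and finitely many orbits of vertices from the observation that every vertex of $C(\cP_0)$ lies in the cubical neighborhood of some hyperplane, which is finite by Lemma~\ref{lem:finhyp}. Both routes are of comparable length; yours has the small bonus of establishing cocompactness of $\Gamma_0\acts C(\cP_0)$ explicitly.
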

\begin{proof}
	The action of $\Gamma_0$ on $\hh_0$ is cocompact, so there are finitely many $\Gamma_0$-orbits of the sets $g\fc_0$, hence finitely many $\Gamma_0$-orbits in $\cP_0$. It then follows from Proposition \ref{prop:cubing}\ref{item:halfspace} that $C(\cP_0)$ has finitely many $\Gamma_0$-orbits of hyperplanes. Every cube of $C(\cP_0)$ is contained in the cubical neighborhood of a hyperplane, and these cubical neighborhoods are finite by Lemma \ref{lem:finhyp}, hence $C(\cP_0)$ has finitely many $\Gamma_0$-orbits of cubes.
\end{proof}

\begin{lem}\label{lem:finstab}	
	The $\Gamma_0$-stabilizer of any pair of cubes in $C(\cP_0)$ is finite.
\end{lem}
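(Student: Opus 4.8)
The plan is to reduce the lemma to the statement that the $\Gamma_0$-stabilizer of a \emph{single} vertex $\omega\in C(\cP_0)^0$ is finite. This suffices: the setwise stabilizer of a pair of vertices contains the pointwise stabilizer with index at most $2$, and the pointwise stabilizer is contained in the stabilizer of either of the two vertices.

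The heart of the argument is an auxiliary claim: \emph{for every $A\in\cP_0$ the subgroup $\mathrm{Stab}_{\Gamma_0}(A)$ is finite.} To prove this I would attach to $A$ the set $V_A\subseteq\h_0$ of endpoints of edges of $\h_0$ joining $A$ to $\h_0-A$. This set is nonempty because $\h_0$ is connected (a combinatorial halfspace is convex, hence its induced subgraph is connected) and $\emptyset\neq A\subsetneq\h_0$. It is finite because, by the definition of $\cP_0$, either $A$ or $\h_0-A$ is a component of $\h_0-g\fc_0$ for some $g\in G_0$; every vertex of $\h_0$ lying across that cut must then belong to the finite set $g\fc_0$ (a vertex outside $g\fc_0$ adjacent to that component would lie in the same component), and since $\h_0$ is locally finite only finitely many edges of $\h_0$ meet $g\fc_0$. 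Any $g\in G_0$ with $gA=A$ permutes the edges between $A$ and $\h_0-A$, hence preserves $V_A$; therefore $\mathrm{Stab}_{\Gamma_0}(A)\leq\mathrm{Stab}_{\Gamma_0}(V_A)$, and the latter is finite because $\Gamma_0$ acts properly on the locally finite complex $\h_0$ (the stabilizer of a finite nonempty vertex set maps to the finite symmetric group on it with kernel an intersection of finite vertex stabilizers).

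To conclude, I would use that $\omega$ is a DCC complete ultrafilter. Since $\cP_0\neq\emptyset$, $\omega$ is nonempty, and being DCC it has a $\leq$-minimal element. Its set of $\leq$-minimal elements is moreover a family of pairwise transverse elements of $\cP_0$: if $A\neq B$ are both minimal then neither is comparable with the other by minimality, while $A\leq B^*$ would force $B\leq A^*$ and hence, from $B\in\omega$, that $A^*\in\omega$, a contradiction. By finite width of $\cP_0$ (Lemma \ref{lem:finhyp}) there are thus only finitely many minimal elements. As $\mathrm{Stab}_{\Gamma_0}(\omega)$ acts by order-automorphisms fixing $\omega$, it permutes this finite nonempty set, so a finite-index subgroup of it fixes some minimal $A\in\cP_0$; that subgroup lies in $\mathrm{Stab}_{\Gamma_0}(A)$, which is finite by the claim. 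Hence $\mathrm{Stab}_{\Gamma_0}(\omega)$ is finite. (Alternatively, for a pair $\omega_1\neq\omega_2$ one could run the same endgame with the finite nonempty set $\omega_1\triangle\omega_2$ in place of the minimal elements.)

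I expect the main obstacle to be the finiteness half of the auxiliary claim, namely pinning down the frontier $V_A$ and checking it is finite. The subtle point is that $A$ itself is typically infinite with infinite $G_0$-stabilizer, yet its frontier inside $\h_0$ is trapped inside a single translate $g\fc_0$ of the finite separating set, and it is precisely there that properness of $\Gamma_0$ on $\h_0$ takes effect. The reduction to a single vertex and the transversality observation are routine.
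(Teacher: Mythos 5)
Your auxiliary claim, that $\operatorname{Stab}_{\Gamma_0}(A)$ is finite for every $A\in\cP_0$, is correct and matches the paper's first step (the paper passes to the finite set of edges joining the component $\fa$ to $g\fc_0$; you take their endpoints inside $g\fc_0$, which is the same frontier). Your alternative endgame, applying the orbit argument to $\omega_1\triangle\omega_2$ for distinct vertices $\omega_1,\omega_2$, is also correct and is essentially the paper's argument: $\omega_1\triangle\omega_2$ consists of the halfspaces bounding the finitely many hyperplanes separating $\omega_1$ from $\omega_2$, which the pair-stabilizer must permute.

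The main route you propose, reducing to finiteness of single-vertex stabilizers, contains a genuine gap, because that statement is false. Under the $\Gamma_0$-equivariant bijection between $C(\cP_0)^0$ and the vertex set of the tree $T_0$ produced by panel collapse (Section~\ref{subsec:trees}), single-vertex stabilizers become the vertex groups of the splitting of $\Gamma_0$ that the whole construction is designed to produce; these are typically infinite (they end up as $\Gamma$-stabilizers of hyperplanes in the essential cube complex $Y$). Only \emph{pair} stabilizers, i.e.\ edge stabilizers in $T_0$, are meant to be finite. The precise error is your claim that the $\leq$-minimal elements of $\omega$ are pairwise transverse: the case analysis omits $A^*\leq B$ (equivalently $B^*\leq A$), which contradicts neither minimality nor consistency since $A^*,B^*\notin\omega$. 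Distinct minimal elements of an ultrafilter need not be transverse; already at a vertex of the line $\mathbb{Z}$, regarded as a one-dimensional cube complex, the two minimal halfspaces $(-\infty,0]$ and $[0,\infty)$ satisfy $(-\infty,0]^*=[1,\infty)\subsetneq[0,\infty)$. More fundamentally, by Lemma~\ref{lem:edges} the minimal elements of $\omega$ correspond to the edges incident to $\omega$, so their number equals the degree of $\omega$, which is unrelated to the width of $\cP_0$ and can be infinite --- and, by your auxiliary claim, must be infinite at any vertex with infinite $\Gamma_0$-stabilizer.
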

\begin{proof}
	If $\fa\in\cP_0$ is a component of $\h_0-g\fc_0$, then the $\Gamma_0$-stabilizer of $\fa$ is contained in the $\Gamma_0$-stabilizer of the finite set of edges that join $\fa$ to $g\fc_0$, and this stabilizer is finite since $\Gamma$ acts properly on $X$. It follows from Proposition \ref{prop:cubing}\ref{item:halfspace} that $\Gamma_0$ has finite hyperplane stabilizers in $C(\cP_0)$. The lemma then follows because the $\Gamma_0$-stabilizer of a pair of cubes in $C(\cP_0)$ stabilizes the finite set of hyperplanes that intersect or separate them.
\end{proof}

\begin{lem}\label{lem:nofix}
	The action of $\Gamma_0$ on $C(\cP_0)$ has no fixed cube or pair of cubes.
\end{lem}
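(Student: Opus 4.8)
The plan is to argue by contradiction: a fixed vertex or a fixed pair of vertices would force $C(\cP_0)$ to have finite diameter, and we rule this out by producing arbitrarily long chains of halfspaces in $C(\cP_0)$. (In the case of a fixed pair $\{v,w\}$ one can also argue instantly: the pair stabilizer is all of $\Gamma_0$, which is then finite by Lemma \ref{lem:finstab}, contradicting that $\Gamma_0$ is infinite since it acts cocompactly on the unbounded complex $\h_0$; but the uniform argument below also covers the case of a single fixed vertex.)

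\emph{Reduction to finite diameter.} Suppose $\Gamma_0$ stabilizes a vertex, or a pair of vertices, of $C(\cP_0)$. Replacing $\Gamma_0$ by a subgroup of index at most $2$, we may assume it fixes a vertex $v_0$; this subgroup still has finite index, so it still acts properly and cocompactly on $\h_0$, and by Lemma \ref{lem:finorbits} it acts on $C(\cP_0)$ with finitely many orbits of vertices, say with representatives $u_1,\dots,u_m$. For any vertex $u=\gamma u_i$ ($\gamma\in\Gamma_0$) we have $d(v_0,u)=d(\gamma^{-1}v_0,u_i)=d(v_0,u_i)$, so $C(\cP_0)$ has diameter at most $N:=2\max_i d(v_0,u_i)$. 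On the other hand, $\cP_0\cong\cH(C(\cP_0))$ by Proposition \ref{prop:cubing}, and if $A_1\supsetneq\cdots\supsetneq A_k$ is a chain in $\cP_0$ then, choosing DCC ultrafilters $v\ni A_k$ and $w\ni A_1^*$ (for instance $v=\omega_x$ with $x\in A_k$ and $w=\omega_y$ with $y\in A_1^*$), consistency gives $A_i\in v$ and $A_i^*\in w$ for every $i$, hence $d(v,w)\geq k$ by Lemma \ref{lem:d}. Thus $\cP_0$ has no chain of length $>N$, and it now suffices to produce chains of every length.

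\emph{Producing long chains.} The idea is to march a sequence of translates $\gamma_1\fc_0,\gamma_2\fc_0,\dots$ of $\fc_0$ into $\h_0$, each lying strictly beyond the previous one. Fix $D$ such that every vertex of $\h_0$ lies within distance $D$ of some $\gamma\fc_0$ with $\gamma\in\Gamma_0$ (possible as $\Gamma_0\acts\h_0$ is cocompact and $\fc_0\neq\emptyset$). Choose $x_1\in\h_0$ with $d(x_1,\fc_0)>D$ and $\gamma_1\in\Gamma_0$ with $d(x_1,\gamma_1\fc_0)\leq D$, so that $\gamma_1\fc_0$ is disjoint from $\fc_0$. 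Inductively, given $\gamma_n$ with $\gamma_n\fc_0$ disjoint from $\fc_0$, let $\fb_n$ be the component of $\h_0-\gamma_n\fc_0$ containing $\fc_0$ (well defined since $\fc_0$ is connected). Since $\gamma_n\fc_0$ is a translate of $\fc_0$ it separates $\h_0$ into at least two unbounded components, so there is an unbounded component $\fb_n'\neq\fb_n$; choose $x_{n+1}\in\fb_n'$ with $d(x_{n+1},\gamma_n\fc_0)$ large and $\gamma_{n+1}\in\Gamma_0$ within distance $D$ of $x_{n+1}$, which forces, for $x_{n+1}$ deep enough, $\gamma_{n+1}\fc_0$ to be disjoint from $\gamma_n\fc_0$ and contained in $\fb_n'\subseteq\fb_n^*$. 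Then $\fb_1^*\supsetneq\fb_2^*\supsetneq\cdots$ is a strictly descending chain in $\cP_0$: as $\fb_n$ is connected, contains $\fc_0$, and avoids $\gamma_{n+1}\fc_0$, it lies in $\fb_{n+1}$; and the inclusion is strict because a vertex of $\fb_n'$ adjacent to $\gamma_n\fc_0$ lies in $\fb_{n+1}$ — it is joined to $\fc_0$ by a path through $\gamma_n\fc_0$ and $\fb_n$ avoiding $\gamma_{n+1}\fc_0$ — but not in $\fb_n$. This contradicts the bound from the previous paragraph, completing the proof.

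\emph{Main obstacle.} The delicate work is entirely in the last step, and concerns the gap between being \emph{near} a translate and being \emph{separated by} it: one must check that $\gamma_{n+1}\fc_0$, chosen only within distance $D$ of the deep vertex $x_{n+1}$, genuinely lands on the far side of $\gamma_n\fc_0$ (using that the whole $D$-ball about $x_{n+1}$ is confined to one component of $\h_0-\gamma_n\fc_0$), that components of the various $\h_0-g\fc_0$ and their complements interact as claimed, and above all that the nesting $\fb_n^*\supsetneq\fb_{n+1}^*$ is strict. Keeping the components $\fb_n'$ unbounded, so that the deep vertices $x_{n+1}$ exist at every stage, is precisely where essentiality of $X$ (hence of $\h_0$) and Lemma \ref{lem:boundaryunbounded} enter.
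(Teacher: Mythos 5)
Your proof is correct, but it takes a genuinely different route from the paper's. You argue by contradiction that the orbit cannot be bounded: a fixed pair of vertices would make $\Gamma_0$ finite by Lemma~\ref{lem:finstab}, and a fixed vertex together with the finitely many vertex orbits from Lemma~\ref{lem:finorbits} would force $C(\cP_0)$ to have finite diameter; you then defeat the latter by constructing arbitrarily long nested chains $\fb_1^*\supsetneq\fb_2^*\supsetneq\cdots$ in $\cP_0$, obtained by marching translates $\gamma_n\fc_0$ ever deeper into $\h_0$. The paper instead exhibits a single element $g\in\Gamma_0$ that \emph{skewers} some $\fa\in\cP_0$ (i.e.\ $g\fa\subsetneq\fa$): cocompactness of $\Gamma_0$ on $N(\hh_0)$ together with Lemma~\ref{lem:boundaryunbounded} supply $g_1,g_2\in\Gamma_0$ with $g_1\fc_0,g_2\fc_0$ in distinct components of $\h_0-\fc_0$, and a short case analysis extracts the skewering element; such a $g$ on its own can fix no vertex or pair, because a vertex $\omega$ fixed by $g^2$ would contain a strictly descending infinite chain $\fa\supsetneq g^2\fa\supsetneq g^4\fa\supsetneq\cdots$ (or the analogue with $\fa^*$), violating DCC. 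The two arguments are cousins---your marching translates are implicitly producing the dynamics of a loxodromic isometry---but the paper's is tighter, is self-contained in that it never invokes Lemmas~\ref{lem:finorbits} or~\ref{lem:finstab}, and follows the standard double-skewering template of Caprace--Sageev. Your version needs a bit of quantitative care that is worth making explicit: one should require $d(x_{n+1},\gamma_n\fc_0)>D+\diam(\fc_0)+1$ rather than merely ``large,'' both so that $\gamma_{n+1}\fc_0$ is genuinely disjoint from $\gamma_n\fc_0$ (the translates have positive diameter, so proximity of nearest points is not enough) and so that the vertex $z\in\fb_n'$ adjacent to $\gamma_n\fc_0$ that witnesses strictness of $\fb_n\subsetneq\fb_{n+1}$ is not itself absorbed by $\gamma_{n+1}\fc_0$; you flag this obstacle yourself, and once the buffer constant is pinned down the details do close up.
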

\begin{proof}
It suffices to find $\fa\in\cP_0$ and $g\in \Gamma_0$ with $g\fa\subsetneq\fa$ ($g$ \emph{skewers} the hyperplane corresponding to $\fa$ in the language of \cite{CapraceSageev11}) as then $g$ will have no fixed vertex or pair of vertices.
Indeed if $g$ fixes a vertex or pair of vertices, then $g^2$ fixes some vertex $\omega\in C(\cP_0)$, but then either $\fa\in\omega$ and $\fa\supsetneq g^2\fa\supsetneq g^4\fa\supsetneq...$ is a strictly descending infinite chain in $\omega$, or $\fa^*\in\omega$ and $\fa^*\supsetneq g^{-2}\fa^*\supsetneq g^{-4}\fa^*\supsetneq...$ is a strictly descending infinite chain in $\omega$ -- so either way we contradict $\omega$ being a DCC ultrafilter. 
	
As $\Gamma_0$ acts cocompactly on the hyperplane $\hh_0$ and its cubical neighborhood $N(\hh_0)$, it follows from Lemma \ref{lem:boundaryunbounded} that there exist $g_1,g_2\in \Gamma_0$ with $g_1\fc_0,g_2\fc_0$ contained in distinct components $\fa_1,\fa_2$ of $\h_0-\fc_0$ respectively. If $g_1\fa_1\subsetneq\fa_1$ or $g_2\fa_2\subsetneq\fa_2$ then we are done, otherwise $g_1\fa_1,g_2\fa_2$ both contain $\fc_0$. But in that case $g_2\fa_1$ is a component of $\h_0 - g_2\fc_0$ that doesn't contain $\fc_0$, so $g_2\fa_1\subsetneq g_1\fa_1$, and we are again done because $g_1^{-1}g_2\fa_1\subsetneq\fa_1$.
\end{proof}

\subsection{The trees $T_{\hh}$}\label{subsec:trees}

The action of $G_0$ on the cubical subdivision $\dot{C}(\cP_0)$ of $C(\cP_0)$ is without inversions in hyperplanes, and the hyperplanes of $\dot{C}(\cP_0)$ are finite since the hyperplanes of $C(\cP_0)$ are.
Hence, we can repeatedly apply the panel collapse procedure of Hagen--Touikan \cite[Theorem A]{HagenTouikan19} to $\dot{C}(\cP_0)$ to obtain an action of $G_0$ on a tree $T_0$. Moreover, there is a $G_0$-equivariant bijection between the vertex sets of $T_0$ and $\dot{C}(\cP_0)$ (this is not stated explicitly in \cite{HagenTouikan19} but it follows from their construction).
Thus, there is a $G_0$-equivariant bijection between $VT_0$ and the set of cubes of $C(\cP_0)$.
It follows from Lemmas \ref{lem:finstab} and \ref{lem:nofix} that $\Gamma_0$ acts on $T_0$ with finite edge stabilizers and no fixed point. 
As $\Gamma_0$ is finitely generated, there is a $\Gamma_0$-invariant subtree $T'_0\subseteq T_0$ with finitely many $\Gamma_0$-orbits of edges. There is a $\Gamma_0$-equivariant bijection between the vertices in $T_0-T'_0$ and the edges in $T_0-T'_0$, where each vertex maps to the incident edge that points towards $T'_0$, so we conclude from Lemma \ref{lem:finorbits} that $\Gamma_0$ acts cocompactly on $T_0$.

If $\{g_i\mid i\in\Omega\}$ is a left transversal of $G_0$ in $G$ then we get an induced action of $G$ on the product
$T_0\times G/G_0$, explicitly this is given by
\begin{equation}\label{inducedaction}
g\cdot(v,g_iG_0):=(g_0v,g_jG_0),
\end{equation}
where $gg_i=g_jg_0$ with $i,j\in\Omega$ and $g_0\in G_0$, and $g_0v$ refers to the action of $G_0$ on $T_0$. 
(This construction is essentially the same as the notion of induced representation from representation theory.)
We may assume that the transversal $\{g_i\}$ includes the identity element, in which case the action of $G_0$ on $T_0\times\{G_0\}$ recovers the original action of $G_0$ on $T_0$.

We can then define the trees $T_{\hh}$ from Theorem \ref{thm:smallerstabs}, and the action of $G$ on $\sqcup_{\hh}T_{\hh}$, by putting 
$$T_{g_i\hh}:=T_0\times\{g_i G_0\},$$
and letting $T_{\hh}$ be a single point for hyperplanes $\hh\notin G\cdot\hh_0$. Properties \ref{item:finstab}--\ref{item:nofix} from Theorem \ref{thm:smallerstabs} hold because there are finitely many $\Gamma$-orbits of hyperplanes in $X$, and $\Gamma_0$ acts on $T_0$ cocompactly, with finite edge stabilizers, and with no fixed point.

\subsection{The pocset $\cP$}

Let $R=\diam(\fc_0)+1$.
We know that all $G_0$-translates of $\fc_0$ lie in the $R$-neighborhood of the halfspace $\h_0^*$, so as $X$ is essential we deduce that
$$\h'_0:=\h_0-\bigcup_{g\in G_0}g\fc_0$$
is non-empty.
Define an equivalence relation $\sim$ on $\h'_0$ where $x\sim y$ if $x$ and $y$ are not separated in $\h_0$ by any set $g\fc_0$ with $g\in G_0$. Let $[x]$ denote the equivalence class of $x$, and let $\cM_0$ denote the set of equivalence classes. The equivalence relation is preserved by $G_0$, so $G_0$ acts on $\cM_0$.

\begin{figure}[H]
	\centering
	\scalebox{.8}{
		\begin{tikzpicture}[auto,node distance=2cm,
			thick,every node/.style={},
			every loop/.style={min distance=2cm},
			hull/.style={draw=none},
			]
			\tikzstyle{label}=[draw=none,font=\Large]
			
			\draw[rounded corners=10pt] (-.5,4)--(-.5,1)--(1.5,1)--(1.5,4);
			\draw[rounded corners=10pt] (-4.5,4)--(-4.5,1)--(-2.5,1)--(-2.5,4);
			\draw[rounded corners=10pt] (3.5,4)--(3.5,1)--(5.5,1)--(5.5,4);
			
			\draw[rounded corners=10pt] (-.5,-3)--(-.5,-1)--(2.5,-1)--(2.5,-3);
			\draw[rounded corners=10pt] (-6,-1)--(-4,-1)--(-4,-3);
			\draw[rounded corners=10pt] (4,-3)--(4,-1)--(6,-1);
			
			\draw[fill=Green,opacity=.2]{[rounded corners=10](-2.5,4)--(-2.5,1)}--(-3,1)--(-3,0)--(0,0)--(0,1)
			{[rounded corners=10]--(-.5,1)}--(-.5,4)--(-2.5,4);
			
			\draw[blue,line width=1pt] (0,0) rectangle (1,1);
			\draw[fill=blue,opacity=.5] (0,0) rectangle (1,1);
			\draw[blue,line width=1pt] (-4,0) rectangle (-3,1);
			\draw[fill=blue,opacity=.5] (-4,0) rectangle (-3,1);
			\draw[blue,line width=1pt] (4,0) rectangle (5,1);
			\draw[fill=blue,opacity=.5] (4,0) rectangle (5,1);
			
			\draw[red,line width=1pt] (-6,0)--(6,0);
			\draw[draw=red,fill=black,-triangle 90, ultra thick](2,0)--(2,1);
			
			\node[circle,draw,fill,inner sep=0pt,minimum size=5pt] at (-2,2){};
			
			\node[label] at (-1.6,2){$x$};
			\node[label,blue] at (-3.5,1.4){$g_1\fc_0$};
			\node[label,blue] at (.5,1.4){$g_2\fc_0$};
			\node[label,blue] at (4.5,1.4){$g_3\fc_0$};
			\node[label,red] at (2.6,.5){$\h_0$};
			\node[label,Green] at (-1.5,3.5){$[x]$};
				
		\end{tikzpicture}
	}
	\caption{Cartoon of an equivalence class $[x]\in\cM_0$. The elements $g_1,g_2,g_3$ are in $G_0$.}\label{fig:[x]}
\end{figure}
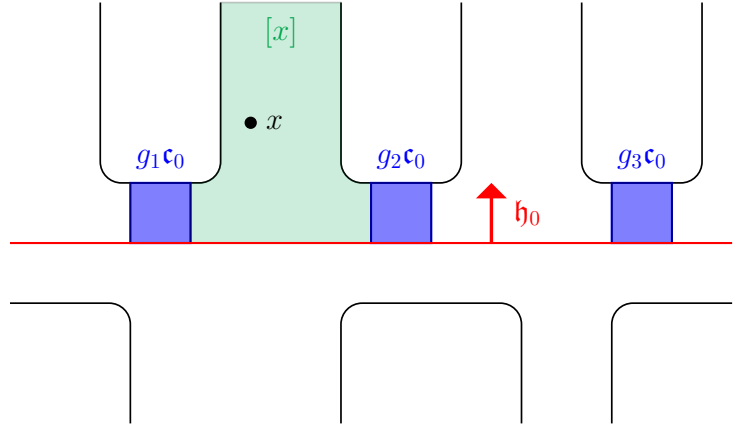

Now define $\cP$ to be the set of all pairs $(\fa,\h)$, with $\fa\subseteq X^0$ and $\h\in\cH$, that arise in one of the following three ways:
\begin{equation}
	(\fa,\h)=
	\begin{cases}
		(\h,\h),              & \h\notin G\cdot\{\h_0,\h_0^*\},\\
		(g[x],g\h_0),&g\in G,\,[x]\in\cM_0,\\
			(g[x]^*,g\h_0^*),&g\in G,\,[x]\in\cM_0,	
	\end{cases}
\end{equation}
where $[x]^*$ is the complement of $[x]$ in $X^0$ -- in fact in this section we will always denote the complement of $\fa\subseteq X^0$ by $\fa^*$.
Define an action of $G$ on $\cP$ by $g\cdot(\fa,\h):=(g\fa,g\h)$.
Also define an involution on $\cP$ by $(\fa,\h)\mapsto(\fa,\h)^*:=(\fa^*,\h^*)$.
Finally, we make $\cP$ into a pocset with the ordering $(\fa_1,\h_1)\leq(\fa_2,\h_2)$ if $\fa_1\subsetneq\fa_2$ or $(\fa_1,\h_1)=(\fa_2,\h_2)$.

Note that $\cP$ looks very much like a wallspace on $X^0$ if one just considers the first coordinate of each pair $(\fa,\h)\in\cP$, but the second coordinate will be needed in order to relate $\cP$ and its cubing to $X$.

\subsection{The map $\sigma$}\label{subsec:sigma}

Each $x\in\h'_0$ defines a DCC ultrafilter on $\cP_0$ given by
$$\lambda_x:=\{\fa\in\cP_0\mid x\in\fa\}.$$
Let $x\sim y$. For any $g\in G_0$ we know that $x$ and $y$ lie in the same component of $\h_0-g\fc_0$, so it follows that $\lambda_x=\lambda_y$. Conversely, if $x\nsim y$ then there exists $g\in G_0$ such that $x$ and $y$ are separated by $g\fc_0$, so there exists a component $\fa$ of $\h_0-g\fc_0$ containing $x$ but not $y$, and it follows that $\lambda_x\neq\lambda_y$. This yields the following lemma.

\begin{lem}
	There is a $G_0$-equivariant injection $\cM_0\to C(\cP_0)^0=VT_0$ given by $[x]\mapsto\lambda_x$.
\end{lem}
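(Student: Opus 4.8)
The plan is to check in turn four things: that $x\mapsto\lambda_x$ takes values in $C(\cP_0)^0$, that it is constant on $\sim$-equivalence classes, that it is $G_0$-equivariant, and that the resulting map $\cM_0\to C(\cP_0)^0$ is injective. The identification $C(\cP_0)^0=VT_0$ is the $G_0$-equivariant bijection of vertex sets recorded in Section \ref{subsec:trees} (panel collapse leaves the vertex set unchanged), so it suffices to produce a $G_0$-equivariant injection into $C(\cP_0)^0$.

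For the first point, note that since $\h'_0\subseteq\h_0$, for $x\in\h'_0$ the set $\lambda_x=\{\fa\in\cP_0\mid x\in\fa\}$ is exactly the ultrafilter $\omega_x$ attached to the point $x$ of the wallspace $(\h_0,\cP_0)$ in Definition \ref{defn:wallspace}; in particular it is a DCC ultrafilter. As $\cP_0$ has finite width by Lemma \ref{lem:finhyp} and admits at least one DCC ultrafilter (any $\lambda_x$ will do), the cubing $C(\cP_0)$ exists by Proposition \ref{prop:cubing}, and its vertices are precisely the DCC ultrafilters on $\cP_0$, so $\lambda_x\in C(\cP_0)^0$.

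For well-definedness and equivariance, observe that every $\fa\in\cP_0$ is either a component of $\h_0-g\fc_0$ for some $g\in G_0$ or the complement in $\h_0$ of such a component; hence whether a point of $\h'_0$ lies in $\fa$ is determined only by which component of $\h_0-g\fc_0$ it lies in. If $x\sim y$ then $x$ and $y$ lie in the same component of $\h_0-g\fc_0$ for every $g\in G_0$, so $x\in\fa\iff y\in\fa$ for all $\fa\in\cP_0$, giving $\lambda_x=\lambda_y$; thus $[x]\mapsto\lambda_x$ is well-defined. Equivariance is the one-line computation $\lambda_{gx}=g\lambda_x$ using the induced action of $G_0$ on $C(\cP_0)$ (namely $g\omega=\{g\fa\mid\fa\in\omega\}$), together with the fact, already noted above, that $\sim$ is $G_0$-invariant so that $g[x]=[gx]$.

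Finally, injectivity is the converse of well-definedness: if $x\nsim y$, then by definition some set $g\fc_0$ with $g\in G_0$ separates $x$ from $y$ in $\h_0$, so the component $\fa$ of $\h_0-g\fc_0$ containing $x$ does not contain $y$; since $\fa\in\cP_0$ this shows $\lambda_x\neq\lambda_y$. I do not anticipate a genuine obstacle here — the whole argument is a matter of unwinding the definitions of $\cP_0$, of $\sim$, and of the cubing of a wallspace — but the step deserving the most care is confirming that each $\lambda_x$ is genuinely a DCC ultrafilter, hence an honest vertex of $C(\cP_0)$; this is exactly where we use that $x$ lies in $\h_0$, which is why $\cM_0$ is built from $\h'_0\subseteq\h_0$ rather than from a larger set.
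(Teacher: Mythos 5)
Your proposal is correct and takes essentially the same approach as the paper, which establishes the lemma in the prose immediately preceding it: $\lambda_x$ is the wallspace ultrafilter $\omega_x$ of $(\h_0,\cP_0)$ and hence a DCC ultrafilter, well-definedness follows because equivalent points lie in the same component of every $\h_0-g\fc_0$, injectivity is the converse, and $G_0$-equivariance comes from $G_0$-invariance of the equivalence relation. Your breakdown and use of Definition \ref{defn:wallspace}, Lemma \ref{lem:finhyp}, and Proposition \ref{prop:cubing} match the paper's argument; one could only quibble that the restriction to $\h'_0$ rather than all of $\h_0$ is less about $\lambda_x$ being DCC (which holds for any $x\in\h_0$) and more about the equivalence relation being cleanly defined away from the separating sets $g\fc_0$.
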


Recall from Section \ref{subsec:trees} that the action of $G_0$ on $T_0$ extends to an action of $G$ on $\sqcup_{\hh\in\hH(X)} VT_{\hh}$, with $T_0=T_{\hh_0}$.
Also recall that $T_{\hh}$ consists of a single vertex if $\h\notin G\cdot\{\h_0,\h_0^*\}$.
We can then extend the map $\cM_0\to VT_0$ above to a $G$-equivariant map
$$\sigma:\cP\to\bigsqcup_{\hh\in\hH(X)} VT_{\hh},$$	
by setting
\begin{align*}
	&\sigma(\h,\h)=T_{\hh}, &&\h\notin G\cdot\{\h_0,\h_0^*\},&&&&\\
	&\sigma(g[x],g\h_0)=g\lambda_x, &&g\in G,\,[x]\in\cM_0,&&&&\\
	&\sigma(g[x]^*,g\h_0^*)=g\lambda_x, &&g\in G,\,[x]\in\cM_0.&&&&\\
\end{align*}
We clearly have $\sigma(\fa,\h)=\sigma(\fa^*,\h^*)$ for all $(\fa,\h)\in\cP$, and this is the only failure of injectivity since $[x]\mapsto\lambda_x$ is injective. Hence $\sigma$ descends to an injective $G$-equivariant map
$$\sigma:\hat{\cP}\to\bigsqcup_{\hh\in\hH(X)} VT_{\hh}.$$
We will soon construct $Y$ as the cubing of $\cP$, and then $\hat{\cP}$ will be identified with $\hH(Y)$ by Proposition \ref{prop:cubing}\ref{item:halfspace}, thus making $\sigma$ the required map in Theorem \ref{thm:smallerstabs}\ref{item:trees}.

\subsection{The cubing $Y$}

We will define the cube complex $Y$ to be the cubing of the pocset $(\cP,\leq)$. So we must show that there exist DCC ultrafilters on $\cP$ and that $\cP$ has finite width.

For $x\in X^0$ let
$$\theta(x):=\{(\fa,\h)\in\cP\mid x\in\fa\}.$$
This is clearly an ultrafilter on $\cP$, and we will show over the next four lemmas that it is DCC.
As $X$ is locally finite and cocompact, there exists a function $k:\mathbb{N}\to\mathbb{N}$ such that the $r$-neighborhood of any vertex or edge in $X$ intersects at most $k(r)$ cubical neighborhoods of hyperplanes.
We will use this function as a source of constants throughout this section.

\begin{lem}\label{lem:eleavefa}
	If $(\fa,\h)\in\cP$ and $e$ is an edge that joins a vertex in $\fa$ to a vertex in $\fa^*$, then 
	$$d(e,\h),d(e,\h^*)\leq R.$$
	Moreover, for each edge $e$ in $X$ there are at most $2k(R)$ elements $(\fa,\h)\in\cP$ such that $e$ joins a vertex in $\fa$ to a vertex in $\fa^*$.
\end{lem}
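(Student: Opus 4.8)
The plan is to establish the two assertions separately: the distance bound first, and then the counting bound as a consequence of it together with the local finiteness encoded in the function $k$.

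For the distance bound, I would begin with the trivial case $(\fa,\h)=(\h,\h)$ with $\h\notin G\cdot\{\h_0,\h_0^*\}$, where $e$ joins a vertex of $\h$ to a vertex of $\h^*$, so $e$ is dual to $\hh$ and $d(e,\h)=d(e,\h^*)=0$. For the other two types of elements of $\cP$, applying an element of $G$ is an isometry and replacing $(\fa,\h)$ by $(\fa^*,\h^*)$ only swaps $d(e,\h)$ with $d(e,\h^*)$, so it is enough to treat $(\fa,\h)=([x],\h_0)$ for some $[x]\in\cM_0$. Since $[x]\subseteq\h'_0\subseteq\h_0$, the endpoint $u$ of $e$ in $[x]$ lies in $\h_0$, giving $d(e,\h_0)=0$; for $d(e,\h_0^*)$, let $v$ be the other endpoint, so $v\in[x]^*$. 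If $v\in\h_0^*$ we are done, so suppose $v\in\h_0\setminus[x]$. The key point is that then $v\notin\h'_0$: otherwise $u$ and $v$ would lie in distinct equivalence classes and hence be separated in $\h_0$ by some translate $h\fc_0$ with $h\in G_0$, which is impossible since $u$ and $v$ both lie in $\h_0\setminus h\fc_0$ and are joined by the single edge $e$, which therefore cannot meet $h\fc_0$. Hence $v\in h\fc_0$ for some $h\in G_0$; and since $\fc_0$ has diameter $R-1$ and meets $N(\hh_0)\subseteq\cN_1(\h_0^*)$, we get $\fc_0\subseteq\cN_R(\h_0^*)$, so $h\fc_0\subseteq\cN_R(\h_0^*)$ because $h$ preserves $\h_0^*$, giving $d(e,\h_0^*)\leq d(v,\h_0^*)\leq R$.

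For the counting statement, fix an edge $e$. If $(\fa,\h)\in\cP$ is crossed by $e$, then by the first part $d(e,\h)\leq R$ and $d(e,\h^*)\leq R$, so there is a path within the $R$-neighborhood $\cN_R(e)$ --- built from two short geodesics to the endpoints of $e$ together with $e$ itself --- joining a vertex of $\h$ to a vertex of $\h^*$; this path crosses $\hh$, so $N(\hh)$ meets $\cN_R(e)$, and hence there are at most $k(R)$ possibilities for $\hh$. It then suffices to bound by $2$ the number of elements of $\cP$ with a given hyperplane $\hh$ that are crossed by $e$. When $\hh\notin G\cdot\hh_0$ this is clear, since $(\h,\h)$ and $(\h^*,\h^*)$ are the only elements of $\cP$ with hyperplane $\hh$. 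When $\hh=\widehat{g\h_0}$, the absence of inversions in hyperplanes implies that the elements of $\cP$ with hyperplane $\hh$ are exactly the $(g[x],g\h_0)$ and $(g[x]^*,g\h_0^*)$ for $[x]\in\cM_0$, and $e$ crosses $(g[x],g\h_0)$ precisely when $g^{-1}e$ crosses $([x],\h_0)$; so I would reduce to showing that $g^{-1}e$ crosses at most one equivalence class. This again follows from the key point above: if $g^{-1}e$ crosses $[x]$, then its endpoint outside $[x]$ lies outside $\h'_0$ and so belongs to no equivalence class, while its endpoint inside $[x]$ determines $[x]$ as its class --- leaving at most one possibility. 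Multiplying the two bounds yields $2k(R)$.

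I expect the main obstacle to be the key point used in the first part: that an endpoint of $e$ lying in $\h_0\cap[x]^*$ but not in $[x]$ must in fact lie in one of the translates $h\fc_0$. It rests on the elementary observation that a single edge cannot cross a vertex set that separates its two endpoints, together with the description of $\h'_0$ as $\h_0$ with all the translates $h\fc_0$ removed. Once this is in place, both the distance bound and the ``at most one class per hyperplane'' step of the counting argument come out immediately, and what remains is routine bookkeeping with $k$ and with the three types of elements of $\cP$.
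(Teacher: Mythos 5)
Your proposal is correct and follows essentially the same approach as the paper's proof: a case analysis on the type of $(\fa,\h)$ for the distance bound (with the observation that an endpoint of $e$ leaving $[x]$ but staying in $\h_0$ must land in some translate $g\fc_0$ since it cannot be in a different equivalence class without contradicting the single-edge separation), and then $k(R)$ to bound the hyperplane plus uniqueness of $\fa$ given the halfspace for the counting. You supply a couple of details the paper leaves implicit (the explicit path inside $\cN_R(e)$ meeting $N(\hh)$, and the role of no-inversions in identifying the $\cP$-elements over a given hyperplane), but the structure and key observations are the same.
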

\begin{proof}
	Let $(\fa,\h)\in\cP$ with $e$ joining a vertex in $\fa$ to a vertex in $\fa^*$.
	We have two cases:
	\begin{enumerate}
		\item\label{fa=h} If $\h\notin G\cdot\{\h_0,\h_0^*\}$ then $\fa=\h$ and $d(e,\h)=d(e,\h^*)=0$.
		\item\label{fah=gzgh0} If $\h\in G\cdot\{\h_0,\h_0^*\}$, then by symmetry we may assume that $(\fa,\h)=([x],\h_0)$ for $[x]\in\cM_0$. Observe that $e$ leaves $[x]$ and either crosses the hyperplane $\hh=\hh_0$ or enters a set $g\fc_0$ with $g\in G_0$. Since the sets $g\fc_0$ have diameter at most $R-1$, we deduce that $d(e,\h),d(e,\h^*)\leq R$.
	\end{enumerate}
	
	We now turn to the second part of the lemma.
	By definition of $k(R)$, for each $e$ there are at most $k(R)$ possibilities for the hyperplane $\hh$, so at most $2k(R)$ possibilities for the halfspace $\h$. And for each $\h$ we claim that there is at most one possibility for $\fa$. 
	In case \ref{fa=h} $\fa=\h$ is determined.
	In case \ref{fah=gzgh0} the edge $e$ leaves $[x]$, and its other endpoint is not in $\h'_0$, hence not in any other $[x']\in\cM_0$, so $[x]$ is uniquely determined.
\end{proof}

\begin{lem}\label{lem:xyfarfromaa*}
If $x,y\in X^0$, $\h\in\cH$ and $S\geq0$ is an integer with $d(x,\h^*),d(y,\h)>R+S$, then there exists $(\fa,\h)\in\cP$ with $d(x,\fa^*),d(y,\fa)>S$.
\end{lem}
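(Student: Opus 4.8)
**Proof plan for Lemma \ref{lem:xyfarfromaa*}.**

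The plan is to produce the pair $(\fa,\h)\in\cP$ directly, using the structure of the three types of elements of $\cP$. First I would observe that if $\h\notin G\cdot\{\h_0,\h_0^*\}$, then $(\h,\h)\in\cP$ and the hypothesis $d(x,\h^*),d(y,\h)>R+S>S$ already gives $d(x,\fa^*)=d(x,\h^*)>S$ and $d(y,\fa)=d(y,\h)>S$ with $\fa=\h$. So the real content is the case $\h\in G\cdot\{\h_0,\h_0^*\}$, and by applying a group element and possibly swapping $\h\leftrightarrow\h^*$ (and correspondingly $x\leftrightarrow y$) we may assume $\h=\h_0$. Here the candidates for $\fa$ with second coordinate $\h_0$ are the sets $[x']\subseteq\h_0'$ for $[x']\in\cM_0$; note these are contained in $\h_0$, whereas $\fa^*=[x']^*$ is everything outside $[x']$.

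The key step is to choose the right equivalence class. Since $d(x,\h_0^*)>R+S\geq R$, the vertex $x$ lies in $\h_0$ and in fact lies in $\h_0'=\h_0-\bigcup_{g\in G_0}g\fc_0$, because every $g\fc_0$ is within distance $R$ of $\h_0^*$; so $[x]\in\cM_0$ is defined, and I would take $\fa:=[x]$, so trivially $d(x,\fa^*)\geq 1>0$ — but we need the stronger bound $d(x,\fa^*)>S$, so a little more care is needed. The point is that any path from $x$ to $\fa^*=[x]^*$ must either reach $\h_0^*$ or enter some $g\fc_0$ (this is exactly the mechanism in the proof of Lemma \ref{lem:eleavefa}, case \ref{fah=gzgh0}: leaving $[x]$ within $\h_0$ means crossing into some $g\fc_0$, since the equivalence class is a union of components of the various $\h_0-g\fc_0$). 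A path from $x$ into some $g\fc_0$ has length at least $d(x,g\fc_0)\geq d(x,\h_0^*)-R>S$, using that $g\fc_0$ lies in the $R$-neighborhood of $\h_0^*$; and a path from $x$ to $\h_0^*$ has length $d(x,\h_0^*)>R+S>S$. Hence $d(x,\fa^*)>S$. For the other bound, I claim $y\notin[x]$: indeed $d(y,\h_0)=d(y,\h)>R+S\geq 1$ forces $y\notin\h_0$, hence $y\notin[x]\subseteq\h_0$, giving $d(y,\fa)\geq 1$; and for $d(y,\fa)>S$, any path from $y$ to $\fa=[x]$ must first enter $\h_0$, so has length at least $d(y,\h_0)>R+S>S$.

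I expect the main obstacle to be the careful verification that "leaving $[x]$ while staying in $\h_0$ forces entry into some $g\fc_0$", i.e. that the bound $d(x,\fa^*)>S$ genuinely reduces to estimating $d(x,\h_0^*)$ and the $d(x,g\fc_0)$'s — this is the place where the combinatorial definition of the equivalence relation $\sim$ (and the fact that each $[x]$ is a union of connected components of $\h_0$ minus finitely many translates of $\fc_0$) has to be invoked precisely rather than waved at. Everything else is a short distance estimate using that each $g\fc_0$ has diameter at most $R-1$ and lies in $\cN_R(\h_0^*)$, together with the essentiality of $X$ (which guarantees $\h_0'\neq\emptyset$ and is implicitly what makes these neighborhood statements meaningful). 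Once that step is pinned down, the two inequalities $d(x,\fa^*)>S$ and $d(y,\fa)>S$ follow immediately, and the case $\h\notin G\cdot\{\h_0,\h_0^*\}$ is trivial, completing the proof.
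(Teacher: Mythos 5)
Your proposal is correct and follows essentially the same route as the paper: handle $\h\notin G\cdot\{\h_0,\h_0^*\}$ trivially, reduce by symmetry to $\h=\h_0$, take $\fa=[x]$, and bound $d(x,\fa^*)$ by observing that leaving $[x]$ forces crossing into some $g\fc_0$ (which lies in $\cN_R(\h_0^*)$) or crossing $\hh_0$, while $d(y,\fa)\geq d(y,\h_0)$ since $[x]\subseteq\h_0$ and $y\in\h_0^*$. The only cosmetic difference is that the paper packages the first estimate via Lemma \ref{lem:eleavefa}, whereas you re-derive the same crossing dichotomy directly.
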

\begin{proof}
	If $\h\notin G\cdot\{\h_0,\h_0^*\}$ then we can put $\fa=\h$.
	If $\h\in G\cdot\{\h_0,\h_0^*\}$ then by symmetry we may assume that $\h=\h_0$. Then $d(x,\h_0^*)>R+S$, so $x\in\h'_0$ and we can put $\fa=[x]$.  We must have $d(x,\fa^*)>S$ since $d(x,\h_0^*)>R+S$ and any edge $e$ joining $\fa$ to $\fa^*$ has $d(e,\h_0^*)\leq R$ by Lemma \ref{lem:eleavefa}. 
	On the other hand we have $y\in\h_0^*\subseteq \fa^*$, so $d(y,\fa)\geq d(y,\h_0)>R+S>S$.
\end{proof}

\begin{figure}[H]
	\centering
	\scalebox{.8}{
		\begin{tikzpicture}[auto,node distance=2cm,
			thick,every node/.style={},
			every loop/.style={min distance=2cm},
			hull/.style={draw=none},
			]
			\tikzstyle{label}=[draw=none,font=\Large]
			
			\draw[rounded corners=10pt] (-.5,4)--(-.5,1)--(1.5,1)--(1.5,4);
			\draw[rounded corners=10pt] (-5.5,4)--(-5.5,1)--(-3.5,1)--(-3.5,4);
			
			\draw[rounded corners=10pt] (-.5,-3)--(-.5,-1)--(2.5,-1)--(2.5,-3);
			\draw[rounded corners=10pt] (-6,-1)--(-4,-1)--(-4,-3);
			
			\draw[fill=Green,opacity=.2]{[rounded corners=10](-3.5,4)--(-3.5,1)}--(-4,1)--(-4,0)--(0,0)--(0,1)
			{[rounded corners=10]--(-.5,1)}--(-.5,4)--(-3.5,4);
			
			\draw[blue,line width=1pt] (0,0) rectangle (1,1);
			\draw[fill=blue,opacity=.5] (0,0) rectangle (1,1);
			\draw[blue,line width=1pt] (-5,0) rectangle (-4,1);
			\draw[fill=blue,opacity=.5] (-5,0) rectangle (-4,1);
			
			\draw[red,line width=1pt] (-6,0)--(3,0);
			\draw[draw=red,fill=black,-triangle 90, ultra thick](2,0)--(2,1);
			
			\node[circle,draw,fill,inner sep=0pt,minimum size=5pt] at (-2,2){};
			\node[circle,draw,fill,inner sep=0pt,minimum size=5pt] at (-3,-2){};
			
			\node[label,blue] at (-4.5,1.4){$g_1\fc_0$};
			\node[label,blue] at (.5,1.4){$g_2\fc_0$};
			\node[label] at (-1.6,2){$x$};
			\node[label] at (-2.6,-2){$y$};
			\node[label,red] at (2.6,.5){$\h_0$};
			\node[label,Green] at (-2,3.5){$\fa=[x]$};
			
			\draw  (-2, 0) edge[<->] node[right] {$>R+S$} (-2, 1.9);
			\draw  (-3, 0) edge[<->] node[right] {$>R+S$} (-3, -1.9);
			\draw  (-3.8, 0) edge[<->] node[right] {$\leq R$} (-3.8, 1);
			
		\end{tikzpicture}
	}
	\caption{Cartoon picture for the proof of Lemma \ref{lem:xyfarfromaa*}. The elements $g_1,g_2$ are in $G_0$.}\label{fig:xyfar}
\end{figure}
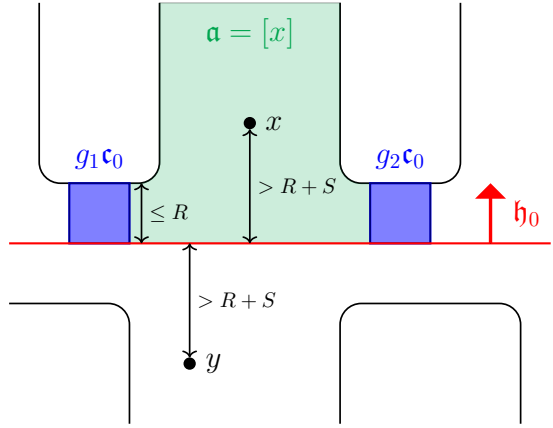

\begin{lem}\label{lem:thetaineq}
	$d(x,y)-2k(R)\leq|\theta(x)-\theta(y)|\leq k(R)d(x,y)$ for all $x,y\in X^0$.
\end{lem}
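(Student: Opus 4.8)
The plan is to compare both $\theta(x)$ and $\theta(y)$ against a fixed combinatorial geodesic $\gamma=(x=v_0,v_1,\dots,v_n=y)$ with $n=d(x,y)$, extracting the upper bound from Lemma \ref{lem:eleavefa} and the lower bound from Lemma \ref{lem:xyfarfromaa*}. Recall that for every $(\fa,\h)\in\cP$ the complement $\fa^*$ is taken in $X^0$, so $\{\fa,\fa^*\}$ partitions $X^0$ and $\cP$ is closed under the involution $(\fa,\h)\mapsto(\fa^*,\h^*)$; this is used repeatedly.

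For the upper bound I would assign to each $(\fa,\h)\in\theta(x)-\theta(y)$, that is each $(\fa,\h)\in\cP$ with $x\in\fa$ and $y\notin\fa$, the index $m(\fa,\h):=\min\{\ell:v_\ell\notin\fa\}$, which lies in $\{1,\dots,n\}$ since $v_0=x\in\fa$ and $v_n=y\notin\fa$; then the edge $e_m=\{v_{m-1},v_m\}$ joins $v_{m-1}\in\fa$ to $v_m\in\fa^*$. For a fixed $m$, Lemma \ref{lem:eleavefa} bounds by $2k(R)$ the number of $(\fa',\h')\in\cP$ for which $e_m$ joins a vertex of $\fa'$ to a vertex of $\fa'^*$; this condition is symmetric under the involution, so these elements fall into complementary pairs, and since $v_{m-1}$ lies in exactly one of $\fa',\fa'^*$, at most one member of each pair has $v_{m-1}$ in its first coordinate. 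Hence the fibre of each $m$ has size at most $k(R)$, and summing over $m$ gives $|\theta(x)-\theta(y)|\le k(R)\,d(x,y)$.

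For the lower bound I would start from the collection $\cK$ of halfspaces $\h\in\cH$ with $x\in\h$ and $y\notin\h$, of which there are exactly $d(x,y)$ by Lemma \ref{lem:d}. I then discard the halfspaces $\h\in\cK$ with $d(x,\h^*)\le R$: any such $\h$ has a vertex $w\in\h^*$ with $d(x,w)\le R$, so $\hat{\h}$ separates $x$ from $w$ and is dual to an edge on a geodesic of length at most $R$ from $x$; both endpoints of that edge lie in $N(\hat{\h})$ and within distance $R$ of $x$, whence $N(\hat{\h})$ meets $\cN_R(x)$, and by the defining property of $k$ at most $k(R)$ halfspaces of $\cK$ arise this way. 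Symmetrically, at most $k(R)$ halfspaces $\h\in\cK$ have $d(y,\h)\le R$. Each of the remaining $\ge d(x,y)-2k(R)$ halfspaces $\h$ satisfies $d(x,\h^*)>R$ and $d(y,\h)>R$, so Lemma \ref{lem:xyfarfromaa*} with $S=0$ yields some $(\fa_\h,\h)\in\cP$ with $d(x,\fa_\h^*)>0$ and $d(y,\fa_\h)>0$, i.e.\ $(\fa_\h,\h)\in\theta(x)-\theta(y)$. These elements are pairwise distinct because their second coordinates are, so $|\theta(x)-\theta(y)|\ge d(x,y)-2k(R)$.

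Both halves are essentially bookkeeping on top of the two preceding lemmas; the one point that needs care is squeezing the constant $k(R)$, rather than the immediate $2k(R)$, out of Lemma \ref{lem:eleavefa} in the upper bound, which is precisely why one passes to complementary pairs and notes that the tail $v_{m-1}$ of the directed edge $e_m$ sits on only one side. I do not expect any genuine obstacle beyond that.
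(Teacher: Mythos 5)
Your proof is correct and takes essentially the same route as the paper: both bounds are obtained by comparing against a combinatorial geodesic, with Lemma~\ref{lem:eleavefa} driving the upper bound and Lemma~\ref{lem:xyfarfromaa*} driving the lower bound, and the $2k(R)$-per-edge count halved to $k(R)$ via the involution on $\cP$. The only cosmetic difference is that the paper obtains the halving by passing through $|\theta(x)\triangle\theta(y)|=2|\theta(x)-\theta(y)|$ (which is just Lemma~\ref{lem:domega}), whereas you do the same pairing directly inside each fibre; and for the lower bound the paper indexes by edges on $\gamma$ rather than by the separating halfspaces, which are in bijection anyway.
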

\begin{proof}
	Fix $x,y\in X^0$ and $\gamma$ a geodesic between them (in $X^1$).	
	Let $e$ be an edge on $\gamma$, and suppose it is dual to a hyperplane $\hh$ with $x\in\h$ and $y\in\h^*$.
	Further suppose that the cubical neighborhood $N(\hh)$ does not intersect the $R$-neighborhoods of $x$ and $y$, so $d(x,\h^*),d(y,\h)>R$.
	By Lemma \ref{lem:xyfarfromaa*} there exists $(\fa,\h)\in\cP$ with $x\in\fa$ and $y\in\fa^*$, so $(\fa,\h)\in\theta(x)-\theta(y)$.
	By definition of $k(R)$, there are at least $d(x,y)-2k(R)$ such edges $e$, and they are dual to distinct hyperplanes since $\gamma$ is a geodesic (Lemma \ref{lem:d}), so this proves the first inequality.
	
	For the second inequality, for each $(\fa,\h)\in\theta(x)\triangle\theta(y)$ there is an edge $e$ on $\gamma$ joining a vertex in $\fa$ to a vertex in $\fa^*$. By Lemma \ref{lem:eleavefa} we have $|\theta(x)\triangle\theta(y)|\leq 2k(R)d(x,y)$, so $|\theta(x)-\theta(y)|\leq k(R)d(x,y)$.	
\end{proof}

\begin{lem}
	$\theta(x)$ is a DCC ultrafilter for all $x\in X^0$.
\end{lem}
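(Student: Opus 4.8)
The plan is to check the ultrafilter axioms quickly (these are essentially immediate) and then obtain the DCC property by combining a strictly descending chain with the finiteness estimate of Lemma~\ref{lem:thetaineq}. Completeness of $\theta(x)$ holds because $\fa\sqcup\fa^*=X^0$ for every $(\fa,\h)\in\cP$, so $x$ lies in exactly one of $\fa$, $\fa^*$; consistency holds because $(\fa_1,\h_1)\leq(\fa_2,\h_2)$ forces $\fa_1\subseteq\fa_2$, so $x\in\fa_1$ implies $x\in\fa_2$. Thus $\theta(x)$ is a complete ultrafilter, and the whole content of the lemma is to rule out strictly descending infinite chains.

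For that I would argue by contradiction. Suppose $(\fa_1,\h_1)>(\fa_2,\h_2)>\cdots$ is such a chain in $\theta(x)$. By the definition of the order on $\cP$ this gives $\fa_1\supsetneq\fa_2\supsetneq\cdots$, and by the definition of $\theta(x)$ we have $x\in\fa_i$ for every $i$. Next I would observe that every first coordinate occurring in $\cP$ has non-empty complement in $X^0$: for $(\h,\h)$ the complement is the halfspace $\h^*$; for $(g[x],g\h_0)$ it contains $g\h_0^*$; and for $(g[x]^*,g\h_0^*)$ it is the non-empty equivalence class $g[x]$. In particular $\fa_1\neq X^0$, so I can pick a vertex $y\in\fa_1^*$. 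Since $\fa_1^*\subseteq\fa_2^*\subseteq\cdots$, this $y$ lies in $\fa_i^*$ for all $i$, so $(\fa_i,\h_i)\in\theta(x)-\theta(y)$ for all $i$, whence $|\theta(x)-\theta(y)|=\infty$.

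This contradicts Lemma~\ref{lem:thetaineq}, which bounds $|\theta(x)-\theta(y)|$ by $k(R)\,d(x,y)<\infty$ (finite since $X^1$ is connected). Hence $\theta(x)$ has no strictly descending infinite chain and so is DCC. The argument is short, and the only step requiring a moment's care is the passage from a descending chain of subsets of $X^0$ to a single vertex $y$ that separates $x$ from all of them — which just amounts to choosing $y$ outside the largest set $\fa_1$ and noting that $\fa_1\neq X^0$. There is no real obstacle, since the substantive estimate was already carried out in Lemma~\ref{lem:thetaineq}.
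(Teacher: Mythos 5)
Your proof is correct and takes essentially the same approach as the paper: both pick a vertex $y$ in the complement of the top element of a putative descending chain and invoke Lemma~\ref{lem:thetaineq} to bound $|\theta(x)-\theta(y)|$, forcing the chain to be finite. The only cosmetic difference is that you phrase it by contradiction and spell out the (routine) ultrafilter axioms and the non-emptiness of $\fa_1^*$, whereas the paper states the same idea directly.
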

\begin{proof}
	Let $(\fa,\h)\in\theta(x)$. Pick $y\in\fa^*$. Any $(\fa',\h')\in\theta(x)$ with $(\fa',\h')\leq(\fa,\h)$ satisfies $\fa'\subseteq\fa$, so $(\fa',\h')\in\theta(x)-\theta(y)$. We deduce from Lemma \ref{lem:thetaineq} that no strictly descending infinite chain in $\theta(x)$ contains $(\fa,\h)$. But $(\fa,\h)$ was an arbitrary element of $\theta(x)$, hence $\theta(x)$ is DCC.
\end{proof}

We now prove over the next three lemmas that $\cP$ has finite width. This involves the convex hull $\Hull(\fa)$ and the metric $E$-neighborhood $\cC_E(\fa)$ of a set $\fa\subset X^0$ (see Section \ref{subsec:CCC}).

\begin{lem}\label{lem:hull}
	There exists an integer $E>0$ such that $\Hull(\fa)\subseteq\cN_E(\fa)$ for all $(\fa,\h)\in\cP$.
\end{lem}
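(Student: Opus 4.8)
The plan is to pass to finitely many $G$-orbit representatives among the pairs in $\cP$ and to treat the three types of pair separately, the type $(\h,\h)$ being immediate. First note that $\cP$ splits into finitely many $G$-orbits: there are finitely many $G$-orbits of hyperplanes because $X$ is $G$-cocompact, and $\cM_0$ has finitely many $G_0$-orbits because $[x]\mapsto\lambda_x$ embeds it $G_0$-equivariantly into $C(\cP_0)^0$, which by Lemma~\ref{lem:finorbits} has finitely many $\Gamma_0$-orbits, hence finitely many $G_0$-orbits, of vertices. Since $g\Hull(\fa)=\Hull(g\fa)$ and $g\cN_r(\fa)=\cN_r(g\fa)$ for $g\in G$, it is enough to find, for each orbit representative $(\fa,\h)$, a finite constant $E_{(\fa,\h)}$ with $\Hull(\fa)\subseteq\cN_{E_{(\fa,\h)}}(\fa)$; one then takes $E$ to be the maximum of these (and at least $1$). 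For a representative of the first type $\fa=\h$ is a halfspace, hence convex, and $\Hull(\fa)=\fa$.

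For a representative $\fa=[x]$ of the second type, write $\fa_g$ for the component of $\h_0-g\fc_0$ containing $x$ (for $g\in G_0$). The key structural fact is that each $\fa_g\cup g\fc_0$ is convex. This is an instance of a general statement: if $\fc$ is convex, $\fb\subseteq\fc$ is convex, and $\fa_0$ is a component of $\fc-\fb$, then $\fa_0\cup\fb$ is convex --- a geodesic between two points of $\fa_0\cup\fb$ stays in $\fc$, and if it ever left $\fa_0\cup\fb$ it would enter and later leave $\fb$, producing a geodesic subpath between two points of $\fb$ whose interior avoids $\fb$, contradicting convexity of $\fb$. Applying this with $\fc=\h_0$ and $\fb=g\fc_0$ (which lies in $\h_0$ since $g$ stabilizes $\h_0$), and using $[x]\subseteq\fa_g$, we obtain $\Hull([x])\subseteq\bigcap_{g\in G_0}(\fa_g\cup g\fc_0)$, which is convex. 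Hence any vertex of $\Hull([x])$ avoiding every translate $g\fc_0$ lies in all the $\fa_g$, and therefore in $[x]$; that is, $\Hull([x])\setminus[x]\subseteq\bigcup_{g\in G_0}g\fc_0$. Since all these translates lie in $\cN_R(\h_0^*)\cap\h_0$, every vertex of $\Hull([x])$ lies in $[x]$ or in this shallow region.

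It remains to bound $d(z,[x])$ for $z\in\Hull([x])$ lying in some $g_0\fc_0$. I would argue by contradiction using cocompactness: if the bound failed for the orbit of $[x]$ we could pick $z_n\in\Hull([x])$ with $d(z_n,[x])\to\infty$; each $z_n$ lies in $\cN_R(\h_0^*)\cap\h_0$, on which $\Gamma_0$ acts cocompactly, so after translating by elements $\gamma_n\in\Gamma_0$ we may assume $\gamma_n^{-1}z_n$ is a single fixed vertex $w$, with $w\in\Hull(\gamma_n^{-1}[x])$ but $d(w,\gamma_n^{-1}[x])\to\infty$. Taking a nearest point $p_n\in\gamma_n^{-1}[x]$, the geodesic from $w$ to $p_n$ lies in $\Hull(\gamma_n^{-1}[x])$, hence --- off its endpoint $p_n$ --- in the shallow region; extracting a limiting geodesic ray from $w$ inside that region and combining this with the facts that $w\in\Hull(\gamma_n^{-1}[x])$ forces $\gamma_n^{-1}[x]$ onto $w$'s side of every translate of $\fc_0$ not containing $w$ and into one fixed component of $\h_0-\fc_0$, one derives a contradiction.

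Finally, for a representative $\fa=[x]^*$ of the third type, use $\h_0^*\subseteq[x]^*$ to write $\Hull([x]^*)=X^0\setminus\bigcup\{\h'\in\cH(X):\h'\subseteq[x]\}$; the required estimate then reduces to showing that any vertex of $[x]$ not contained in a halfspace lying inside $[x]$ is within bounded distance of $X^0\setminus[x]$, which one should deduce from essentiality of $X$ (a vertex lying sufficiently deep inside $[x]$ is contained in a halfspace inside $[x]$) together with the fact that the translates of $\fc_0$ cutting out $[x]$ all lie in $\cN_R(\h_0^*)$. The main obstacle is the quantitative step in each of the last two cases --- ruling out that the convex hull of an equivalence class, or of its complement, can drift arbitrarily far from it along the shallow region near $\hh_0$ --- and it is precisely here that local finiteness, cocompactness of $\Gamma$, and essentiality of $X$ all enter.
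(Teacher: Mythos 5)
Your reduction to orbit representatives and the split by type matches the paper's organization, and your first step for the type $([x],\h_0)$ is a genuinely nice alternative: proving $\fa_g\cup g\fc_0$ is convex (via the general fact that a component of a convex set cut by a convex subset, together with that subset, is convex) and intersecting over $g\in G_0$ does cleanly give $\Hull([x])\setminus[x]\subseteq\bigcup_g g\fc_0$. The paper instead derives this containment directly from separating halfspaces through $g_\tau\fc_0$ and connectedness of $\h_0\cap\h^*$; both work, and yours is arguably more transparent.

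However, there is a genuine gap, and it sits exactly at the quantitative heart of the lemma. For the type $([x],\h_0)$, knowing that $\Hull([x])\setminus[x]$ lies in the shallow strip near $\hh_0$ does not by itself bound $d(\cdot,[x])$; your compactness sketch ends with ``one derives a contradiction'' without producing one, and as written it appears to require the limit point $w$ to lie in $\h'_0$ in order to conclude $w\in\gamma_n^{-1}[x]$, whereas $w$ actually lies inside $\bigcup_g g\fc_0$ and is therefore \emph{not} in $\h'_0$, so this comparison is not available. (Perturbing $w$ into $\h'_0$ destroys the property $w\in\Hull(\gamma_n^{-1}[x])$.) Moreover, ``$w\in\Hull(S)$ while $S$ drifts away from $w$'' is perfectly possible for general sets $S$ in a CAT(0) cube complex, so the contradiction must use the specific combinatorics of $[x]$; the specific mechanism the paper uses --- the finitely many $G_0$-orbits of edges at $\lambda_x$ in $C(\cP_0)$, producing a uniform $E_1$ with $g_\tau\fc_0\subseteq\cN_{E_1}([x])$, combined with passing to a nearby $y'\in\h'_0$ --- is precisely what is missing from your outline. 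For the type $([x]^*,\h_0^*)$ the gap is more serious: you reduce to showing that a point of $[x]$ far from $[x]^*$ is contained in a halfspace inside $[x]$, and then attribute this to essentiality of $X$; but essentiality says nothing of the sort, and the paper does not invoke it there. The paper's actual argument uses the convexity of the cubical thickening $(\h_0^*)^{+R}$: a far point $y$ is separated from $(\h_0^*)^{+R}$ by a halfspace $\h$, then $\h^*$ misses every $g\fc_0$, hence lies in $\h'_0$, hence (being connected) lies inside a single class, which must be $[x]$. This convex-thickening idea is the key input and does not appear in your proposal.
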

\begin{proof}
	If $\h\notin G\cdot\{\h_0,\h_0^*\}$ then $\fa=\h$ is already convex, so it suffices to find $E>0$ such that $\Hull([x])\subseteq\cN_E([x])$ and $\Hull([x]^*)\subseteq\cN_E([x]^*)$ for all $[x]\in\cM_0$.
	
	Fix $[x]\in\cM_0$. As in Section \ref{subsec:sigma}, $[x]$ defines a vertex $\lambda_x\in C(\cP_0)^0$.
	Let $\{e_\tau\}$ be the set of edges incident to $\lambda_x$.
	Lemma \ref{lem:finhyp} tells us that the hyperplanes of $C(\cP_0)$ are compact, and Lemma \ref{lem:finorbits} tells us that $C(\cP_0)$ has finitely many $G_0$-orbits of hyperplanes, hence $C(\cP_0)$ has finitely many $G_0$-orbits of edges.
	It follows that the $G_0$-stabilizer of $[x]$ has finitely many orbits in $\{e_\tau\}$.
	Each edge $e_\tau$ leaves a halfspace of $C(\cP_0)$ containing $\lambda_x$, and such a halfspace corresponds to a component $\fa_\tau$ of $\h_0-g_\tau\fc_0$ containing $[x]$, for some $g_\tau\in G_0$.
	Moreover, any $y\in\h'_0-[x]$ defines a vertex $\lambda_y\neq\lambda_x$ in $C(\cP_0)$, so $\lambda_y$ is separated from $\lambda_x$ by a hyperplane dual to some $e_\tau$, and therefore $y$ is separated from $[x]$ by one of the sets $g_\tau\fc_0$.
	
	As the $G_0$-stabilizer of $[x]$ has finitely many orbits in $\{e_\tau\}$, we see that there is an integer $E_1>0$ such that all the sets $g_\tau\fc_0$ are contained in the $E_1$-neighborhood of $[x]$.
	$G_0$ acts cocompactly on $\h_0-\h'_0$, so there is also an integer $E_2>0$ such that $\h_0$ is contained in the $E_2$-neighborhood of $\h'_0$.
	Let $E:=E_1+2E_2$.
	We now claim that every $y\in\h_0-\cN_E([x])$ is separated from $[x]$ by one of the sets $g_\tau\fc_0$.
	Indeed, given such a $y$ there exists $y'\in\h'_0$ with $d(y,y')\leq E_2$, and $y'\notin\cN_{E_1+E_2}([x])$, so in particular $y'\notin[x]$, and there exists $g_\tau\fc_0$ separating $y'$ from $[x]$. But $g_\tau\fc_0\subseteq\cN_{E_1}([x])$, so $d(y',g_\tau\fc_0)>E_2$, hence $y$ is also separated from $[x]$ by $g_\tau\fc_0$.
	
	If $y\in\h_0$ is separated from $[x]$ by $g_\tau\fc_0$, then in particular $y\notin g_\tau\fc_0$. As $g_\tau\fc_0$ is convex, there exists a halfspace $\h\in\cH$ with $g_\tau\fc_0\subseteq\h$ and $y\in\h^*$ (Lemma \ref{lem:d}). The intersection $\h_0\cap\h^*$ is convex (Lemma \ref{lem:intersect}), so in particular connected, thus we must have $[x]\subseteq\h$ (else there is a path in $\h_0\cap\h^*$ joining $y$ to $[x]$ that avoids $g_\tau\fc_0$).
	
	$\Hull([x])$ is the intersection of halfspaces containing $[x]$, so our arguments so far imply that any $y\in\h_0-\cN_E([x])$ lies outside $\Hull([x])$. We also know that $[x]\subseteq\h_0$, so $\Hull([x])\subseteq\h_0$. This proves that $\Hull([x])\subseteq\cN_E([x])$.
	
	Finally we turn to $\Hull([x]^*)$. 
	Enlarging $E$ so that $E\geq R\dim X$, we claim that 
	$$\Hull([x]^*)\subseteq\cN_E([x]^*).$$
	Suppose $y\in[x]-\cN_E([x]^*)$. Our task is to find a halfspace containing $[x]^*$ but not $y$.
	Observe from Remark \ref{remk:thickconvex} that $(\h_0^*)^{+R}$ is convex, and from Remark \ref{remk:thickneigh} that $$\cN_R(\h_0^*)\subseteq(\h_0^*)^{+R}\subseteq\cN_{R\dim X}(\h_0^*).$$
	We know that $\h_0^*\subseteq[x]^*$, so $y\notin\cN_{R\dim X}(\h_0^*)$ and $y\notin(\h_0^*)^{+R}$, so by convexity of $(\h_0^*)^{+R}$ there is a halfspace $(\h_0^*)^{+R}\subseteq\h\in\cH$ with $y\in\h^*$. The halfspace $\h^*$ is disjoint from $\cN_R(\h_0^*)$, so lies in $\h'_0$, and $\h^*$ is connected so it lies inside one of the classes in $\cM_0$. As $y\in\h^*\cap[x]$ we deduce that $\h^*\subseteq[x]$, so $[x]^*\subseteq\h$ as required. 
\end{proof}

\begin{lem}\label{lem:twohulls}
	For $(\fa,\h)\in\cP$ the intersection $\Hull(\fa)^{+1}\cap\Hull(\fa^*)^{+1}$ is non-empty and is contained in the $(R+E+\dim X)$-neighborhood of $N(\hh)$.
\end{lem}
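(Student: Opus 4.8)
The plan is to prove the two assertions separately: first that $\Hull(\fa)^{+1}\cap\Hull(\fa^*)^{+1}$ is non-empty, and then that it lies in the $(R+E+\dim X)$-neighborhood of $N(\hh)$. For the non-emptiness, the natural strategy is to exhibit an explicit vertex. Since $\fa$ and $\fa^*$ partition $X^0$ and $X^1$ is connected, there is an edge $e$ joining a vertex $u\in\fa$ to a vertex $v\in\fa^*$; then $u\in\Hull(\fa)$, $v\in\Hull(\fa^*)$, and both $u$ and $v$ lie in $\Hull(\fa)^{+1}\cap\Hull(\fa^*)^{+1}$ because $e$ is a cube with one endpoint in each convex hull, so $N(\text{that cube})$, hence the $+1$-thickening, contains both endpoints. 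Actually it is cleanest to note $u\in\Hull(\fa)$ already and $u\in\{v\}^{+1}\subseteq\Hull(\fa^*)^{+1}$ since $u$ and $v$ span an edge. So $u$ witnesses non-emptiness.

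**The neighborhood bound.** For the second assertion, let $z\in\Hull(\fa)^{+1}\cap\Hull(\fa^*)^{+1}$; I must bound $d(z,N(\hh))$. By Remark \ref{remk:thickneigh} (applied with the convex sets $\Hull(\fa)$, $\Hull(\fa^*)$, which are convex and hence so are their thickenings by Remark \ref{remk:thickconvex}), $z$ lies within distance $\dim X$ of both $\Hull(\fa)$ and $\Hull(\fa^*)$. So it suffices to bound the distance between $\Hull(\fa)$ and $N(\hh)$ — more precisely, to show every point of $\Hull(\fa)\cap\cN_{\dim X}(\Hull(\fa^*))$, or rather every point witnessing the intersection, is close to $N(\hh)$. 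The cleaner route: take $p\in\Hull(\fa)$ and $q\in\Hull(\fa^*)$ with $d(z,p),d(z,q)\le\dim X$, so $d(p,q)\le 2\dim X$. Now I need $d(p,N(\hh))\le R+E$ or similar. The key input is Lemma \ref{lem:hull}: $\Hull(\fa)\subseteq\cN_E(\fa)$ and $\Hull(\fa^*)\subseteq\cN_E(\fa^*)$, so $p\in\cN_E(\fa)$ and $q\in\cN_E(\fa^*)$, giving $p',q'$ with $p'\in\fa$, $q'\in\fa^*$, $d(p,p')\le E$, $d(q,q')\le E$, and $d(p',q')\le 2E+2\dim X$. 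A geodesic from $p'$ to $q'$ crosses from $\fa$ to $\fa^*$, so it contains an edge $e$ joining $\fa$ to $\fa^*$; by Lemma \ref{lem:eleavefa}, $d(e,\h)\le R$ and $d(e,\h^*)\le R$, which means $e$ crosses (or is very near) $\hh$ — in any case $d(e,N(\hh))\le R$ (an edge with $d(e,\h),d(e,\h^*)\le R$ has an endpoint within $R$ of $\h$ and within $R$ of $\h^*$; a cleaner statement is that $e$ is within $R$ of the hyperplane $\hh$ since $N(\hh)$ separates $\h$ from $\h^*$). Then $z$ is within $\dim X + E + R$ of $e$ hence of $N(\hh)$, wait — I should track the constant carefully: $d(z,e)\le d(z,p)+d(p,p')+d(p',e)\le \dim X + E + (\text{length of the geodesic portion})$, which is too big.

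**Fixing the constant.** The previous chain overcounts. Better: apply Lemma \ref{lem:eleavefa} differently. Rather than going through a long geodesic, observe that $z\in\Hull(\fa)^{+1}\cap\Hull(\fa^*)^{+1}$ forces $z$ to be near a \emph{single} edge crossing between the thickened hulls. Precisely, since $z\in\Hull(\fa)^{+1}$, either $z\in\Hull(\fa)$ or $z$ is adjacent to a vertex of $\Hull(\fa)$ (up to $\dim X$ from Remark \ref{remk:thickneigh}); similarly for $\Hull(\fa^*)$. Take a geodesic from a nearest point $p\in\Hull(\fa)\subseteq\cN_E(\fa)$ to a nearest point $q\in\Hull(\fa^*)\subseteq\cN_E(\fa^*)$ — no, I think the honest approach is: let $p'\in\fa$ with $d(z,p')\le \dim X + E$ and $q'\in\fa^*$ with $d(z,q')\le\dim X+E$; a geodesic $\gamma$ from $p'$ to $q'$ has length $\le 2(\dim X+E)$ and contains an edge $e$ from $\fa$ to $\fa^*$ with $d(z,e)\le d(z,p')\le \dim X+E$ — no that's wrong too since $e$ could be the far end of $\gamma$. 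I expect \textbf{this constant-chasing to be the main obstacle}: the bound $R+E+\dim X$ is tight and the proof must route through the correct convex set. The right move is: $e$ lies on $\gamma$, so $d(z,e)\le \dim X + E + |\gamma|/1$... The actual fix is that $\hh$ separates $p'$ from $q'$ (since $p'\in\fa\supseteq$ its side, $q'\in\fa^*$), so $\gamma$ crosses $\hh$, meaning $\gamma$ meets $N(\hh)$; hence $d(z,N(\hh))\le d(z,p') + |\gamma| \le 3(\dim X + E)$ — still not the claimed bound. So Lemma \ref{lem:hull} alone is too lossy; I suspect the real argument uses that $N(\hh)$ is convex and that $z$'s nearest points $p,q$ in the two hulls have the property that the \emph{geodesic between them} passes within $R$ of where it crosses $\hh$ \emph{and} that $z$ is within $\dim X$ of that geodesic because $z$ is within $1$-thickenings. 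I would reconcile this by using Lemma \ref{lem:eleavefa}'s bound $d(e,\h),d(e,\h^*)\le R$ on the specific crossing edge $e$ of a geodesic from $p$ to $q$ where now $p,q$ are chosen as the gate/projection of $z$, forcing $z$ within $\dim X$ of $e$ itself, then adding $E$ from Lemma \ref{lem:hull} to get into $\fa,\fa^*$ and $R$ from Lemma \ref{lem:eleavefa} to land in $N(\hh)$: total $R+E+\dim X$. Getting this projection argument exactly right — identifying which edge to use and why $z$ is within $\dim X$ of it rather than of a far endpoint — is the crux, and I would handle it by working with the combinatorial gate of $z$ onto the convex set $\Hull(\fa^*)^{+1}$ (or $N(\hh)$ directly) and invoking convexity of all thickenings via Remark \ref{remk:thickconvex}.
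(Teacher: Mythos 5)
Your non‑emptiness argument is fine and matches the paper (an endpoint of any edge joining $\fa$ to $\fa^*$ lies in both thickened hulls). But the neighborhood bound is never completed, and the several failed attempts contain a genuine conceptual error which is worth naming precisely.

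First, the false claim: in your second‑to‑last attempt you assert that ``$\hh$ separates $p'$ from $q'$ (since $p'\in\fa\supseteq$ its side, $q'\in\fa^*$).'' This is wrong because $\fa$ and $\fa^*$ are \emph{not} the two halfspaces bounded by $\hh$. For a pair $(\fa,\h)=(g[x],g\h_0)$ one has $\fa\subsetneq\h$, so $\fa^*\supsetneq\h^*$ strictly: a point $q'\in\fa^*$ may perfectly well lie in $\h$ (for instance inside some $gg'\fc_0$ or in another equivalence class). Hence a geodesic from $p'\in\fa$ to $q'\in\fa^*$ need not cross $\hh$ at all, and the conclusion you draw from that attempt does not hold.

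Second, the structural gap. All of your attempts route through a geodesic between two auxiliary points ($p$ and $q$, or $p'$ and $q'$) that are both \emph{different} from $z$, and you correctly notice that you then cannot control the distance from $z$ to the crossing edge on that geodesic. What's missing is that the geodesic should be based at $z$ itself. Combine Lemma~\ref{lem:hull} with Remark~\ref{remk:thickneigh} in one step to get $\Hull(\fa)^{+1}\subseteq\cN_{E+\dim X}(\fa)$ and $\Hull(\fa^*)^{+1}\subseteq\cN_{E+\dim X}(\fa^*)$, so $z$ lies within $E+\dim X$ of both $\fa$ and $\fa^*$. Since $z$ is a vertex it lies in exactly one of $\fa,\fa^*$, say $z\in\fa$; then a geodesic from $z$ to a nearest vertex of $\fa^*$ has length at most $E+\dim X$, starts in $\fa$ and ends in $\fa^*$, and therefore contains an edge $e$ joining $\fa$ to $\fa^*$ with \emph{both} endpoints within $E+\dim X$ of $z$ (they both lie on that short geodesic). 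Now Lemma~\ref{lem:eleavefa} gives $d(e,\h),d(e,\h^*)\le R$, which forces an endpoint of $e$ to lie within $R$ of $N(\hh)$, and the estimate $d(z,N(\hh))\le R+E+\dim X$ follows. The point you were looking for — ``why $z$ is within (a controlled distance) of $e$ itself rather than of a far endpoint'' — is handled not by gates or projections but simply by making $z$ one endpoint of the connecting geodesic; the $E$ and $\dim X$ are absorbed into a single radius via $\Hull(\fa)^{+1}\subseteq\cN_{E+\dim X}(\fa)$ rather than being spent in two separate hops through $\Hull(\fa)$.
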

\begin{proof}
The intersection $\Hull(\fa)^{+1}\cap\Hull(\fa^*)^{+1}$ is non-empty because it contains any edge that joins $\fa$ to $\fa^*$.
Lemma \ref{lem:hull} says that $\Hull(\fa)\subseteq\cN_E(\fa)$, so $\Hull(\fa)^{+1}\subseteq\cN_{E+\dim X}(\fa)$ by Remark \ref{remk:thickneigh}. Similarly $\Hull(\fa^*)^{+1}\subseteq\cN_{E+\dim X}(\fa^*)$.
Thus any vertex in $\Hull(\fa)^{+1}\cap\Hull(\fa^*)^{+1}$ is within distance $E+\dim X$ of an edge that joins $\fa$ to $\fa^*$, so it lies in the $(R+E+\dim X)$-neighborhood of $N(\hh)$ by Lemma \ref{lem:eleavefa}.
\end{proof}

\begin{lem}
	$\cP$ has finite width.
\end{lem}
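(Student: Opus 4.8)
We must show that $\cP$ has finite width, i.e.\ that there is a global bound on the number of pairwise transverse elements of $\cP$.

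**Plan.** The strategy is to reduce the transversality relation in $\cP$ to something geometric in $X$ so that we can invoke cocompactness and local finiteness. The key point is that if $(\fa_1,\h_1),\dots,(\fa_n,\h_n)\in\cP$ are pairwise transverse, then — because transversality of two elements $(\fa_i,\h_i),(\fa_j,\h_j)$ forces all four intersections $\fa_i\cap\fa_j$, $\fa_i\cap\fa_j^*$, $\fa_i^*\cap\fa_j$, $\fa_i^*\cap\fa_j^*$ to be non-empty (where I read $\le$ on $\cP$ as the inclusion order on first coordinates) — one can run the Helly-type argument of Lemma~\ref{lem:intersect}. More precisely, I would first replace each $\fa_i$ by the convex set $\Hull(\fa_i)^{+1}$ and each $\fa_i^*$ by $\Hull(\fa_i^*)^{+1}$; these are convex by Remark~\ref{remk:thickconvex}, and I would check that pairwise transversality of the $(\fa_i,\h_i)$ implies that $\Hull(\fa_i)^{+1}\cap\Hull(\fa_j^*)^{+1}\neq\emptyset$ and likewise for the other three sign patterns — indeed $\fa_i\cap\fa_j\neq\emptyset$ immediately gives $\Hull(\fa_i)^{+1}\cap\Hull(\fa_j)^{+1}\neq\emptyset$, and when $i=j$ this is exactly the non-emptiness half of Lemma~\ref{lem:twohulls}. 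Then for any choice of signs $\varepsilon_i\in\{\ast,\text{nothing}\}$ the sets $\{\Hull(\fa_i^{\varepsilon_i})^{+1}\}$ are pairwise intersecting convex sets, so by Lemma~\ref{lem:intersect} they have a common vertex $x$.

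**Locating the intersection point and counting.** Apply this with $\varepsilon_i=\ast$ for a single index $i=1$ and $\varepsilon_i=\text{nothing}$ otherwise: we get $x\in\Hull(\fa_1^*)^{+1}\cap\bigcap_{j\ge2}\Hull(\fa_j)^{+1}$. Doing the same with the roles reversed ($\varepsilon_1=$ nothing, all others $\ast$) gives $x'\in\Hull(\fa_1)^{+1}\cap\bigcap_{j\ge2}\Hull(\fa_j^*)^{+1}$. Now $x$ lies in $\Hull(\fa_1^*)^{+1}\cap\Hull(\fa_1)^{+1}$, which by Lemma~\ref{lem:twohulls} is contained in the $(R+E+\dim X)$-neighborhood of $N(\hh_1)$; the same holds for $x$ with respect to every $\hh_j$ (just pick the all-but-$j$ signs). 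So there is a single vertex $x$ lying within distance $R+E+\dim X$ of $N(\hh_i)$ for every $i=1,\dots,n$. By local finiteness and cocompactness of the $G$-action — concretely, using the function $k$ from Section~\ref{subsec:sigma}, or just the fact that only boundedly many cubical neighborhoods of hyperplanes of $X$ meet a ball of fixed radius — the number of distinct hyperplanes $\hh_i$ with this property is at most $k(R+E+\dim X)$. Finally, distinct transverse elements $(\fa_i,\h_i)$ must have distinct hyperplanes $\hh_i$: if $\h_i=\h_j$ with $i\ne j$ then, by the same argument as the end of Lemma~\ref{lem:eleavefa}, the first coordinate is uniquely determined by the halfspace together with transversality, so $(\fa_i,\h_i)=(\fa_j,\h_j)$. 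Hence $n\le k(R+E+\dim X)$, which bounds the width.

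**Main obstacle.** The routine but slightly fiddly part is the very first reduction: verifying that transversality of $(\fa_i,\h_i)$ and $(\fa_j,\h_j)$ in the poset $\cP$ really does give all four sign-patterned intersections non-empty, and in particular handling the $i=j$ diagonal case where one wants $\Hull(\fa_i)^{+1}\cap\Hull(\fa_i^*)^{+1}\ne\emptyset$ — this is supplied by Lemma~\ref{lem:twohulls}, which is precisely why that lemma was proved. Everything after that is a clean Helly argument followed by a counting argument; no genuinely hard step remains. I would also remark at the end that since $\cP$ has finite width and admits DCC ultrafilters (the $\theta(x)$), the cubing $Y=C(\cP)$ exists, setting up the remaining verifications of Theorem~\ref{thm:smallerstabs}.
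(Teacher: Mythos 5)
Your overall strategy matches the paper's exactly: pass to the convex sets $\Hull(\fa_i)^{+1}$ and $\Hull(\fa_i^*)^{+1}$, verify pairwise intersections (Lemma~\ref{lem:twohulls} for the diagonal terms, transversality for the cross-terms), invoke the Helly lemma (Lemma~\ref{lem:intersect}), and then bound the count using the function $k$. You have identified all the right ingredients. However, there is a concrete error in your ``Locating the intersection point and counting'' paragraph. You apply Lemma~\ref{lem:intersect} only to subcollections of $n$ sets at a time (one set per index $i$, after fixing a sign $\varepsilon_i$), and then assert that the common vertex $x$ obtained from the choice $\varepsilon_1=\ast$, $\varepsilon_j=$ nothing for $j\geq 2$, lies in $\Hull(\fa_1)^{+1}\cap\Hull(\fa_1^*)^{+1}$. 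That is not justified: that choice of signs only places $x$ in $\Hull(\fa_1^*)^{+1}$, not in $\Hull(\fa_1)^{+1}$. Moreover, the parenthetical ``just pick the all-but-$j$ signs'' produces a \emph{different} common vertex for each $j$, so you never locate a single vertex simultaneously close to all of $N(\hh_1),\dots,N(\hh_n)$.

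The fix is simpler than what you attempted, and it is what the paper does: apply Lemma~\ref{lem:intersect} once to the \emph{entire} family of all $2n$ convex sets $\{\Hull(\fa_i)^{+1},\,\Hull(\fa_i^*)^{+1}\}_{i=1}^n$. Your own ``Plan'' paragraph already establishes that these are pairwise intersecting — transversality handles the cross-terms and Lemma~\ref{lem:twohulls} gives $\Hull(\fa_i)^{+1}\cap\Hull(\fa_i^*)^{+1}\neq\emptyset$. Then a single common vertex $x$ lies in $\Hull(\fa_i)^{+1}\cap\Hull(\fa_i^*)^{+1}$ for every $i$, hence, again by Lemma~\ref{lem:twohulls}, within distance $R+E+\dim X$ of $N(\hh_i)$ for every $i$; the bound $n\leq k(R+E+\dim X)$ follows. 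One small additional point: the counting step needs the hyperplanes $\hh_i$ (not merely the halfspaces $\h_i$) to be distinct, so one should also rule out $\h_i=\h_j^*$; this follows from transversality because in that case the sets $\fa_i$ and $\fa_j^*$ would be nested or equal, but your remark only addresses $\h_i=\h_j$.
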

\begin{proof}
	Let $\{(\fa_i,\h_i)\}$ be a finite collection of pairwise transverse elements of $\cP$. It follows easily from the construction of $\cP$ that the $\h_i$ are distinct.
	Remark \ref{remk:thickconvex} implies that all the sets $\Hull(\fa_i)^{+1}$ and $\Hull(\fa^*_i)^{+1}$ are convex, and they pairwise intersect by Lemma \ref{lem:twohulls} and the fact that the $(\fa_i,\h_i)$ are pairwise transverse. So Lemma \ref{lem:intersect} tells us that
	$$\bigcap_i (\Hull(\fa_i)^{+1}\cap\Hull(\fa^*_i)^{+1})\neq\emptyset.$$
	But, by Lemma \ref{lem:twohulls}, any vertex in this intersection is in the $(R+E+\dim X)$-neighborhood of $N(\hh_i)$ for all $i$, thus the size of the collection $\{(\fa_i,\h_i)\}$ is bounded by $k(R+E+\dim X)$.
\end{proof}

As promised, we can now define $Y:=C(\cP)$. The construction also gives us a $G$-equivariant map $\theta:X^0\to Y^0$, which is a quasi-isometric embedding by Lemmas \ref{lem:domega} and \ref{lem:thetaineq}.

\subsection{The quasi-isometries $\theta$ and $\phi$}

Next we will show that $\theta: X^0\to Y^0$ is a quasi-isometry by constructing a coarse inverse $\phi$.
For $\mu\in Y$, define the subset $\omega_\mu\subseteq\cH$ to consist of all halfspaces $\h$ with $\fa\subsetneq\h$ for some $(\fa,\h')\in\mu$.

\begin{lem}\label{lem:DCCpartial}
	$\omega_\mu$ is a DCC partial ultrafilter on $\cH$.
\end{lem}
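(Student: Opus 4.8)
I want to show $\omega_\mu := \{\h \in \cH \mid \fa \subsetneq \h \text{ for some } (\fa,\h') \in \mu\}$ is a DCC partial ultrafilter on $\cH$. First I would verify the partial ultrafilter condition: if $\h_1, \h_2 \in \omega_\mu$ with $\h_1 \leq \h_2$ (i.e. $\h_1 \subseteq \h_2$), I must rule out $\h_2^* \in \omega_\mu$. So suppose for contradiction that there are $(\fa_1, \h_1'), (\fa_2, \h_2') \in \mu$ with $\fa_1 \subsetneq \h_1$ and $\fa_2 \subsetneq \h_2^*$. Since $\mu$ is a (complete) ultrafilter on $\cP$, the pairs $(\fa_1, \h_1')$ and $(\fa_2, \h_2')$ must be consistent, meaning they are not "opposed" in $\cP$; concretely, since $\mu$ is an ultrafilter, no element of $\mu$ is $\leq$ the complement of another element of $\mu$. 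I would leverage this: $\fa_1 \subsetneq \h_1 \subseteq \h_2$ and $\fa_2 \subsetneq \h_2^*$ give $\fa_1 \cap \fa_2 = \emptyset$, and combined with the structure of $\cP$ (each $(\fa, \h')$ has $\fa$ within bounded distance of $\hat\h'$, by Lemma~\ref{lem:eleavefa}, and $\fa$ roughly fills up one side) I should derive that $(\fa_1,\h_1') \leq (\fa_2,\h_2')^* = (\fa_2^*, (\h_2')^*)$ or a similar comparability, contradicting $\mu$ being a consistent ultrafilter. The cleanest route is probably: show $\fa_1 \subseteq \fa_2^*$ and deduce either equality of the pairs (impossible, as $\fa_1 \cap \fa_2 = \emptyset$ forces $\fa_1 \neq \fa_2$) or a strict order $(\fa_1, \h_1') < (\fa_2^*, (\h_2')^*)$, i.e. $(\fa_1,\h_1') < (\fa_2,\h_2')^*$, which violates consistency of $\mu$.

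**DCC.** For the descending chain condition, suppose $\h_1 > \h_2 > \h_3 > \cdots$ is a strictly descending chain in $\omega_\mu$, so $\h_1 \supsetneq \h_2 \supsetneq \cdots$ with each $\h_i$ witnessed by some $(\fa_i, \h_i') \in \mu$ with $\fa_i \subsetneq \h_i$. I want a contradiction with $\mu$ being DCC. The point is that $\fa_i$ is "close to" the hyperplane $\hat\h_i'$, and since the $\h_i$ are nested and strictly decreasing, the hyperplanes $\hat\h_i$ march off; but $\fa_i \subsetneq \h_i$ means the $\fa_i$ are themselves roughly nested (at least $\fa_i$ lies deep inside $\h_i \subseteq \h_{i-1}$, and one expects $\fa_i \subseteq \fa_{i-1}$ or at least $(\fa_i, \h_i') \leq (\fa_{i-1}, \h_{i-1}')$ after passing to a subsequence). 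More carefully: from the chain one can extract, using local finiteness and Lemma~\ref{lem:eleavefa} (each $\fa$ with $(\fa,\h') \in \mu$ forces $\hat\h'$ to be within distance $R$ of any edge leaving $\fa$), that infinitely many $(\fa_i, \h_i')$ form a descending chain in $\cP$, contradicting that $\mu$ is a DCC ultrafilter. The key auxiliary fact is that $\fa_i \subsetneq \h_i \subsetneq \h_{i-1}$ and, since $\fa_{i-1}$ "fills" $\h_{i-1}$ up to bounded distance from $\hat\h_{i-1}'$ (which is far from $\hat\h_i$), we get $\fa_i \subseteq \fa_{i-1}$ for $i$ large, hence $(\fa_i, \h_i') \le (\fa_{i-1}, \h_{i-1}')$; combined with strictness (the $\h_i$ are distinct so the pairs are distinct) this is a strictly descending chain in $\mu$.

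**Main obstacle.** The technical heart is translating "$\fa \subsetneq \h$" combined with the bounded-distance property of Lemma~\ref{lem:eleavefa} into honest nesting (or at least comparability in $\cP$) of the pairs $(\fa_i,\h_i')$, so that properties of $\mu$ as a pocset ultrafilter can be invoked. One has to be careful because $\fa$ is not convex and the pair's second coordinate $\h'$ need not equal $\h$ — in fact for $\cP$-elements of the form $(g[x], g\h_0)$, the halfspace $\h$ containing $g[x]$ could be any halfspace, not just $g\h_0$. So I would want a lemma saying: if $(\fa, \h') \in \cP$ and $\fa \subsetneq \h$, then $\hh$ is within bounded distance of $\hat\h'$ (this follows since an edge leaving $\h$ also leaves $\fa$, hence by Lemma~\ref{lem:eleavefa} is within $R$ of $\hat\h'$, so $\hat\h$ — being crossed by such edges — is within $\approx R$ of $\hat\h'$). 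With that in hand, a descending chain of $\h_i$'s with distinct underlying hyperplanes forces the $\hat\h_i'$ to be distinct too (after bounded grouping) and the pairs to descend in $\cP$, giving the contradiction. I expect the partial-ultrafilter verification to be short once this bounded-distance dictionary is set up, and the DCC argument to be where the real care is needed.
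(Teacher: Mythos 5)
Your verification that $\omega_\mu$ is a partial ultrafilter is essentially the paper's argument: from $\h_1\cap\h_2=\emptyset$ and $\fa_i\subsetneq\h_i$ one gets $\fa_1\subsetneq\h_1\subseteq\h_2^*\subsetneq\fa_2^*$ (the last inclusion is strict because $\fa_2\subsetneq\h_2$), so $(\fa_1,\h_1')\leq(\fa_2,\h_2')^*$, contradicting consistency of $\mu$. You do not need Lemma~\ref{lem:eleavefa} or any bounded-distance input for this part; the pocset order on $\cP$ is literally strict inclusion of first coordinates, so the comparison is immediate.

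For DCC, however, your approach has a genuine gap. Your auxiliary claim --- that $(\fa,\h')\in\cP$ and $\fa\subsetneq\h$ force $\hh$ to be within bounded distance of $\hh'$ --- is false, and the reasoning offered for it (``an edge leaving $\h$ also leaves $\fa$'') does not hold: an edge dual to $\hh$ leaves $\fa$ only if its $\h$-side endpoint lies in $\fa$, which fails whenever $\fa$ does not touch $N(\hh)$. A concrete counterexample: for $\h'\notin G\cdot\{\h_0,\h_0^*\}$ one has $\fa=\h'$, and $\h'\subsetneq\h$ can hold with $\hh'$ arbitrarily far from $\hh$. With this claim gone, your plan to promote $\fa_i\subsetneq\h_i\subsetneq\h_{i-1}$ to $\fa_i\subseteq\fa_{i-1}$ ``for $i$ large'' has no support; $\fa_{i-1}$ need not ``fill'' $\h_{i-1}$ near $\hh_i$. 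Even the fallback of using finite width plus Ramsey to extract infinitely many pairwise-comparable $(\fa_i,\h_i')$ in $\mu$ does not finish the job: a chain in $\mu$ need only avoid infinite \emph{descending} subchains, and nothing in your setup rules out the extracted chain being ascending, which does not contradict DCC.

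The paper's DCC argument sidesteps all of this. Fix any $x\in\h_1^*$. Since $\fa_i\subsetneq\h_i\subseteq\h_1$, we have $x\notin\fa_i$ for every $i$, so every $(\fa_i,\h_i')$ lies in $\mu-\theta(x)$. The point is that $\mu$ and $\theta(x)$ are both vertices of $Y$, so $|\mu-\theta(x)|=d(\mu,\theta(x))<\infty$ by Lemma~\ref{lem:domega}. It therefore suffices to show that infinitely many of the $(\fa_i,\h_i')$ are distinct; and this follows because $\bigcap_i\h_i=\emptyset$ (a strictly descending infinite chain of halfspaces in a connected cube complex has empty intersection), so any fixed non-empty $\fa$ can satisfy $\fa\subsetneq\h_i$ for only finitely many $i$. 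This is the key idea you were missing: rather than manufacture a descending chain inside $\mu$, compare $\mu$ against a single vertex ultrafilter $\theta(x)$ and invoke the finiteness of their symmetric difference.
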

\begin{proof}
	First let's prove DCC, so suppose $\h_1\supsetneq\h_2\supsetneq\h_3\supsetneq...$ is a strictly descending infinite chain in $\omega_\mu$.
	For each $\h_i$ pick $(\fa_i,\h'_i)\in\mu$ with $\fa_i\subsetneq\h_i$.
	The intersection $\cap_i\h_i$ is empty, so each $\fa_j$ is contained in only finitely many $\h_i$, hence the sequence $(\fa_j,\h'_j)$ contains infinitely many distinct elements.
	For $x\in\h_1^*$ we have $x\notin\fa_i$ for all $i$, so $(\fa_i,\h'_i)\in\mu-\theta(x)$ for all $i$, but then $\mu-\theta(x)$ is infinite, contradicting $\mu,\theta(x)\in Y^0$.
	
	To see that $\omega_\mu$ is a partial ultrafilter, suppose that $\h_1,\h_2\in\omega_\mu$ are disjoint. Then there exist $(\fa_1,\h'_1),(\fa_2,\h'_2)\in\mu$ with $\fa_i\subsetneq\h_i$.
	In turn this implies that $\fa_1\subsetneq\fa_2^*$, so $(\fa_1,\h'_1)\leq(\fa_2^*,(\h'_2)^*)$, contradicting $\mu$ being an ultrafilter.
\end{proof}

\begin{lem}\label{lem:bomega}
	The intersection $\fb_\mu:=\bigcap_{\h\in\omega_\mu}\h$	is non-empty, and $\fb_\mu\subseteq\h\in\cH$ implies $\h\in\omega_\mu$.
\end{lem}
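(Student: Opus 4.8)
The plan is to establish the two assertions by relating $\omega_\mu$ to the ambient pocset $\cH(X)$ and exploiting that $X$ is a CAT(0) cube complex, so that Theorem \ref{thm:duality} identifies $X$ with the cubing $C(\cH)$. First I would note that by Lemma \ref{lem:DCCpartial}, $\omega_\mu$ is a DCC partial ultrafilter on $\cH$. I claim that $\omega_\mu$ satisfies the consistency-type condition needed for Lemma \ref{lem:partial=subcomplex}: if $\h\in\omega_\mu$ and $\h\leq\h'$ (i.e. $\h\subseteq\h'$) then $\h'\in\omega_\mu$. Indeed, if $(\fa,\h'')\in\mu$ witnesses $\fa\subsetneq\h$, then $\fa\subsetneq\h\subseteq\h'$ gives $\fa\subsetneq\h'$, so $\h'\in\omega_\mu$ (the inclusion is strict because $\fa\subsetneq\h$ already). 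Thus $\omega_\mu$ is exactly the kind of partial ultrafilter to which Lemma \ref{lem:partial=subcomplex} applies, with $\cP$ there taken to be $\cH$.

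Next, fixing any vertex $\omega_0\in C(\cH)^0 = X^0$ (for instance $\omega_0=\omega_x$ for some $x\in X^0$, in the notation of Definition \ref{defn:wallspace}), Lemma \ref{lem:partial=subcomplex} tells us that $\omega_\mu$ extends to a DCC complete ultrafilter $\bar\omega$, and moreover that the set of all such DCC extensions is the intersection $\bigcap_{\h\in\omega_\mu}\{\bar\omega\in X^0\mid \h\in\bar\omega\}$, which under the identification of Proposition \ref{prop:cubing}\ref{item:halfspace} is precisely the intersection of the halfspaces $\h$ over $\h\in\omega_\mu$ — that is, it is $\fb_\mu$. Since at least one such extension exists, $\fb_\mu\neq\emptyset$. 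The lemma also records that $\fb_\mu$ is a convex subcomplex (being an intersection of halfspaces), and convexity via Lemma \ref{lem:d} immediately yields the second assertion: if $\fb_\mu\subseteq\h$ for some $\h\in\cH$ but $\h\notin\omega_\mu$, then $\h^*\in\omega_\mu$ would force $\fb_\mu\subseteq\h^*$, contradicting $\fb_\mu\neq\emptyset$; and if neither $\h$ nor $\h^*$ is in $\omega_\mu$, one checks using the description of $\fb_\mu$ as the DCC extensions of $\omega_\mu$ that both $\omega_\mu\cup\{\h\}$ and $\omega_\mu\cup\{\h^*\}$ remain partial ultrafilters, so each extends to a DCC complete ultrafilter, giving points of $\fb_\mu$ inside $\h$ and inside $\h^*$ respectively — so $\fb_\mu\not\subseteq\h$.

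The main obstacle I anticipate is the verification that $\omega_\mu\cup\{\h\}$ (resp. $\omega_\mu\cup\{\h^*\}$) is still a partial ultrafilter when $\h\notin\omega_\mu\cup\omega_\mu^*$, which is what makes the "only if" direction of the second assertion go through; this is essentially the argument appearing in the proof of Lemma \ref{lem:extend}, adapted to the present setting, and one must be slightly careful that the relevant comparabilities in $\cH$ translate correctly through the elements $(\fa,\h'')\in\mu$. Everything else is a direct application of Lemmas \ref{lem:DCCpartial}, \ref{lem:partial=subcomplex} and \ref{lem:d}, together with the duality $X\cong C(\cH)$.
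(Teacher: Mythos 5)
Your overall strategy is the same as the paper's: use Lemma \ref{lem:DCCpartial}, observe $\omega_\mu$ is upward-closed, apply Lemma \ref{lem:partial=subcomplex} to identify $\fb_\mu$ with the set of DCC complete extensions of $\omega_\mu$ (hence non-empty), and handle the converse by producing a DCC complete extension of $\omega_\mu$ containing $\h^*$. The first half of your argument and the case $\h^*\in\omega_\mu$ are fine as written.

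However, in the remaining case (neither $\h$ nor $\h^*$ in $\omega_\mu$), the specific step you propose has a gap that the paper sidesteps. You want to augment $\omega_\mu$ to $\omega_\mu\cup\{\h^*\}$ and then ``extend to a DCC complete ultrafilter,'' citing the argument of Lemma \ref{lem:extend}. But Lemma \ref{lem:extend} only guarantees a \emph{DCC} extension when the partial ultrafilter is DCC \emph{and cofinite}, and cofiniteness of $\omega_\mu$ is not available at this point in the paper (it follows only from Lemma \ref{lem:bmudiameter}, proved later). On the other hand, Lemma \ref{lem:partial=subcomplex} cannot be applied directly to $\omega_\mu\cup\{\h^*\}$ either, because that set fails the hypothesis ``$A\in\omega$ and $A\leq B\Rightarrow B\in\omega$'': adjoining $\h^*$ breaks upward closure. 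So neither cited lemma applies to your modified partial ultrafilter without further work (e.g.\ passing to the upward closure and separately verifying it remains DCC). The paper avoids this entirely: it does not modify $\omega_\mu$ at all, but instead exploits the free parameter $\omega_0$ in Lemma \ref{lem:partial=subcomplex}. Choosing $\omega_0=\omega_x$ for a vertex $x\in\h^*$, and noting $\omega_\mu\cap\{\h,\h^*\}=\emptyset$ (you already established $\h^*\notin\omega_\mu$), the explicit formula \eqref{baromega} produces a DCC complete extension $\bar\omega$ of $\omega_\mu$ with $\h^*\in\bar\omega$. This $\bar\omega$ is a vertex of $\fb_\mu$ lying in $\h^*$, contradicting $\fb_\mu\subseteq\h$. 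You correctly flagged this extension step as ``the main obstacle''; the fix is to route through the $\omega_0$-dependence in Lemma \ref{lem:partial=subcomplex} rather than through Lemma \ref{lem:extend}.
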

\begin{proof}
	$\omega_\mu$ is a DCC partial ultrafilter by Lemma \ref{lem:DCCpartial}, and it is clear that $\h_1\in\omega_\mu$ and $\h_1\subseteq\h_2\in\cH$ implies $\h_2\in\omega_\mu$. Hence we can apply Lemma \ref{lem:partial=subcomplex} to deduce that $\omega_\mu$ can be extended to a DCC complete ultrafilter, and that the set of such extensions is precisely the set $\fb_\mu$ defined above (identifying $X$ with $C(\cH)$ by Theorem \ref{thm:duality}). Thus $\fb_\mu$ is non-empty.
	
	For the second part, suppose $\fb_\mu\subseteq\h\in\cH$ with $\h\notin\omega_\mu$. Then we can pick $x\in\h^*$, and  define a DCC complete ultrafilter $\omega_x$ on $\cH$ with $\h^*\in\omega_x$. 
	We know that $\h^*\notin\omega_\mu$ as that would contradict $\fb_\mu\subseteq\h$, so we may apply Lemma \ref{lem:partial=subcomplex} with $\omega_0=\omega_x$ to extend $\omega_\mu$ to a DCC complete ultrafilter $\bar{\omega}$ with $\h^*\in\bar{\omega}$. But we said above that such an extension corresponds to a vertex of $\fb_\mu$, so this implies $\h^*\cap\fb_\mu\neq\emptyset$, once again contradicting $\fb_\mu\subseteq\h$.
\end{proof}

\begin{lem}\label{lem:N1Hull}
	$\fb_\mu$ is contained in the intersection $\bigcap_{(\fa,\h)\in\mu}\cN_1(\Hull(\fa))$.
\end{lem}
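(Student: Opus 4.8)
The plan is to show that every vertex $z\in\fb_\mu$ lies within distance $1$ of $\Hull(\fa)$ for each $(\fa,\h)\in\mu$. Fix such a $(\fa,\h)$. The key tool is Lemma \ref{lem:bomega}: a halfspace $\mathfrak{k}\in\cH$ contains $\fb_\mu$ if and only if $\mathfrak{k}\in\omega_\mu$, and by the definition of $\omega_\mu$ this happens whenever $\fa\subsetneq\mathfrak{k}$. So the strategy is to take a vertex $z$ realizing $d(z,\Hull(\fa))$, look at a geodesic $\gamma$ from $z$ to its nearest point $p\in\Hull(\fa)$, and argue via Lemma \ref{lem:d} that if this geodesic had length $\geq 2$ we could find a halfspace separating $z$ from $\Hull(\fa)$ in a way that contradicts $z\in\fb_\mu$.

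More precisely, first I would observe that $\Hull(\fa)$ is convex, so by Lemma \ref{lem:d} the distance $d(z,\Hull(\fa))$ equals the number of halfspaces $\mathfrak{k}$ with $z\in\mathfrak{k}^*$ and $\Hull(\fa)\subseteq\mathfrak{k}$. Since $\fa\subseteq\Hull(\fa)\subseteq\mathfrak{k}$, each such $\mathfrak{k}$ satisfies $\fa\subseteq\mathfrak{k}$; I need $\fa\subsetneq\mathfrak{k}$ to invoke $\omega_\mu$. If $\fa=\mathfrak{k}$ then $\fa$ would itself be a halfspace, forcing $\Hull(\fa)=\fa$, and then any separating halfspace $\mathfrak{k}$ with $\fa\subseteq\mathfrak{k}$ and $z\notin\mathfrak{k}$ would have to be $\fa$ itself only if it is minimal — but we can instead note that if $z\notin\Hull(\fa)=\fa$ then $z\in\fa^*$, and since $\fa^*$ is a halfspace with $\fa^*\not\supseteq\fa$ we cannot directly conclude. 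The cleaner route: suppose for contradiction that $d(z,\Hull(\fa))\geq 2$, pick a geodesic $z=z_0,z_1,\dots,z_n=p$ with $p\in\Hull(\fa)$ and $n\geq 2$, and let $\mathfrak{k}$ be the halfspace dual to the first edge $z_0z_1$, chosen so that $z_1\in\mathfrak{k}$, $z_0\in\mathfrak{k}^*$. Since $p\in\Hull(\fa)\subseteq\mathfrak{k}$ would need justification; instead, convexity of $\Hull(\fa)$ and $d(z,\Hull(\fa))=n\geq 2$ together with Lemma \ref{lem:d} produce $n$ distinct halfspaces each containing $\Hull(\fa)$ and missing $z$, and at least one of them, say $\mathfrak{k}$, must strictly contain $\fa$ (since at most one halfspace equals $\fa$, and even that case gives $\fa\subsetneq$ the others when $n\geq2$). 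Then $\fa\subsetneq\mathfrak{k}$ gives $\mathfrak{k}\in\omega_\mu$ by the definition of $\omega_\mu$, so $\fb_\mu\subseteq\mathfrak{k}$, contradicting $z\in\fb_\mu\cap\mathfrak{k}^*$.

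I expect the only real subtlety — the main obstacle — to be the bookkeeping around whether $\fa$ itself is a halfspace and hence whether "$\fa\subseteq\mathfrak{k}$" can be upgraded to the strict "$\fa\subsetneq\mathfrak{k}$" needed to apply the definition of $\omega_\mu$; this is handled by the observation above that when $d(z,\Hull(\fa))\geq2$ there are at least two distinct separating halfspaces, and two distinct halfspaces containing $\fa$ cannot both equal $\fa$, so at least one strictly contains it. Once that point is dispatched, the contradiction with Lemma \ref{lem:bomega} (second part) is immediate, giving $d(z,\Hull(\fa))\leq 1$, i.e. $z\in\cN_1(\Hull(\fa))$; intersecting over all $(\fa,\h)\in\mu$ yields the claim.
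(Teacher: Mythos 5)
Your argument is correct and matches the paper's proof: both invoke Lemma \ref{lem:d} to produce two distinct halfspaces, each containing $\Hull(\fa)$ and missing the given vertex, observe that at most one of the two can equal $\fa$, and conclude that the strictly larger one lies in $\omega_\mu$, so $\fb_\mu$ misses that vertex. Two minor clean-ups: the abandoned first route (the paragraph about $\fa$ being a halfspace) should simply be cut, and the final contradiction needs only the definition $\fb_\mu:=\bigcap_{\h\in\omega_\mu}\h$, not the converse direction established in Lemma \ref{lem:bomega}.
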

\begin{proof}
	Let $(\fa,\h)\in\mu$ and suppose that $x\in X^0-\cN_1(\Hull(\fa))$.
	Lemma \ref{lem:d} implies that there are distinct $\h_1,\h_2\in\cH$ with $\Hull(\fa)\subseteq\h_i$ and $x\in\h_i^*$. In particular, we either have $\fa\subsetneq\h_1$ or $\fa\subsetneq\h_2$, so at least one of $\h_1,\h_2$ is in $\omega_\mu$, hence $x\notin\fb_\mu$.
\end{proof}

\begin{lem}\label{lem:bmudiameter}
	The diameter of $\fb_\mu$ is at most $D:=2k(R+E+1)$.
\end{lem}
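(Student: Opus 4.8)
The plan is to show that $d(v,w)\le D$ for any two vertices $v,w\in\fb_\mu$; since $\fb_\mu=\bigcap_{\h\in\omega_\mu}\h$ (Lemma \ref{lem:bomega}) is an intersection of halfspaces it is convex, so by Lemma \ref{lem:d} this suffices, as $d(v,w)$ then equals the number of halfspaces $\h\in\cH$ with $v\in\h$, $w\in\h^*$. So fix such $v,w$, and call a halfspace $\h$ \emph{separating} if $v\in\h$ and $w\in\h^*$; I must bound the number of separating halfspaces by $D=2k(R+E+1)$. For a separating $\h$ we have $\fb_\mu\not\subseteq\h$ (as $w\notin\h$) and $\fb_\mu\not\subseteq\h^*$ (as $v\notin\h^*$), so Lemma \ref{lem:bomega} gives $\h\notin\omega_\mu$ and $\h^*\notin\omega_\mu$, i.e. no $(\fa,\h')\in\mu$ has $\fa\subsetneq\h$ or $\fa\subsetneq\h^*$. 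The main step is to show that $N(\hat{\h})$ meets $\cN_{R+E+1}(v)$ or $\cN_{R+E+1}(w)$ for each separating hyperplane $\hat{\h}$; by the defining property of $k$ there are then at most $k(R+E+1)$ separating hyperplanes of each type, whence the bound.

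To carry this out, fix a separating halfspace $\h$ and consider how the complete ultrafilter $\mu$ on $\cP$ treats the hyperplane $\hat{\h}$. If $\hat{\h}\notin G\cdot\hh_0$, then (from the definition of $\cP$) the only elements of $\cP$ with hyperplane $\hat{\h}$ are $(\h,\h)$ and $(\h^*,\h^*)$, of which exactly one lies in $\mu$; say $(\h,\h)\in\mu$ (the case $(\h^*,\h^*)\in\mu$ is symmetric). Since $\h$ is convex, $\Hull(\h)=\h$, so Lemma \ref{lem:N1Hull} gives $\fb_\mu\subseteq\cN_1(\h)$, and hence $w\in\h^*$ is adjacent to $\h$ and so lies in $N(\hat{\h})$. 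If instead $\hat{\h}=g\hh_0$ for some $g\in G$, I first claim that for every $[z]\in\cM_0$ the element of $\mu$ from the pair $\{(g[z],g\h_0),(g[z]^*,g\h_0^*)\}$ is the second one: otherwise $(g[z],g\h_0)\in\mu$ with $g[z]\subsetneq g\h_0$ would force $g\h_0\in\omega_\mu$, hence $\fb_\mu\subseteq g\h_0$, contradicting that $g\hh_0=\hat{\h}$ separates $v,w\in\fb_\mu$. Now suppose $v\in g\h_0$ and $w\in g\h_0^*$ (the reverse orientation is handled symmetrically, with $v$ and $w$ swapped). The vertex $v$ either lies in a translate $gh\fc_0$ of $\fc_0$ with $h\in G_0$ — which, as $\fc_0\subseteq\h_0$ has diameter $R-1$ and meets $N(\hh_0)$, lies in $\cN_{R-1}(N(g\hh_0))=\cN_{R-1}(N(\hat{\h}))$ — or lies in a unique equivalence class $g[z']$ with $[z']\in\cM_0$; in the latter case the claim gives $(g[z']^*,g\h_0^*)\in\mu$, so Lemmas \ref{lem:N1Hull} and \ref{lem:hull} yield $v\in\fb_\mu\subseteq\cN_1(\Hull(g[z']^*))\subseteq\cN_{1+E}(g[z']^*)$, and a geodesic of length at most $1+E$ from $v$ to $g[z']^*$ crosses an edge joining $g[z']$ to $g[z']^*$, which by Lemma \ref{lem:eleavefa} lies within $R$ of $N(\hat{\h})$. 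Either way $d(v,N(\hat{\h}))\le R+E+1$.

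Combining the cases, every separating hyperplane has cubical neighborhood meeting $\cN_{R+E+1}(v)$ or $\cN_{R+E+1}(w)$, so there are at most $2k(R+E+1)=D$ of them; thus $d(v,w)\le D$, and as $v,w$ were arbitrary, $\diam(\fb_\mu)\le D$. I expect the main obstacle to be the second case of the middle paragraph: one must first rule out the ``wrong'' element of $\mu$ (which would confine $\fb_\mu$ to one side of $\hat{\h}$) and then exploit that each equivalence class hugs its own boundary — quantified by Lemma \ref{lem:eleavefa} — to push the endpoint of a geodesic $[v,w]$ on the $g\h_0$-side near $N(\hat{\h})$, treating separately the subcase where that endpoint lies in a translate of $\fc_0$ rather than in an equivalence class.
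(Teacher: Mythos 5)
Your proof is correct, but it takes a genuinely different route from the paper's. The paper argues by contradiction: assuming $d(x,y)>D$ for $x,y\in\fb_\mu$, it uses the defining property of $k$ to find a single halfspace $\h$ with $d(x,\h^*),d(y,\h)>R+E+1$, then invokes Lemma \ref{lem:xyfarfromaa*} to lift this to a pair $(\fa,\h)\in\cP$ with $d(x,\fa^*),d(y,\fa)>E+1$, and whichever of $(\fa,\h),(\fa^*,\h^*)$ lies in $\mu$ contradicts Lemmas \ref{lem:hull} and \ref{lem:N1Hull}. You instead fix $v,w\in\fb_\mu$ and bound the number of separating hyperplanes directly, showing that each such hyperplane's cubical neighborhood lies within $R+E+1$ of $v$ or $w$ via a case split on whether $\hat{\h}\in G\cdot\hh_0$ -- in effect re-deriving the content of Lemma \ref{lem:xyfarfromaa*} inline from the explicit structure of $\cP$ (the dichotomy between $(\h,\h)$ and the pairs $(g[z],g\h_0)$). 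Both arguments draw on Lemmas \ref{lem:hull}, \ref{lem:N1Hull}, \ref{lem:eleavefa} and the function $k$, and land on the same constant $D=2k(R+E+1)$; the paper's is shorter because it delegates the case analysis to the earlier lemma, while yours is more self-contained and makes the role of the pocset $\cP$ more visible. One minor misattribution: the implication $\fb_\mu\not\subseteq\h\Rightarrow\h\notin\omega_\mu$ is immediate from the definition $\fb_\mu=\bigcap_{\h\in\omega_\mu}\h$; Lemma \ref{lem:bomega} in fact gives the \emph{converse} direction, though what you use is of course still true.
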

\begin{proof}
	Suppose $x,y\in\fb_\mu$ with $d(x,y)>D$.
	Then by considering the hyperplanes that separate $x$ and $y$, the definition of $k:\mathbb{N}\to\mathbb{N}$ tells us that there exists $\h\in\cH$ with $d(x,\h^*),d(y,\h)>R+E+1$.
	By Lemma \ref{lem:xyfarfromaa*}, there exists $(\fa,\h)\in\cP$ with $d(x,\fa^*),d(y,\fa)>E+1$.
	We know that $\mu$ is an ultrafilter, so one of $(\fa,\h)$ and $(\fa^*,\h^*)$ is in $\mu$ -- say $(\fa,\h)\in\mu$.
	Lemma \ref{lem:hull} tells us that $\Hull(\fa)\subseteq\cN_E(\fa)$, so $y\notin\cN_1(\Hull(\fa))$.
	But then Lemma \ref{lem:N1Hull} implies that $y\notin\fb_\mu$, a contradiction.
\end{proof}

We now have a coarsely well-defined map $\phi:Y^0\to X^0$ by picking $\phi(\mu)\in\fb_\mu$ for each $\mu\in Y^0$.
Our next task is to prove that $\phi$ is a coarse inverse to $\theta$.

\begin{lem}\label{lem:inverse=phi}
	$\sup_{\mu\in Y^0}d(\mu,\theta\phi(\mu))<\infty$
\end{lem}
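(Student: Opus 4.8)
The plan is to estimate $d(\mu,\theta\phi(\mu))$ directly from the combinatorial description of distance in the cubing $Y=C(\cP)$. Writing $x:=\phi(\mu)$, we have $\theta\phi(\mu)=\theta(x)=\{(\fa,\h)\in\cP\mid x\in\fa\}$, which is a vertex of $Y$ by the preceding lemma, so Lemma \ref{lem:domega} gives
\[
d(\mu,\theta\phi(\mu))=|\mu-\theta(x)|=\#\{(\fa,\h)\in\mu\mid x\notin\fa\}.
\]
Thus it suffices to bound the number of pairs $(\fa,\h)\in\mu$ with $x\notin\fa$ by a constant independent of $\mu$.

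The first step is to record that $x$ is uniformly close to every $\fa$ appearing in $\mu$. Since $x=\phi(\mu)\in\fb_\mu$, Lemma \ref{lem:N1Hull} gives $x\in\cN_1(\Hull(\fa))$ for every $(\fa,\h)\in\mu$, and Lemma \ref{lem:hull} gives $\Hull(\fa)\subseteq\cN_E(\fa)$; hence $d(x,\fa)\leq E+1$ for all $(\fa,\h)\in\mu$. Now fix $(\fa,\h)\in\mu$ with $x\notin\fa$, so that $1\leq d(x,\fa)\leq E+1$. Taking a shortest path in $X^1$ from $x$ to a vertex of $\fa$, its final edge $e=e_{(\fa,\h)}$ joins a vertex of $\fa^*$ to a vertex of $\fa$, and both endpoints of $e$ lie within distance $E+1$ of $x$.

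The second step is a counting argument. By local finiteness and cocompactness of $X$ there is a constant $N$, independent of the chosen vertex, bounding the number of edges of $X$ with both endpoints in a ball of radius $E+1$; and by Lemma \ref{lem:eleavefa} any fixed edge $e$ occurs as $e_{(\fa',\h')}$ for at most $2k(R)$ pairs $(\fa',\h')\in\cP$. Assigning to each $(\fa,\h)\in\mu$ with $x\notin\fa$ the edge $e_{(\fa,\h)}$ therefore shows that
$\#\{(\fa,\h)\in\mu\mid x\notin\fa\}\leq 2Nk(R)$,
a bound independent of $\mu$, which completes the proof.

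I do not expect a genuine obstacle here: the argument is just an assembly of the lemmas already proved about $\fb_\mu$, $\Hull(\fa)$, and edges leaving $\fa$. The one point that needs a line of justification is that the selected edge $e_{(\fa,\h)}$ really "leaves $\fa$'' so that Lemma \ref{lem:eleavefa} applies to it; but this is automatic, since $e_{(\fa,\h)}$ is by construction the last edge of a shortest path from a vertex outside $\fa$ into $\fa$, hence joins $\fa$ to $\fa^*$.
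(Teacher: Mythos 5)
Your proof is correct and follows essentially the same route as the paper: both use Lemmas \ref{lem:N1Hull} and \ref{lem:hull} to place $\phi(\mu)$ within distance $E+1$ of each $\fa$ appearing in $\mu$, extract an edge near $\phi(\mu)$ joining $\fa$ to $\fa^*$, and then bound the count via Lemma \ref{lem:eleavefa} together with local finiteness and cocompactness. The only difference is cosmetic: you construct the edge explicitly as the last edge of a geodesic and name the resulting constants, where the paper states the existence of such an edge more tersely.
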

\begin{proof}
	Let $\mu\in Y^0$.
	Suppose $(\fa,\h)\in\mu-\theta\phi(\mu)$. Then $\phi(\mu)\notin\fa$.
	But $\phi(\mu)\in\fb_\mu\subseteq\cN_1(\Hull(\fa))$ by Lemma \ref{lem:N1Hull}, so $\phi(\mu)\in\cN_{E+1}(\fa)$ by Lemma \ref{lem:hull}.
	We deduce that there exists an edge $e$ in the $(E+1)$-neighborhood of $\phi(\mu)$ that joins $\fa$ to $\fa^*$.
	By Lemma \ref{lem:eleavefa}, we can then bound $|\mu-\theta\phi(\mu)|$ by the product of $2k(R)$ and the number of edges in the $(E+1)$-neighborhood of $\phi(\mu)$, and this can be bounded independently of $\mu$ since $X$ is locally finite and cocompact.
\end{proof}

It follows from Lemmas \ref{lem:thetaineq} and \ref{lem:inverse=phi} that $\theta$ is a quasi-isometry with coarse inverse $\phi$.

\subsection{The actions of $G$ and $\Gamma$ on $Y$}

\begin{lem}\label{lem:Ylocfinite}
	$Y$ is locally finite.
\end{lem}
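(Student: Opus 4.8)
The plan is to bound the number of edges of $Y$ at each vertex $\mu\in Y^0$; since $\cP$ has finite width, $Y$ is finite-dimensional, and a uniform bound on vertex valence then forces local finiteness. By Lemma \ref{lem:edges}, the neighbours of $\mu$ in $Y$ are exactly the ultrafilters $\mu':=(\mu\cup\{(\fa^*,\h^*)\})-\{(\fa,\h)\}$ as $(\fa,\h)$ ranges over the $\le$-minimal elements of $\mu$, so it suffices to bound the number of $\le$-minimal elements of $\mu$ independently of $\mu$.

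First I would record that $\theta$ is a quasi-isometry with coarse inverse $\phi$ (Lemmas \ref{lem:thetaineq} and \ref{lem:inverse=phi}); in particular, combining the quasi-isometric embedding inequality for $\theta$ with $\sup_{\mu}d(\mu,\theta\phi(\mu))<\infty$ shows that $\phi$ is coarsely Lipschitz, so $d(\phi(\mu),\phi(\mu'))\le L$ whenever $d(\mu,\mu')=1$, for a constant $L$ depending only on the quasi-isometry data. Now fix a $\le$-minimal $(\fa,\h)\in\mu$ and let $\mu'$ be the corresponding neighbour. Since $(\fa,\h)\in\mu$, Lemmas \ref{lem:N1Hull} and \ref{lem:hull} give $\phi(\mu)\in\fb_\mu\subseteq\cN_1(\Hull(\fa))\subseteq\cN_{E+1}(\fa)$; applying the same two lemmas to $\mu'$, and using $(\fa^*,\h^*)\in\mu'$ together with $(\fa^*,\h^*)\in\cP$, gives $\phi(\mu')\in\cN_{E+1}(\fa^*)$. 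Picking $u\in\fa$ with $d(\phi(\mu),u)\le E+1$ and $v\in\fa^*$ with $d(\phi(\mu'),v)\le E+1$, any path in $X^1$ from $u$ to $v$ must cross from $\fa$ to $\fa^*$, so a geodesic from $u$ to $v$ contains an edge $e(\fa,\h)$ joining a vertex of $\fa$ to a vertex of $\fa^*$, with $d(\phi(\mu),e(\fa,\h))\le L+3E+3=:C$.

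Finally I would count. By Lemma \ref{lem:eleavefa}, any fixed edge of $X$ equals $e(\fa,\h)$ for at most $2k(R)$ elements $(\fa,\h)\in\cP$; and since $X$ is locally finite and cocompact there is a uniform bound $N$ on the number of edges within distance $C$ of a vertex. Hence $\mu$ has at most $2k(R)N$ minimal elements, and therefore at most $2k(R)N$ neighbours. The one point that needs a little thought is the choice of the edge $e(\fa,\h)$: proximity of $\phi(\mu)$ to $\fa$ is immediate from $(\fa,\h)\in\mu$, but proximity to $\fa^*$ is not, and this is precisely why one passes to the adjacent vertex $\mu'$, where $(\fa^*,\h^*)$ now belongs to the ultrafilter so that Lemma \ref{lem:N1Hull} applies. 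Note that the minimality of $(\fa,\h)$ is used only to enumerate the neighbours of $\mu$ via Lemma \ref{lem:edges}, not in the geometric estimate itself.
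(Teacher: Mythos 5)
Your proof is correct, and it takes a genuinely different route for the key geometric estimate. Both you and the paper reduce, via Lemma \ref{lem:edges}, to bounding the number of $\leq$-minimal elements of $\mu$, and both use Lemmas \ref{lem:N1Hull} and \ref{lem:hull} to place $\phi(\mu)$ within $\cN_{E+1}(\fa)$. The divergence is in how to get $\phi(\mu)$ also close to $\fa^*$. The paper argues \emph{internally} in the pocset framework by contradiction: if $d(\phi(\mu),\fa^*)>D+E$, then by Lemma \ref{lem:bmudiameter} the convex sets $\fb_\mu$ and $\Hull(\fa^*)$ are disjoint, so Lemma \ref{lem:d} produces a halfspace $\fb_\mu\subseteq\h\subseteq\fa$, and Lemma \ref{lem:bomega} then produces $(\fa',\h')\in\mu$ with $\fa'\subsetneq\h\subseteq\fa$, contradicting minimality of $(\fa,\h)$. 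You instead pass to the adjacent vertex $\mu'=(\mu\cup\{(\fa^*,\h^*)\})-\{(\fa,\h)\}$, where $(\fa^*,\h^*)$ genuinely lies in the ultrafilter so that Lemma \ref{lem:N1Hull} applies directly, and then use the coarse Lipschitz property of $\phi$ (deducible from Lemmas \ref{lem:thetaineq}, \ref{lem:inverse=phi} and \ref{lem:domega}) to pull $\phi(\mu')$ close to $\phi(\mu)$. Your approach uses the minimality of $(\fa,\h)$ only to enumerate neighbours via Lemma \ref{lem:edges}, and replaces the pocset-specific lemmas \ref{lem:bmudiameter}, \ref{lem:bomega} and \ref{lem:d} with the already-established quasi-isometry data; the paper's version buys a slightly smaller explicit constant ($D+E$ rather than your $L+3E+3$), but yours is arguably more conceptual and less dependent on the fine structure of $\fb_\mu$ and $\omega_\mu$. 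Both are fine.
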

\begin{proof}
Let $\mu\in Y^0$. By Lemma \ref{lem:edges}, to show that $Y$ is locally finite at $\mu$ we must show that $\mu$ has finitely many $\leq$-minimal elements. 

Let $(\fa,\h)\in\mu$. Lemma \ref{lem:N1Hull} tells us that $\phi(\mu)\in\cN_1(\Hull(\fa))$, so $d(\phi(\mu),\fa)\leq E+1$ by Lemma \ref{lem:hull}.
Now suppose that $d(\phi(\mu),\fa^*)>D+E$.
Then Lemmas \ref{lem:hull} and \ref{lem:bmudiameter} imply that $\fb_\mu$ and $\Hull(\fa^*)$ are disjoint, and since these are convex sets Lemma \ref{lem:d} provides us with $\fb_\mu\subseteq\h\in\cH$ such that $\Hull(\fa^*)\subseteq\h^*$ -- in particular $\h\subseteq\fa$.
Lemma \ref{lem:bomega} implies that $\h\in\omega_\mu$, so there exists $(\fa',\h')\in\mu$ with $\fa'\subsetneq\h$.
Then $\fa'\subsetneq\fa$ and $(\fa',\h')\leq(\fa,\h)$, so $(\fa,\h)$ is not minimal in $\mu$.

Our argument so far implies that any minimal $(\fa,\h)\in\mu$ has $d(\phi(\mu),\fa),d(\phi(\mu),\fa^*)\leq D+E$. But for such an $(\fa,\h)$ there exists an edge $e$ in the $(D+E)$-neighborhood of $\phi(\mu)$ joining $\fa$ to $\fa^*$. It follows from Lemma \ref{lem:eleavefa} and the local finiteness of $X$ that $\mu$ has finitely many minimal elements.
\end{proof}

We now verify property \ref{item:GGammaact} of Theorem \ref{thm:smallerstabs}, which concerns the actions of $G$ and $\Gamma$ on $Y$.
We already know that $\theta:X\to Y$ is a $G$-equivariant quasi-isometry, and that $G$ acts cocompactly on $X$, so it follows from Lemma \ref{lem:Ylocfinite} that $G$ acts cocompactly on $Y$.
It is also clear that $G$ acts on $Y$ without inversions in hyperplanes: we have a pocset isomorphism $(\cH(Y),\subseteq)\cong(\cP,\leq)$, and $g(\fa,\h)=(\fa^*,\h^*)$ for $(\fa,\h)\in\cP$ and $g\in G$ implies that $g\h=\h^*$, contradicting the fact that $G$ acts on $X$ without inversions in hyperplanes.

We also know that $\theta:X\to Y$ is a $\Gamma$-equivariant quasi-isometry, and that $\Gamma$ acts cocompactly on $X$, so again it follows from Lemma \ref{lem:Ylocfinite} that $\Gamma$ acts cocompactly on $Y$. Similarly, $\Gamma$ acts properly on $Y$ because it acts properly on $X$.

Finally, to ensure that $Y$ is essential we can replace it with its essential core using Proposition \ref{prop:essential} (noting that the closest point projection from $Y$ to its essential core is a $G$-equivariant quasi-isometry, and that the set of hyperplanes of the essential core is a subset of the set of hyperplanes of $Y$).
This completes the proof of Theorem \ref{thm:smallerstabs}.

\bigskip
\section{Reducing to deep quarterspaces}\label{sec:quarterspaces}

In this section we prove the following theorem. 

\theoremstyle{plain}
\newtheorem*{thm:quarterspaces}{Theorem \ref{thm:quarterspaces}}
\begin{thm:quarterspaces}
	Let $G$ be a group acting cocompactly on a CAT(0) cube complex $X$. Then there is a CAT(0) cube complex $Y$ with the following properties:
	\begin{enumerate}
		\item All quarterspaces in $Y$ are deep.
		\item $G$ acts cocompactly on $Y$.
		\item There exists a $G$-equivariant quasi-isometry $\phi:Y\to X$.
		\item $\phi$ maps each halfspace in $Y$ to within finite Hausdorff distance of a halfspace in $X$.
		\item $Y$ is locally finite if $X$ is locally finite.
	\end{enumerate}		
\end{thm:quarterspaces}

Theorem \ref{thm:quarterspaces} follows from the following theorem by induction on the number of $G$-orbits of vertices in $X$.

\begin{thm}\label{thm:smaller}
	Let $G$ be a group acting cocompactly on a CAT(0) cube complex $X$. If $X$ contains a shallow quarterspace then there is a CAT(0) cube complex $Y$ with the following properties:
	\begin{enumerate}
		\item $G$ acts cocompactly on $Y$.
		\item $Y$ has fewer $G$-orbits of vertices than $X$.
		\item There exists a $G$-equivariant quasi-isometry $\phi:Y\to X$.
		\item $\phi$ maps each halfspace in $Y$ to within finite Hausdorff distance of a halfspace in $X$.
		\item $Y$ is locally finite if $X$ is locally finite.
	\end{enumerate}	
\end{thm}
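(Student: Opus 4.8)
The plan is to follow the architecture of the proof of Theorem~\ref{thm:smallerstabs}: build a pocset $\cP$, set $Y:=C(\cP)$, produce a $G$-equivariant map $\theta\colon X\to Y$ together with a coarse inverse $\phi$, and then read off the five properties, with the technical work carried by hull-and-thickening estimates that transport combinatorial and metric data between $\cP$ and $X$. Fix a shallow quarterspace $\h_1\cap\h_2$, say $\h_1\cap\h_2\subseteq\cN_N(\h_1^*\cap\h_2^*)$, and (replacing $X$ by its first cubical subdivision) assume $G$ acts without inversions in hyperplanes. The idea is to \emph{pull apart} $\h_1$ and $\h_2$: replace $\h_1$ by a convex set $\h_1'$ which agrees with $\h_1$ away from the thin sliver $\h_1\cap\h_2$ but is pulled into $\h_2^*$ along it, dually letting $\h_1^*$ absorb the sliver, so that $\h_1'$ is no longer transverse to $\h_2$; and to do this $G$-equivariantly over the whole $G$-orbit of the ordered pair $(\h_1,\h_2)$, leaving all other halfspaces untouched. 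Concretely, for each $\h$ in the $G$-orbit of $\h_1$ one takes $\h':=\h\cap\bigcap_g g\h_2^*$, the intersection running over the single $G_\h$-orbit of translates dictated by the orbit of $(\h_1,\h_2)$; the particular orbit of shallow quarterspace chosen to pull apart must be selected with some care (for instance of minimal depth) so that the slivers being removed stay suitably spread out. Then $\cP$ consists of the pairs $(\fa,\h)$ with $\h\in\cH(X)$ and $\fa\in\{\h,\h'\}$, ordered by inclusion of first coordinates, with involution $(\fa,\h)^*:=(X^0\setminus\fa,\h^*)$ and $G$-action $g\cdot(\fa,\h):=(g\fa,g\h)$; exactly as in Section~\ref{sec:oneend}, the witness halfspace $\h$ is bookkeeping that records where $\fa$ comes from in $X$.

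The verification of properties (1), (3), (4), (5) then runs parallel to Section~\ref{sec:oneend} (adapted, since $X$ is not assumed locally finite). Because each removed sliver lies in $\cN_N(\h^*)$ and the action is cocompact, one shows $\h'$ and its complement $X^0\setminus\h'$ stay within bounded Hausdorff distance of the convex sets $\h$ and $\h^*$ — this is the analogue of Lemma~\ref{lem:hull}, and I expect nailing down the recipe for $\h'$ so that \emph{both} it and its complement have uniformly controlled convex hulls to be the most delicate technical point. Granting it, the analogue of Lemma~\ref{lem:twohulls} follows, and hence — since $X$ is finite-dimensional (the action being cocompact), so that pairwise transverse elements of $\cP$ lift to pairwise transverse halfspaces of $X$ — the pocset $\cP$ has finite width. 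The principal ultrafilters $\theta(x):=\{(\fa,\h)\in\cP\mid x\in\fa\}$ are DCC by the argument of Section~\ref{sec:oneend}, giving a $G$-equivariant quasi-isometric embedding $\theta\colon X^0\to Y^0$ via Lemma~\ref{lem:domega}; the partial-ultrafilter argument of Lemmas~\ref{lem:DCCpartial}--\ref{lem:inverse=phi} supplies a coarse inverse $\phi$, which is property (3), and property (4) holds because $\phi$ sends the halfspace of $Y$ indexed by $(\fa,\h)$ to within bounded distance of $\fa$, which is Hausdorff-close to the halfspace $\h$ of $X$. Local finiteness of $Y$ when $X$ is locally finite (property (5)) is the argument of Lemma~\ref{lem:Ylocfinite}, and cocompactness of the action of $G$ on $Y$ (property (1)) then follows either from the quasi-isometry $\theta$, or in general from the finite-width bound via Lemma~\ref{lem:cocompact}.

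Property (2) — fewer $G$-orbits of vertices — is the main obstacle and the conceptual heart of the theorem (it is what drives the induction proving Theorem~\ref{thm:quarterspaces}). One first argues that $\theta$ is \emph{surjective} onto $Y^0$: since the construction only prunes relations from the pocset of $X$, every DCC ultrafilter $\mu$ on $\cP$ should be realised by a genuine vertex of $X$ lying in the nonempty convex set $\fb_\mu$ of Lemma~\ref{lem:bomega} — concretely, one must sharpen the coarse statement $\theta\phi(\mu)\approx\mu$ of Lemma~\ref{lem:inverse=phi} to produce an exact $\theta$-preimage. Granting surjectivity, $\theta$ collapses the pulled-apart sliver: if $v\in\h_1\cap\h_2$ and $w$ is its neighbour across $\hh_1$, then $v$ and $w$ are separated in $X$ only by $\h_1$, and since $\h_1'$ no longer meets $\h_2$ — and provided the quarterspace was chosen so that no other modified pair separates $v$ from $w$ — we get $\theta(v)=\theta(w)$. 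Choosing $v$ and $w$ to lie in distinct $G$-orbits, $\theta$ is then surjective but not a bijection on $G$-orbits of vertices, so $Y$ has strictly fewer $G$-orbits of vertices than $X$, and the (finite) number of such orbits strictly decreases at each application of the theorem. The delicate points — and where I expect the real work to lie — are pinning down the recipe for $\h'$ so that both it and its complement have controlled hulls, and choosing which shallow quarterspace to pull apart so that the induced collapse genuinely merges two $G$-orbits.
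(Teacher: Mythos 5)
The paper's proof takes a fundamentally different route from yours. Rather than modifying halfspaces as subsets of $X^0$ and cubing the resulting wallspace, the paper works purely with the abstract pocset of halfspaces: it declares $\h_1 \leq \h_2$ whenever $\h_1 \subseteq \h_2$ or $\h_1 \cap \h_2^*$ is a depth-$0$ quarterspace (Lemma~\ref{lem:exist0} shows any shallow quarterspace contains a depth-$0$ one, and Lemma~\ref{lem:depth0} reformulates depth-$0$ purely in terms of inclusions of halfspaces, which makes transitivity immediate). After quotienting by the symmetrization $\sim$, one gets a pocset $(\cH/\!\sim,\leq)$ whose $\sim$-classes consist of pairwise transverse halfspaces (hence have size $\leq \dim X$), and $Y := C(\cH/\!\sim)$. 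This is not a wallspace with modified walls; nothing in $X^0$ is moved, and all depth-$0$ quarterspaces are handled simultaneously and canonically, with no choice of orbit to pull apart.

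The payoff of that choice — and the place where I think your plan has a genuine gap rather than a deferred technicality — is property~(2). Because $q:(\cH,\subseteq)\to(\cH/\!\sim,\leq)$ is an order-preserving surjection, any DCC ultrafilter $\mu$ on $\cH/\!\sim$ pulls back to the DCC ultrafilter $\phi(\mu):=\{\h\in\cH\mid[\h]\in\mu\}$ on $(\cH,\subseteq)$. This $\phi:Y^0\to X^0$ is \emph{canonical}, $G$-equivariant and \emph{injective}, and it is visibly not surjective: no $\phi(\mu)$ can contain both sides $\h_1,\h_2$ of a depth-$0$ quarterspace, since $\h_1\leq\h_2^*$ would violate consistency of $\mu$, so every vertex of a depth-$0$ quarterspace is missed. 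The complement $X^0\setminus\phi(Y^0)$ is then a non-empty $G$-invariant set and automatically contains a full orbit, giving (2) in one line. In your setup there is no such $\phi$: your $\cP$ consists of \emph{different subsets} of $X^0$, there is no order-preserving map from $\cH$ to $\cP$ along which ultrafilters pull back, and the intuition that "the construction only prunes relations" is precisely the defining feature of the paper's quotient and does not hold for your wallspace. Concretely, with $\h':=\h\cap\bigcap_g g\h_2^*$, the assignment $\h\mapsto\h'$ need not preserve inclusions among translates of $\h$ (the intersections range over different cosets of $G_{\h}$), so there is no reason for your $\theta$ to be surjective onto $Y^0$.

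Even granting surjectivity, the collapse step is not secured: you need not only that some edge endpoints $v,w$ separated in $X$ only by $\hh_1$ become unseparated in $\cP$ (the "provided no other modified pair separates $v$ from $w$" caveat is real and needs an argument), but also that you can choose such $v,w$ in \emph{distinct} $G$-orbits — and after pulling apart a single $G$-orbit of slivers there is nothing preventing every vertex in the sliver from being $G$-equivalent to a vertex outside it, in which case no orbits are merged. Your proposal is a genuinely different construction (wallspace, one orbit at a time, choices required) and these two steps — surjectivity of $\theta$, and merging distinct orbits — are missing ideas, not routine estimates; the paper's quotient-pocset construction sidesteps both by design. The remaining properties (1), (3), (4), (5) are by contrast substantially easier in the paper than your proposal anticipates: cocompactness is Lemma~\ref{lem:cocompact} applied to $q$, the quasi-isometry constant is just $\dim X$ (Lemma~\ref{lem:QIembed}), and the halfspace property is immediate from $\phi$ sending the halfspace $[\h]$ of $Y$ into $\h$, with the reverse bound following from a mild tweak of $\theta$.
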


We will spend the rest of this section proving Theorem \ref{thm:smaller}.
If $X$ is bounded then we can take $Y$ to be a single vertex and $\phi(Y)$ to be the center of $X$ \cite[Proposition II.2.7]{BridsonHaefliger99}, so henceforth we assume that $X$ is unbounded.
By Proposition \ref{prop:essential} we may assume that $X$ is essential.
Throughout this section we will write $\cH=\cH(X)$ for the set of halfspaces of $X$.

\subsection{Depth-0 quarterspaces}

We need the following lemma to characterize depth-0 quarterspaces in terms of inclusions of halfspaces.

\begin{lem}\label{lem:depth0}
	Let $\h_1\cap\h_2$ be a quarterspace. Then $\h_1\cap\h_2$ has depth 0 if and only if any halfspace $\h\subsetneq\h_1$ (resp. $\h\subsetneq\h_2$) satisfies $\h\subsetneq\h_2^*$ (resp. $\h\subsetneq\h_1^*$).
\end{lem}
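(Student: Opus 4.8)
The plan is to prove both implications using the characterization of the combinatorial metric via separating halfspaces (Lemma \ref{lem:d}) together with the interpretation of depth-0 as $\h_1\cap\h_2\subseteq N(\hh_1)\cap N(\hh_2)$, i.e. every vertex of $\h_1\cap\h_2$ lies at distance $\le 2$ from $\h_1^*\cap\h_2^*$. The key translation is: $\h_1\cap\h_2$ has depth $0$ iff for every $x\in\h_1\cap\h_2$ there is $y\in\h_1^*\cap\h_2^*$ with $d(x,y)\le 2$, and since any path from $\h_1\cap\h_2$ to $\h_1^*\cap\h_2^*$ must cross both $\hh_1$ and $\hh_2$, such a path of length $2$ crosses exactly $\hh_1$ and $\hh_2$ and no other hyperplane. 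So $x$ and $y$ are separated by precisely $\hh_1$ and $\hh_2$.

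First I would prove the forward direction. Assume depth $0$, and let $\h\subsetneq\h_1$ be a halfspace; I want $\h\subsetneq\h_2^*$. Suppose not: then $\h\cap\h_2\neq\emptyset$ (we cannot have $\h\subseteq\h_2^*$ fail by emptiness since $\h\subseteq\h_1$ already), so pick $x\in\h\cap\h_2\subseteq\h_1\cap\h_2$. By depth $0$ there is $y$ with $d(x,y)\le 2$ and $y\in\h_1^*\cap\h_2^*$; the two halfspaces separating $x$ from $y$ are exactly $\h_1$ and $\h_2$, hence $\hh\notin\{\hh_1,\hh_2\}$ does not separate them, so $y\in\h$ as well. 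But then $y\in\h\cap\h_1^*$, contradicting $\h\subsetneq\h_1$ (which forces $\h\subseteq\h_1$). This gives $\h\subseteq\h_2^*$, and the inclusion is proper because $\h_1^*\cap\h_2\neq\emptyset$ lies in $\h_2^*$ but not in $\h\subseteq\h_1$. The resp.\ statement is symmetric.

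For the converse, assume the inclusion condition and take any $x\in\h_1\cap\h_2$; I must produce $y\in\h_1^*\cap\h_2^*$ with $d(x,y)\le 2$. Let $\cW$ be the set of hyperplanes separating $x$ from $\h_1^*\cap\h_2^*$ other than $\hh_1,\hh_2$ — equivalently, $\h\in\cW$ iff $x\in\h$ and $\h_1^*\cap\h_2^*\subseteq\h$. The goal is to show $\cW=\emptyset$, for then $x$ is at distance $\le 2$ from $\h_1^*\cap\h_2^*$: indeed, the convex hull argument (or direct path construction through $N(\hh_1)\cap N(\hh_2)$, using Lemma \ref{lem:intersect} to find a vertex in $\h_1\cap\h_2$ adjacent across $\hh_1$, etc.) gives a length-$\le 2$ path once no third hyperplane intervenes. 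To show $\cW=\emptyset$: if $\h\in\cW$ then, since $x\in\h\cap\h_1$ and $\h\neq\h_1^*$, the intersection $\h\cap\h_1$ is non-empty and convex; I want to derive that $\h$ or $\h_1$ is properly contained in the other so as to invoke the hypothesis. The cleanest route is: among all hyperplanes in $\cW\cup\{\hh_1\}$ separating $x$ from $\h_1^*\cap\h_2^*$, pick one, say $\hh$ with halfspace $\h\ni x$, that is innermost (closest to $\h_1^*\cap\h_2^*$), which exists since this is a finite collection (finitely many hyperplanes separate the two points, Lemma \ref{lem:d}); minimality forces $\h\subsetneq\h_1$ or $\h\subsetneq\h_2$ after checking nestedness via Lemma \ref{lem:intersect} applied to $\h_1^*\cap\h_2^*$, $\h_1$ (or $\h_2$), and $\h$. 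Then the hypothesis gives $\h\subsetneq\h_2^*$ (resp.\ $\h\subsetneq\h_1^*$), contradicting $x\in\h\cap\h_2$ (resp.\ $x\in\h\cap\h_1$). Hence $\cW=\emptyset$ and $x$ has distance $\le 2$ to $\h_1^*\cap\h_2^*$; since $x$ was arbitrary, the quarterspace has depth $0$.

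The main obstacle I anticipate is the converse direction: pinning down the combinatorial claim that ``no third hyperplane separates $x$ from $\h_1^*\cap\h_2^*$'' genuinely yields $d(x,\h_1^*\cap\h_2^*)\le 2$ rather than merely that $x$ and $\h_1^*\cap\h_2^*$ are separated by only two hyperplanes (one must rule out that $\h_1^*\cap\h_2^*$ is itself ``far'' in the sense that the two crossing hyperplanes must be traversed consecutively). This should follow from the standard fact that the convex hull of $\{x\}\cup(\h_1^*\cap\h_2^*)$ (or a well-chosen pair of vertices) meets $\h_1\cap\h_2^*$ and $\h_1^*\cap\h_2^*$ appropriately, using Lemma \ref{lem:intersect} to produce the intermediate vertex; but getting the bookkeeping of which halfspaces are involved exactly right is where care is needed, and I would double-check it against the remark in the excerpt that depth equals the least $r$ with $\h_1\cap\h_2\subseteq\cN_r(N(\hh_1)\cap N(\hh_2))$.
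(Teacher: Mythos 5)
Your argument is correct and runs on the same two tools as the paper (Lemmas \ref{lem:d} and \ref{lem:intersect}); the paper simply proves both implications contrapositively and keeps the bookkeeping tighter. Three small things to tidy up. First, in the forward direction your witness to properness of $\h\subseteq\h_2^*$ should be $\h_1^*\cap\h_2^*$ (non-empty by transversality, contained in $\h_2^*$ but not in $\h\subseteq\h_1$); you wrote $\h_1^*\cap\h_2$, which sits in $\h_2$, not $\h_2^*$. Second, in the converse the ``innermost hyperplane'' detour is unnecessary and somewhat muddled: given $\h\in\cW$, just apply Lemma \ref{lem:intersect} directly to the three convex sets $\h,\h_1^*,\h_2^*$ (note the stars -- you wrote $\h_1$ or $\h_2$); the triple intersection is empty while $\h_1^*\cap\h_2^*\neq\emptyset$ by transversality, so $\h\cap\h_1^*=\emptyset$ or $\h\cap\h_2^*=\emptyset$, i.e.\ $\h\subsetneq\h_1$ or $\h\subsetneq\h_2$, and the hypothesis then contradicts $x\in\h\cap\h_2$ or $x\in\h\cap\h_1$. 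Third, your closing worry is unfounded: Lemma \ref{lem:d} states outright that $d(x,\h_1^*\cap\h_2^*)$ equals the number of separating halfspaces (both sets being convex), so once you know only $\h_1,\h_2$ separate them you get $d=2$ with no further path construction or intermediate-vertex argument needed.
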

\begin{proof}
Suppose that $\h_1\cap\h_2$ has depth greater than 0. Then there exists $x\in \h_1\cap\h_2$ with $d(x,\h^*_1\cap\h^*_2)\geq3$. Lemma \ref{lem:d} implies that there is a halfspace $x\in\h\in\cH-\{\h_1,\h_2\}$ with $\h\cap\h_1^*\cap\h_2^*=\emptyset$. Using Lemma \ref{lem:intersect}, we can then assume without loss of generality that $\h\cap\h_1^*=\emptyset$, which means $\h\subsetneq\h_1$. Plus we know that $x\in\h\cap\h_2$, so $\h\nsubseteq\h_2^*$.

Conversely, if there is a halfspace $\h\subsetneq\h_1$ that doesn't satisfy $\h\subsetneq\h_2^*$, then $\h\neq\h_2^*$ (as $\h_1,\h_2$ are transverse) and $\h\cap\h_2\neq\emptyset$. Therefore $\h\cap\h_1^*\cap\h_2^*=\emptyset$ and there exists $x\in\h\cap\h_1\cap\h_2$, and such $x$ satisfies $d(x,\h^*_1\cap\h^*_2)\geq3$, so that $\h_1\cap\h_2$ has depth greater than 0.
\end{proof}

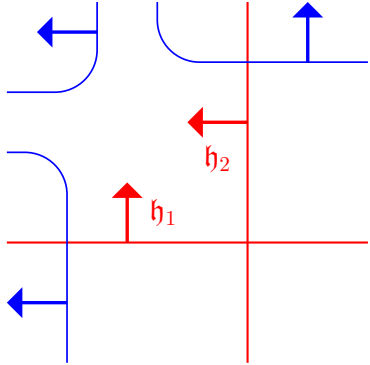
\begin{figure}[H]
	\centering
	\scalebox{.8}{
		\begin{tikzpicture}[auto,node distance=2cm,
			thick,every node/.style={},
			every loop/.style={min distance=2cm},
			hull/.style={draw=none},
			]
			\tikzstyle{label}=[draw=none,font=\Large]

			\draw[red,line width=1pt] (-4,0)--(2,0);
			\draw[draw=red,fill=black,-triangle 90, ultra thick](-2,0)--(-2,1);
			
			\draw[red,line width=1pt] (0,-2)--(0,4);
			\draw[draw=red,fill=black,-triangle 90, ultra thick](0,2)--(-1,2);
			
			\draw[blue,rounded corners=20pt](-3,-2)--(-3,1.5)--(-4,1.5);
			\draw[draw=blue,fill=black,-triangle 90, ultra thick](-3,-1)--(-4,-1);
			
			\draw[blue,rounded corners=20pt](-4,2.5)--(-2.5,2.5)--(-2.5,4);
			\draw[draw=blue,fill=black,-triangle 90, ultra thick](-2.5,3.5)--(-3.5,3.5);
			
			\draw[blue,rounded corners=20pt](-1.5,4)--(-1.5,3)--(2,3);
			\draw[draw=blue,fill=black,-triangle 90, ultra thick](1,3)--(1,4);
			
			\node[red,label] at (-1.4,.5){$\h_1$};
			\node[red,label] at (-.5,1.4){$\h_2$};	
				
		\end{tikzpicture}
	}
	\caption{If $\h_1\cap \h_2$ is a depth-0 quarterspace then Lemma \ref{lem:depth0} implies that there are no halfspaces as shown in blue.}\label{fig:depth0}
\end{figure}

Next, we deduce that $X$ contains a depth-0 quarterspace by the following lemma.

\begin{lem}\label{lem:exist0}
	Any shallow quarterspace contains a depth-0 quarterspace.
\end{lem}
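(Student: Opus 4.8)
The plan is to start from a shallow quarterspace and, as long as it is not already of depth $0$, pass to a strictly thinner sub-quarterspace still contained in it, using Lemma \ref{lem:depth0}; shallowness will stop this from going on forever, and the process can only halt at a depth-$0$ quarterspace.

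First I would fix an integer $R_0$ with $\h_1\cap\h_2\subseteq\cN_{R_0}(\h_1^*\cap\h_2^*)$, which exists since $\h_1\cap\h_2$ is shallow. Then I recursively produce quarterspaces $\mathfrak{k}_1\cap\mathfrak{k}_2$ with $\mathfrak{k}_1\subseteq\h_1$ and $\mathfrak{k}_2\subseteq\h_2$ (so automatically $\mathfrak{k}_1\cap\mathfrak{k}_2\subseteq\h_1\cap\h_2$), starting from $\h_1\cap\h_2$ itself. If the current quarterspace has depth $0$ we stop. Otherwise Lemma \ref{lem:depth0} produces a halfspace strictly contained in one of $\mathfrak{k}_1,\mathfrak{k}_2$ — say $\mathfrak{k}\subsetneq\mathfrak{k}_1$ with $\mathfrak{k}\not\subseteq\mathfrak{k}_2^*$, the case $\mathfrak{k}\subsetneq\mathfrak{k}_2$ with $\mathfrak{k}\not\subseteq\mathfrak{k}_1^*$ being symmetric — and we replace $\mathfrak{k}_1$ by $\mathfrak{k}$, keeping $\mathfrak{k}_2$. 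The point to check is that $\mathfrak{k},\mathfrak{k}_2$ are transverse, so that $\mathfrak{k}\cap\mathfrak{k}_2$ is again a quarterspace: among the four relevant intersections, $\mathfrak{k}\cap\mathfrak{k}_2\neq\emptyset$ is given; $\mathfrak{k}^*\cap\mathfrak{k}_2^*\supseteq\mathfrak{k}_1^*\cap\mathfrak{k}_2^*\neq\emptyset$ because $\mathfrak{k}\subsetneq\mathfrak{k}_1$ and $\mathfrak{k}_1,\mathfrak{k}_2$ are transverse; $\mathfrak{k}_2\subseteq\mathfrak{k}$ would force $\mathfrak{k}_2\subseteq\mathfrak{k}_1$, contradicting transversality of $\mathfrak{k}_1,\mathfrak{k}_2$; and $\mathfrak{k}\subseteq\mathfrak{k}_2$ would give $\mathfrak{k}\subseteq\mathfrak{k}_1\cap\mathfrak{k}_2\subseteq\h_1\cap\h_2\subseteq\cN_{R_0}(\h_1^*\cap\h_2^*)\subseteq\cN_{R_0}(\mathfrak{k}^*)$ (using $\mathfrak{k}\subseteq\mathfrak{k}_1\subseteq\h_1$), making $\mathfrak{k}$ a shallow halfspace, contrary to essentiality of $X$. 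So the recursion is well defined, and at each step exactly one coordinate halfspace strictly shrinks while both stay nested in $\h_1$, $\h_2$ respectively.

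It then remains to see the recursion terminates, for then it terminates at a depth-$0$ quarterspace contained in $\h_1\cap\h_2$, which is what the lemma asks for. If it did not terminate, then by the pigeonhole principle one coordinate — say $\mathfrak{k}_1$, the $\mathfrak{k}_2$ case being symmetric — strictly shrinks infinitely often, and extracting those stages gives an infinite strictly descending chain of halfspaces $\h_1=\mathfrak{m}_0\supsetneq\mathfrak{m}_1\supsetneq\mathfrak{m}_2\supsetneq\cdots$, where at the stage with first coordinate $\mathfrak{m}_j$ the quarterspace is $\mathfrak{m}_j\cap\mathfrak{n}_j$ with $\mathfrak{n}_j\subseteq\h_2$. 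Picking $z_j\in\mathfrak{m}_j\cap\mathfrak{n}_j\subseteq\h_1\cap\h_2$ and, by shallowness, $v_j\in\h_1^*\cap\h_2^*$ with $d(z_j,v_j)\leq R_0$, one notes that $v_j\in\h_1^*=\mathfrak{m}_0^*\subseteq\mathfrak{m}_i^*$ for every $i$ while $z_j\in\mathfrak{m}_j\subseteq\mathfrak{m}_i$ for every $i\leq j$; hence the $j$ distinct halfspaces $\mathfrak{m}_1,\dots,\mathfrak{m}_j$ all contain $z_j$ but not $v_j$, so $d(z_j,v_j)\geq j$ by Lemma \ref{lem:d}. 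Taking $j>R_0$ gives a contradiction.

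The delicate part is the bookkeeping in the recursive step — verifying that the thinner pair of halfspaces remains transverse — and this is precisely where essentiality of $X$ and the fact that the quarterspaces stay inside the original shallow one are used. The termination step is then routine: a strictly descending chain of halfspaces nested inside $\h_1$ cannot stay within bounded distance of $\h_1^*$.
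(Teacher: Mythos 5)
Your proof is correct, and the core scheme is the same as the paper's: starting from a shallow quarterspace, use Lemma~\ref{lem:depth0} to extract a strictly smaller halfspace, use essentiality of $X$ to rule out nesting and hence keep the new pair transverse, and iterate. Where you diverge is in the termination argument. The paper shows directly, via a count of separating halfspaces (Lemma~\ref{lem:d}), that the replacement quarterspace $\h\cap\h_2$ has depth strictly less than that of $\h_1\cap\h_2$, so the induction on the non-negative integer ``depth'' closes immediately. You instead do not track depth at all: you observe that a non-terminating recursion would (by pigeonhole on which coordinate shrinks) produce an infinite strictly descending chain $\h_1\supsetneq\mathfrak{m}_1\supsetneq\mathfrak{m}_2\supsetneq\cdots$ all containing points $z_j$ of $\h_1\cap\h_2$, and then the shallowness bound $d(z_j,v_j)\le R_0$ clashes with $d(z_j,v_j)\ge j$ from Lemma~\ref{lem:d}. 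Both terminations are sound; the paper's is a bit more economical because it quantifies the improvement at each step, which makes the induction transparent, while yours needs the extra pigeonhole/bookkeeping on which coordinate shrinks and an explicit appeal to the shallowness constant $R_0$. Your transversality check also reproves, in slightly more detail, the same essentiality-based exclusion of $\h\subseteq\h_2$ that the paper gives in one line.
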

\begin{proof}
Let $\h_1\cap\h_2$ be a shallow quarterspace of depth $r>0$. By Lemma \ref{lem:depth0} we may assume there is a halfspace $\h\subsetneq\h_1$ with $\h\cap\h_2\neq\emptyset$. We cannot have $\h\subseteq\h_2$, as then $\h$ being deep (since $X$ is essential) would imply that $\h_1\cap\h_2$ is deep, thus $\h$ and $\h_2$ are transverse. We now claim that $\h\cap\h_2$ is a quarterspace of depth less than $r$ -- the lemma then follows by induction on depth. Indeed for any $x\in\h\cap\h_2$, we have an inclusion
$$\{\h'\in\cH\mid x\in\h',\,\h'\cap\h^*\cap\h_2^*=\emptyset\}\subsetneq\{\h'\in\cH\mid x\in\h',\,\h'\cap\h_1^*\cap\h_2^*=\emptyset\},$$
which is strict because $\h_1$ is in the second set but not the first ($\h_1\cap\h^*\cap\h_2^*\neq\emptyset$ by Lemma \ref{lem:intersect}). Lemma \ref{lem:d} then implies that
$$d(x,\h^*\cap\h_2^*)<d(x,\h_1^*\cap\h_2^*)\leq r,$$
so we conclude that $\h\cap\h_2$ has depth at most $r-1$.
\end{proof}

\subsection{The pocset $(\cH/\sim,\leq)$}

The cube complex $Y$ will be constructed from a modified version of the pocset of halfspaces $(\cH,\subseteq)$. We first define a \emph{quasi-order} $\leq$ on $\cH$ -- i.e. a binary relation that is reflexive and transitive but may have $\h_1\leq\h_2\leq\h_1$ for $\h_1\neq\h_2$. We define this by $\h_1\leq\h_2$ if $\h_1\subseteq\h_2$ or $\h_1\cap\h_2^*$ is a depth-0 quarterspace. We will make use of the following equivalent formulation.

\begin{lem}\label{lem:leq}
	$\h_1\leq\h_2$ if and only if any halfspace $\h\subsetneq\h_1$ (resp. $\h\subsetneq\h_2^*$) satisfies $\h\subsetneq\h_2$ (resp. $\h\subsetneq\h_1^*$).
\end{lem}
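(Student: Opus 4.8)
The plan is to unwind the definition of the quasi-order $\leq$ on $\cH$ (namely $\h_1\leq\h_2$ iff $\h_1\subseteq\h_2$ or $\h_1\cap\h_2^*$ is a depth-0 quarterspace) and show it matches the ``no bad halfspaces'' condition in the statement. I would prove the two implications separately, and I expect the case analysis separating $\h_1\subseteq\h_2$ from $\h_1\cap\h_2^*$ being a transverse pair to be the main bookkeeping obstacle — in particular checking that the condition on halfspaces below $\h_1$ and the (symmetric) condition on halfspaces below $\h_2^*$ are each individually equivalent to $\h_1\leq\h_2$, rather than needing both.

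\emph{($\Leftarrow$).} Suppose every halfspace $\h\subsetneq\h_1$ satisfies $\h\subsetneq\h_2$, and (symmetrically) every $\h\subsetneq\h_2^*$ satisfies $\h\subsetneq\h_1^*$. If $\h_1\subseteq\h_2$ we are done, so assume $\h_1\nsubseteq\h_2$, i.e. $\h_1\cap\h_2^*\neq\emptyset$. First I would rule out the degenerate possibilities: $\h_1=\h_2^*$ is impossible since then any deep halfspace below $\h_1$ (which exists because $X$ is essential and unbounded — here one can take $\h_1$ itself if minimal, or pass to a proper sub-halfspace) would have to lie in $\h_2=\h_1^*$, absurd; and $\h_1\subseteq\h_2^*$ would force, by the symmetric hypothesis applied appropriately, a contradiction with essentiality. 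Also $\h_2^*\subseteq\h_1$ would mean $\h_2^*\subsetneq\h_1$, hence by hypothesis $\h_2^*\subsetneq\h_2$, impossible. So $\h_1$ and $\h_2^*$ are transverse, and $\h_1\cap\h_2^*$ is a quarterspace. To see it has depth 0, I apply Lemma~\ref{lem:depth0} with the pair $(\h_1,\h_2^*)$: a halfspace $\h\subsetneq\h_1$ must, by the first hypothesis, satisfy $\h\subsetneq\h_2=(\h_2^*)^*$, which is exactly one of the two conditions in Lemma~\ref{lem:depth0}; and a halfspace $\h\subsetneq\h_2^*$ must, by the second hypothesis, satisfy $\h\subsetneq\h_1^*$, which is the other. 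Hence $\h_1\cap\h_2^*$ has depth 0, so $\h_1\leq\h_2$.

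\emph{($\Rightarrow$).} Conversely suppose $\h_1\leq\h_2$. If $\h_1\subseteq\h_2$, then for any $\h\subsetneq\h_1$ we immediately get $\h\subsetneq\h_2$ (strict since $\h\subsetneq\h_1\subseteq\h_2$), and for any $\h\subsetneq\h_2^*$ we get $\h\subsetneq\h_2^*\subseteq\h_1^*$ (the inclusion $\h_2^*\subseteq\h_1^*$ being the contrapositive of $\h_1\subseteq\h_2$), again strict; so both conditions hold. If instead $\h_1\cap\h_2^*$ is a depth-0 quarterspace, apply Lemma~\ref{lem:depth0} to the pair $(\h_1,\h_2^*)$: it tells us that any $\h\subsetneq\h_1$ satisfies $\h\subsetneq(\h_2^*)^*=\h_2$, and any $\h\subsetneq\h_2^*$ satisfies $\h\subsetneq\h_1^*$ — precisely the two conditions in the statement. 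This finishes the equivalence. The only subtlety worth double-checking is that the statement says ``resp.'', i.e. it is really asserting the conjunction of the two conditions is equivalent to $\h_1\leq\h_2$ — and both directions above produce/use exactly that conjunction, so there is nothing further to do. I would also remark that Lemma~\ref{lem:depth0} is being used with $\h_2$ there replaced by $\h_2^*$, so the roles of starred/unstarred halfspaces in its conclusion get swapped accordingly, which is why the two conditions come out as displayed.
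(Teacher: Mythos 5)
Your proof follows essentially the same route as the paper: a case analysis on the relative position of $\h_1$ and $\h_2$, reducing the transverse case to Lemma~\ref{lem:depth0} and handling the nested cases directly. The ($\Rightarrow$) direction is complete. In the ($\Leftarrow$) direction there is, however, a small gap. To conclude that $\h_1$ and $\h_2^*$ are transverse after assuming $\h_1\nsubseteq\h_2$, you must rule out all four nesting possibilities between $\h_1$ and $\h_2$: namely $\h_1\subseteq\h_2^*$ (which you handle, including equality), $\h_2^*\subseteq\h_1$ (which you handle), $\h_1\subseteq\h_2$ (excluded by assumption), and $\h_2\subsetneq\h_1$ — this last one is not addressed anywhere in your argument. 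It is, of course, immediate to close: if $\h_2\subsetneq\h_1$, then applying the first condition with $\h=\h_2$ gives $\h_2\subsetneq\h_2$, absurd. You should also tighten the handling of $\h_1\subsetneq\h_2^*$, which is currently gestured at (``by the symmetric hypothesis applied appropriately''): take any $\h\subsetneq\h_1$ (which exists by Lemma~\ref{lem:d} and essentiality of $X$); then $\h\subsetneq\h_1\subsetneq\h_2^*$, so the second condition forces $\h\subsetneq\h_1^*$, contradicting $\h\subsetneq\h_1$. (For what it's worth, the paper's own proof sorts the non-transverse cases slightly differently and also treats one of them somewhat tersely, so the overall level of detail you are aiming for is comparable — but you do need to include $\h_2\subsetneq\h_1$ for the argument to be complete.)
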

\begin{proof}
	If $\h_1,\h_2$ are transverse then this equivalence reduces to Lemma \ref{lem:depth0}. If $\h_1\subseteq\h_2$ then both conditions are clearly satisfied. If $\h_2\subsetneq\h_1$ or $\h_1\cap\h_2=\emptyset$ then it is easy to see that neither condition is satisfied (noting that $\exists\h\subsetneq\h_1$ by Lemma \ref{lem:d} and essentialness of $X$).
\end{proof}

\begin{lem}
	$\leq$ is a quasi-order on $\cH$.
\end{lem}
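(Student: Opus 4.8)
The plan is to verify directly that the binary relation $\leq$ on $\cH$ is reflexive and transitive, using the characterization from Lemma \ref{lem:leq} rather than the original definition, since that characterization is symmetric in a convenient way and easier to chain. Reflexivity is immediate: for $\h_1=\h_2=\h$, any halfspace $\h'\subsetneq\h$ trivially satisfies $\h'\subsetneq\h$, and likewise on the other side, so $\h\leq\h$.

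For transitivity, suppose $\h_1\leq\h_2$ and $\h_2\leq\h_3$; I want to show $\h_1\leq\h_3$, i.e. that any halfspace $\h\subsetneq\h_1$ satisfies $\h\subsetneq\h_3$ and any halfspace $\h\subsetneq\h_3^*$ satisfies $\h\subsetneq\h_1^*$. Take $\h\subsetneq\h_1$. From $\h_1\leq\h_2$ and Lemma \ref{lem:leq} we get $\h\subsetneq\h_2$. Now I would like to apply $\h_2\leq\h_3$ to conclude $\h\subsetneq\h_3$, but Lemma \ref{lem:leq} only directly gives this if $\h$ is \emph{strictly} contained in $\h_2$, which we have. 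So $\h\subsetneq\h_2 \Rightarrow \h\subsetneq\h_3$ by Lemma \ref{lem:leq} applied to $\h_2\leq\h_3$. The second condition is symmetric: take $\h\subsetneq\h_3^*$; by $\h_2\leq\h_3$ and Lemma \ref{lem:leq} we get $\h\subsetneq\h_2^*$, and then by $\h_1\leq\h_2$ and Lemma \ref{lem:leq} we get $\h\subsetneq\h_1^*$. Hence $\h_1\leq\h_3$.

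The one subtlety to watch — and the place I expect the only real friction — is the degenerate case where one of the halfspaces in the chain equals the complement of a neighbor, or where some relevant ``$\h\subsetneq\h_i$'' fails to exist; but essentialness of $X$ together with Lemma \ref{lem:d} guarantees that every halfspace strictly contains another halfspace, so the quantifier ``any halfspace $\h\subsetneq\h_1$'' is over a nonempty set and the argument above goes through without special cases. In particular there is no need to separately handle whether consecutive pairs are transverse or nested, precisely because Lemma \ref{lem:leq} has already absorbed that case analysis. So the proof is just: reflexivity by inspection, transitivity by the two-step chase above using Lemma \ref{lem:leq} in both directions.
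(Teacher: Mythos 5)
Your proof is correct and takes essentially the same route as the paper: reflexivity is immediate, and transitivity is obtained by chaining the two directions of Lemma \ref{lem:leq}. (The concern in your final paragraph about the quantifier ranging over an empty set is unnecessary for transitivity — a universally quantified implication is vacuously true when the domain is empty — but it causes no harm since you correctly conclude the argument goes through.)
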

\begin{proof}
	Reflexivity is immediate from the definition. Transitivity follows easily from Lemma \ref{lem:leq}: if $\h_1\leq\h_2\leq\h_3$ and $\h\subsetneq\h_1$ then $\h_1\leq\h_2$ implies $\h\subsetneq\h_2$ and $\h_2\leq\h_3$ implies $\h\subsetneq\h_3$; similarly $\h\subsetneq\h_3^*$ implies $\h\subsetneq\h_1^*$.
\end{proof}

To turn $\leq$ into a partial order we quotient $\cH$ by the equivalence relation $\h_1\sim\h_2$ if $\h_1\leq\h_2\leq\h_1$. Let $[\h]$ denote the equivalence class of $\h$. If $\h_1\sim\h_2$, then we cannot have $\h_1\subsetneq\h_2$ as $\h_2\leq\h_1$ would imply $\h_1\subsetneq\h_1$, similarly we cannot have $\h_2\subsetneq\h_1$. It follows that the elements within an equivalence class $[\h]$ are pairwise transverse, and so the size of $[\h]$ is bounded by the dimension of $X$ (note that $X$ is finite dimensional since it admits a cocompact group action). 

\begin{figure}[H]
	\centering
	\scalebox{.8}{
		\begin{tikzpicture}[auto,node distance=2cm,
			thick,every node/.style={},
			every loop/.style={min distance=2cm},
			hull/.style={draw=none},
			]
			\tikzstyle{label}=[draw=none,font=\Large]
			
			\begin{scope}[scale=.8]
			\draw(-5,6)--(0,6);
			\draw(-5,4)--(0,4);
			\draw(-5,2)--(2,2);
			\draw(-4,7)--(-4,2);
			\draw(-2,7)--(-2,2);
			\draw(0,7)--(0,0);
			\draw(2,2)--(2,-3);
			\draw(0,0)--(5,0);
			\draw(2,-2)--(5,-2);
			\draw(4,0)--(4,-3);
			
			\draw[red,line width=1pt] (0,1)--(2,1);
			\draw[draw=red,fill=black,-triangle 90, ultra thick](1,1.5)--(.3,1.5);
			
			\draw[red,line width=1pt] (1,0)--(1,2);
			\draw[draw=red,fill=black,-triangle 90, ultra thick](1.5,1)--(1.5,1.7);
			
			\node[red,label] at (.6,2.4){$\h_1$};
			\node[red,label] at (2.4,1.3){$\h_2$};	
			\end{scope}
			
			\begin{scope}[shift={(10,0)},scale=.8]
\draw(-5,6)--(0,6);
\draw(-5,4)--(0,4);
\draw(-5,2)--(0,2);
\draw(-4,7)--(-4,2);
\draw(-2,7)--(-2,2);
\draw(0,7)--(0,2);
\draw(2,0)--(2,-3);
\draw(2,0)--(5,0);
\draw(2,-2)--(5,-2);
\draw(4,0)--(4,-3);
\draw(0,2)--(2,0);
\node[circle,red,draw,fill,inner sep=0pt,minimum size=5pt] at (1,1){};
\draw[draw=red,fill=black,-triangle 90, ultra thick](1,1)--(.3,1.7);
\node[red,label] at (2.2,1.9){$[\h_1]=[\h_2]$};
			\end{scope}	
		\end{tikzpicture}
	}
	\caption{Example of $\h_1\sim\h_2$ in $X$ on the left. The cube complex $Y=C(\cH/\sim)$ on the right.}\label{fig:h1simh2}
\end{figure}
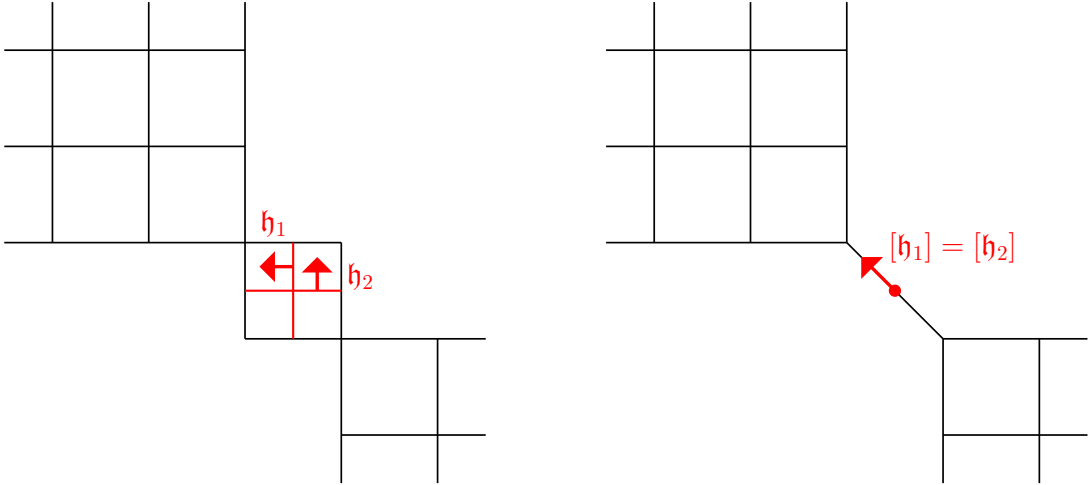

It follows straight from the definition of $\leq$ that $\h_1\leq\h_2$ if and only if $\h_2^*\leq\h_1^*$. 
It is also immediate that $\h_1,\h_2$ are $\subseteq$-transverse whenever $[\h_1],[\h_2]$ are $\leq$-transverse.
Putting this all together we get the following lemma.

\begin{lem}\label{lem:pocsetquotient}
	$(\cH/\sim,\leq)$ is a pocset with involution defined by $[\h]^*:=[\h^*]$. Moreover, the width of $(\cH/\sim,\leq)$ is at most $\dim X$; and the quotient map $\cH\to\cH/\sim$ defines a pocset map $q:(\cH,\subseteq)\to(\cH/\sim,\leq)$, with sizes of fibers bounded by $\dim X$.
\end{lem}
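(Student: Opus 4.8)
The plan is to bolt the lemma together from the observations made in the three paragraphs preceding it, supplying only the verification of the two pocset axioms and a clean derivation of the width and fibre bounds. First I would invoke the purely formal fact that, for any quasi-order, setting $\h_1\sim\h_2$ when $\h_1\leq\h_2\leq\h_1$ yields an equivalence relation for which the induced relation ---$[\h_1]\leq[\h_2]$ exactly when $\h_1\leq\h_2$--- is a well-defined partial order on $\cH/\sim$; transitivity and antisymmetry are then automatic, and this step uses nothing about cube complexes. So $(\cH/\sim,\leq)$ is a poset.

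Next I would check that $[\h]^*:=[\h^*]$ is well defined and turns this poset into a pocset. Both the well-definedness and the reversal axiom $[\h_1]\leq[\h_2]\Rightarrow[\h_2]^*\leq[\h_1]^*$ are immediate from the equivalence $\h_1\leq\h_2\iff\h_2^*\leq\h_1^*$ recorded just above the lemma. The one remaining axiom is that $[\h]$ and $[\h^*]$ are incomparable. For this I would combine Lemma~\ref{lem:leq} with essentialness: by Lemma~\ref{lem:d} and essentialness of $X$ there is a halfspace $\h'\subsetneq\h$, and $\h\leq\h^*$ would then force $\h'\subsetneq\h^*$ by Lemma~\ref{lem:leq}, i.e.\ $\emptyset\neq\h'\subseteq\h\cap\h^*=\emptyset$, absurd; the symmetric argument rules out $\h^*\leq\h$. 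Hence $[\h]$ and $[\h^*]$ are incomparable (in particular distinct).

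For the two cardinality bounds I would pass through duality: by Theorem~\ref{thm:duality} together with Proposition~\ref{prop:cubing} the width of the pocset $(\cH,\subseteq)$ equals $\dim X$. A family of pairwise $\leq$-transverse classes $[\h_1],\dots,[\h_k]$ lifts, using the observation that $\leq$-transversality forces $\subseteq$-transversality, to a family $\h_1,\dots,\h_k$ of pairwise $\subseteq$-transverse (hence distinct) halfspaces, so $k\leq\dim X$; this bounds the width of $(\cH/\sim,\leq)$. Likewise, since $q^{-1}([\h])$ is exactly the $\sim$-class of $\h$, whose members are pairwise transverse as noted before the lemma, each fibre has size at most $\dim X$. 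Finally $q$ is a pocset map: it is order-preserving because $\h_1\subseteq\h_2$ gives $\h_1\leq\h_2$ and hence $[\h_1]\leq[\h_2]$, and it intertwines the involutions by definition, $q(\h^*)=[\h^*]=[\h]^*=q(\h)^*$. The only genuinely non-bookkeeping point is the incomparability of $[\h]$ and $[\h^*]$, and even that reduces quickly to Lemma~\ref{lem:leq} once essentialness is in hand, so I anticipate no real obstacle ---the role of the lemma being simply to package these facts for the construction of $Y$.
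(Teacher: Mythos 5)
Your proposal is correct and follows the same route the paper intends: the paper states the lemma with no explicit proof, treating it as an immediate consequence of the three observations in the preceding paragraphs (the quasi-order/quotient construction, the equivalence $\h_1\leq\h_2\iff\h_2^*\leq\h_1^*$, and the fact that $\leq$-transversality implies $\subseteq$-transversality). Your write-up simply makes that packaging explicit, including the one point that genuinely needs a small argument, namely the incomparability of $[\h]$ and $[\h^*]$, which you handle correctly via Lemma~\ref{lem:leq} and essentialness.
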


\subsection{The cubing $Y$}

The cube complex $Y$ will be the cubing of $(\cH/\sim,\leq)$. To show that this is well-defined we must find a DCC ultrafilter on $(\cH/\sim,\leq)$.
Our arguments will mainly be on the level of ultrafilters for the remainder of this section, so we will consider vertices $\omega\in X^0$ as DCC ultrafilters on $(\cH,\subseteq)$ (using Theorem \ref{thm:duality}) and vertices $\mu\in Y^0$ as DCC ultrafilters on $(\cH/\sim,\leq)$ (once we know that they exist!).

Given $\omega\in X^0$, consider the partition $\omega=\omega^0\sqcup\omega^1$, where $\h_1\in\omega^0$ if there exists $\h_2\in\omega$ with $\h_1\cap\h_2$ a depth-0 quarterspace.

\begin{lem}\label{lem:omegax0}
	$|\omega^0|\leq\dim X$ for all $\omega\in X^0$.
\end{lem}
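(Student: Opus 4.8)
The plan is to prove that any two distinct halfspaces in $\omega^0$ are $\subseteq$-transverse; the stated bound then follows at once, since a collection of pairwise transverse halfspaces in a finite‑dimensional CAT(0) cube complex has size at most $\dim X$ (this is the statement that the width of the pocset $\cH(X)$ is $\dim X$, cf.\ Theorem \ref{thm:duality} and Proposition \ref{prop:cubing}), and $X$ is finite dimensional since it admits a cocompact group action.

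The engine of the transversality argument is Lemma \ref{lem:depth0}. For each $\h\in\omega^0$ I would fix, using the definition of $\omega^0$, a halfspace $w(\h)\in\omega$ for which $\h\cap w(\h)$ is a depth-$0$ quarterspace. Lemma \ref{lem:depth0} then says that every halfspace properly contained in $\h$ is contained in $w(\h)^*$, hence disjoint from $w(\h)$. Since $w(\h)\in\omega$ and any two halfspaces lying in a common ultrafilter must intersect (otherwise one contains the complement of the other, violating consistency and completeness), this forces every halfspace properly contained in $\h$ to lie outside $\omega$. I would record this as $(\star)$: no halfspace properly contained in an element of $\omega^0$ lies in $\omega$.

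Now take distinct $\h,\h'\in\omega^0$ and suppose they are not transverse. Because $\h$ and $\h'$ both lie in $\omega$, we cannot have $\h'=\h^*$ or $\h\cap\h'=\emptyset$, so non-transversality leaves only the possibilities $\h'\subsetneq\h$, $\h\subsetneq\h'$, and $\h'^*\subsetneq\h$. The first two contradict $(\star)$ directly, applied to $\h$ and to $\h'$ respectively. In the last case, $\h'^*$ is a halfspace properly contained in $\h$, so Lemma \ref{lem:depth0} gives $\h'^*\subsetneq w(\h)^*$, i.e.\ $w(\h)\subsetneq\h'$; then $(\star)$ applied to $\h'$ yields $w(\h)\notin\omega$, contradicting the choice of $w(\h)$. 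Hence the elements of $\omega^0$ are pairwise transverse and $|\omega^0|\le\dim X$.

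There is no serious obstacle here: the whole argument is a short manipulation of the ultrafilter axioms together with Lemma \ref{lem:depth0}. The point that requires the most care is the bookkeeping of strict versus non-strict inclusions coming out of Lemma \ref{lem:depth0} — in particular confirming that $w(\h)\subsetneq\h'$ in the last case is a \emph{proper} inclusion, so that it is a legitimate input to $(\star)$ — together with checking that the enumeration of non-transverse configurations really does collapse to the three listed cases once membership in $\omega$ is taken into account.
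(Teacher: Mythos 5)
Your proof is correct and takes essentially the same approach as the paper's: you first establish that every element of $\omega^0$ is $\subseteq$-minimal in $\omega$ (your $(\star)$, derived from Lemma \ref{lem:depth0} and the ultrafilter axioms), and then use this minimality together with another application of Lemma \ref{lem:depth0} to rule out each non-transverse configuration, with your auxiliary witness $w(\h)$ playing the role of the paper's $\h_2$.
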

\begin{proof}
	We show that the halfspaces in $\omega^0$ are pairwise transverse.
	Firstly, if $\h_1\cap\h_2$ is a depth-0 quarterspace for $\h_1,\h_2\in\omega$, then Lemma \ref{lem:depth0} together with the consistency of $\omega$ implies that $\h_1,\h_2$ are both $\subseteq$-minimal in $\omega$.
	So every halfspace in $\omega^0$ is $\subseteq$-minimal in $\omega$.
	To prove the lemma it suffices to consider distinct $\h_1,\h_2,\h_3\in\omega^0$ such that $\h_1\cap\h_2$ is a depth-0 quarterspace, and show that $\h_1,\h_3$ are transverse.
	Indeed, $\subseteq$-minimality of $\h_1,\h_3$ in $\omega$ implies that we cannot have $\h_1\subsetneq\h_3$ or $\h_3\subsetneq \h_1$. And if $\h_3^*\subsetneq\h_1$ then $\h_3^*\subsetneq\h_2^*$ by Lemma \ref{lem:depth0}, so $\h_2\subsetneq\h_3$, contradicting $\subseteq$-minimality of $\h_3$.
\end{proof}

If $\h_1,\h_2\in\omega^1$ then $\h_1\not\subseteq\h_2^*$ and $\h_1\cap\h_2$ is not a depth-0 quarterspace, hence $\h_1\nleq\h_2^*$. Thus $\omega^1$ pushes forward to a partial ultrafilter $q_*\omega^1$ on $(\cH/\sim,\leq)$, given by
$$q_*\omega^1:=\{[\h]\mid\h\in\omega^1\}.$$

\begin{lem}
	$q_*\omega^1$ is DCC and cofinite.
\end{lem}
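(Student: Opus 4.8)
I want to show that $q_*\omega^1$ is both DCC and cofinite as a partial ultrafilter on $(\cH/\sim,\leq)$. For cofiniteness, the point is that $\omega$ is a DCC complete ultrafilter on $(\cH,\subseteq)$, so it is cofinite in the trivial sense that $\omega\cap\{\h,\h^*\}$ is never empty; the only pairs $\{[\h],[\h^*]\}$ that can be missed by $q_*\omega^1$ are those for which the $\omega$-side halfspace lies in $\omega^0$ rather than $\omega^1$. By Lemma \ref{lem:omegax0} there are at most $\dim X$ such halfspaces, hence at most $\dim X$ missed pairs, so $q_*\omega^1$ is cofinite.

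**DCC.** For the descending chain condition, suppose $[\h_1]>[\h_2]>[\h_3]>\cdots$ is a strictly descending infinite chain in $q_*\omega^1$, with each $\h_i\in\omega^1$. I would first translate this into the $(\cH,\subseteq)$ picture using Lemma \ref{lem:leq}: $[\h_{i+1}]<[\h_i]$ means $\h_{i+1}\leq\h_i$ and $\h_i\not\leq\h_{i+1}$, and since the relation is strict the halfspaces $\h_i$ are pairwise non-$\sim$-equivalent. The plan is to show that such a chain forces a strictly descending infinite $\subseteq$-chain inside $\omega$, contradicting the fact that $\omega$ is a DCC ultrafilter. To do this I would pick a vertex $x\in\h_1^*$ (equivalently $\h_1^*\in\omega_x$) and argue that for each $i$ there is some halfspace $\h_i'\subsetneq\h_i$ with $\h_i'\in\omega$ and, after passing to a subsequence, the $\h_i'$ are strictly $\subseteq$-nested. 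Concretely: $\h_{i+1}\leq\h_i$ together with $\h_i\not\leq\h_{i+1}$ and Lemma \ref{lem:leq} should give a halfspace witnessing that $\h_{i+1}$ is ``$\subseteq$-below'' $\h_i$ up to a depth-$0$ quarterspace correction; iterating and using transitivity of $\leq$ lets me compare $\h_j$ with $\h_i$ for $i<j$ and extract the genuine $\subseteq$-chain. The essential resource here is Lemma \ref{lem:d}, which converts containments of halfspaces into bounded distances and hence controls how the corrections accumulate.

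**The main obstacle.** The delicate point is that $\leq$ is only a quasi-order and passing from $\leq$-descending to $\subseteq$-descending is not automatic: a chain $\h_{i+1}\leq\h_i$ with $\h_{i+1}\cap\h_i^*$ a depth-$0$ quarterspace need not have $\h_{i+1}\subsetneq\h_i$ at all. So the heart of the argument is to show that an \emph{infinite} strictly $\leq$-descending chain cannot be built purely out of such depth-$0$ corrections — intuitively, because each correction moves one "only finitely far" (depth $0$ is a bounded-geometry condition, cf. Lemma \ref{lem:d} and the fact that any path between opposite quarterspaces has length $\geq 2$), while a strictly descending chain in a DCC ultrafilter must eventually force genuine nesting. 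I expect to handle this by fixing $x\in\h_1^*$ as above and showing that all the $\h_i$ are contained in a single ball around $x$ would contradict strictness, so that distances $d(x,\h_i^*)$ (via Lemma \ref{lem:d}: this counts halfspaces separating $x$ from $\h_i$) must grow; the halfspaces realizing this growth, suitably chosen from $\omega$, give the contradiction. If this quantitative route is awkward, the fallback is a purely order-theoretic argument: show directly from Lemma \ref{lem:leq} that $[\h_{i+2}]<[\h_i]$ already implies $\h_{i+2}\subsetneq\h_i$ (two successive corrections cannot both be depth-$0$ quarterspace moves while staying strict), which immediately yields a strictly descending $\subseteq$-subchain $\h_1\supsetneq\h_3\supsetneq\h_5\supsetneq\cdots$ in $\omega$ and contradicts DCC of $\omega$.
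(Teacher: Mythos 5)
Your cofiniteness argument matches the paper's: if $[\h],[\h^*]\notin q_*\omega^1$ then neither $\h$ nor $\h^*$ is in $\omega^1$, so the one of them lying in $\omega$ lies in $\omega^0$, and Lemma~\ref{lem:omegax0} bounds $|\omega^0|$ by $\dim X$. That part is fine.

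The DCC argument, however, has a genuine gap, and neither of your two fallback routes closes it. The key observation — which you do not isolate — is that for \emph{any} $i<j$ in the chain, the strict relation $[\h_j]<[\h_i]$ gives $\h_j\leq\h_i$ with $\h_j\neq\h_i$, so by definition of $\leq$ either $\h_j\subsetneq\h_i$ or $\h_j\cap\h_i^*$ is a depth-$0$ quarterspace, and in the latter case $\h_i$ and $\h_j$ are transverse. Since no more than $\dim X$ halfspaces can be pairwise transverse, the ``transverse'' colour has no infinite monochromatic set, and the \emph{infinite Ramsey theorem} then produces an infinite $\subsetneq$-descending subsequence of the $\h_i$ inside $\omega$, contradicting DCC of $\omega$. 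This is the paper's proof, and Ramsey is doing real work: it converts the dimension bound on antichains of transverse elements into the needed nested subchain.

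Your fallback~(2) claims that $[\h_{i+2}]<[\h_i]$ already forces $\h_{i+2}\subsetneq\h_i$, i.e.\ that two consecutive depth-$0$ corrections cannot persist. This is not justified by Lemma~\ref{lem:leq}: transitivity only yields $\h_{i+2}\leq\h_i$, which allows either $\h_{i+2}\subsetneq\h_i$ or the transverse depth-$0$ case, and nothing rules out the latter. Indeed the very structure of the argument (and the size bound $|\,[\h]\,|\leq\dim X$ on $\sim$-classes) is consistent with chains of up to $\dim X$ pairwise transverse halfspaces all linked by depth-$0$ corrections; in dimension $\geq 3$ there is no reason for your two-step claim to hold. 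Your fallback~(1) is also not viable as sketched: the halfspaces $\h_i'\subsetneq\h_i$ with $\h_i'\in\omega$ need not exist (the $\h_i\in\omega^1$ may well be $\subseteq$-minimal in $\omega$), and with $x\in\h_1^*$ the quantities $d(x,\h_i^*)$ can simply vanish, so the proposed growth argument does not get off the ground. In short, you have identified the correct target (an infinite $\subsetneq$-chain in $\omega$) but are missing the combinatorial engine — the dimension bound combined with Ramsey — that produces it.
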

\begin{proof}
	Suppose for contradiction that $[\h_1]>[\h_2]>[\h_3]>\cdots$ is a strictly descending infinite chain in $q_*\omega^1$, with $\h_i\in\omega^1$. For each $i<j$, either $\h_i\supsetneq\h_j$ or $\h_j\cap\h_i^*$ is a depth-0 quarterspace. In the latter case $\h_i,\h_j$ are transverse, and a collection of pairwise transverse halfspaces can be no larger than $\dim X$, so it follows from Ramsey's Theorem that there is an infinite $\subsetneq$-descending subsequence of the $\h_i$. This contradicts $\omega$ being a DCC ultrafilter. Hence $q_*\omega^1$ is DCC.
	
	To see cofiniteness of $q_*\omega^1$, consider a pair $[\h],[\h^*]\notin q_*\omega^1$. It follows that $\h,\h^*\notin\omega^1$, so one of them must lie in $\omega^0$, but then there are at most $\dim X$ possibilities for $\h,\h^*$ by Lemma \ref{lem:omegax0}.
\end{proof}

By Lemma \ref{lem:extend}, $q_*\omega^1$ can be extended to a DCC complete ultrafilter $\theta(\omega)$ on $(\cH/\sim,\leq)$. 
This shows the existence of DCC complete ultrafilters on $(\cH/\sim,\leq)$, and it also gives us a map $\theta: X^0\to Y^0$ (albeit not a canonical one!).

We will see later that $\theta$ is actually a quasi-isometry.
We now define a map $\phi:Y^0\to X^0$, which will turn out to be a coarse inverse of $\theta$.
Given $\mu\in Y^0$, define
$$\phi(\mu):=\{\h\in\cH\mid[\h]\in\mu\}.$$

\begin{lem}
	$\phi(\mu)$ is a DCC ultrafilter on $(\cH,\subseteq)$, and the map $\phi:Y^0\to X^0$ is injective.
\end{lem}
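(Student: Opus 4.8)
The plan is to verify the two ultrafilter axioms together with the descending chain condition for $\phi(\mu)$, and then observe that injectivity is essentially formal. Throughout I use that $q:(\cH,\subseteq)\to(\cH/\sim,\leq)$ is a pocset map whose fibers have size at most $\dim X$ (Lemma \ref{lem:pocsetquotient}), and I identify $X$ with $C(\cH)$ via Theorem \ref{thm:duality}, so that $X^0$ is precisely the set of DCC ultrafilters on $(\cH,\subseteq)$.

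\emph{Completeness and consistency.} For $\h\in\cH$ we have $[\h^*]=[\h]^*$, and since $\mu$ is a complete ultrafilter on $(\cH/\sim,\leq)$ exactly one of $[\h],[\h]^*$ belongs to $\mu$; hence exactly one of $\h,\h^*$ belongs to $\phi(\mu)$. For consistency, if $\h_1\in\phi(\mu)$ and $\h_1\subseteq\h_2$, then $[\h_1]\leq[\h_2]$ because $q$ is a pocset map, so $[\h_2]\in\mu$ by consistency of $\mu$, i.e.\ $\h_2\in\phi(\mu)$.

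\emph{The DCC.} This is the one step that uses more than formalities, and it is where I expect the only real (though mild) difficulty to lie — it is exactly the place where the finiteness of the fibers of $q$, equivalently that a $\sim$-class consists of boundedly many pairwise transverse halfspaces, is needed. Suppose $\h_1\supsetneq\h_2\supsetneq\cdots$ is a strictly $\subsetneq$-descending infinite chain inside $\phi(\mu)$. Applying $q$ yields a non-increasing chain $[\h_1]\geq[\h_2]\geq\cdots$, all of whose terms lie in $\mu$. If this chain took only finitely many distinct values $c_1,\dots,c_m$, then $\{\h_i\}$ would lie in the finite set $q^{-1}(c_1)\cup\cdots\cup q^{-1}(c_m)$ of size at most $m\dim X$, contradicting that the $\h_i$ are distinct. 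So $[\h_1]\geq[\h_2]\geq\cdots$ takes infinitely many distinct values, and being non-increasing it then contains a strictly descending infinite subchain in $\mu$, contradicting that $\mu$ is DCC. Hence $\phi(\mu)$ is a DCC ultrafilter on $(\cH,\subseteq)$, i.e.\ a vertex of $X$.

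\emph{Injectivity.} Every element of $\cH/\sim$ has the form $[\h]$, and by definition $[\h]\in\mu$ if and only if $\h\in\phi(\mu)$; thus $\mu=\{[\h]\mid\h\in\phi(\mu)\}$ is recovered from $\phi(\mu)$, so $\phi$ is injective.
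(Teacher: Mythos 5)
Your proof is correct and essentially parallels the paper's, but in the DCC step you take a slightly longer route than necessary. You treat the projected chain $[\h_1]\geq[\h_2]\geq\cdots$ as merely non-increasing, then use finiteness of the fibers of $q$ together with an extraction argument to produce a strictly descending subchain. The paper instead observes that the projected chain is already \emph{strictly} descending, with no need for a subchain: if $\h_i\supsetneq\h_j$ then $[\h_i]\neq[\h_j]$, because distinct halfspaces in a common $\sim$-class are pairwise transverse (as established right after the definition of $\sim$), and transverse halfspaces are never nested. So strict descent is preserved under $q$ automatically. Your argument is sound and has the virtue of only invoking the bound on fiber sizes rather than the transversality-within-classes fact, but the paper's one-line version is the cleaner observation to internalize. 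Completeness, consistency, and injectivity are handled the same way in both proofs.
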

\begin{proof}
	For a halfspace $\h\in\cH$ we have exactly one of $[\h],[\h]^*=[\h^*]$ in $\mu$ by completeness of $\mu$, so we have exactly one of $\h,\h^*$ in $\phi(\mu)$.
	If $\h_1\subseteq\h_2$ with $\h_1\in\phi(\mu)$, then $[\h_1]\in\mu$ and $[\h_1]\leq[\h_2]$, so $[\h_2]\in\mu$ by consistency of $\mu$. Thus $\h_2\in\phi(\mu)$.
	This shows that $\phi(\mu)$ is an ultrafilter on $(\cH,\subseteq)$. 
	Any strictly $\subseteq$-descending chain in $\phi(\mu)$ projects to a strictly $\leq$-descending chain in $\mu$, so $\mu$ being DCC implies that $\phi(\mu)$ is DCC.
	Finally, the map $\phi:Y^0\to X^0$ is injective, because for distinct $\mu_1,\mu_2\in Y^0$ there exists $[\h]\in\mu_1-\mu_2$, so $\h\in\phi(\mu_1)-\phi(\mu_2)$.
\end{proof}

Apart from the map $\theta$, all the constructions so far are entirely canonical, so the action of $G$ on $X$ induces actions of $G$ on $(\cH/\sim,\leq)$ and $Y$, and the map $\phi$ is $G$-equivariant.

\begin{lem}
	$G$ acts cocompactly on $Y$.
\end{lem}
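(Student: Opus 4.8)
The plan is to verify the criterion of Lemma~\ref{lem:cocompact}: since $Y = C(\cH/\sim)$, it suffices to show that $G$ acts with only finitely many orbits on the collection of finite sets of pairwise $\leq$-transverse elements of $\cH/\sim$. Given such a set $\cS = \{[\h_1],\dots,[\h_n]\}$, I would associate to it the subset $T_{\cS} := [\h_1]\sqcup\cdots\sqcup[\h_n] \subseteq \cH$ obtained by taking the union of the underlying equivalence classes (these are genuinely disjoint, being distinct $\sim$-classes). This assignment is the heart of the argument.

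First I would check that $T_{\cS}$ is a set of pairwise $\subseteq$-transverse halfspaces of $X$. Two halfspaces lying in the same class $[\h_i]$ are transverse, as was noted when bounding the size of an equivalence class by $\dim X$. Two halfspaces $\h\in[\h_i]$ and $\h'\in[\h_j]$ with $i\neq j$ are $\subseteq$-transverse because $[\h]=[\h_i]$ and $[\h']=[\h_j]$ are $\leq$-transverse, and we observed that $\leq$-transversality of two classes forces $\subseteq$-transversality of every pair of representatives. In particular $|T_{\cS}|\leq(\dim X)^2$, since $n\leq\dim X$ by the width bound of Lemma~\ref{lem:pocsetquotient} and each $|[\h_i]|\leq\dim X$, though we will not actually need this bound.

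Next, since the equivalence relation $\sim$ on $\cH$ is defined purely in terms of the $\subseteq$-order, it is $G$-invariant, and hence the map $\cS\mapsto T_{\cS}$ is $G$-equivariant. It is also injective: from $T_{\cS}$ one recovers $\cS$ simply by partitioning $T_{\cS}$ into its $\sim$-equivalence classes, which are exactly $[\h_1],\dots,[\h_n]$ because these classes are pairwise disjoint. Finally, $X=C(\cH)$ carries a cocompact $G$-action, so by Lemma~\ref{lem:cocompact} there are only finitely many $G$-orbits of finite sets of pairwise $\subseteq$-transverse halfspaces of $X$; pulling these back along the injective $G$-equivariant map $\cS\mapsto T_{\cS}$ shows that there are finitely many $G$-orbits of sets $\cS$, and hence, by Lemma~\ref{lem:cocompact} again, that $G$ acts cocompactly on $Y$. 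The one point that needs a moment's care is the injectivity of $\cS\mapsto T_{\cS}$, i.e.\ that distinct classes are disjoint so that the $\sim$-partition of $T_{\cS}$ recovers $\cS$ exactly; everything else is formal.
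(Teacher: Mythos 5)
Your proof is correct and follows essentially the same route as the paper: reduce via Lemma~\ref{lem:cocompact} to counting $G$-orbits of pairwise $\leq$-transverse collections, and observe that the preimage under $q$ of such a collection is a pairwise $\subseteq$-transverse collection in $\cH$, of which there are finitely many $G$-orbits. You simply spell out the injectivity of $\cS\mapsto q^{-1}(\cS)$ (via disjointness of distinct $\sim$-classes), which the paper leaves implicit.
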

\begin{proof}
By Lemma \ref{lem:cocompact} it is equivalent to show that there are finitely many $G$-orbits of collections of pairwise transverse elements in $(\cH/\sim,\leq)$.
Now $q:(\cH,\subseteq)\to(\cH/\sim,\leq)$ is a surjective pocset map with pairwise transverse fibers, so the preimage of a collection of pairwise transverse elements is pairwise transverse.
But $G$ acts cocompactly on $X$, so there are finitely many collections of pairwise transverse elements in $(\cH,\subseteq)$. The result follows.
\end{proof}

\begin{lem}\label{lem:notsurj}
	$\phi$ is not surjective. In particular $Y$ has fewer $G$-orbits of vertices than $X$.
\end{lem}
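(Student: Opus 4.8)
We want to show $\phi: Y^0 \to X^0$ is not surjective, and deduce that $Y$ has strictly fewer $G$-orbits of vertices. The idea is to exhibit a vertex $\omega \in X^0$ that is not of the form $\phi(\mu)$ for any $\mu \in Y^0$, by using a depth-$0$ quarterspace. Such a quarterspace exists: $X$ contains a shallow quarterspace by hypothesis, hence a depth-$0$ quarterspace $\h_1 \cap \h_2$ by Lemma \ref{lem:exist0}.

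**Key steps.** First, I would pick a vertex $\omega \in \h_1 \cap \h_2$, so $\h_1, \h_2 \in \omega$. Since $\h_1 \cap \h_2$ has depth $0$, Lemma \ref{lem:depth0} tells us that every halfspace $\h \subsetneq \h_1$ satisfies $\h \subsetneq \h_2^*$, and symmetrically; combined with consistency of $\omega$, this forces $\h_1$ and $\h_2$ to be $\subseteq$-minimal in $\omega$ (as noted in the proof of Lemma \ref{lem:omegax0}). Now I claim $\omega \notin \phi(Y^0)$. Suppose $\omega = \phi(\mu)$ for some $\mu \in Y^0$. Then $[\h_1], [\h_2] \in \mu$. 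But $\h_1 \cap \h_2$ being a depth-$0$ quarterspace means precisely $\h_1 \leq \h_2^*$ and $\h_2 \leq \h_1^*$ by the definition of $\leq$ (it is $\h_1 \cap (\h_2^*)^* = \h_1 \cap \h_2$ that is the depth-$0$ quarterspace witnessing $\h_1 \leq \h_2^*$). Wait — I need to be careful: $\h_1 \leq \h_2^*$ holds iff $\h_1 \subseteq \h_2^*$ or $\h_1 \cap (\h_2^*)^* = \h_1 \cap \h_2$ is depth-$0$; the latter holds, so $\h_1 \leq \h_2^*$, i.e. $[\h_1] \leq [\h_2^*] = [\h_2]^*$. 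Then by consistency of $\mu$, $[\h_1] \in \mu$ forces $[\h_2]^* \in \mu$, contradicting $[\h_2] \in \mu$ and completeness. Hence no such $\mu$ exists, so $\phi$ is not surjective.

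**From non-surjectivity to fewer orbits.** Since $\phi$ is $G$-equivariant and injective, it induces an injection on orbit sets $Y^0/G \hookrightarrow X^0/G$. I would argue this injection is not surjective: the non-surjectivity above produces $\omega \in X^0$ not in the image, and in fact the whole $G$-orbit $G\omega$ misses the image of $\phi$ (by equivariance, if $g\omega = \phi(\mu)$ then $\omega = \phi(g^{-1}\mu)$). So the $G$-orbit of $\omega$ is an orbit in $X^0$ not hit by any orbit from $Y^0$. Combined with $G$ acting cocompactly on $X$ (so $X^0/G$ is finite), this gives $|Y^0/G| < |X^0/G|$, which is the desired conclusion.

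**Main obstacle.** The proof is essentially bookkeeping once the right $\omega$ is chosen; the only subtle point is making sure the definition of $\leq$ is being applied in the correct direction — that $\h_1 \cap \h_2$ being depth-$0$ gives $\h_1 \leq \h_2^*$ (equivalently $[\h_1] \leq [\h_2]^*$), so that consistency of the ultrafilter $\mu$ produces the contradiction. I expect the author's proof to run along exactly these lines, perhaps phrased via the partition $\omega = \omega^0 \sqcup \omega^1$: $\h_1, \h_2 \in \omega^0$, and an ultrafilter in the image of $\phi$ would have to come from something like $q_*\omega^1$, which cannot contain both $[\h_1]$ and $[\h_2]$.
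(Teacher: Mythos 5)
Your proof is correct and follows the same approach as the paper: take a depth-$0$ quarterspace $\h_1\cap\h_2$ (which exists by Lemma \ref{lem:exist0}), observe that $\h_1\leq\h_2^*$ forces no $\mu\in Y^0$ to contain both $[\h_1]$ and $[\h_2]$, so no vertex of $\h_1\cap\h_2$ lies in $\phi(Y^0)$. Your spelled-out orbit-counting at the end (using $G$-equivariance and injectivity of $\phi$) is the standard deduction the paper leaves implicit.
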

\begin{proof}
	If $\h_1\cap\h_2$ is a depth-0 quarterspace then $\h_1\leq\h_2^*$, so no $\mu\in Y^0$ has $[\h_1],[\h_2]\in\mu$, which means no $\mu\in Y^0$ has $\h_1,\h_2\in\phi(\mu)$. But any vertex in the quarterspace $\h_1\cap\h_2$ is represented by an ultrafilter $\omega\in X^0$ with $\h_1,\h_2\in\omega$. We know that $X$ does contain depth-0 quarterspaces by Lemma \ref{lem:exist0}, so we conclude that $\phi$ is not surjective.
\end{proof}

\begin{remk}
	It is not hard to extend the arguments in Lemma \ref{lem:notsurj} to show that $X^0-\phi(Y^0)$ is precisely the union of depth-0 quarterspaces in $X$.
\end{remk}

Now we turn to showing that $\phi$ is a quasi-isometry.

\begin{lem}\label{lem:QIembed}
	$d(\mu_1,\mu_2)\leq d(\phi(\mu_1),\phi(\mu_2))\leq (\dim X) d(\mu_1,\mu_2)$ for all $\mu_1,\mu_2\in Y^0$.
\end{lem}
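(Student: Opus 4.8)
The plan is to reduce everything to counting, using the formula $d(\omega_1,\omega_2)=|\omega_1-\omega_2|$ from Lemma \ref{lem:domega}, applied in $X\cong C(\cH)$ on one side and in $Y=C(\cH/\sim,\leq)$ on the other. So first I would rewrite the two distances as $d(\mu_1,\mu_2)=|\mu_1-\mu_2|$, the set difference of the ultrafilters $\mu_1,\mu_2$ on $\cH/\sim$, and $d(\phi(\mu_1),\phi(\mu_2))=|\phi(\mu_1)-\phi(\mu_2)|$, the set difference of the ultrafilters $\phi(\mu_1),\phi(\mu_2)$ on $\cH$.

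The key observation is that $\phi(\mu)=\{\h\in\cH\mid[\h]\in\mu\}$ is precisely the preimage $q^{-1}(\mu)$ under the quotient pocset map $q\colon(\cH,\subseteq)\to(\cH/\sim,\leq)$ of Lemma \ref{lem:pocsetquotient}. Hence a halfspace $\h$ lies in $\phi(\mu_1)-\phi(\mu_2)$ exactly when $[\h]\in\mu_1$ and $[\h]\notin\mu_2$, i.e.\ $\phi(\mu_1)-\phi(\mu_2)=q^{-1}(\mu_1-\mu_2)$, and therefore
\[
|\phi(\mu_1)-\phi(\mu_2)|=\sum_{[\h]\in\mu_1-\mu_2}|q^{-1}([\h])|.
\]

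Finally I would invoke the bounds on the fibers of $q$ from Lemma \ref{lem:pocsetquotient}: each fiber $q^{-1}([\h])$ is nonempty (as $q$ is surjective) and has size at most $\dim X$. Substituting $1\le|q^{-1}([\h])|\le\dim X$ into the displayed sum yields
\[
|\mu_1-\mu_2|\ \le\ |\phi(\mu_1)-\phi(\mu_2)|\ \le\ (\dim X)\,|\mu_1-\mu_2|,
\]
which is the assertion after translating back through Lemma \ref{lem:domega}. There is no substantial obstacle in this lemma; the only points requiring a little care are the bookkeeping identity $\phi(\mu_1)-\phi(\mu_2)=q^{-1}(\mu_1-\mu_2)$ and the non-emptiness of the fibers of $q$, the latter being exactly what delivers the lower bound (hence that $\phi$ is a quasi-isometric embedding, not merely Lipschitz).
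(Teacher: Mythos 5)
Your proof is correct and takes essentially the same route as the paper: both reduce the distances to set‑difference cardinalities via Lemma \ref{lem:domega}, identify $\phi(\mu_1)\setminus\phi(\mu_2)$ with the $q$‑preimage of $\mu_1\setminus\mu_2$, and invoke the fiber bounds from Lemma \ref{lem:pocsetquotient}. The only cosmetic difference is that the paper phrases the counting with symmetric differences rather than one‑sided differences, and leaves the non‑emptiness of the fibers implicit.
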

\begin{proof}
	By Lemma \ref{lem:domega} this is equivalent to showing
	$$|\mu_1\triangle\mu_2|\leq |\phi(\mu_1)\triangle\phi(\mu_2)|\leq(\dim X) |\mu_1\triangle\mu_2|.$$
	And this follows from
	\begin{align*}
		\phi(\mu_1)\triangle\phi(\mu_2)=\{\h\in\cH\mid[\h]\in\mu_1\triangle\mu_2\}
	\end{align*}
and the fact that the classes $[\h]$ have size at most $\dim X$.
\end{proof}

We now show that $\theta$ is a coarse inverse to $\phi$, so we conclude from Lemma \ref{lem:QIembed} that $\theta$ and $\phi$ are both quasi-isometries.

\begin{lem}\label{lem:thetaphi}
	$\theta\phi$ is the identity map on $Y^0$ and $d(\phi\theta(\omega),\omega)\leq2\dim X$ for all $\omega\in X^0$.
\end{lem}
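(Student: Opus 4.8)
The plan is to verify the two claims in Lemma~\ref{lem:thetaphi} directly from the definitions of $\theta$ and $\phi$ together with the description of $\omega = \omega^0 \sqcup \omega^1$ and Lemma~\ref{lem:omegax0}.

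For the first claim, that $\theta\phi = \mathrm{id}_{Y^0}$: fix $\mu \in Y^0$ and recall $\phi(\mu) = \{\h \mid [\h] \in \mu\}$. I would first identify $\phi(\mu)^0$ and $\phi(\mu)^1$. The key observation is that a halfspace $\h \in \phi(\mu)$ lies in $\phi(\mu)^0$ only if there is $\h' \in \phi(\mu)$ with $\h \cap \h'$ a depth-$0$ quarterspace, which forces $[\h] \leq [\h']^* = [\h'^*]$ and hence (since $[\h'] \in \mu$ and $\mu$ is a complete ultrafilter) $[\h'] \notin \mu$ unless $[\h] = [\h'^*]$ — but then $\h, \h'^*$ would be transverse halfspaces in the same $\sim$-class, which is impossible. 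So I expect to show $\phi(\mu)^0 = \emptyset$, i.e. $\phi(\mu) = \phi(\mu)^1$. Then $q_*\phi(\mu)^1 = \{[\h] \mid \h \in \phi(\mu)\} = \{[\h] \mid [\h] \in \mu\} = \mu$ because $q$ is surjective onto $\cH/\sim$; and since $\theta(\omega)$ is defined as \emph{some} DCC complete extension of $q_*\omega^1$, and $q_*\phi(\mu)^1 = \mu$ is already complete, the only possible extension is $\mu$ itself, giving $\theta\phi(\mu) = \mu$.

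For the second claim I would start from the identity $\phi\theta(\omega) = \{\h \mid [\h] \in \theta(\omega)\}$ and compare it with $\omega$. Since $\theta(\omega)$ extends $q_*\omega^1 = \{[\h] \mid \h \in \omega^1\}$, every $\h \in \omega^1$ has $[\h] \in \theta(\omega)$, so $\omega^1 \subseteq \phi\theta(\omega)$. Conversely, if $\h \in \phi\theta(\omega) \setminus \omega$ then $\h^* \in \omega$; and $\h^* \in \omega^1$ would give $[\h^*] \in q_*\omega^1 \subseteq \theta(\omega)$, contradicting $[\h] \in \theta(\omega)$ (completeness of $\theta(\omega)$); so $\h^* \in \omega^0$. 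Thus $\phi\theta(\omega) \triangle \omega$ is contained in $\{\h \mid \h^* \in \omega^0\} \cup \omega^0$ (the second set accounting for halfspaces of $\omega^0$ possibly dropped by $\theta$), which has size at most $2|\omega^0| \leq 2\dim X$ by Lemma~\ref{lem:omegax0}. By Lemma~\ref{lem:domega}, $d(\phi\theta(\omega), \omega) = |\phi\theta(\omega) \triangle \omega|/2 \cdot$ — more precisely $d = |\phi\theta(\omega) \setminus \omega|$, which is bounded by $|\omega^0| \leq \dim X$, comfortably within $2\dim X$; I would double-check the exact bookkeeping but the bound is clearly of this order.

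The main obstacle I anticipate is the careful case analysis showing $\phi(\mu)^0 = \emptyset$ in the first claim — specifically ruling out the degenerate possibility that two members of $\phi(\mu)$ forming a depth-$0$ quarterspace are $\sim$-equivalent (equivalently, that $[\h]$ and $[\h']$ coincide). This needs the fact, established just before Lemma~\ref{lem:pocsetquotient}, that members of a single $\sim$-class are pairwise transverse and hence cannot have one contained in the complement of another, combined with Lemma~\ref{lem:depth0}. Everything else is routine manipulation of ultrafilters, the surjectivity of $q$, and the size bound from Lemma~\ref{lem:omegax0}.
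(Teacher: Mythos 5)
Your proof follows exactly the same route as the paper's: show $\phi(\mu)^0=\emptyset$ so that $q_*\phi(\mu)^1=\mu$ is already a complete ultrafilter, then observe $\omega^1\subseteq\phi\theta(\omega)$ and bound the symmetric difference by $|\omega^0|$. The second part is entirely correct; you are right to note that your bookkeeping yields $d(\phi\theta(\omega),\omega)=|\phi\theta(\omega)-\omega|\le|\omega^0|\le\dim X$, which is comfortably inside the stated $2\dim X$.

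However, your handling of the first part introduces a spurious case and then disposes of it with a false claim. You write that if $[\h]=[\h'^*]$ then ``$\h,\h'^*$ would be transverse halfspaces in the same $\sim$-class, which is impossible.'' This is the opposite of what the paper establishes: just before Lemma~\ref{lem:pocsetquotient} it is shown precisely that distinct members of a single $\sim$-class \emph{are} pairwise transverse, so transverse halfspaces sharing a $\sim$-class are entirely permissible (this is what makes the equivalence classes nontrivial and gives Figure~3 its content). Fortunately this misstep is not load-bearing: no case distinction is needed. From $\h\cap\h'$ depth~$0$ you get $[\h]\le[\h']^*$ by definition of $\leq$; consistency of $\mu$ applied to $[\h]\in\mu$ and $[\h]\le[\h']^*$ gives $[\h']^*\in\mu$ \emph{regardless} of whether the inequality is strict, and this contradicts $[\h']\in\mu$ by completeness. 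Your later ``main obstacle'' paragraph compounds the confusion by switching the degenerate case from $[\h]=[\h'^*]$ to $[\h]=[\h']$; that one is impossible for a different reason (it would force $[\h]\le[\h]^*$, violating the pocset axiom that $A$ and $A^*$ are incomparable), and again requires no appeal to $\mu$ at all. If you strike the incorrect side-remark and the corresponding paragraph, the remaining argument is exactly the paper's proof.
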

\begin{proof}
	Let $\mu\in Y^0$. We claim that $\phi(\mu)^0=\emptyset$. Indeed $\phi(\mu)^0\neq\emptyset$ would imply the existence of $\h_1,\h_2\in \phi(\mu)$ such that $\h_1\cap\h_2$ is a depth-0 quarterspace. But then $\h_1\leq\h_2^*$, so $[\h_1]\leq[\h_2]^*$, contradicting the consistency of $\mu$. Hence $\phi(\mu)=\phi(\mu)^1$, so
	$$q_*\phi(\mu)^1=q_*\phi(\mu)=\mu$$
	is already a complete ultrafilter, thus $\theta\phi(\mu)=\mu$.
	
	For the second part of the lemma, we note that $q_*\omega^1\subseteq\theta(\omega)$, so $\omega^1\subseteq\phi\theta(\omega)$. Then applying Lemma \ref{lem:domega} yields 
	$$d(\phi\theta(\omega),\omega)=2|\omega-\phi\theta(\omega)|\leq2|\omega^0|\leq2\dim X.\qedhere$$
\end{proof}

Next we show that each halfspace in $Y$ is mapped by $\phi$ to within finite Hausdorff distance of a halfspace in $X$. 
By Proposition \ref{prop:cubing}\ref{item:halfspace}, halfspaces in $Y$ correspond to the elements of $\cH/\sim$, and it is immediate from the construction of $\phi$ that each halfspace $[\h]$ is mapped within the halfspace $\h$ in $X$. Thus it remains to show that $\h$ is contained in a bounded neighborhood of the image $\phi[\h]$. This follows from Lemma \ref{lem:thetaphi} together with the following lemma.

\begin{lem}
	Let $\h\in\cH$. We can choose the map $\theta$ so that $\phi\theta:X^0\to X^0$ preserves the halfspace $\h$.
\end{lem}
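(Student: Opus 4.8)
The plan is to control the single halfspace $\h$ by modifying the extension step in the construction of $\theta$. Recall that for each $\omega\in X^0$ we have the partition $\omega=\omega^0\sqcup\omega^1$, that $q_*\omega^1$ is a DCC cofinite partial ultrafilter on $(\cH/\sim,\leq)$, and that $\theta(\omega)$ was chosen to be \emph{some} DCC complete extension of $q_*\omega^1$ (via Lemma \ref{lem:extend}). The freedom in Lemma \ref{lem:extend} is exactly the freedom to decide, pair by pair, whether to put $[\h']$ or $[\h']^*$ into the extension when neither is already forced. So the first step is to observe that it suffices to arrange, for the particular class $[\h]$, that $[\h]\in\theta(\omega)$ whenever $\h\in\omega$, and $[\h]^*=[\h^*]\in\theta(\omega)$ whenever $\h^*\in\omega$; then $\h\in\phi\theta(\omega)$ iff $\h\in\omega$, i.e.\ $\phi\theta$ preserves $\h$.

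The second step is to check this choice is compatible with being a partial ultrafilter, i.e.\ that adding $[\h]$ (resp.\ $[\h^*]$) to $q_*\omega^1$ when $\h\in\omega^0$ (resp.\ $\h^*\in\omega^0$) does not create a violation. When $\h\in\omega^1$ the class $[\h]$ is already in $q_*\omega^1$ and there is nothing to do; similarly for $\h^*\in\omega^1$. So the only case is $\h\in\omega^0$ (the case $\h^*\in\omega^0$ is symmetric). We must show $q_*\omega^1\cup\{[\h]\}$ is still a partial ultrafilter, i.e.\ there is no $[\h']\in q_*\omega^1$ with $[\h]\leq[\h']$ and $[\h']^*\in q_*\omega^1$ — but that is automatic since $q_*\omega^1$ is itself a partial ultrafilter and can contain at most one of $[\h'],[\h']^*$. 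The real content is to rule out $[\h']\in q_*\omega^1$ with $[\h']\leq[\h]^*$, equivalently $[\h]\leq[\h']^*$, which would force $[\h']^*$ into any consistent extension and contradict $[\h']\in\theta(\omega)$. Unwinding via Lemma \ref{lem:leq}: $[\h]\leq[\h']^*$ means every $\mathfrak{g}\subsetneq\h$ satisfies $\mathfrak{g}\subsetneq\h'^*$ and every $\mathfrak{g}\subsetneq\h'$ satisfies $\mathfrak{g}\subsetneq\h^*$. Since $\h\in\omega^0$ it is $\subseteq$-minimal in $\omega$ (Lemma \ref{lem:depth0}), and $\h'\in\omega^1\subseteq\omega$; the relation $\h\leq\h'^*$ would combine with $\h\in\omega$, $\h'\in\omega$ to contradict consistency of $\omega$ unless $\h'=\h^*$, which is impossible as $\h'\in\omega$. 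So no such $[\h']$ exists, and the extension is legitimate; apply Zorn's lemma / Lemma \ref{lem:extend} as before, starting from $q_*\omega^1\cup\{[\h]\text{ or }[\h^*]\text{ as dictated by }\omega\}$, to get a DCC complete ultrafilter $\theta(\omega)$ with the desired property.

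I expect the main obstacle to be the bookkeeping in the second step: verifying that prescribing the truth value of $[\h]$ (for every $\omega$ simultaneously, since $\theta$ is not required to be $G$-equivariant but must be well-defined on all of $X^0$) never conflicts with the forced part $q_*\omega^1$. Concretely, one must be careful that when $\h\in\omega^0$ there genuinely is no chain of $\leq$-relations through $q_*\omega^1$ that already determines the complementary value — this is where minimality of elements of $\omega^0$ in $\omega$ (Lemma \ref{lem:omegax0} and its proof) does the work, and where Lemma \ref{lem:leq} must be applied carefully to translate $\leq$ back into statements about genuine inclusions of halfspaces in $X$. Once that compatibility is established, the conclusion $\h\subseteq\phi[\h]^{+C}$ for a constant $C$ depending only on $\dim X$ follows: for $x\in\h$, the vertex $\omega_x$ has $\h\in\omega_x$, hence $\h\in\phi\theta(\omega_x)$, and $d(x,\phi\theta(\omega_x))\leq 2\dim X$ by Lemma \ref{lem:thetaphi}, so $x$ lies within $2\dim X$ of a vertex of $\phi[\h]=\phi\theta(\omega_x)$... wait — more precisely $\phi\theta(\omega_x)\in\phi(Y^0)$ and $\h\in\phi\theta(\omega_x)$ means $\phi\theta(\omega_x)$, viewed as a vertex, lies in $\h$ and in the image $\phi[\h]$; combined with Lemma \ref{lem:QIembed} and Lemma \ref{lem:thetaphi} this gives $\h\subseteq\cN_{2\dim X}(\phi[\h])$, completing the verification that $\phi$ sends $[\h]$ to within finite Hausdorff distance of $\h$.
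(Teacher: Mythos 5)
Your overall strategy coincides with the paper's: keep the forced part $q_*\omega^1$, adjoin $[\h]$ (or $[\h^*]$) when $\h$ (or $\h^*$) lies in $\omega^0$, check that the enlarged set is still a DCC, cofinite partial ultrafilter, and then extend via Lemma~\ref{lem:extend}. That is exactly the right shape of argument, and your identification of the condition to rule out --- no $[\h']\in q_*\omega^1$ with $[\h']\leq[\h]^*$ --- is also correct.

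However, the verification of that condition has a genuine gap. You assert that $\h\leq\h'^*$ together with $\h,\h'\in\omega$ ``would contradict consistency of $\omega$ unless $\h'=\h^*$.'' That is false: $\leq$ was \emph{defined} by ``$\h_1\subseteq\h_2$ \emph{or} $\h_1\cap\h_2^*$ is a depth-$0$ quarterspace,'' and only the first disjunct contradicts consistency of $\omega$. The second disjunct --- $\h$ and $\h'$ transverse with $\h\cap\h'$ a depth-$0$ quarterspace --- is perfectly compatible with both lying in $\omega$; what it contradicts is $\h'\in\omega^1$ (since $\h\in\omega$ would then put $\h'$ into $\omega^0$ by definition of the partition). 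So the argument must split on the two clauses of the definition of $\leq$, invoking consistency for the inclusion clause and the definition of $\omega^1$ for the depth-$0$ clause, as the paper does. Your detour through Lemma~\ref{lem:leq} and the $\subseteq$-minimality of $\omega^0$-elements is not what closes the gap (minimality is never actually used in your chain of implications), and the phantom exception ``$\h'=\h^*$'' signals that the transverse case was not considered. Once the case split is inserted, the rest of your proposal (including the routine check that adding one element preserves DCC and cofiniteness, and the final remark recovering the Hausdorff-distance statement from Lemmas~\ref{lem:thetaphi} and~\ref{lem:QIembed}) is fine, though the last paragraph is redundant with the surrounding text of the paper.
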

\begin{proof}
	Suppose $\h\in\omega\in X^0$. Our goal is to choose $\theta(\omega)$ so that $[\h]\in\theta(\omega)$, as this would imply that $\h\in\phi\theta(\omega)$. If $\h\in\omega^1$ then $[\h]\in\theta(\omega)$ is automatic because $[\h]\in q_*\omega^1$, so suppose $\h\in\omega^0$. Observe that $q_*\omega^1\cup\{[\h]\}$ is a partial ultrafilter on $(\cH/\sim,\leq)$: indeed if there is $\h'\in\omega^1$ with $\h'\leq\h^*$, then either $\h'\subseteq\h^*$, contradicting consistency of $\omega$, or $\h'\cap\h$ is a depth-0 quarterspace, contradicting $\h'\in\omega^1$. Moreover, the fact that $q_*\omega^1$ is DCC and cofinite implies that $q_*\omega^1\cup\{[\h]\}$ is DCC and cofinite. Thus we may choose  $\theta(\omega)$ to be an extension of $q_*\omega^1\cup\{[\h]\}$ (still using Lemma \ref{lem:extend}).
\end{proof}

Finally, we prove that our construction preserves local finiteness.

\begin{lem}\label{lem:locfinite}
	$Y$ is locally finite if $X$ is locally finite.
\end{lem}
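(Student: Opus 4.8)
The plan is to show that every vertex $\mu\in Y^0$ has only finitely many $\leq$-minimal elements, since by Lemma \ref{lem:edges} this is exactly what controls the degree of $\mu$ in $Y$. Recall $\phi(\mu)$ is a DCC ultrafilter on $(\cH,\subseteq)$, so it has finitely many $\subseteq$-minimal elements (each adjacent edge in $X$ at $\phi(\mu)$ corresponds to a minimal element, and $X$ is locally finite). The key claim is: if $[\h]$ is $\leq$-minimal in $\mu$, then some representative $\h'\sim\h$ in $\phi(\mu)$ is "close" to being $\subseteq$-minimal in $\phi(\mu)$ — more precisely, I would aim to show that a $\leq$-minimal class $[\h]\in\mu$ contains a halfspace $\h'\in\phi(\mu)$ that is either $\subseteq$-minimal in $\phi(\mu)$, or lies within bounded combinatorial distance (depending only on $\dim X$) of a $\subseteq$-minimal halfspace. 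Since each class has size at most $\dim X$ and $X$ is locally finite, bounding the number of minimal classes then reduces to bounding, for each $\subseteq$-minimal $\h_0\in\phi(\mu)$, the number of $\leq$-minimal classes "associated" to $\h_0$.

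First I would unwind what $\leq$-minimality of $[\h]$ in $\mu$ says via Lemma \ref{lem:leq}: if $[\h']<[\h]$ for some $\h'\in\phi(\mu)$ (equivalently $[\h']\in\mu$) then by minimality this is impossible, so for every $\h'\in\phi(\mu)$ with $\h'\leq\h$ we must have $\h'\sim\h$. Translating through Lemma \ref{lem:leq}, for any halfspace $\h''\subsetneq\h'$ one gets $\h''\subsetneq\h$, and conversely. So the relevant picture is: $[\h]$ minimal in $\mu$ means roughly that no halfspace of $\phi(\mu)$ sits strictly "below" the class $[\h]$ in the $\leq$-order. I would then argue that among the (at most $\dim X$) representatives of $[\h]$ lying in $\phi(\mu)$, one can pick $\h'$ that is $\subseteq$-minimal in $\phi(\mu)$: if $\h''\subsetneq\h'$ with $\h''\in\phi(\mu)$, then $[\h'']\leq[\h']=[\h]$, and minimality of $[\h]$ forces $[\h'']=[\h]$, i.e.\ $\h''\sim\h'$; but then $\h'',\h'$ are transverse (as shown in the paragraph after Lemma \ref{lem:pocsetquotient} — elements of a single $\sim$-class are pairwise transverse), contradicting $\h''\subsetneq\h'$. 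Hence every representative of $[\h]$ in $\phi(\mu)$ is already $\subseteq$-minimal in $\phi(\mu)$.

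With that, the injective map sending a $\leq$-minimal class $[\h]\in\mu$ to (say) the $\subseteq$-minimal set of $\phi(\mu)$-representatives of $[\h]$ is at most $\dim X$-to-one onto a subset of the $\subseteq$-minimal elements of $\phi(\mu)$: indeed distinct minimal classes $[\h_1]\neq[\h_2]$ have disjoint sets of representatives, so the total count of minimal classes is at most (number of $\subseteq$-minimal elements of $\phi(\mu)$), which is finite by local finiteness of $X$ and Lemma \ref{lem:edges} applied in $X$. The main obstacle I anticipate is the verification that every representative of a $\leq$-minimal class is genuinely $\subseteq$-minimal in $\phi(\mu)$ — this hinges on correctly combining Lemma \ref{lem:leq}, the consistency/completeness of the ultrafilter $\phi(\mu)$, and the transversality of $\sim$-equivalent halfspaces; everything else is bookkeeping with finiteness bounds that are uniform in $\dim X$.
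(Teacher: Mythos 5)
Your proposal is correct and takes essentially the same route as the paper: you pick a $\leq$-minimal class $[\h]\in\mu$, show via transversality of $\sim$-equivalent halfspaces that every representative of $[\h]$ in $\phi(\mu)$ is $\subseteq$-minimal, and conclude via Lemma \ref{lem:edges} that the degree of $\mu$ in $Y$ is bounded by the degree of $\phi(\mu)$ in $X$. The hedging in the middle of your write-up (about representatives being merely ``close'' to minimal) is unnecessary — your own argument establishes the clean statement that every representative is already $\subseteq$-minimal, which is exactly what the paper proves.
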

\begin{proof}
	Let $\mu\in Y^0$ and let $[\h_1]\in\mu$ be $\leq$-minimal in $\mu$. If $\h_2\in\phi(\mu)$ satisfies $\h_2\subseteq\h_1$ then $[\h_2]\in\mu$ and $[\h_2]\leq[\h_1]$, so by minimality of $[\h_1]$ we have $[\h_2]=[\h_1]$. But we saw earlier that elements of a $\sim$-equivalence class are pairwise transverse, so in fact $\h_2=\h_1$. Hence $\h_1$ is $\subseteq$-minimal in $\phi(\mu)$. It follows from Lemma \ref{lem:edges} that the number of edges in $Y$ incident to $\mu$ is at most the number of edges in $X$ incident to $\phi(\mu)$, and there are finitely many such edges since $X$ is locally finite.
\end{proof}

\bigskip
\section{Semistability at infinity}\label{sec:semistability}

In this section we prove Theorem \ref{thm:semistable}, that all cubulated groups are semistable at infinity.
Dunwoody's accessibility (Theorem \ref{thm:accessibility}) together with the following two theorems allow us to reduce to the case of one-ended cubulated groups (noting that finite groups are semistable).

\begin{thm}\cite[Theorem 1]{MihalikTschantz92b}\label{thm:combinesemistable}\\
	If $G=A*_H B$ is an amalgamated product where $A$ and $B$ are finitely presented and semistable at infinity, and $H$ is finitely generated, then $G$ is semistable at infinity.
	If $G=A*_H$ is an HNN-extension where $A$ is semistable at infinity, and $H$ is finitely generated, then $G$ is semistable at infinity.
\end{thm}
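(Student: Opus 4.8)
The plan is to build an explicit model space for $G$ as a tree of spaces over the Bass--Serre tree and then verify the standard semistability criterion of \cite{Mihalik83} on this model, reducing the ``pushing out'' of a loop in the model for $G$ to pushing out inside the vertex spaces, where semistability of $A$ and $B$ is available. I will treat the amalgam $G=A*_H B$ in detail; the HNN case is the same with a single vertex space and the stable letter moving edge regions.

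First I would set up the model. Since $A,B$ are finitely presented and $H$ is finitely generated, $G$ is finitely presented, so semistability at infinity is defined for $G$ and may be checked on the universal cover of a suitable finite model. Take presentation $2$--complexes $K_A,K_B$ for $A,B$ and a (possibly infinite) $2$--complex $K_H$ with $\pi_1(K_H)=H$; only $1$--dimensional objects will be mapped into $K_H$ below, so its higher cells are irrelevant and finite generation of $H$ suffices to make $\widetilde{K_H}$ locally finite and cocompact. Form the graph of spaces $K_G$ with vertex spaces $K_A,K_B$ and edge space $K_H\times[0,1]$, glued by $\pi_1$--injective maps $K_H\to K_A$, $K_H\to K_B$; then $\pi_1(K_G)=G$. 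Let $X=\widetilde{K_G}$ with its $G$--action and the $G$--equivariant projection $f\colon X\to T$ to the Bass--Serre tree: $f^{-1}(v)$ is a copy of $\widetilde{K_A}$ or $\widetilde{K_B}$, a simply connected locally finite model space for $A$ or $B$ (hence semistable at infinity by hypothesis), and $f^{-1}(e)=\widetilde{K_H}\times[0,1]$.

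Next I would run the criterion of \cite{Mihalik83}: it suffices to show that for every compact $C\subseteq X$ and every larger compact $C'$ there is a compact $D\supseteq C'$ such that any edge--loop $\ell\subseteq X\setminus D$ based on a fixed proper ray $r$ can be homotoped into $X\setminus C'$ by a homotopy avoiding $C$ and tracking $\ell$ back to $r$. Given such $\ell$, I would first straighten it along the tree: after a small homotopy assume $\ell$ lies in $X^{(1)}$ and consider the closed edge--path $f\circ\ell$ in $T$; since $T$ is a tree this path reduces to a point by cancelling innermost backtracks $e\bar e$, and each cancellation pulls back to a move pushing a subarc of $\ell$ (which projects to a single vertex $v$) together with the two adjoining collars $f^{-1}(e)$ into the vertex space $X_v$. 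Iterating and reassembling rewrites $\ell$, up to homotopy rel $r$ that avoids $C$ (if $D$ was taken large enough that these bounded moves occur outside $C$), as a ``tree scaffold'' carrying finitely many excursions, each excursion a loop or a controlled arc lying in a single vertex space, with the number of pieces bounded in terms of $C'$; I would decompose a tail of $r$ similarly. Then each excursion of $\ell$ in a vertex space $X_v$ is far out in $X_v$ and based (as a loop, or as an arc with endpoints on the matching excursion of $r$) on a proper ray of $X_v$, so semistability of $A$ or $B$ pushes it arbitrarily far out within $X_v$, hence within $X$, avoiding $C$; the collars $\widetilde{K_H}\times[0,1]$ between consecutive excursions are products over an interval, so the connecting arcs need only be slid along the $[0,1]$ factor and absorbed into the pushings on either side --- finite generation of $H$ keeps the collars locally finite and cocompact, so these slides avoid $C$. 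Choosing $D$ large enough in terms of $C'$ and of the bound on the number of pieces then pushes $\ell$ into $X\setminus C'$, and the criterion of \cite{Mihalik83} gives semistability of $G$.

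The main obstacle is the compatibility of these pushings along the \emph{seams}: the pushing of an excursion inside $X_v$ and the pushing of the neighbouring excursion inside $X_{v'}$ must agree on the collar between them, and all of them together must form a single proper homotopy that stays attached to the decomposed base ray $r$. This forces the pushings to be carried out in a coherent order along the finitely many edge--translates that $\ell$ meets --- effectively an induction on the size of the tree scaffold of $\ell$ --- and it is exactly here that the hypothesis on $H$ is used, to bound the combinatorics of the collars; this coherent bookkeeping is the technical core of the Mihalik--Tschantz argument. The HNN case $G=A*_H$ runs identically with a single vertex space $K_A$, a single edge space $K_H\times[0,1]$, and the stable letter translating the edge regions; no new idea is needed beyond notation.
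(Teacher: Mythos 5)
This statement is cited in the paper as \cite[Theorem 1]{MihalikTschantz92b} and is not reproved there; the paper's ``proof'' of the theorem is the citation itself. So you are reconstructing the Mihalik--Tschantz argument from scratch, and the right benchmark is whether your sketch is both sound and sufficiently detailed.

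Your broad strategy --- build a tree of spaces over the Bass--Serre tree, lift to the universal cover, project loops to $T$ and cancel backtracks, reduce to excursions inside individual vertex spaces, and invoke semistability of $A$ and $B$ there --- does match the actual Mihalik--Tschantz approach. However, there is a concrete error in your statement of the Mihalik criterion. You write that it suffices to find, for each pair of compacts $C\subseteq C'$, a compact $D\supseteq C'$ such that every loop in $X\setminus D$ can be homotoped into $X\setminus C'$ by a homotopy in $X\setminus C$. As written this is vacuous: if $D\supseteq C'$ then any loop in $X\setminus D$ already lies in $X\setminus C'$, so the constant homotopy works. The correct criterion (which the paper records as its Theorem~\ref{thm:semistableequiv}) has the opposite quantifier structure: $D$ must depend only on $C$, and then for \emph{every} further compact $E$ --- crucially, $E$ may be chosen after $D$ is fixed, and may engulf $D$ --- one must push loops in $X-D$ past $E$ by a homotopy in $X-C$. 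Your version, with $D$ allowed to depend on the target $C'$, does not imply semistability: it never forces a fixed loop near $D$ to be pushable arbitrarily far. Beyond that, you yourself flag that the seam/bookkeeping argument (coherence of the vertex-space pushings across edge collars, and attaching everything to the decomposed base ray) is ``the technical core'' and leave it unaddressed. That coherence argument is where the real content lies --- it is what makes the pushings in $X_v$ and $X_{v'}$ agree on the connecting collar and assemble into a single homotopy of $\ell$ --- so as it stands the proposal is a plan, not a proof. To repair the sketch you should (i) restate the Mihalik criterion with the correct $\forall C\,\exists D\,\forall E$ quantifier order, and (ii) either carry out the induction on the tree scaffold that coordinates the pushings, or explicitly reduce to the precise statement in \cite{MihalikTschantz92b} as the paper does.
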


\begin{thm}\label{thm:cubulatevgroup}
	Let $G$ be a cubulated group that admits a finite splitting over finite subgroups. Then each vertex group is cubulated.
\end{thm}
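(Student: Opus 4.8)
The plan is to use Bass–Serre theory together with the fact that a cocompact CAT(0) cube complex action is controlled by its pocset of halfspaces, and to build a cubulation of each vertex group by combining the ambient cubulation of $G$ with the tree of the given splitting.

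Let $G$ act properly and cocompactly on a CAT(0) cube complex $X$, and let $G$ act on a tree $T$ realizing the finite splitting over finite subgroups, so that every edge stabilizer is finite. First I would observe that $T$ can itself be viewed as a (one-dimensional) CAT(0) cube complex on which $G$ acts cocompactly, though \emph{not} properly — the edge and vertex stabilizers in $T$ are infinite in general. The wallspace structure on $T$ is just its set of halftrees (the two components of $T$ minus the midpoint of an edge), which forms a $G$-invariant pocset $\cQ$ of width $1$. Now form the product wallspace on $X^0 \times VT$ using the disjoint union of pocsets $\cH(X) \sqcup \cQ$; equivalently, consider the product cube complex $X \times T$ with its diagonal $G$-action. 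This action is proper (because the $X$-factor action is proper) and cocompact (product of cocompact actions). The point of introducing the tree is that passing to vertex groups becomes transparent on the $T$-factor.

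Next, fix a vertex $v \in VT$ with stabilizer $G_v$, one of the vertex groups. The key step is to restrict the product action to a suitable $G_v$-cocompact convex subcomplex. Since $G_v$ fixes $v$, it stabilizes the ``star'' of $v$ in $T$, but more usefully it acts on the fiber $X \times \{v\}$ — which is just $X$ — properly and cocompactly? No: $G_v$ need not act cocompactly on $X$ (indeed if $G_v$ is a proper subgroup it typically does not). This is exactly the main obstacle. To get around it, I would instead use the tree to produce, inside $X \times T$, a $G_v$-invariant convex subcomplex that \emph{is} $G_v$-cocompact: take the convex hull of the orbit $G_v \cdot (x_0, v)$ for a basepoint $x_0$, or more robustly, use the Caprace–Sageev machinery — pass to a $G_v$-minimal invariant subcomplex of $X\times T$. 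The difficulty is that a priori the $G_v$-orbit of a point is not cobounded in $X\times T$.

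The cleaner route, and the one I would pursue, is the following. Consider the pocset $\cH(X) \sqcup \cQ$ but now examine which halfspaces are ``relevant'' to $G_v$: restrict attention to the sub-pocset consisting of $\cQ$ together with those halfspaces in $\cH(X)$, and cube this with respect to the $G_v$-action. Since $G_v$ has finite index properties only relative to edge groups, I would instead argue as follows: the splitting exhibits $G_v$ as acting on $X$ with the orbit $G_v x_0$ being ``quasi-convex'' in a controlled way — no, this still requires input. The correct and standard tool here is that $G_v$ is a \emph{relatively quasiconvex}-type subgroup only in hyperbolic settings. So the right approach: use that the edge groups are \emph{finite}. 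A finite subgroup acting on a CAT(0) cube complex has a fixed point (by the center-of-bounded-orbit argument, \cite[Proposition II.2.7]{BridsonHaefliger99} applied to a bounded orbit — finite orbits are bounded). So each edge group fixes a point, hence a cube, hence (passing to the cubical subdivision or barycenter) a vertex of $X$. Now build a $G$-equivariant map $T \to X$ (sending edge midpoints to fixed-points of edge groups, vertices to chosen $G_v$-orbit representatives, extended equivariantly and sent across edges via combinatorial geodesics). Its image, together with its convex hull, gives a $G$-invariant, $G$-cocompact convex subcomplex $Z \subseteq X$ with the property that $G_v$ acts cocompactly on the fiber over $v$ — because that fiber is the convex hull of the $G_v$-orbit of a single point plus finitely many edge-group fixed points, hence $G_v$-cocompact by construction. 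The plan concludes: $G_v$ acts properly (inherited) and cocompactly on the convex subcomplex $\Hull(G_v\cdot x_0 \cup \{\text{edge fixed points for edges at }v\}) \subseteq X$, which is itself a CAT(0) cube complex, so $G_v$ is cubulated.

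The main obstacle, then, is precisely the step of verifying that this convex hull is $G_v$-cocompact. I expect to handle it by noting that there are finitely many $G_v$-orbits of edges of $T$ incident to $v$ (the splitting is finite), each such edge contributes a single edge-group fixed point up to the action, so the seed set for the convex hull is a finite union of $G_v$-orbits of points; then I would invoke that the convex hull of a finite union of orbits under a group acting properly on a CAT(0) cube complex need not be cocompact in general — so a further argument is needed, most likely that $G_v$ is finitely generated (hence quasi-isometrically embedded data can be extracted) or, better, a direct Bass–Serre argument: decompose $\Hull$ into the $G_v$-translates of a fundamental domain coming from the $G$-action on $X$ restricted along $T$. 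I would organize the write-up so that this cocompactness verification is the single substantive lemma, with everything else being assembly from \cite{BridsonHaefliger99}, \cite{CapraceSageev11}, and standard Bass–Serre theory.
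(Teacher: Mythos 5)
Your proposal correctly locates the decisive difficulty — proving that some $G_v$-invariant convex subcomplex of $X$ is $G_v$-cocompact — but it does not resolve it, and you say as much (``a further argument is needed''). The candidate closing move you sketch is also not sound as stated: the convex hull of $G_v\cdot x_0$ (even augmented by a finite $G_v$-orbit of edge-group fixed points) need not be $G_v$-cocompact in general. A subgroup $H<G$ of a cubulated group, even finitely generated and quasi-isometrically embedded, need not act cocompactly on the cubical convex hull of one of its orbits; quasi-isometric embedding is strictly weaker than combinatorial convexity. So the fallback you mention — ``$G_v$ is finitely generated hence quasi-isometrically embedded data can be extracted'' — will not close the gap, and the proposed ``decompose $\Hull$ into $G_v$-translates of a fundamental domain'' has no mechanism for producing finiteness, since a convex subcomplex of $X$ has no reason to meet only finitely many $G_v$-orbits of cubes. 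The exploratory detours through $X\times T$ and sub-pocsets are abandoned in the write-up and do not contribute.

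The paper's proof is a two-line reduction that supplies exactly the structural input your sketch lacks. A finite splitting over finite edge groups makes $G$ hyperbolic relative to its infinite vertex groups, and peripheral subgroups of a cubulated relatively hyperbolic group act cocompactly on convex subcomplexes by Sageev--Wise \cite[Theorem 1.1]{SageevWise15}; finite vertex groups are trivially cubulated (proper cocompact action on a point). The Sageev--Wise theorem is precisely where the cocompact convex core you were trying to construct by hand is actually produced, and the mechanism that makes it work is (relative) quasiconvexity of the peripherals, which you would need to identify and exploit rather than a direct Bass--Serre count. Your intuition about the shape of the argument is sound, but as written the proposal has a genuine gap at the key step, which the paper fills by invoking relative hyperbolicity and citing the appropriate core theorem.
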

\begin{proof}
	$G$ is hyperbolic relative to its infinite vertex groups, so each infinite vertex group is cubulated by \cite[Theorem 1.1]{SageevWise15}. The finite vertex groups are also cubulated because they admit proper cocompact actions on a point.
\end{proof}

We will also make use of the following theorem, which follows straight from the proof of \cite[Theorem 2.1]{Mihalik83} (replacing $\tilde{X}$ by $X$). This provides a characterization for semistability in the one-ended case in terms of ``pushing out'' loops along a fixed proper ray.

\begin{thm}\label{thm:semistableequiv}
	Let $X$ be a one-ended locally finite CW-complex and let $r:[0,\infty)\to X$ be a proper ray. Then the following are equivalent:
	\begin{enumerate}
		\item $X$ is semistable at infinity.
		\item\label{item:push} For any compact set $C$, there is a compact set $D$ such that for any third compact set $E$ and loop $\alpha$ based on $r$ with image in $X-D$, $\alpha$ is homotopic rel$\{r\}$ to a loop in $X-E$, by a homotopy in $X-C$.
	\end{enumerate}
\end{thm}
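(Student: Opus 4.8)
The plan is to extract this equivalence from the proof of \cite[Theorem 2.1]{Mihalik83}, where the analogous statement is established for a one-ended universal cover $\tilde X$: the essential point is that the equivalence of \ref{item:push} with semistability of $X$ at infinity is purely a statement about the behaviour of $X$ near its unique end, and that Mihalik's argument for it uses only that $X$ is connected, locally finite and one-ended --- it never appeals to a group action, nor to any global topological feature such as simple connectivity --- so it may be copied with $\tilde X$ replaced by $X$. Concretely, I would run both directions over a fixed exhaustion $C_1 \subseteq C_2 \subseteq \cdots$ of $X$ by compact subcomplexes containing $r(0)$.

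For the direction \ref{item:push} $\Rightarrow$ (1): since $X$ is one-ended, any two proper rays converge to its unique end, so by transitivity of proper homotopy it suffices to show that an arbitrary proper ray $\sigma$ is properly homotopic to $r$, and after prepending a path we may assume $\sigma(0) = r(0)$. One builds the proper homotopy from $\sigma$ to $r$ in stages along the exhaustion: once the homotopy has been constructed so as to agree with $r$ on the portion of the parameter strip mapping into $C_n$, the obstruction to pushing it past $C_n$ is a single loop based on $r$, and applying \ref{item:push} with $C = C_n$ allows this loop to be homotoped rel $\{r\}$, by a homotopy avoiding $C_n$, until it lies outside any prescribed compact set; this is exactly what is needed to extend the homotopy one more stage while keeping it proper. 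The assembled homotopy is then proper because the $n$-th stage takes place entirely outside $C_n$.

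For the direction (1) $\Rightarrow$ \ref{item:push}: from semistability at infinity of $X$ one extracts, as in \cite{Mihalik83}, that for each $n$ the images $\mathrm{im}\bigl(\pi_1(X - C_k, r) \to \pi_1(X - C_n, r)\bigr)$ are independent of $k$ once $k$ is large. Given a compact set $C$, choose $n$ with $C \subseteq C_n$ and then $m \geq n$ so that $D := C_m$ realises this stable image. If $E$ is any further compact set, pick $k \geq m$ with $E \subseteq C_k$; a loop $\alpha$ based on $r$ with image in $X - D$ represents a class of $\pi_1(X - C_n, r)$ lying in $\mathrm{im}\bigl(\pi_1(X - C_m, r) \to \pi_1(X - C_n, r)\bigr) = \mathrm{im}\bigl(\pi_1(X - C_k, r) \to \pi_1(X - C_n, r)\bigr)$, hence is represented by a loop $\beta$ with image in $X - C_k \subseteq X - E$, and a homotopy rel $\{r\}$ from $\alpha$ to $\beta$ witnessing this equality lies in $X - C_n \subseteq X - C$, as required.

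I expect the main obstacle to be making the stage-by-stage construction in \ref{item:push} $\Rightarrow$ (1) fully rigorous: one has to arrange the bookkeeping so that pushing out the discrepancy loop at stage $n$ does not disturb the part of the homotopy already built over earlier stages, and so that the infinitely many stages glue to a genuinely proper map. This is precisely the technical heart of \cite[Theorem 2.1]{Mihalik83}. The remaining task --- confirming, as the parenthetical ``replacing $\tilde X$ by $X$'' indicates, that nothing in Mihalik's argument for this particular equivalence secretly uses simple connectivity of the complex or the ambient group action --- should be routine, since the argument only concerns the topology of $X$ outside large compacta.
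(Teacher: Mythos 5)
Your proposal is correct and takes essentially the same approach as the paper: the paper itself offers no independent proof of this theorem, stating only that it ``follows straight from the proof of \cite[Theorem 2.1]{Mihalik83} (replacing $\tilde{X}$ by $X$),'' which is exactly the reduction you make. Your sketch of the two implications is a faithful gloss on what Mihalik's argument does, and your closing remark correctly identifies the only thing one must actually check, namely that Mihalik's proof of this particular equivalence uses neither simple connectivity of the complex nor the ambient group action.
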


We are now ready to prove Theorem \ref{thm:semistable}.

\begin{proof}[Proof of Theorem \ref{thm:semistable}]
	Cubulated groups are finitely presented, so by Dunwoody's accessibility (Theorem \ref{thm:accessibility}) we know that any cubulated group admits a finite splitting over finite groups with vertex groups that are either finite or one-ended.
	If these vertex groups are semistable at infinity then Theorem \ref{thm:combinesemistable} implies that the whole group is semistable at infinity.
	Theorem \ref{thm:cubulatevgroup} tells us that the vertex groups in such a splitting are cubulated, so we reduce to proving semistability for finite or one-ended cubulated groups. Finite groups are automatically semistable at infinity, so it suffices to consider the one-ended case.
	
	Let $G$ be a one-ended cubulated group, acting properly and cocompactly on a CAT(0) cube complex $X$. By applying Theorem \ref{thm:halfquarter}, we may assume that all halfspaces in $X$ are one-ended and that all quarterspaces in $X$ are deep.
	We now prove that $X$ (and hence $G$) is semistable at infinity using the characterization in Theorem \ref{thm:semistableequiv}.
	For this proof we will work with the CAT(0) metric on $X$ rather than the combinatorial metric, so we will not consider halfspaces as collections of vertices but instead we will consider them as the CAT(0) convex subspaces of $X$ that arise as complementary components of hyperplanes; and we will refer to the union of a halfspace with its bounding hyperplane as a \emph{closed halfspace}.	
	Let $r$ be a geodesic ray in $X$ and let $C\subseteq X$ be compact. Let $D$ be the intersection of all closed halfspaces containing $C$, which is compact by Lemma \ref{lem:finhull}.
	Let $E\subseteq X$ be a third compact set, and let $\alpha$ be a loop in $X-D$ based on $r$.
	Passing to the cubical neighborhood, we may assume that $E$ is a subcomplex of $X$.
	Since $D$ is an intersection of closed halfspaces, $\alpha$ can be written as a concatenation of paths $\alpha_0,\alpha_1,...,\alpha_n$, such that $\alpha_i$ is contained in a halfspace $\h_i$ disjoint from $D$ (see Figure \ref{fig:push}).
	Let $x$ be the point at the beginning of $\alpha_0$ and the end of $\alpha_n$, and assume that $\alpha$ is based on $r$ at $x$. Also assume that $\h_0=\h_n$. For $0\leq i<n$, let $x_i$ be the point at the end of the segment $\alpha_i$ and the beginning of the segment $\alpha_{i+1}$.
	Observe that $x_i\in\h_i\cap\h_{i+1}$ and $D\subseteq\h_i^*\cap\h_{i+1}^*$, so $\h_i,\h_{i+1}$ are transverse and $\h_i\cap\h_{i+1}$ is a quarterspace. (Note that a pair of halfspaces intersect in the CAT(0) setting if and only if they intersect in the combinatorial setting).
	Each $\h_i$ is one-ended, so only one of the finitely many components of $\h_i-E$ is unbounded, call this component $E_i^*$. Each quarterspace $\h_i\cap\h_{i+1}$ is deep, so in particular unbounded, and $E_i^*\cap E_{i+1}^*\subseteq\h_i\cap\h_{i+1}$ is unbounded.
	
	We now construct a loop $\beta$ in $X-E$, and a homotopy rel$\{r\}$ in $X-D$ from $\alpha$ to $\beta$ (in particular this homotopy is in $X-C$).
	Let $y$ be a point on $r$ in $E_0^*$, and let $y_i$ be a point in $E_i^*\cap E_{i+1}^*$ for $0\leq i<n$.
	Let $\beta_0,\beta_1,...,\beta_n$ be paths in $E_0^*,E_1^*,...,E_n^*$ respectively that join the points $y,y_0,y_1,...,y_{n-1},y$. Let $\beta$ be the concatenation of the $\beta_i$, which lies in $X-E$.
	For $0\leq i\leq n$, we know that $\alpha_i$ and $\beta_i$ both lie in the halfspace $\h_i$, so we can homotope $\alpha_i$ to $\beta_i$ via geodesics, and this homotopy will be in $\h_i$ since halfspaces are convex. By uniqueness of geodesics in CAT(0) spaces, these homotopies will fit together to give a homotopy from $\alpha$ to $\beta$ in $X-D$. Moreover, the homotopy will move the point $x$ along a subsegment of $r$ to $y$, so the homotopy is rel$\{r\}$ as required.
\end{proof}

\begin{figure}[H]
	\centering
	\scalebox{.8}{
		\begin{tikzpicture}[auto,node distance=2cm,
			thick,every node/.style={},
			every loop/.style={min distance=2cm},
			hull/.style={draw=none},
			]
			\tikzstyle{label}=[draw=none,font=\Large]
			
			\newcommand{\toppart}[1]
			{
				\draw[red, line width=1pt] (-6,5)edge [bend right=30](6,5);
				\draw[draw=red,fill=black,-triangle 90, ultra thick](0,3.25)--(0,4);
				
				\draw[blue] (2.6,4.5) edge [blue,line width=1pt,
				postaction={decoration={markings,mark=at position 0.55 with {\arrow[blue,line width=.5mm]{triangle 60}}},decorate}](-2.6,4.5);
				
				\draw[Green] (3.9,6.75) edge [Green,line width=1pt,
				postaction={decoration={markings,mark=at position 0.55 with {\arrow[Green,line width=.5mm]{triangle 60}}},decorate}](-3.9,6.75);
				
			}
			
			\node[circle,fill,inner sep=0pt,minimum size=5pt] at (.6,0) {};
			\draw[draw=black,fill=black,-triangle 90, ultra thick] (.6,0) -- (.6,8);	
			\draw[rounded corners=20] (-2,-1.5) rectangle (2,1.5);

			\begin{scope}[rotate=60]
				\toppart{2};
			\end{scope}
			\begin{scope}[rotate=-60]
				\toppart{2};
			\end{scope}	
			\begin{scope}[rotate=120]
				\toppart{2};
			\end{scope}
			\begin{scope}[rotate=180]
				\draw[blue] (2.6,4.5)-- (1,4.5);
				\draw[blue,dotted](1,4.5)--(0,4.5);
				
				\draw[Green] (3.9,6.75) --(1,6.75);
				\draw[Green,dotted](1,6.75)--(0,6.75);
			\end{scope}
			\begin{scope}[rotate=-120]
				\draw[blue] (-2.6,4.5)-- (-1,4.5);
				\draw[blue,dotted](-1,4.5)--(0,4.5);
				
				\draw[Green] (-3.9,6.75) --(-1,6.75);
				\draw[Green,dotted](-1,6.75)--(0,6.75);
			\end{scope}
			
			\draw[red, line width=1pt] (-6,5)edge [bend right=30](6,5);
			\draw[draw=red,fill=black,-triangle 90, ultra thick](0,3.25)--(0,4);
			
			\node[circle,fill=blue,inner sep=0pt,minimum size=5pt] at (.6,4.5) {};
			\draw[blue] (2.6,4.5) edge [blue,line width=1pt,
			postaction={decoration={markings,mark=at position 0.55 with {\arrow[blue,line width=.5mm]{triangle 60}}},decorate}](.6,4.5);
			\draw[blue] (.6,4.5) edge [blue,line width=1pt,
			postaction={decoration={markings,mark=at position 0.55 with {\arrow[blue,line width=.5mm]{triangle 60}}},decorate}](-2.6,4.5);
			
			\node[circle,fill=Green,inner sep=0pt,minimum size=5pt] at (.6,6.75) {};
			\draw[Green] (3.9,6.75) edge [Green,line width=1pt,
			postaction={decoration={markings,mark=at position 0.55 with {\arrow[Green,line width=.5mm]{triangle 60}}},decorate}](.6,6.75);
			\draw[Green] (.6,6.75) edge [Green,line width=1pt,
			postaction={decoration={markings,mark=at position 0.55 with {\arrow[Green,line width=.5mm]{triangle 60}}},decorate}](-3.9,6.75);
			
			\node[label,red] at (-.6,3.7){$\h_0$};
			\node[label,red] at (-3.5,1.4){$\h_1$};
			\node[label,red] at (-2.8,-2.4){$\h_2$};
			\node[label,red] at (3.8,1.3){$\h_{n-1}$};
			
			\node[label,blue] at (-.8,4.9){$\alpha_0$};
			\node[label,blue] at (-4.5,2.3){$\alpha_1$};
			\node[label,blue] at (-4.4,-2.5){$\alpha_2$};
			\node[label,blue] at (4.7,2.3){$\alpha_{n-1}$};
			\node[label,blue] at (2,4.9){$\alpha_n$};
			\node[label,blue] at (.9,4.8){$x$};
			
			\node[label,Green] at (-2,7.2){$\beta_0$};
			\node[label,Green] at (-6.5,3.3){$\beta_1$};
			\node[label,Green] at (-6.3,-3.7){$\beta_2$};
			\node[label,Green] at (6.6,3.5){$\beta_{n-1}$};
			\node[label,Green] at (2.5,7.2){$\beta_n$};
			\node[label,Green] at (.9,7.1){$y$};
			
			\node[label] at (-.5,1){$C$};
			\node[label] at (0,7.8){$r$};
			
			\node[regular polygon,
			draw=black,
			regular polygon sides = 12,
			text = blue,
			minimum size = 6cm] (p) at (0,0) {};
			\node[label] at (-.5,2.4){$D$};
			
			\node[regular polygon,
			draw=black,
			regular polygon sides = 12,
			text = blue,
			minimum size = 12.7cm] (p) at (0,0) {};
			\node[label] at (-.5,5.7){$E$};
			
		\end{tikzpicture}
	}
	\caption{The proof of Theorem \ref{thm:semistable}.}\label{fig:push}
\end{figure}
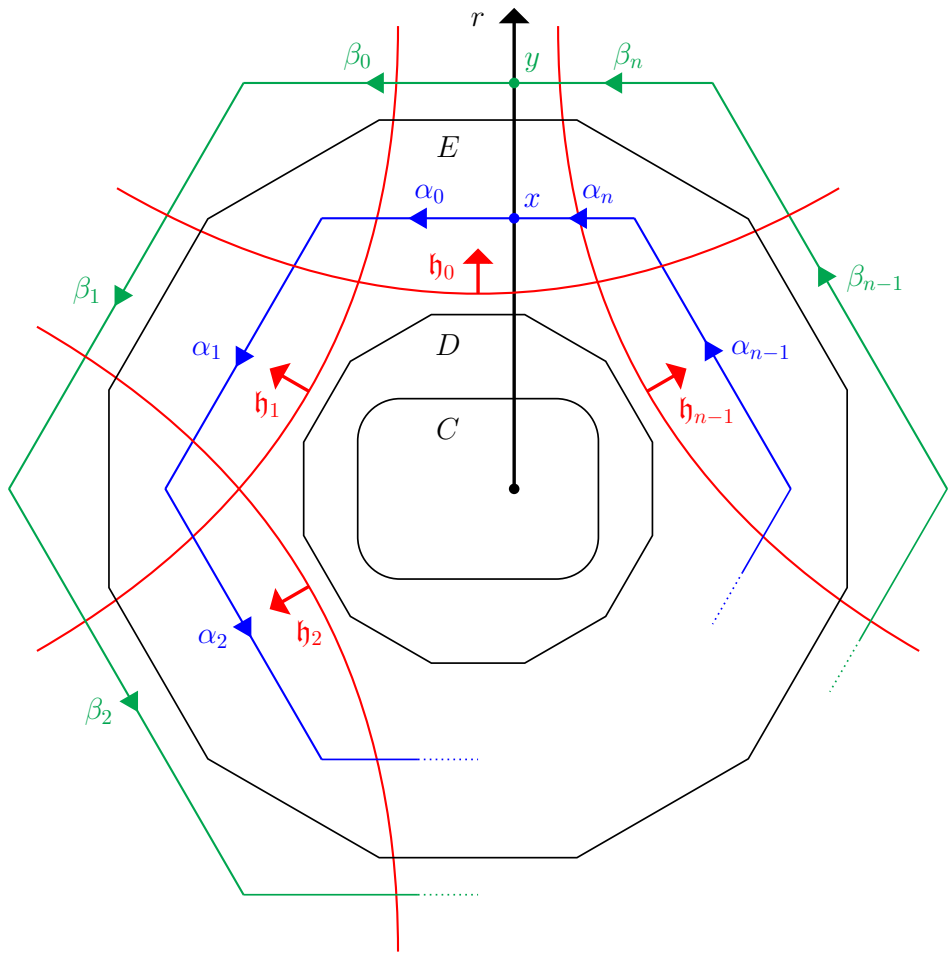

\begin{remk}
	The above proof only requires quarterspaces to be unbounded rather than deep, so Theorem \ref{thm:halfquarter} is actually stronger than what we need to prove Theorem \ref{thm:semistable}.
\end{remk}

\bigskip
\section{Example with an infinite-ended halfspace}\label{sec:example}

In this section we give an example of a one-ended group with a cubulation given by a CAT(0) cube complex that is essential and contains an infinite-ended halfspace.
This demonstrates that Theorem \ref{thm:halfspaces} is not vacuous, and that it requires more than simply passing to the essential core of a CAT(0) cube complex.

Consider the following cyclic amalgam of free groups $\mathbb{F}_m$ and $\mathbb{F}_n$

\begin{equation}\label{amalgam}
G=\mathbb{F}_m\ast_{\mathbb{Z}}\mathbb{F}_n  =  \langle \mathbb{F}_m,\mathbb{F}_n \mid w_1 = w_2 \rangle,
\end{equation}

where $w_1\in\mathbb{F}_m$ and $w_2\in \mathbb{F}_n$.
If $w_1,w_2$ are cyclically reduced and have the same length $L$ with respect to the standard generators of $\mathbb{F}_m$ and $\mathbb{F}_n$, then we may construct a non-positively curved square complex $X$ with fundamental group $G$ as follows.
Take graphs $R_1,R_2$ with one vertex each and $m,n$ edges respectively (the \emph{roses with $m$ or $n$ petals}); $R_1,R_2$ have fundamental groups $\mathbb{F}_m,\mathbb{F}_n$ respectively, where each edge corresponds to a generator.
Take an annulus $A$ formed by identifying the top and bottom of a $2\times L$ square grid.
Then form the square complex $X$ by attaching the left-hand boundary of $A$ to $R_1$ along the word $w_1$ and attaching the right-hand boundary of $A$ to $R_2$ along $w_2$ (see Figure \ref{fig:amalgam}).
$X$ is non-positively curved because $w_1,w_2$ are cyclically reduced.
$X$ has the structure of a graph of spaces with vertex spaces $R_1,R_2$ and edge space $A$; and this structure corresponds to the splitting (\ref{amalgam}) for $G$, so $G=\pi_1(X)$.

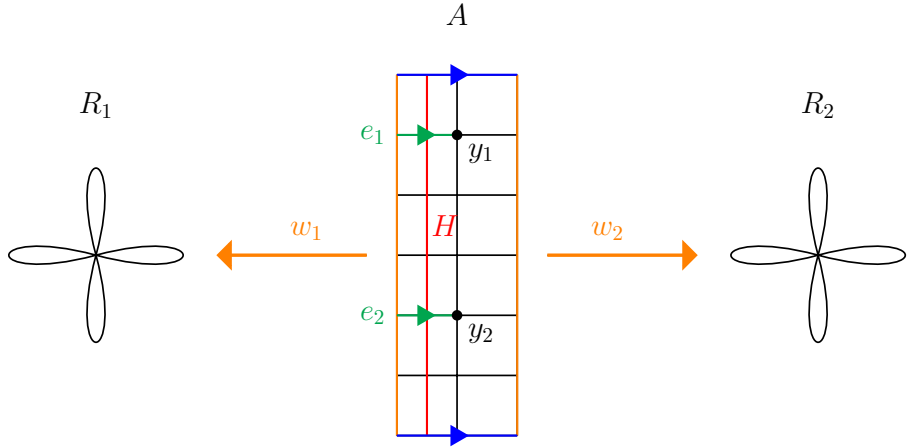
\begin{figure}[H]
	\centering
	\scalebox{.8}{
		\begin{tikzpicture}[auto,node distance=2cm,
			thick,every node/.style={},
			every loop/.style={min distance=2cm},
			hull/.style={draw=none},
			]
			\tikzstyle{label}=[draw=none,font=\Large]
			
			\begin{scope}[shift={(-5,3)}]
				\path
				(0,0) edge [loop left] (0,0)
				(0,0) edge [loop right] (0,0)
				(0,0) edge [loop below] (0,0)
				(0,0) edge [loop above] (0,0);
				
				\node[label] at (0,2.5) {$R_1$};
			\end{scope}
			
			\begin{scope}	
				\draw(0,0) grid (2,6);
				\draw[draw=orange,fill=black,-triangle 90, ultra thick] (-0.5,3) -- (-3,3);
				\draw[draw=orange,fill=black,-triangle 90, ultra thick] (2.5,3) -- (5,3);
				
				\path
				(0,0) edge [orange,line width=1pt] (0,6)
				(2,0) edge [orange,line width=1pt] (2,6)
				(0.5,0) edge [red,line width=1pt] (0.5,6)
				(0,5) edge [Green,line width=1pt,
				postaction={decoration={markings,mark=at position 0.65 with {\arrow[Green,line width=.5mm]{triangle 60}}},decorate}] (1,5)
				(0,2) edge [Green,line width=1pt,
				postaction={decoration={markings,mark=at position 0.65 with {\arrow[Green,line width=.5mm]{triangle 60}}},decorate}] (1,2)
			    (0,6) edge [blue,line width=1pt,
				postaction={decoration={markings,mark=at position 0.6 with {\arrow[blue,line width=.5mm]{triangle 60}}},decorate}] (2,6)
				(0,0) edge [blue,line width=1pt,
				postaction={decoration={markings,mark=at position 0.6 with {\arrow[blue,line width=.5mm]{triangle 60}}},decorate}] (2,0);
				
				\node[label] at (1,7) {$A$};
				\node[label,orange] at (-1.5,3.4) {$w_1$};
				\node[label,orange] at (3.5,3.4) {$w_2$};
				\node[label,Green] at (-.4,5) {$e_1$};
				\node[label,Green] at (-.4,2) {$e_2$};
				\node[label,red] at (.8,3.5) {$H$};
				\node[circle,fill,inner sep=0pt,minimum size=5pt] at (1,5) {};
				\node[circle,fill,inner sep=0pt,minimum size=5pt] at (1,2) {};
				\node[label] at (1.4,4.7) {$y_1$};
				\node[label] at (1.4,1.7) {$y_2$};
			\end{scope}
		
		\begin{scope}[shift={(7,3)}]
			\path
			(0,0) edge [loop left] (0,0)
			(0,0) edge [loop right] (0,0)
			(0,0) edge [loop below] (0,0)
			(0,0) edge [loop above] (0,0);
			
			\node[label] at (0,2.5) {$R_2$};
		\end{scope}

		\end{tikzpicture}
	}
	\caption{Construction of the square complexes $X$ and $X'$.}\label{fig:amalgam}
\end{figure}

The group $G$ might be one-ended or infinite-ended. 
For example, if one of the generators $a$ of $\mathbb{F}_m$ does not appear in the word $w_1$ then $G$ admits a free splitting with $\langle a\rangle$ as one of the factors, so $G$ is infinite-ended.
As an example where $G$ is one-ended, we can let $\mathbb{F}_m,\mathbb{F}_n$ be rank-2 free groups with generating sets $\{a_1,b_1\},\{a_2,b_2\}$ respectively and take the elements $w_1,w_2$ to be the commutators $[a_1,b_1],[a_2,b_2]$; in this case $X$ is homeomorphic to the oriented surface of genus 2.
As a further example, if $w_1,w_2$ are sufficiently generic elements then $G$ is one-ended and (\ref{amalgam}) is a JSJ splitting for $G$ over cyclic subgroups \cite[Example 2.27]{ShepherdWoodhouse22}.
Henceforth we will assume that $G$ is one-ended.

The universal cover $\tilde{X}$ of $X$ is a CAT(0) cube complex, and the action of $G$ on $\tilde{X}$ by deck transformations is a cubulation of $G$. In particular $\tilde{X}$ is one-ended.
However, $\tilde{X}$ might not contain an infinite-ended halfspace. To exhibit a cubulation of $G$ with an infinite-ended halfspace we will modify $X$ to obtain another non-positively curved square complex $X'$, and then consider the universal cover of $X'$.

The construction of $X'$ requires an additional (but mild) assumption on $w_1$: if $w_1=a_1 a_2\cdots a_k$ as a word in the generators of $\mathbb{F}_m$, then we assume that there exist $1\leq p<q<k$ such that $a_p\neq a_q$ and $a_{p+1}\neq a_{q+1}$.
The construction of $X'$ is then as follows.
Label the edges of the left-hand boundary of the annulus $A$ according to the word $w_1$, and let $e_1,e_2$ be the horizontal edges in $A$ that meet this boundary in the middle of the subwords $a_p a_{p+1}, a_q a_{q+1}$ respectively.
Orient $e_1, e_2$ from left to right (as shown in Figure \ref{fig:amalgam}).
We obtain $X'$ from $X$ by gluing together the (oriented) edges $e_1,e_2$.
Note that the left-hand endpoints of $e_1,e_2$ define the same vertex in $X$, so we can think of the gluing as folding $e_1$ and $e_2$ together from left to right.
The condition that $a_p\neq a_q$ and $a_{p+1}\neq a_{q+1}$ is necessary for $X'$ to be locally CAT(0) at the common left-hand endpoint of $e_1$ and $e_2$.
It is also instructive to consider the effect of the gluing on the hyperplane $H$ dual to $e_1$ and $e_2$: before gluing $H$ is homeomorphic to a circle, but after gluing it becomes a wedge of two circles, since the midpoint of $e_1$ has been identified with the midpoint of $e_2$.
It is not hard to show that the quotient map $X\to X'$ is a homotopy equivalence, so $\pi_1(X')=\pi_1(X)=G$, and the action of $G$ on the universal cover $\tilde{X}'$ of $X'$ is another cubulation of $G$.

\begin{prop}\label{prop:tX'}
	$\tilde{X}'$ is essential and contains an infinite-ended halfspace.
\end{prop}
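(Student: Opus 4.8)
The plan is to establish essentiality and the existence of an infinite-ended halfspace separately, the second being the substantial point.

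\emph{Essentiality.} First I would check that $X'$ is non-positively curved, which amounts to verifying the link condition at the two vertices affected by the fold; since $e_1,e_2$ already share their left-hand vertex in $X$ and $X$ is non-positively curved, the only risk is a new bigon or triangle in a link, and this is ruled out by inspecting the petals of $R_1$ (resp.\ of $R_2$) adjacent to $e_1$ and to $e_2$. Next, as $\tilde{X}'$ carries a proper cocompact action of the one-ended group $G$ it is quasi-isometric to $G$, hence one-ended and in particular unbounded. To see that no halfspace is shallow it then suffices to check that for each hyperplane $K$ of $X'$, both of the closed subcomplexes into which $K$ cuts $X'$ have fundamental group with infinite image in $G$: this makes both halfspaces of every lift $\tilde K$ unbounded, and in fact deep, since each such halfspace contains a whole lift of one of the roses $R_i$, and a lift of $R_i$ is a regular tree of valence $\ge 4$ with vertices arbitrarily far from $\tilde K$. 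The hyperplanes of $X'$ are exactly the images of those of $X$ (the fold merges no two distinct ones); those of $X$ are the two hyperplanes running around the columns of $A$, each carrying the conjugacy class of $w_1=w_2$, together with one hyperplane per generator of $\mathbb{F}_m$ or $\mathbb{F}_n$ carrying that generator --- all with infinite image in $G$ --- and the fold affects only the left-column hyperplane $H$, whose image $\pi_1 H$ still surjects onto an infinite subgroup of $G$.

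\emph{The hyperplane $H$ becomes an infinitely-ended tree.} The key point --- the reason for performing the fold --- is that $H$, although self-crossing in $X'$, still has a tubular neighbourhood: the carrier of $H$ in $X$ is $H\times[-1,1]$ with $e_1,e_2$ occurring as the fibres over their midpoints, and because $e_1,e_2$ are identified respecting the orientation transverse to $H$, the carrier of $H$ in $X'$ is $\bigl(H/(\text{mid}\,e_1\sim\text{mid}\,e_2)\bigr)\times[-1,1]$, a wedge of two circles crossed with $[-1,1]$. Passing to the universal cover, a lift $\tilde H$ of $H$ is therefore a tree on which the free group $S:=\mathrm{Stab}_G(\tilde H)$ acts freely and cocompactly with quotient $H$; since $\tilde H$ is contractible it is the universal cover of $H$, so $S\cong\pi_1 H$ is free of rank $2$ and $\tilde H$ is the $4$-regular tree, which is infinitely-ended.

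\emph{Producing an infinite-ended halfspace.} Write $X'=X'_-\cup_H X'_+$ for the decomposition along $H$ into its two closed sides, $R_1\subseteq X'_-$. The fold exhibits $X'_-$ as obtained from the mapping cylinder of $w_1$ into $R_1$ by folding two arcs sharing the $R_1$-vertex, hence a homotopy equivalence, so $\pi_1 X'_-\cong\mathbb{F}_m$ and $\pi_1 H\hookrightarrow\pi_1 X'_-$ is the inclusion of $\langle u,v\rangle$, where $u,v$ are the two subwords of $w_1$ cut off by the fold (so $uv=w_1$, and $\langle u,v\rangle$ is free of rank $2$ for a non-degenerate choice of $e_1,e_2$). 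Comparing with $G=\mathbb{F}_m\ast_{\langle w_1\rangle}\mathbb{F}_n$ forces $\pi_1 X'_+\cong\langle u,v\rangle\ast_{\langle uv\rangle}\mathbb{F}_n$; and because $uv$ is primitive in the free group $\langle u,v\rangle$ this splits freely, $\langle u,v\rangle\ast_{\langle uv\rangle}\mathbb{F}_n\cong\langle v\rangle\ast\mathbb{F}_n\cong\mathbb{F}_{n+1}$, an infinitely-ended group. I would then argue that the halfspace $\mathfrak{h}_+$ on the $X'_+$ side of $\tilde H$ is infinitely-ended by relating it to $\pi_1 X'_+$: a lift $\widetilde{X'_+}$ of the closed side $X'_+$ is a convex subcomplex of $\tilde{X}'$ bounded in part by $\tilde H$ and quasi-isometric to $\mathbb{F}_{n+1}$, and one wants $\mathfrak{h}_+$ to coarsely coincide with the deep part of such a lift. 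I expect this last step to be the main obstacle: $\mathfrak{h}_+$ also meets neighbouring edge-strips and vertex-trees of the tree-of-spaces structure of $\tilde{X}'$, and one must show these extra pieces do not connect up the ends of $\widetilde{X'_+}$. For the examples in question $G$ is word-hyperbolic and $\tilde H$ is quasiconvex with limit set a Cantor set $\Lambda\subseteq\partial G$; it would then suffice to show that the closure of $\mathfrak{h}_+$ at infinity equals $\Lambda$ --- equivalently, that $\mathfrak{h}_+$ picks up none of the complementary gaps of $\Lambda$ --- from which $\mathfrak{h}_+$ inherits infinitely many ends. Alternatively one could invoke a formula computing the number of ends of a halfspace from the filtered ends of the splitting of $G$ determined by $\tilde H$, applied to the free splitting of $G$ above.
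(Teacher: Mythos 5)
Your preliminary observations are largely sound — the fold merges no distinct hyperplanes, the carrier of $H$ in $X'$ is $H'\times[-1,1]$ with $H'$ a wedge of two circles, and a lift $\tilde H$ is the universal cover of $H'$, a $4$-regular tree. But you correctly identify, and then do not close, the crucial gap: you need to show that the halfspace $\mathfrak{h}_+$ is infinite-ended, and you attempt this by relating $\mathfrak{h}_+$ to a lift $\widetilde{X'_+}$ of the closed complementary side of $H'$ and invoking $\pi_1 X'_+\cong\mathbb{F}_{n+1}$. The step ``$\mathfrak{h}_+$ coarsely equals the deep part of $\widetilde{X'_+}$'' is exactly the issue: $\mathfrak{h}_+$ contains many other vertex trees and edge strips of the Bass--Serre tree of spaces, and a priori these could reconnect the ends of $\widetilde{X'_+}$. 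The limit-set and filtered-ends suggestions are only sketches, the former needs hyperbolicity of $G$ (not assumed in the paper; $w_1,w_2$ are allowed to be proper powers), and the computation $\pi_1 X'_+\cong\langle u,v\rangle\ast_{\langle uv\rangle}\mathbb{F}_n$ is asserted by an uncited ``uniqueness of amalgam'' argument and a genericity assumption (``for a non-degenerate choice of $e_1,e_2$'') that should be checked. So as written the argument is incomplete at the one point that matters.

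The paper's argument avoids all of this by working locally at the identified vertex $\tilde y'$. With $\h'$ the halfspace containing $\tilde y'$ bounded by the hyperplane $\hat{\h}'$ dual to the identified edge $\tilde e'$, one sees from the local picture that $\tilde e'$ separates $\h'$ near $\tilde y'$, and then globally: if $\h'-\tilde e'$ were connected, $\h'$ would be an HNN extension and so have nontrivial fundamental group, contradicting simple connectivity (convexity in a CAT(0) space). The $G_{\hat{\h}'}$-translates of $\tilde e'$ give infinitely many such cuts and hence infinitely many ends. This is a short topological argument that uses nothing about the global structure of the tree of spaces and no genericity hypothesis. (Similarly, for essentiality the paper observes that every edge of $X'$ lies on a closed local geodesic whose lift is a bi-infinite geodesic crossing the dual hyperplane orthogonally, which is cleaner than your route through fundamental groups of the two sides; your claim that every halfspace contains a whole lift of a rose is plausible but needs an argument.) You should look for an argument in the spirit of the paper's: it is much easier to show that a single edge separates a simply connected halfspace than to control the global geometry of $\mathfrak{h}_+$ relative to $\widetilde{X'_+}$.
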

\begin{proof}
	The horizontal and vertical edge paths in the annulus $A$ map to closed local geodesics in $X'$, and every edge of $X'$ is contained in one of these local geodesics.
	Lifting these local geodesics to $\tilde{X}'$, we see that every edge $\tilde{e}'$ in $\tilde{X}'$ is contained in a bi-infinite geodesic $\tilde{\gamma}'$ (in a CAT(0) space local geodesics are geodesics).
	Furthermore, $\tilde{\gamma}'$ meets the hyperplane $\hat{\h}'$ dual to $\tilde{e}'$ at right-angles, so it follows from basic CAT(0) geometry that $\tilde{\gamma}'$ goes arbitrarily far from $\hat{\h}'$ in both the halfspaces bounded by $\hat{\h}'$. It follows that every halfspace in $\tilde{X}'$ is deep, so $\tilde{X}'$ is essential.
	
	It remains to show that $\tilde{X}'$ contains an infinite-ended halfspace.
	Let $y_1,y_2$ be the right-hand endpoints of the edges $e_1,e_2$ (shown in Figure \ref{fig:amalgam}). 
	In $X'$ the edges $e_1,e_2$ are glued together to form a single edge $e'$ and the vertices $y_1,y_2$ are identified to give a single vertex $y'$.
	Let $\tilde{y}'$ be a lift of $y'$ to $\tilde{X}'$ and let $\tilde{e}'$ be the lift of $e'$ incident at $\tilde{y}'$.
	Let $\hat{\h}'$ be the hyperplane dual to $\tilde{e}$ and let $\h'$ be the halfspace containing $\tilde{y}'$ that is bounded by $\hat{\h}'$.
	(For this proof we consider halfspaces as complementary components of hyperplanes rather than taking the combinatorial viewpoint from Section \ref{subsec:CCC}.)
	The local picture of of $\tilde{X}'$ at $\tilde{y}$ is shown in Figure \ref{fig:local}.	
	Observe that $\tilde{e}'$ cuts $\h'$ into two components in this local picture.
	We claim that $\tilde{e}'$ also cuts $\h'$ into two components globally.
	Indeed, if $\h'-\tilde{e}'$ was connected then $\h'$ would be obtained from $\h'-\tilde{e}'$ by amalgamating the two sides of $\tilde{e}'\cap\h'$, so $\h'$ would have non-trivial fundamental group (given by a HNN extension of $\pi_1(\h'-\tilde{e}')$).
	But this cannot happen since $h'$ is a convex subspace of the CAT(0) space $\tilde{X}'$.
	The other lifts of $\tilde{e}'$ dual to $\hat{\h}'$ also cut $\h'$ into two components, so we conclude that $\h'$ is infinite-ended.
\end{proof}

\begin{figure}[H]
	\centering
	\scalebox{.8}{
		\begin{tikzpicture}[auto,node distance=2cm,
			thick,every node/.style={},
			every loop/.style={min distance=2cm},
			hull/.style={draw=none},
			]
			\tikzstyle{label}=[draw=none,font=\Large]
			
			\draw[fill=red,opacity=.2] (0,.5)--(1,1.5)--(1,1)--(2.5,.8)--(1.5,-.2)--(2.5,-1.2)--(1,-1)--(0,0)--(-1,-1)--(-2.5,-1.2)--(-1.5,-.2)--(-2.5,.8)--(-1,1)--(-1,1.5)--(0,.5);
			
			\draw[dashed] (1,-1) -- (1,0) -- (0,1) -- (-1,0) -- (-1,-1);
			\draw[red,dashed,line width=1pt] (1,-.5)--(0,.5)--(-1,-.5);
			
			\draw (0,0) -- (1,1) -- (1,2) -- (0,1) -- (-1,2) -- (-1,1) -- (0,0);
			\draw (0,0) -- (1.5,-.2) -- (2.5,-1.2) -- (1,-1) -- (0,0);
			\draw (0,0) -- (-1.5,-.2) -- (-2.5,-1.2) -- (-1,-1) -- (0,0);
			\draw (1,1) -- (2.5,.8) -- (1.5,-.2);	
			\draw (-1,1) -- (-2.5,.8) -- (-1.5,-.2);	
				
			\path
			(0,.5) edge [red,line width=1pt] (1,1.5)
			(0,.5) edge [red,line width=1pt] (-1,1.5)
			(0,1) edge [Green,line width=1pt,
				postaction={decoration={markings,mark=at position 0.65 with {\arrow[Green,line width=.5mm]{triangle 60}}},decorate}] (0,0);

				\node[label] at (0,-.5) {$\tilde{y}'$};
				\node[label,Green] at (0,1.4) {$\tilde{e}$};
				\node[label,red] at (1.4,1.6) {$\hat{\h}'$};
				\node[circle,fill,inner sep=0pt,minimum size=5pt] at (0,0) {};			
		\end{tikzpicture}
	}
	\caption{The local picture of $\tilde{X}'$ at $\tilde{y}$. The halfspace $\h'$ is shaded red.
	The left-hand-portion of the picture is a lift of the neighborhood of $y_1$ in the annulus $A$, while the right-hand portion is a lift of the neighborhood of $y_2$ in $A$.}\label{fig:local}
\end{figure}
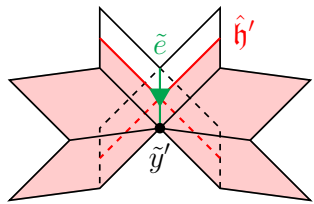

\begin{remk}
	The folding together of $e_1$ and $e_2$ to produce $X'$ is an example of a cubical Stallings fold, as studied in \cite{BeekerLazarovich18,DaniLevcovitz21,BenZviKrophollerLyman22}.
	One can also think of $\tilde{X}'$ as being obtained from $\tilde{X}$ by infinitely many Stallings folds corresponding to the lifts of $e_1$ and $e_2$.
	In fact, this folding is the reverse of the procedure in Section \ref{sec:oneend} that proves Theorem \ref{thm:smallerstabs}.
	More precisely, if we apply the procedure in Section \ref{sec:oneend} to $\tilde{X}'$, with $\h_0$ being the halfspace $\h'$ from the proof of Proposition \ref{prop:tX'} and $\fc_0$ consisting of just the vertex $\tilde{y}'$, then we recover the cube complex $\tilde{X}$. 
\end{remk}

\bibliographystyle{alpha}
\bibliography{Ref}

\begin{thebibliography}{BZKL22}

\bibitem[BH99]{BridsonHaefliger99}
Martin~R. Bridson and Andr\'{e} Haefliger.
\newblock {\em Metric spaces of non-positive curvature}, volume 319 of {\em
  Grundlehren der Mathematischen Wissenschaften [Fundamental Principles of
  Mathematical Sciences]}.
\newblock Springer-Verlag, Berlin, 1999.

\bibitem[BL18]{BeekerLazarovich18}
Benjamin Beeker and Nir Lazarovich.
\newblock Stallings' folds for cube complexes.
\newblock {\em Israel J. Math.}, 227(1):331--363, 2018.

\bibitem[BM91]{BestvinaMess91}
Mladen Bestvina and Geoffrey Mess.
\newblock The boundary of negatively curved groups.
\newblock {\em J. Amer. Math. Soc.}, 4(3):469--481, 1991.

\bibitem[Bow99]{Bowditch99}
B.~H. Bowditch.
\newblock Connectedness properties of limit sets.
\newblock {\em Trans. Amer. Math. Soc.}, 351(9):3673--3686, 1999.

\bibitem[Bri93]{Brick93}
Stephen~G. Brick.
\newblock Quasi-isometries and ends of groups.
\newblock {\em J. Pure Appl. Algebra}, 86(1):23--33, 1993.

\bibitem[BZKL22]{BenZviKrophollerLyman22}
Michael Ben-Zvi, Robert Kropholler, and Rylee~Alanza Lyman.
\newblock Folding-like techniques for {CAT}(0) cube complexes.
\newblock {\em Math. Proc. Cambridge Philos. Soc.}, 173(1):227--238, 2022.

\bibitem[CN05]{ChatterjiNiblo05}
Indira Chatterji and Graham Niblo.
\newblock From wall spaces to {$\rm CAT(0)$} cube complexes.
\newblock {\em Internat. J. Algebra Comput.}, 15(5-6):875--885, 2005.

\bibitem[CS11]{CapraceSageev11}
Pierre-Emmanuel Caprace and Michah Sageev.
\newblock Rank rigidity for {CAT}(0) cube complexes.
\newblock {\em Geom. Funct. Anal.}, 21(4):851--891, 2011.

\bibitem[Dic80]{Dicks80}
Warren Dicks.
\newblock {\em Groups, trees and projective modules}, volume 790 of {\em
  Lecture Notes in Mathematics}.
\newblock Springer, Berlin, 1980.

\bibitem[DL21]{DaniLevcovitz21}
Pallavi Dani and Ivan Levcovitz.
\newblock Subgroups of right-angled {C}oxeter groups via {S}tallings-like
  techniques.
\newblock {\em J. Comb. Algebra}, 5(3):237--295, 2021.

\bibitem[Dun85]{Dunwoody85}
M.~J. Dunwoody.
\newblock The accessibility of finitely presented groups.
\newblock {\em Invent. Math.}, 81(3):449--457, 1985.

\bibitem[Geo08]{Geoghegan08}
Ross Geoghegan.
\newblock {\em Topological methods in group theory}, volume 243 of {\em
  Graduate Texts in Mathematics}.
\newblock Springer, New York, 2008.

\bibitem[GK91]{GeogheganKrasinkiewicz91}
Ross Geoghegan and J\'{o}zef Krasinkiewicz.
\newblock Empty components in strong shape theory.
\newblock {\em Topology Appl.}, 41(3):213--233, 1991.

\bibitem[GM85]{GeogheganMihalik85}
Ross Geoghegan and Michael~L. Mihalik.
\newblock Free abelian cohomology of groups and ends of universal covers.
\newblock {\em J. Pure Appl. Algebra}, 36(2):123--137, 1985.

\bibitem[GS19]{GeogheganSwenson19}
Ross Geoghegan and Eric Swenson.
\newblock On semistability of {$\rm CAT(0)$} groups.
\newblock {\em Groups Geom. Dyn.}, 13(2):695--705, 2019.

\bibitem[Hag22]{Hagen22}
Mark Hagen.
\newblock Large facing tuples and a strengthened sector lemma.
\newblock {\em Tunis. J. Math.}, 4(1):55--86, 2022.

\bibitem[Hou77]{Houghton77}
C.~H. Houghton.
\newblock Cohomology and the behaviour at infinity of finitely presented
  groups.
\newblock {\em J. London Math. Soc. (2)}, 15(3):465--471, 1977.

\bibitem[HT19]{HagenTouikan19}
Mark~F. Hagen and Nicholas W.~M. Touikan.
\newblock Panel collapse and its applications.
\newblock {\em Groups Geom. Dyn.}, 13(4):1285--1334, 2019.

\bibitem[HW08]{HaglundWise08}
Fr\'{e}d\'{e}ric Haglund and Daniel~T. Wise.
\newblock Special cube complexes.
\newblock {\em Geom. Funct. Anal.}, 17(5):1551--1620, 2008.

\bibitem[HW12]{HaglundWise12}
Fr\'{e}d\'{e}ric Haglund and Daniel~T. Wise.
\newblock A combination theorem for special cube complexes.
\newblock {\em Ann. of Math. (2)}, 176(3):1427--1482, 2012.

\bibitem[Kra77]{Krasinkiewicz77}
J\'{o}zef Krasinkiewicz.
\newblock Local connectedness and pointed {$1$}-movability.
\newblock {\em Bull. Acad. Polon. Sci. S\'{e}r. Sci. Math. Astronom. Phys.},
  25(12):1265--1269, 1977.

\bibitem[Lev98]{Levitt98}
Gilbert Levitt.
\newblock Non-nesting actions on real trees.
\newblock {\em Bull. London Math. Soc.}, 30(1):46--54, 1998.

\bibitem[Man20]{Manning20}
Jason Manning.
\newblock Cubulating spaces and groups, lecture notes.
\newblock
  http://pi.math.cornell.edu/~jfmanning/teaching/notes/cubulating20200303.pdf,
  2020.

\bibitem[Mih83]{Mihalik83}
Michael~L. Mihalik.
\newblock Semistability at the end of a group extension.
\newblock {\em Trans. Amer. Math. Soc.}, 277(1):307--321, 1983.

\bibitem[Mih96]{Mihalik96}
Michael~L. Mihalik.
\newblock Semistability of {A}rtin and {C}oxeter groups.
\newblock {\em J. Pure Appl. Algebra}, 111(1-3):205--211, 1996.

\bibitem[MT92a]{MihalikTschanz92a}
Michael~L. Mihalik and Steven~T. Tschantz.
\newblock One relator groups are semistable at infinity.
\newblock {\em Topology}, 31(4):801--804, 1992.

\bibitem[MT92b]{MihalikTschantz92b}
Michael~L. Mihalik and Steven~T. Tschantz.
\newblock Semistability of amalgamated products and {HNN}-extensions.
\newblock {\em Mem. Amer. Math. Soc.}, 98(471):vi+86, 1992.

\bibitem[Nic04]{Nica04}
Bogdan Nica.
\newblock Cubulating spaces with walls.
\newblock {\em Algebr. Geom. Topol.}, 4:297--309, 2004.

\bibitem[Rol16]{Roller16}
Martin Roller.
\newblock Poc sets, median algebras and group actions, 2016.

\bibitem[\'{S}06]{Swiatkowski06}
Jacek \'{S}wi\.{a}tkowski.
\newblock Regular path systems and (bi)automatic groups.
\newblock {\em Geom. Dedicata}, 118:23--48, 2006.

\bibitem[Sag95]{Sageev95}
Michah Sageev.
\newblock Ends of group pairs and non-positively curved cube complexes.
\newblock {\em Proc. London Math. Soc. (3)}, 71(3):585--617, 1995.

\bibitem[Sta71]{Stallings71}
John Stallings.
\newblock {\em Group theory and three-dimensional manifolds}, volume~4 of {\em
  Yale Mathematical Monographs}.
\newblock Yale University Press, New Haven, Conn.-London, 1971.
\newblock A James K. Whittemore Lecture in Mathematics given at Yale
  University, 1969.

\bibitem[SW05]{SageevWise05}
Michah Sageev and Daniel~T. Wise.
\newblock The {T}its alternative for {${\rm CAT}(0)$} cubical complexes.
\newblock {\em Bull. London Math. Soc.}, 37(5):706--710, 2005.

\bibitem[SW15]{SageevWise15}
Michah Sageev and Daniel~T. Wise.
\newblock Cores for quasiconvex actions.
\newblock {\em Proc. Amer. Math. Soc.}, 143(7):2731--2741, 2015.

\bibitem[SW22]{ShepherdWoodhouse22}
Sam Shepherd and Daniel~J. Woodhouse.
\newblock Quasi-isometric rigidity for graphs of virtually free groups with
  two-ended edge groups.
\newblock {\em J. Reine Angew. Math.}, 782:121--173, 2022.

\bibitem[Swa96]{Swarup96}
G.~A. Swarup.
\newblock On the cut point conjecture.
\newblock {\em Electron. Res. Announc. Amer. Math. Soc.}, 2(2):98--100, 1996.

\bibitem[Wis12]{WiseRiches}
Daniel~T. Wise.
\newblock {\em From riches to raags: 3-manifolds, right-angled {A}rtin groups,
  and cubical geometry}, volume 117 of {\em CBMS Regional Conference Series in
  Mathematics}.
\newblock Published for the Conference Board of the Mathematical Sciences,
  Washington, DC; by the American Mathematical Society, Providence, RI, 2012.

\end{thebibliography}

\end{document}